\definecolor{darkspringgreen}{rgb}{0.09, 0.45, 0.27}
\newcommand{\stkout}[1]{\ifmmode\text{\sout{\ensuremath{#1}}}\else\sout{#1}\fi}
\patchcmd{\footnotemark}{\stepcounter{footnote}}{\refstepcounter{footnote}}{}{}
\newtheorem{thm}[subsubsection]{Theorem}
\newtheorem{cor}[subsubsection]{Corollary}
\newtheorem{lem}[subsubsection]{Lemma}
\newtheorem{prop}[subsubsection]{Proposition}
\newtheorem{conj}{Conjecture}
\theoremstyle{definition}
\newtheorem{definition}[subsubsection]{Definition}
\newtheorem{question}{Question}
\theoremstyle{remark}
\newtheorem{rem}[subsubsection]{Remark}
\newcommand{\nc}{\newcommand}
\nc{\renc}{\renewcommand} \nc{\ssec}{\subsection}
\nc{\sssec}{\subsubsection}
\nc{\on}{\operatorname} \nc{\wh}{\widehat}
\nc\ol{\overline} \nc\ul{\underline} \nc\wt{\widetilde}
\nc{\BA}{{\mathbb{A}}} \nc{\BC}{{\mathbb{C}}} \nc{\BF}{{\mathbb{F}}}
\nc{\BD}{{\mathbb{D}}} \nc{\BG}{{\mathbb{G}}} \nc{\BQ}{{\mathbb{Q}}}
\nc{\BM}{{\mathbb{M}}} \nc{\BN}{{\mathbb{N}}} \nc{\BO}{{\mathbb{\bfO}}}
\nc{\BP}{{\mathbb{P}}} \nc{\BR}{{\mathbb{R}}}
\nc{\BZ}{{\mathbb{Z}}} \nc{\BS}{{\mathbb{S}}} \nc{\BW}{{\mathbb{W}}}
\nc{\CA}{{\mathcal{A}}} \nc{\CB}{{\mathcal{B}}} \nc{\CalD}{{\mathcal{D}}}
\nc{\CE}{{\mathcal{E}}} \nc{\CF}{{\mathcal{F}}}
\nc{\CG}{{\mathcal{G}}} \nc{\CH}{{\mathcal{H}}}
\nc{\CI}{{\mathcal{I}}} \nc{\CK}{{\mathcal{K}}} \nc{\CL}{{\mathcal{L}}}
\nc{\CM}{{\mathcal{M}}} \nc{\CN}{{\mathcal{N}}}
\nc{\CO}{{\mathcal{\bfO}}} \nc{\CP}{{\mathcal{P}}}
\nc{\CQ}{{\mathcal{Q}}} \nc{\CR}{{\mathcal{R}}}
\nc{\CS}{{\mathcal{S}}} \nc{\CT}{{\mathcal{T}}}
\nc{\CU}{{\mathcal{U}}} \nc{\CV}{{\mathcal{V}}}  \nc{\CY}{{\mathcal Y}}
\nc{\CW}{{\mathcal{W}}} \nc{\CZ}{{\mathcal{Z}}}
\nc{\cM}{{\check{\mathcal M}}{}} \nc{\csM}{{\check{\mathcal A}}{}}
\nc{\oM}{{\overset{\circ}{\mathcal M}}{}}
\nc{\obM}{{\overset{\circ}{\mathbf M}}{}}
\nc{\oCA}{{\overset{\circ}{\mathcal A}}{}}
\nc{\obA}{{\overset{\circ}{\mathbf A}}{}}
\nc{\ooM}{{\overset{\circ}{M}}{}}
\nc{\osM}{{\overset{\circ}{\mathsf M}}{}}
\nc{\vM}{{\overset{\bullet}{\mathcal M}}{}}
\nc{\nM}{{\underset{\bullet}{\mathcal M}}{}}
\nc{\oD}{{\overset{\circ}{\mathcal D}}{}}
\nc{\obD}{{\overset{\circ}{\mathbf D}}{}}
\nc{\oA}{{\overset{\circ}{\mathbb A}}{}}
\nc{\op}{{\overset{\bullet}{\mathbf p}}{}}
\nc{\cp}{{\overset{\circ}{\mathbf p}}{}}
\nc{\oU}{{\overset{\bullet}{\mathcal U}}{}}
\nc{\ofZ}{{\overset{\circ}{\mathfrak Z}}{}}
\nc{\ff}{{\mathfrak{f}}} \nc{\fv}{{\mathfrak{v}}}
\nc{\fa}{{\mathfrak{a}}} \nc{\fb}{{\mathfrak{b}}}
\nc{\fd}{{\mathfrak{d}}} \nc{\fe}{{\mathfrak{e}}}
\nc{\fg}{{\mathfrak{g}}} \nc{\fgl}{{\mathfrak{gl}}}
\nc{\fh}{{\mathfrak{h}}} \nc{\fri}{{\mathfrak{i}}}
\nc{\fj}{{\mathfrak{j}}} \nc{\fk}{{\mathfrak{k}}} \nc{\fl}{{\mathfrak{l}}}
\nc{\fm}{{\mathfrak{m}}} \nc{\fn}{{\mathfrak{n}}}
\nc{\ft}{{\mathfrak{t}}} \nc{\fu}{{\mathfrak{u}}}
\nc{\fw}{{\mathfrak{w}}} \nc{\fz}{{\mathfrak{z}}}
\nc{\fp}{{\mathfrak{p}}} \nc{\fq}{{\mathfrak{q}}} \nc{\frr}{{\mathfrak{r}}}
\nc{\fs}{{\mathfrak{s}}} \nc{\fsl}{{\mathfrak{sl}}}
\nc{\hsl}{{\widehat{\mathfrak{sl}}}}
\nc{\hgl}{{\widehat{\mathfrak{gl}}}}
\nc{\hg}{{\widehat{\mathfrak{g}}}}
\nc{\chg}{{\widehat{\mathfrak{g}}}{}^\vee}
\nc{\hn}{{\widehat{\mathfrak{n}}}}
\nc{\chn}{{\widehat{\mathfrak{n}}}{}^\vee}
\nc{\fA}{{\mathfrak{A}}} \nc{\fB}{{\mathfrak{B}}} \nc{\fC}{{\mathfrak{C}}}
\nc{\fD}{{\mathfrak{D}}} \nc{\fE}{{\mathfrak{E}}}
\nc{\fF}{{\mathfrak{F}}} \nc{\fG}{{\mathfrak{G}}} \nc{\fH}{{\mathfrak{H}}}
\nc{\fI}{{\mathfrak{I}}} \nc{\fJ}{{\mathfrak{J}}}
\nc{\fK}{{\mathfrak{K}}} \nc{\fL}{{\mathfrak{L}}}
\nc{\fM}{{\mathfrak{M}}} \nc{\fN}{{\mathfrak{N}}}
\nc{\frP}{{\mathfrak{P}}} \nc{\fQ}{{\mathfrak{Q}}}
\nc{\fS}{{\mathfrak{S}}} \nc{\fT}{{\mathfrak{T}}} \nc{\fU}{{\mathfrak{U}}}
\nc{\fV}{{\mathfrak{V}}} \nc{\fW}{{\mathfrak{W}}}
\nc{\fX}{{\mathfrak{X}}} \nc{\fY}{{\mathfrak{Y}}}
\nc{\fZ}{{\mathfrak{Z}}}
\nc{\ba}{{\mathbf{a}}}
\nc{\bb}{{\mathbf{b}}} \nc{\bc}{{\mathbf{c}}}
\nc{\be}{{\mathbf{e}}} \nc{\bj}{{\mathbf{j}}} \nc{\bm}{{\mathbf{m}}}
\nc{\bn}{{\mathbf{n}}} \nc{\bp}{{\mathbf{p}}}
\nc{\bq}{{\mathbf{q}}} \nc{\br}{{\mathbf{r}}} \nc{\bt}{{\mathbf{t}}}
\nc{\bfu}{{\mathbf{u}}} \nc{\bv}{{\mathbf{v}}}
\nc{\bx}{{\mathbf{x}}} \nc{\by}{{\mathbf{y}}} \nc{\bz}{{\mathbf{z}}}
\nc{\bw}{{\mathbf{w}}} \nc{\bA}{{\mathbf{A}}}
\nc{\bB}{{\mathbf{B}}} \nc{\bC}{{\mathbf{C}}}
\nc{\bD}{{\mathbf{D}}} \nc{\bF}{{\mathbf{F}}} \nc{\bG}{{\mathbf{G}}}
\nc{\bH}{{\mathbf{H}}} \nc{\bI}{{\mathbf{I}}} \nc{\bJ}{{\mathbf{J}}}
\nc{\bK}{{\mathbf{K}}} \nc{\bM}{{\mathbf{M}}} \nc{\bN}{{\mathbf{N}}}
\nc{\bO}{{\mathbf{\bfO}}} \nc{\bS}{{\mathbf{S}}} \nc{\bT}{{\mathbf{T}}}
\nc{\bU}{{\mathbf{U}}} \nc{\bV}{{\mathbf{V}}} \nc{\bW}{{\mathbf{W}}}
\nc{\bX}{{\mathbf{X}}}
\nc{\bY}{{\mathbf{Y}}} \nc{\bP}{{\mathbf{P}}}
\nc{\bZ}{{\mathbf{Z}}} \nc{\bh}{{\mathbf{h}}}
\nc{\sA}{{\mathsf{A}}} \nc{\sB}{{\mathsf{B}}}
\nc{\sC}{{\mathsf{C}}} \nc{\sD}{{\mathsf{D}}}
\nc{\sE}{{\mathsf{E}}} \nc{\sF}{{\mathsf{F}}} \nc{\sG}{{\mathsf{G}}}
\nc{\sI}{{\mathsf{I}}} \nc{\sK}{{\mathsf{K}}} \nc{\sL}{{\mathsf{L}}}
\nc{\sfm}{{\mathsf{m}}} \nc{\sM}{{\mathsf{M}}} \nc{\sO}{{\mathsf{\bfO}}}
\nc{\sQ}{{\mathsf{Q}}} \nc{\sP}{{\mathsf{P}}}
\nc{\sT}{{\mathsf{T}}} \nc{\sZ}{{\mathsf{Z}}}
\nc{\sV}{{\mathsf{V}}} \nc{\sW}{{\mathsf{W}}}
\nc{\sfp}{{\mathsf{p}}} \nc{\sq}{{\mathsf{q}}} \nc{\sr}{{\mathsf{r}}}
 \nc{\sfb}{{\mathsf{b}}}
\nc{\sfc}{{\mathsf{c}}} \nc{\sd}{{\mathsf{d}}}
\nc{\sz}{{\mathsf{z}}}
\nc{\tA}{{\widetilde{\mathbf{A}}}}
\nc{\tB}{{\widetilde{\mathcal{B}}}}
\nc{\tg}{{\widetilde{\mathfrak{g}}}} \nc{\tG}{{\widetilde{G}}}
\nc{\TM}{{\widetilde{\mathbb{M}}}{}}
\nc{\tO}{{\widetilde{\mathsf{\bfO}}}{}}
\nc{\tU}{{\widetilde{\mathfrak{U}}}{}} \nc{\TZ}{{\tilde{Z}}}
\nc{\tx}{{\tilde{x}}} \nc{\tbv}{{\tilde{\bv}}}
\nc{\tfP}{{\widetilde{\mathfrak{P}}}{}} \nc{\tz}{{\tilde{\zeta}}}
\nc{\tmu}{{\tilde{\mu}}}
\nc{\urho}{\underline{\rho}} \nc{\uB}{\underline{B}}
\nc{\uC}{{\underline{\mathbb{C}}}} \nc{\ui}{\underline{i}}
\nc{\uj}{\underline{j}} \nc{\ofP}{{\overline{\mathfrak{P}}}}
\nc{\oB}{{\overline{\mathcal{B}}}}
\nc{\og}{{\overline{\mathfrak{g}}}} \nc{\oI}{{\overline{I}}}
\nc{\eps}{\varepsilon} \nc{\hrho}{{\hat{\rho}}}
\nc{\blambda}{{\boldsymbol{\lambda}}} \nc{\bmu}{{\boldsymbol{\mu}}} \nc{\bnu}{{\boldsymbol{\nu}}}
\nc{\one}{{\mathbf{1}}} \nc{\two}{{\mathbf{t}}}
\nc{\Sym}{\mathop{\operatorname{\rm Sym}}}
\nc{\SI}{{\rm{SI}}}
\nc{\Tot}{{\mathop{\operatorname{\rm Tot}}}}
\nc{\Spec}{\mathop{\operatorname{\rm Spec}}}
\nc{\Ker}{{\mathop{\operatorname{\rm Ker}}}}
\nc{\Isom}{{\mathop{\operatorname{\rm Isom}}}}
\nc{\Hilb}{{\mathop{\operatorname{\rm Hilb}}}}
\nc{\deeq}{{\mathop{\operatorname{\rm deeq}}}}
\nc{\End}{{\mathop{\operatorname{\rm End}}}}
\nc{\Ext}{{\mathop{\operatorname{\rm Ext}}}}
\nc{\Hom}{{\mathop{\operatorname{\rm Hom}}}}
\nc{\CHom}{{\mathop{\operatorname{{\mathcal{H}}\it om}}}}
\nc{\GL}{{\mathop{\operatorname{\rm GL}}}}
\nc{\Stab}{{\mathop{\operatorname{\rm Stab}}}}
\nc{\Mir}{{\mathop{\operatorname{\rm Mir}}}}
\nc{\gr}{{\mathop{\operatorname{\rm gr}}}}
\nc{\Id}{{\mathop{\operatorname{\rm Id}}}}
\nc{\perf}{{\mathop{\operatorname{\rm perf}}}}
\nc{\ver}{{\mathop{\operatorname{\rm ver}}}}
\nc{\verdier}{{\mathop{\operatorname{\rm Verdier}}}}
\nc{\glob}{{\mathop{\operatorname{\rm glob}}}}
\nc{\ratio}{{\mathop{\operatorname{\rm ratio}}}}
\nc{\Ran}{{\mathop{\operatorname{\rm Ran}}}}
\nc{\Conf}{{\mathop{\operatorname{\rm Conf}}}}
\nc{\mix}{{\mathop{\operatorname{\rm mxd}}}}
\nc{\sm}{{\mathop{\operatorname{\rm small}}}}
\nc{\defi}{{\mathop{\operatorname{\rm def}}}}
\nc{\length}{{\mathop{\operatorname{\rm length}}}}
\nc{\supp}{{\mathop{\operatorname{\rm supp}}}}
\nc{\Fun}{{\mathop{\operatorname{\rm Funct}}}}
\nc{\Hei}{{\mathop{\operatorname{\rm Heis}}}}
\nc{\HC}{{\mathcal H}{\mathcal C}}
\nc{\Cliff}{{\mathsf{Cliff}}}
\nc{\loc}{{\operatorname{loc}}}
\nc{\Fl}{{\operatorname{Fl}}} 
\nc{\Ffl}{{\mathcal{F}\ell}}
\nc{\Fib}{{\mathsf{Fib}}}
\nc{\Coh}{{\operatorname{Coh}}} \nc{\FCoh}{{\mathsf{FCoh}}}
\nc{\Perf}{{\mathsf{Perf}}}
\nc{\reg}{{\text{\rm reg}}}
\nc{\gvee}{{\mathfrak g}^{\!\scriptscriptstyle\vee}}
\nc{\tvee}{{\mathfrak t}^{\!\scriptscriptstyle\vee}}
\nc{\nvee}{{\mathfrak n}^{\!\scriptscriptstyle\vee}}
\nc{\bvee}{{\mathfrak b}^{\!\scriptscriptstyle\vee}}
       \nc{\rhovee}{\rh\bfO^{\!\scriptscriptstyle\vee}}
\nc{\cplus}{{\mathbf{C}_+}} \nc{\cminus}{{\mathbf{C}_-}}
\nc{\cthree}{{\mathbf{C}_*}} \nc{\Qbar}{{\bar{Q}}}
\nc{\Gtimes}{\vphantom{j^{X^2}}\smash{\overset{G}{\vphantom{\rule{0pt}{0.30em}}\smash{\times}}}}
\nc{\sGtimes}{\vphantom{j^{X^2}}\smash{\overset{\mathsf G}{\vphantom{\rule{0pt}{0.30em}}\smash{\times}}}}
\nc{\bOmega}{{\overline{\Omega}}}
\nc{\seq}[1]{\stackrel{#1}{\sim}}
\nc{\aff}{{\operatorname{aff}}}
\nc{\fin}{{\operatorname{fin}}}
\nc{\mir}{{\operatorname{mir}}}
\nc{\triv}{{\operatorname{triv}}}
\nc{\ext}{{\operatorname{ext}}}
\nc{\righ}{{\operatorname{right}}}
\nc{\lef}{{\operatorname{left}}}
\nc{\forg}{{\operatorname{forg}}}
\nc{\fid}{{\operatorname{fd}}}
\nc{\modu}{{\operatorname{-mod}}}
\nc{\Gr}{{\operatorname{Gr}}}
\nc{\FT}{{\operatorname{FT}}}
\nc{\Mat}{{\operatorname{Mat}}}
\nc{\MSt}{{\operatorname{MSt}}}
\nc{\sph}{{\operatorname{sph}}}
\nc{\GR}{{\mathbf{Gr}}}
\nc{\Perv}{{\operatorname{Perv}}}
\nc{\Rep}{{\operatorname{Rep}}}
\nc{\Fact}{{\operatorname{FactMod}}}
\nc{\Ind}{{\operatorname{Ind}}}
\nc{\IC}{{\operatorname{IC}}}
\nc{\Bun}{{\operatorname{Bun}}}
\nc{\Br}{{\operatorname{Br}}}
\nc{\mult}{{\operatorname{mult}}}
\nc{\Proj}{{\operatorname{Proj}}}
\nc{\pt}{{\operatorname{pt}}}
\nc{\bfmu}{{\boldsymbol{\mu}}}
\nc{\bfomega}{{\boldsymbol{\omega}}}
\nc{\calM}{\mathcal M}
\nc{\calA}{\mathcal A}
\nc{\calO}{\mathcal O}
\nc{\cC}{\mathcal C}
\nc{\CC}{\mathbb C}
\nc{\calN}{\mathcal N}
\nc{\grg}{\mathfrak g}
\nc{\tslash}{/\!\!/\!\!/}
\nc\grt{\mathfrak t}
\nc\bfM{\mathbf M}
\nc\bfN{\mathbf N}
\nc\ZZ{\mathbb{Z}}
\nc\calC{\mathcal C}
\nc\calF{\mathcal F}
\nc\calX{\mathcal X}
\nc\calY{\mathcal Y}
\nc\Char{\operatorname{char}}
\nc\QCoh{\operatorname{QCoh}}
\nc\ShvD{\operatorname{D-mod}}
\nc\Shv{\operatorname{Shv}}
\nc\IndCoh{\operatorname{IndCoh}}
\nc\Maps{\operatorname{Maps}}
\nc\Dmod{D-\operatorname{mod}}
\newcommand\Hecke{\operatorname{Hecke}}
\nc{\calD}{\mathcal D}
\nc\bfO{\mathbf O}
\nc\bfF{\mathbf F}
\nc\GG{\mathbb G}
\nc\calK{\mathcal K}
\nc{\calG}{\mathcal G}
\nc\RHom{\operatorname{RHom}}
\nc\Res{\operatorname{Res}}
\nc\Av{\operatorname{Av}}
\nc\oblv{\operatorname{oblv}}
\nc\pr{\operatorname{pr}}
\nc\unit{\operatorname{unit}}
\nc\add{\operatorname{add}}
\nc\ind{\operatorname{ind}}
\nc\coind{\operatorname{coind}}
\nc\sprd{\operatorname{sprd}}
\nc\projection{\operatorname{projection}}
\nc\averaging{\operatorname{averaging}}
\nc\pullback{\operatorname{pullback}}
\nc\grs{\mathfrak s}
\nc{\tilX}{\widetilde X}
\nc\calB{\mathcal B}
\nc\calS{\mathcal S}
\nc\calT{\mathcal T}
\nc\calZ{\mathcal Z}
\nc\LS{\operatorname{LocSys}}
\nc\bfL{\on{\mathbf L}}
\newcommand*\circled[1]
\newcommand{\raisemath}[1]{\mathpalette{\raisem@th{#1}}}
\newcommand{\raisem@th}[3]{\raisebox{#1}{$#2#3$}}
\nc{\binlim}[2][]{\def\@tempa{#1}\@ifnextchar^{\@binlim{#2}}{\@binlim{#2}^{}}}
\def\@binlim#1^#2{\mathbin{\@ifempty{#2}{\mathop{#1}}{\mathop{#1}\@xp\displaylimits\@tempa^{#2}}}}
\nc\cX{{\mathcal X}}
\nc\Gm{{\mathbb G_m}}
\renc\Hecke{\mathit{\CH\kern-.2ex ecke}}
\nc\Fq{\mathbb F_q}
\nc\bGO{{\bG_\bO}}
\nc\opp{{\on{op}}}
\nc\tbx{\binlim{\widetilde\boxtimes{}}}
\nc\phitau{\varphi\tau}
\nc\ceil[1]{\lceil#1\rceil}  \nc\floor[1]{\lfloor#1\rfloor}
\nc\Lie{\on{Lie}}
\def\arxiv#1{\href{http://arxiv.org/abs/#1}{\tt arXiv:#1}} \let\arXiv\arxiv
\nc\kap{\kappa}
\nc\gra{\mathfrak a}
\nc\gl{\mathfrak{gl}}
\nc\sTr{\operatorname{sTr}}
\nc\hatG{\widehat{G}}
\nc\calL{\mathcal L}
\nc\Whit{\operatorname{Whit}}
\nc\KM{\operatorname{KM}}
\nc\KL{\operatorname{KL}}
\renewcommand{\subsection}{\@startsection{subsection}{2}{0pt}{-3ex
plus -1ex minus -0.2ex}{-2mm plus -0pt minus
-2pt}{\normalfont\bfseries}} \makeatother
\nc\mto{\mapsto }
\nc\en{\enspace }
\numberwithin{equation}{subsection}
\newtheorem*{rep@theorem}{\rep@title}
\newcommand{\newreptheorem}[2]{%
\newenvironment{rep#1}[1]{%
 \def\rep@title{#2 \ref{##1}}%
 \begin{rep@theorem}}%
 {\end{rep@theorem}}}
 \newcommand{\ncmd}{\newcommand*}
\newcommand{\rncmd}{\renewcommand*}
\ncmd{\DMO}{\DeclareMathOperator}
\ncmd{\ncmdd}[2]{\ncmd{#1}{{#2}}}
\ncmd{\DefOps}[1]{\def\OPERATOR@NAME##1{\DeclareMathOperator{##1}{\expandafter\@gobble\string##1}}
    \def\OPERATOR@LIST##1{\ifcat\noexpand\relax\noexpand##1\OPERATOR@NAME##1\expandafter\OPERATOR@LIST\fi}
    \OPERATOR@LIST#1.}
\ncmd{\DefRm}[1]{\def\OPERATOR@NAME##1{\ncmd{##1}{\mathrm{\expandafter\@gobble\string##1}}}
    \def\OPERATOR@LIST##1{\ifcat\noexpand\relax\noexpand##1\OPERATOR@NAME##1\expandafter\OPERATOR@LIST\fi}
    \OPERATOR@LIST#1.}
\ncmd{\phtr}[2]{\lefteqn{#1{\phantom{#2}}}#2}
\def\Alphabet{ABCDEFGHIJKLMNOPQRSTUVWXYZ}
\def\newalph#1#2{\begingroup
    \def\procL@tt@r##1{%
        \@xp\gdef\csname#1\endcsname{#2}}%
    \proc@lph@bet\endgroup}
\def\proc@lph@bet{\@xp\prlist@\Alphabet\relax}
\def\prlist@#1{\ifx#1\relax\else\procL@tt@r{#1}\@xp\prlist@\fi}
\ncmd{\SmSub}[2][]{_\bgroup #2\smsub@{#1}}
\def\smsub@#1{\@ifnextchar_{#1\@smsb}{\egroup}}
\def\@smsb_#1{#1\smsub@{}}
\ncmd{\SmSup}[2][]{^\bgroup #2\smsup@{#1}}
\def\smsup@#1{\@ifnextchar^{#1\@smsp}{\@ifnextchar'{\prime\@xp\smsup@\@xp{\@xp}\@gobble}{\egroup}}}
\def\@smsp^#1{#1\smsup@{}}
\def\@binlim#1_#2{\mathbin{\@ifempty{#2}{\mathop{#1}}{\mathop{#1}\@xp\displaylimits\@tempa_{#2}}}}
\ncmd\Text[1]{\ifmmode\text{#1}\else#1\fi}
\ncmd\todo[1][todo]{\RedNote{#1}%
 \ifx\undefined\@todoflag
   \global\let\@todoflag\relax
  \AtEndDocument{\par\vspace{3em}\noindent \RedNote{TO DO:}}%
 \fi
 \edef\pagenmbr{\thepage}%
 \expandafter\redendnote
 \expandafter{\pagenmbr}{#1}}
\ncmd\redendnote[2]{\AtEndDocument{\\$\bullet$\ \RedNote{page #1: #2}}}
\ncmd\hidetodos{\rncmd\todo\relax}
\theoremstyle{remark}
\ncmd{\Fp}{{\FF_p}}
\DMO{\cHom}{\text{\textrm{\itshape{\cH}\kern-.2ex{}om}}}
\DMO{\cEnd}{\text{\textrm{\itshape{\cE}\kern-.2ex{}nd}}}    \DMO{\cExt}{\text{\textrm{\itshape{\cE}\kern-.2ex{}xt}}}
\ncmd{\sff}{\mathsf f}
\let\Im\undefined   \let\det\undefined
\ncmd{\young}[1]{\vcenter{\begin{Young}#1\crcr\end{Young}}}
\ncmd{\RGam}{\text{\upshape R}\Gamma}
\DMO{\chr}{char}
\ncmd{\angs}[1]{\langle#1\rangle}
\ncmd{\hmod}{\text{\upshape-mod}}
\ncmd{\cxym}[1]{\ensuremath{\vcenter{\xymatrix{#1}}}}
\def\arxiv#1{\href{http://arxiv.org/abs/#1}{\tt arXiv:#1}} \let\arXiv\arxiv
\title[Twisted Whittaker sheaves on affine flags]{Twisted Whittaker category on affine flags and the category of representations of the mixed quantum group}
\date{}
\author{Ruotao Yang}
\email{yruotao@gmail.com}
\address{Skolkovo Institute of Science and Technology, Moscow, Russia}
\keywords{Whittaker D-modules, Affine flags, Quantum groups.}
\begin{document}

\begin{abstract}
Let $G$ be a reductive group, and let $\check{G}$ be its Langlands dual group. S. Arkhipov and R. Bezrukavnikov prove that the Whittaker category on the affine flags $\Fl_G$ is equivalent to the category of $\check{G}$-equivariant quasi-coherent sheaves on the Springer resolution of the nilpotent cone. This paper proves this theorem in the quantum case. We show that the twisted Whittaker category on $\Fl_G$ and the category of representations of the mixed quantum group are equivalent. In particular, we prove that the quantum category $\mathsf{O}$ is equivalent to the twisted Whittaker category on $\Fl_G$ in the generic case. The strong version of our main theorem claims a motivic equivalence between the Whittaker category on $\Fl_G$ and a factorization module category, which holds in the de Rham setting, the Betti setting, and the $\ell$-adic setting.
\end{abstract}

\maketitle
\setcounter{tocdepth}{1}
\tableofcontents


\section{Introduction}\label{introduction}

\subsection{Reminder on work of \cite{[AB]}}\label{work of ab}
\subsubsection{}
Let $G$ be a reductive group over an algebraically closed field $\mathsf{k}$. Fix a pair ($B$, $B^-$) of opposite Borel subgroups, and denote by $N$ and $N^-$ their unipotent radicals, respectively. Denote by $\mathcal{K}=\mathsf{k}(\!(t)\!)$ the field of Laurent series and by $\mathcal{O}=\mathsf{k}[\![t]\!]$ the ring of formal power series. We denote by $G(\mathcal{K})$ the loop group of $G$, by $I$ the Iwahori subgroup, and by $\Fl_G:= G(\mathcal{K})/I$ the affine flags. In this section, we assume $\mathsf{k}=\mathbb{C}$.


It is known that the category of D-modules on $\Fl_G$, with certain equivariance properties, can be realized in terms of the category of representations of the Langlands dual group $\check{G}$. 

An important equivariance condition is the Whittaker condition. Denote by $N(\mathcal{K})$ the loop group of $N$. We refer to the category of Whittaker D-modules on $\Fl_G$ as the (DG) category of D-modules on $\Fl_G$ which are $N(\mathcal{K})$-equivariant against a non-degenerate character $\chi$. We denote it by $\Whit(\ShvD(\Fl_G))$.
A well-known result of S. Arkhipov and R. Bezrukavnikov (ref. \cite{[AB]}) says that there is an equivalence of categories
\begin{equation}\label{1.21} \Whit(\ShvD(\Fl_G)) \simeq \QCoh(\tilde{\mathcal{N}}/\check{G}).
\end{equation}
In the above formula, $\tilde{\mathcal{N}}:= T^*(\check{G}/\check{B})$ is the Springer resolution of the nilpotent cone, and $\QCoh(\tilde{\mathcal{N}}/\check{G})$ is the (DG) category of $\check{G}$-equivariant quasi-coherent sheaves on $\tilde{\mathcal{N}}$.

It is natural to consider the following question.
\begin{question}
What is the deformed version of \eqref{1.21}?
\end{question}
\subsubsection{}
It is expected that \eqref{1.21} deforms over the space of levels, i.e., the space of Weyl group-invariant symmetric bilinear forms on $\Lambda$.  Here $\Lambda$ is the coweight lattice of $G$.

The left-hand side of \eqref{1.21} admits a naturally defined level-parameterized deformation. Namely, a level $\kappa$ gives rise to a twisting, and we can consider $\kappa$-twisted D-modules on $\Fl_G$. Then $\Whit(\ShvD_{\kappa}(\Fl_G))$, the category of $\kappa$-twisted Whittaker D-modules on $\Fl_G$, is defined as the category of $(N(\mathcal{K}),\chi)$-equivariant $\kappa$-twisted D-modules on $\Fl_G$. 

The deformation of the right-hand side of \eqref{1.21} is not obvious. In order to present it, we rewrite $\tilde{\mathcal{N}}/\check{G}$ as $\check{\mathfrak{n}}/\check{B}$. Here $\check{\mathfrak{n}}$ is the Lie algebra of the unipotent radical $\check{N}$ of $\check{B}$, and the action of $\check{B}$ on $\check{\mathfrak{n}}$ is the adjoint action.

A quasi-coherent sheaf on  $\check{\mathfrak{n}}/\check{B}$ is a $\mathcal{O}(\check{\mathfrak{n}})$-module with a compatible action of $\check{B}$. It can be regarded as a $\Lambda$-graded vector space with compatible actions of $\Sym(\check{\mathfrak{n}}^-)$ and $U(\check{\mathfrak{n}})$, with the locally nilpotent condition. The universal enveloping algebra $U(\check{\mathfrak{n}})$ naturally deforms. Namely, the Lusztig quantum group $U^L_q(\check{\mathfrak{n}})$ provides a deformation of $U(\check{\mathfrak{n}})$ over the space of levels. Here the relation between the level $\kappa$ and the quantum parameter $$q: \Lambda\to \mathbb{C}/\mathbb{Z}=\mathbb{C}^\times$$ is given by $q(\lambda)=  \exp(\pi i\cdot\kappa(\lambda,\lambda))$ for $\lambda\in \Lambda$. Note that $\Sym(\check{\mathfrak{n}}^-)$ is the graded dual of $U(\check{\mathfrak{n}})$, so the graded dual of $U^L_q(\check{\mathfrak{n}})$ provides a deformation of $\Sym(\check{\mathfrak{n}}^-)$ over the space of levels.

From this point of view, the right-hand side of \eqref{1.21} has a deformation. It is given by the category of representations of a certain quantum group, whose positive part is the Lusztig quantum group $U^L_q(\check{\mathfrak{n}})$ and the negative part is the De Concini-Kac quantum group $U^{DK}_q(\check{\mathfrak{n}}^-)$ (i.e., the graded dual of the Lusztig quantum group). It is exactly the category of representations of the \textit{mixed} quantum group introduced by D. Gaitsgory in \cite[Section 5.3]{[Ga1]}, which is denoted by $\Rep^{\mix}_q(\check{G})$.

Now it is natural to ask the following question.
\begin{question}
Is there an equivalence
\begin{equation}\label{ques 2}
    \Whit(\ShvD_{\kappa}(\Fl_G))\simeq \Rep^{\mix}_q(\check{G})?
\end{equation}
\end{question}

\subsection{Main theorem of this paper}
The weak version of the main theorem (Theorem \ref{thm 2.1}) says that, when $q$ avoids small torsion (see Section \ref{small torsion}), there exists a t-exact equivalence of categories between $\Whit(\ShvD_\kappa(\Fl_G))$ and $\Rep_q^{\mix}(\check{G})$. Furthermore, both $\Rep_q^{\mix}(\check{G})$ and $\Whit(\ShvD_\kappa(\Fl_G))$ acquire structures of highest weight categories,\footnotemark\ and the equivalence functor preserves highest weight category structure.\footnotetext{For a highest weight category structure of a DG-category $\mathcal{C}$, we mean a collection of standard objects \{$c_i$\} and a collection of costandard objects \{$c^\vee_i$\} with the orthogonality property. Furthermore, we require that \{$c_i$\} generate $\mathcal{C}$ by colimits, shifts, and extensions.}

However, Theorem \ref{strong thm}, the strong version of the main theorem, proves a more general statement, where we do not need to assume that the base field of schemes is $\mathbb{C}$ and the sheaf category is the category of D-modules. 

Let $\mathsf{k}$ denote the base field of schemes, and let $\mathsf{e}$ denote the coefficients field of sheaves. Note that \eqref{ques 2} only makes sense when $\mathsf{k}=\mathsf{e}=\mathbb{C}$, otherwise we cannot define D-modules and the mixed quantum group simultaneously. In this case, $\Rep_q^\mix(\check{G})$ can be realized as the category of \textit{factorization modules} (ref. \cite{[Ga6]}) over a \textit{factorization algebra} $\Omega_q^{{L,'}}$. Instead of working with $\Rep_q^\mix(\check{G})$, we will compare $\Whit(\ShvD_\kappa(\Fl_G))$ with this factorization module category.

The advantage of using factorization modules lies in the fact that the statement involving factorization modules is geometric (i.e., motivic). Rather than D-modules, we can also consider factorization modules in the settings of the $\ell$-adic sheaves and the constructible sheaves with arbitrary coefficients $\mathsf{e}$. 

Theorem \ref{strong thm} claims that there is a t-exact equivalence of highest weight categories
\begin{equation}
    \Whit_q(\Fl_G)\simeq \Omega_q^{L,'}-\Fact.
\end{equation}
In the above formula, 
\begin{itemize}
    \item $\Whit_q(\Fl_G)$ is the category of twisted Whittaker sheaves on $\Fl_G$,
    \item $\Omega_q^{L,'}-\Fact$ is the category of factorization modules over $\Omega_q^{L,'}$.
\end{itemize}
This theorem holds in a greater generality: besides D-modules, it is true for all the sheaves contexts listed in Section \ref{sheaf context} and any $q$. 

\subsection{Other Motivations}
In this section, we will provide motivations for Theorem \ref{thm 2.1} other than those coming from the work of \cite{[AB]}. We assume $\mathsf{k}=\mathsf{e}=\mathbb{C}$ in this section.
\subsubsection{Fundamental local equivalence}
Another main idea that motivates this work comes from the quantum local Langlands conjecture. We will explain it in this section.

In \cite{Ga13}, D. Gaitsgory proposed a very general conjecture of the quantum Langlands program. 

Consider the category $\ShvD_\kappa(G(\mathcal{K}))$ of $\kappa$-twisted D-modules on the loop group $G(\mathcal{K})$. The group structure on $G(\mathcal{K})$ induces a monoidal structure on this category. We denote by $G(\mathcal{K})\modu_\kappa$ the 2-category of module categories over this monoidal category. The quantum local Langlands conjecture asserts the following equivalences.

\begin{conj}\label{quantum local Langlands}\
\begin{enumerate}[label={\upshape(\arabic*)}]
    \item There is an equivalence of categories,
 \begin{equation}\label{quantum local}
    G(\mathcal{K})\modu_{-\kappa}\simeq \check{G}(\mathcal{K})\modu_{\check{\kappa}}.
\end{equation}
Here  $\check{\kappa}$ denotes the dual level of $\kappa$ (ref. \cite[0.1.1]{[GL1]}).
\item  If $\mathcal{C}\in G(\mathcal{K})\modu_{-\kappa}$ goes to $\check{\mathcal{C}}\in \check{G}(\mathcal{K})\modu_{\check{\kappa}}$ under the equivalence \eqref{quantum local}, then their Iwahori strong invariants (ref. \cite[Section 4]{[Ber]}) are equivalent
\begin{equation}\label{1.4+}
   \mathcal{C}^{I}\simeq \check{\mathcal{C}}^{\check{I}}.
\end{equation}

Namely, the category of $\ShvD_{-\kappa}(G(\mathcal{K}))$-equivariant functors from $\ShvD_{-\kappa}(\Fl_G))$ to $\mathcal{C}$, is equivalent to the category of $\ShvD_{\check{\kappa}}(\check{G}(\mathcal{K}))$-equivariant functors from $\ShvD_{\check{\kappa}}(\Fl_{\check{G}})$ to $\check{\mathcal{C}}$.

\end{enumerate}
\end{conj}

 Conjectural \ref{quantum local Langlands} is supposed to be characterized by the property that it intertwines the \emph{Whittaker model} and the \emph{Kac-Moody model}. The functor sending $\mathcal{C}\in G(\mathcal{K})\modu_{-\kappa}$ to its Whittaker model is co-represented by $\ShvD_{-\kappa}(G(\mathcal{K})/N(\mathcal{K}),\chi)$. The functor sending $\check{\mathcal{C}}\in \check{G}(\mathcal{K})\modu_{\check{\kappa}}$ to its Kac-Moody model is co-represented by the category of Kac-Moody representations $\hat{\check{\mathfrak{g}}}_{\check{\kappa}}\modu$. Hence,
\begin{equation}\label{assi}
    \ShvD_{-\kappa}(G(\mathcal{K})/N(\mathcal{K}),\chi)\mapsto \hat{\check{\mathfrak{g}}}_{\check{\kappa}}\modu
\end{equation}
under the equivalence (\ref{quantum local}). 

By applying the property (2) of Conjecture \ref{quantum local Langlands} to (\ref{assi}), we arrive the following conjectural equivalence,\footnotemark\
which was proposed by D. Gaitsgory and J. Lurie (\cite[Conjecture 3.11]{Ga12}).

\begin{conj}(Iwahori fundamental local equivalence)\label{conj 2}\label{iwahori con}
There is an equivalence of categories
\begin{equation}
    \Whit(\ShvD_{\kappa}(\Fl_G))\simeq \hat{\check{\mathfrak{g}}}_{\check{\kappa}}\modu^{\check{I}}
\end{equation}
Here $ \hat{\check{\mathfrak{g}}}_{\check{\kappa}}\modu^{\check{I}}$ denotes the category of Iwahori-integrable Kac-Moody representations.\footnotetext{It is proved in a recent paper \cite{[CDR]} by J. Campbell, G. Dhillon, S. Raskin using Soergel module techniques. }
\end{conj}

In the upcoming paper {\cite{[CF]}} of L. Chen and Y. Fu, the authors prove that $\Rep_q^{\mix}(\check{G})$ is equivalent to $\hat{\check{\mathfrak{g}}}_{\check{\kappa}}\modu^{\check{I}}$. Hence, the combination of our papers provides a new proof of Conjecture \ref{iwahori con}.

\subsubsection{Relation with Kazhdan-Lusztig}
By \cite{[KL]}, $\Rep_q(\check{G})$, the category of representations of the quantum group, is equivalent to the category of $\check{G}(\mathcal{O})$-integrable Kac-Moody representations. Furthermore, by \cite{[CDR]}, the latter is equivalent to the twisted Whittaker category on the affine Grassmannian $\Gr_G:= G(\mathcal{K})/G(\mathcal{O})$. Hence, there is an equivalence of categories
\begin{equation}\label{1.11}
    \Whit(\ShvD_{\kappa}(\Gr_G)):= \ShvD_{\kappa}(N(\mathcal{K),\chi}\backslash \Gr_G)\simeq \Rep_q(\check{G}).
\end{equation}
Theorem \ref{thm 2.1} provides a tamely ramified version of the above equivalence of categories.
\subsubsection{BGG Category $\mathsf{O}$}
When $q$ is generic, the Lusztig quantum group $U_q^L(\check{\mathfrak{n}}^-)$ is naturally isomorphic to the De Concini-Kac quantum group $U_q^{DK}(\check{\mathfrak{n}}^-)$. In particular, the category $\Rep_q^{\mix}(\check{G})$ is equivalent to the quantum category $\mathsf{O}$ in \cite{[BGG]} when $q$ is generic. So in this case, Theorem \ref{thm 2.1} gives a geometric realization of the quantum category $\mathsf{O}$.

In the case of root of unity, the category $\Rep_q^{\mix}(\check{G})$ is different from the quantum category $\mathsf{O}$. For example, the standard objects (i.e., Verma modules) and costandard objects (i.e., coVerma modules) of $\Rep_q^{\mix}(\check{G})$ are no longer of finite length. Nevertheless, $\Rep_q^{\mix}(\check{G})$ is still a highest weight category. The comparison of the highest weight category structures of $\Rep^{\mix}_q(\check{G})$ and $\Whit(\ShvD_\kappa(\Fl_G))$ plays an important role in our proof. 
\subsubsection{Casselman-Shalika theorem}\label{Casselman-Shalika}
 The original Casselman-Shalika theorem interprets the values of the spherical Whittaker function as characters of the irreducible representations of the Langlands dual group. 
 
 Let $\Bun_N$ be the algebraic stack classifying principal $N$-bundles on a smooth connected projective curve $X$. In \cite{[FGV]}, the authors proved a generalization of the geometric Casselman-Shalika formula.  It interprets the category of representations of the Langlands dual group as Whittaker D-modules (equivalently, $\ell$-adic sheaves) on $(\overline{\Bun_N^{\omega^\rho}})_{\infty\cdot x}$, where the algebraic stack $(\overline{\Bun_N^{\omega^\rho}})_{\infty\cdot x}$ denotes the Drinfeld compactification of $\Bun_N$ with a possible pole at a fixed point $x$ (see Section \ref{Drinfeld compactification sec} for definition). 

By a local-global comparison, \cite{[FGV]} actually proves that \eqref{1.11} is an equivalence when $q=1$.
 
The geometric Casselman-Shalika formula gives us a hint of how to construct a functor to relate the category of Whittaker sheaves to the category of representations. Namely, the ''integration'' of a Whittaker D-module along $G(\mathcal{O})$-orbits (or $N^-(\mathcal{K})$-orbits)  encodes representation-theoretic information. The construction of the functor $F^L$ (see Section \ref{subsection F} for definition) is inspired by this idea and S. Raskin's thesis \cite{[Ras1]}.

\subsubsection{Small quantum groups}
    In \cite{[GL1]}, a geometric realization of the category of representations of the small quantum groups is studied. In $loc.cit$, it is proved that the category of Hecke-eigensheaves of the twisted Whittaker categories on $\Gr_G$, is equivalent to the same category of representations of the small quantum groups. Our work adopts the strategy developed in \cite{[GL1]}{. This method is originated from \cite{[Ga2],[BFS],[FGV]}, and has proven to be a powerful method in geometric representation theory. The recent work of A.Braverman, M.Finkelberg, and R.Travkin on the Gaiotto conjecture for $\GL(N-1|N)$ (ref. \cite{[BFT]}), the work of R.Travkin and the author on the Gaiotto conjecture for $\GL(M|N)$ and the Iwahori Gaiotto conjecture also use this strategy.}

The small quantum group is very similar to the mixed quantum group: both of the categories of their representations $\Rep_q^{\sm}(\check{G})$ and $\Rep^\mix_q(\check{G})$ can be realized as categories of factorization modules. In particular, the method used in \cite{[GL1]} indicates us a strategy to prove Theorem \ref{thm 2.1} and offers us models for the constructions of the functors and stacks used in our paper. For example, the key step of the proof of our main theorem is to use the local-global equivalence of Whittaker categories and then prove the theorem in the global case. This idea comes from \cite{[GL1]}. 

In our case, there are some technical difficulties caused by the additional Iwahori structure. For example, in \cite{[GL1]}, standard objects of $\Whit_q(\Gr_G)$ are defined as $!$-pushforward of the unique irreducible twisted Whittaker sheaf supported on a single relevant $N(\mathcal{K})$-orbit and costandard objects are $*$-pushforward of that irreducible Whittaker sheaf. In our case, since coVerma modules of $\Rep_q^\mix(\check{G})$ are not compact, it is impossible to define standard objects and costandard objects similar to \cite{[GL1]} such that they match Verma modules and coVerma modules in $\Rep_q^\mix(\check{G})$. Instead, we define standard objects by $!$-Whittaker averaging of "Wakimoto sheaves" which are twisted and Iwahori equivariant against a character. However, this definition does not make sense because (classical) Wakimoto sheaves are Iwahori-equivariant and only defined in the non-twisted case. We need to extend the definition of Wakimoto sheaves to the twisted and $(I, b_\lambda)$-equivariant case. Here $b_\lambda$ is a character of the Iwahori subgroup. 

\subsection{Strategy of proof} The idea of proving Theorem \ref{thm 2.1} is to compare both sides with a factorization category. In \cite{[Ga5]}, the author proves that $\Rep_q^{\mix}(\check{G})$ can be realized as the category of factorization modules on the configuration space over a factorization algebra $\Omega_q^L$ (see Definition \ref{def lus}) in the D-module setting. Therefore, we need to construct an equivalence between $\Whit(\ShvD_\kappa(\Fl_G))$ and $\Omega_q^L-\Fact$.

A naive attempt is to use the pullback-pushforward functor $F^{DK}$,\footnotemark\ \footnotetext{The superscript $DK$ means De Concini-Kac, the reason is that this functor induces a functor to the category of representations of a group whose positive part is the De Concini-Kac quantum group.} namely, we $!$-pullback Whittaker D-modules on $\Fl_G$ to an Iwahori version Zastava space and then take $*$-pushforward to the configuration space. However, the image of $F^{DK}$ does not have a $\Omega_q^L$-factorization module structure (except when $\kappa$ is generic).  For a coweight $\lambda$, the $\lambda$-component of this functor is given by \[H(\Fl_G, \mathcal{F}\overset{!}{\otimes} j_*(\omega_{S_{\Fl,x}^{-,\lambda}})).\] 
Here $j_*(\omega_{S_{\Fl,x}^{-,\lambda}})$ denotes the $*$-extension of the dualizing D-module on the $N^-(\mathcal{K})$-orbit of $t^\lambda\in \Fl_G$.

The functor $F^L$ used in this paper is a modification of $F^{DK}$. The $\lambda$-component of $F^L$ is given by \[H(\Fl_G, \mathcal{F}\overset{!}{\otimes} j_!(\omega_{S_{\Fl,x}^{-,\lambda}})).\]Here $j_!(\omega_{S_{\Fl,x}^{-,\lambda}})$ denotes the $!$-extension of the dualizing D-module on the $N^-(\mathcal{K})$-orbit of $t^\lambda\in \Fl_G$. 

We will see that $F^L$ factors through $\Omega_q^{L,'}-\Fact$ (ref. Proposition \ref{prop 15.2}), and there is $\Omega_q^{L,'}\simeq \Omega_q^L$ if $q$ avoids small torsion. Theorem \ref{strong thm} claims that $F^L$ induces an equivalence between the Whittaker category on $\Fl_G$ and the category of factorization modules over $\Omega_q^{L,'}$.

To prove that $F^L$ is an equivalence, we will use some tautological arguments about highest weight categories. That is to say, we will construct standards and costandards of both sides, prove that $F^L$ preserves them and induces an isomorphism of Hom spaces. 

The compatibility of costandards is more or less trivial. It follows by a direct calculation of $!$-stalks of $F^L$ (ref. Corollary \ref{6.4.3}). The claim of fully faithfullness of $F^L$ follows from a calculation of Hom spaces, and the latter reduces to the problem of compatibility of standards (ref. Proposition \ref{imstan}). However, since the calculation of $*$-stalks is difficult (seems impossible), the proof of the compatibility of standards is not tautological at all. It is the main difficulty of the proof.

The method to overcome this difficulty is to define a duality functor to transfer the calculation of $*$-stalks to a calculation of $!$-stalks. This duality functor is not tautological. Since the Whittaker category used in this paper is defined as a category of invariants, its dual category is a category of coinvariants in nature. By a theorem of S. Raskin (see \cite[Theorem 2.1.1]{[Ras2]}), we can identify the invariant-Whittaker category as the coinvariant-Whittaker category. Hence, this duality functor can be defined.

We need to prove that this duality functor intertwines $F^L$ and $F^{DK}$. Following \cite{[Ga5]}, we can identify the (local) Whittaker category with the \textit{global} Whittaker category. By translating the problem into the global Whittaker category defined on a Drinfeld compactification, we need to prove that the Verdier duality functor intertwines the global functors corresponding to $F^L$ and $F^{DK}$ (ref. Theorem \ref{dualfunctor}). According to the constructions, we need to compare a $!$-tensor product and a $*$-tensor product. We solve this problem by using a universally locally acyclic property.
\subsection{Organization of the paper}\label{Road map}
\begin{enumerate}[label=(\arabic*)]
    \item In Section \ref{geometric prep}, we introduce some prestacks and gerbes used in this paper.
    \item In Section \ref{sec 2}, we explain the definitions of the Whittaker category on $\Fl_G$ and $\Rep_q^\mix(\check{G})$. Then we state Theorem \ref{thm 2.1}. 
    \item In Section \ref{Fact geo repl}, we review factorization algebras and factorization modules. We replace Theorem \ref{thm 2.1} by an equivalent statement Theorem \ref{main theorem 1}. 
    \item In Section \ref{local Whittaker category}, we study standards and duality functor of the Whittaker category, and show that standards compactly generate the Whittaker category.
    \item In Section \ref{try proof 1}, we construct a functor $F^L$ which goes from the Whittaker side to the factorization side. We show that $F^L$ is an equivalence functor modulo Proposition \ref{imstan} which is about the comparison of standards. 
    \item In Section \ref{Whittaker category: global definition}, the global Whittaker category is defined. By using the global Whittaker category, we reduce Proposition \ref{imstan} to  Proposition \ref{imstan glob} where we can use a {universally locally acyclic} property.
    \item In Section \ref{step 5}, we prove  Proposition \ref{imstan glob}.
\end{enumerate}

\subsection{Generality of our results}
\subsubsection{Sheaf theories} \label{sheaf context}
Let $\mathsf{k}$, $\mathsf{e}$ be algebraically closed fields, and $\textnormal{char}(\mathsf{e})=0$. The strong version of our main result (Theorem \ref{strong thm}) is true for any of the sheaf theories listed in \cite[Section 0.8.8]{[GL1]}:
\begin{enumerate}[label=(\arabic*)]
     \item (de Rham) Schemes are defined over $\mathsf{k}$ (assume $\textnormal{char}(\mathsf{k})=0$ here) and the sheaf category is the category of D-modules, or the ind-completion of the category of holonomic D-modules, or the ind-completion of the category of regular holonomic D-modules;
    \item (Betti) Schemes are defined over $\mathbb{C}$ and the sheaf category is the ind-completion of the category of constructible sheaves with respect to the classical topology with coefficients $\mathsf{e}$;
    \item ($\ell$-adic) Schemes are defined over $\mathsf{k}$ and the sheaf category is the ind-completion of the category of constructible $\bar{\mathbb{Q}}_\ell$-adic sheaves. Here $\mathsf{e}=\bar{\mathbb{Q}}_\ell$.
\end{enumerate}
We denote by $\Shv$ any sheaf theory listed above.

Note that the Whittaker category is not always well-defined for the sheaf theories above, such as the Betti setting and the $\ell$-adic setting on schemes defined over a field $\mathsf{k}$ of characteristic 0. In these cases, neither the exponential D-module nor the Artin-Schreier sheaf makes sense, so we are not allowed to talk about $(N(\mathcal{K}),\chi)$-equivariant sheaves. We need to replace the Whittaker category by the Kirillov model (ref. \cite[Appendix A]{[Ga1]}) in Theorem \ref{strong thm}. However, applying the Lefschetz principle and Riemann-Hilbert correspondence, the proof for these cases can be easily reduced to the setting of regular holonomic D-modules. In order to simplify the notations and not get distracted, we will only focus on the sheaf theories such that the Whittaker category makes sense.

\subsubsection{Deformation parameters}
In the general case, we need to use gerbes to twist a sheaf category. Given a gerbe $\mathcal{G}$ with respect to the multiplicative group $\mathsf{e}^\times$, one can twist a sheaf category with coefficients in $\mathsf{e}$. We refer readers to \cite[Section 1.7]{[GL2]} for the definition of the category of $\mathcal{G}$-twisted sheaves. 

Let us be more precise about which kind of gerbes are used in different sheaf theories. In the Betti setting and $\ell$-adic setting, we use the gerbes with respect to the torsion multiplicative group $\mathsf{e}^{\mathsf{torsion},\times}$ (see \cite[Section 1.3]{[GL2]}). In the D-module setting, we use the tame gerbes (\cite[Section 3.3]{[Zh]}). 

If readers are not familiar with how to twist a sheaf category with a gerbe, we advise to think about the case of D-modules, and $G$ is a simple and simply-connected group over $\mathbb{C}$. In this case, the twisting parameter $q$ is just a non-zero complex number.  Twisted D-modules on $\Fl_G$ are those D-modules on the canonical line bundle of $\Fl_G$ which are $\mathbb{G}_m$-monodromic along the fiber with the monodromy $q^2$. This restriction of generality will not lose the main interest of this paper. 

\subsection{Conventions and notations}

In the main body of the paper, to simplify, we assume that $G$ is a reductive group defined over any algebraically closed field $\mathsf{k}$, and the derived subgroup $[G,G]$ is simply connected. Let $T:= B^-\cap B$ be the Cartan subgroup of $G$.

We denote by $\Lambda$ the coweight lattice of $G$ and by $\check{\Lambda}$ the weight lattice. Let $\Lambda^{\textnormal{{neg}}}$ be the semi-group spanned by negative simple coroots. Its inverse is denoted by $\Lambda^{\textnormal{pos}}$. Set $\Lambda^+$  (resp. $\check{\Lambda}^+$) the semi-group of dominant coweights (resp. dominant weights), $\Delta$ the root system of $\check{G}$, and $\alpha_1, \alpha_2, \cdots, \alpha_r$ the simple coroots. Let $W$ denote the finite Weyl group and $W^{\ext}$ denote the extended affine Weyl group. 

The theory of sheaves '$\Shv$' on infinite-dimensional schemes (also prestacks) used in this paper is developed in \cite{[Ber]}, \cite{[GR1]}, \cite{[GR2]}, \cite{[Ras3]}, etc. When we talk about the Whittaker category, we assume that we are in the D-module setting or the $\ell$-adic setting.

In this paper, the categories considered are cocomplete $\mathsf{e}$-linear DG-categories (see \cite[Chapter 1, Section 10]{[GR1]}).
We need the theory of higher categories developed in \cite{[Lu1]} and \cite{[Lu2]} in this paper. 

Let $\Vect$ be the $(\infty,1)$-category of complexes of vector spaces over $\mathsf{e}$. Given a category $\mathcal{C}$ and $c_1, c_2\in \mathcal{C}$, we denote by $\mathcal{H}om_{\mathcal{C}}(c_1, c_2)\in \Vect$ the Hom space of $c_1$ and $c_2$, and denote by $\Hom_{\mathcal{C}}(c_1, c_2):= H^0(\mathcal{H}om_{\mathcal{C}}(c_1, c_2))$.

\section{Geometric Preparation}\label{geometric prep}
In this section, we will define some basic geometric objects used in this paper. We will, first of all, recall the definitions of \textit{Ran space} and \textit{Configuration space} (Sections \ref{Ran space sec} and \ref{ fixed point version configuration spaces}), and then review the definitions of Ran-ified (or, Beilinson-Drinfeld) affine flags and affine Grassmannian in Section  \ref{Ran version of affine flags and affine Grassmannian}. In Section \ref{gerbe used}, we will explain the gerbes used in this paper. 

\subsection{Ran space}\label{Ran space sec}
 The Ran space is important for us, since it is naturally factorizable. We need factorization prestacks over $\Ran$ to perform our construction of the equivalence in Theorem \ref{thm 2.1}. 

Let $X$ be a smooth connected projective curve defined over $\mathsf{k}$.
\begin{definition}
The Ran space $\Ran:=\Ran_X$ is defined as the prestack whose $S$-points classify non-empty finite sets $\mathcal{I}$ of $\Maps(S, X)$ for any affine scheme $S$ over $\mathsf{k}$. 

We denote by $(\Ran\times \Ran)_{disj}$ the open sub-prestack of $ \Ran\times \Ran$ with disjoint support condition. 

\end{definition}
The Ran space admits a (non-unital) semi-group structure by taking union
\begin{equation}\label{eq 2.1}
    \cup: \Ran\times \Ran\to \Ran
\end{equation}
\[\mathcal{I}_1, \mathcal{I}_2\mapsto \mathcal{I}_1\cup \mathcal{I}_2.\]

Let $\mathcal{D}_{\mathcal{I}}$ be the formal completion of $S\times X$ along the graph of $\mathcal{I}$, and denote by $\overset{\circ}{\mathcal{D}}_{\mathcal{I}}$ the open subscheme of $\mathcal{D}_{\mathcal{I}}$ obtained by removing the graph of $\mathcal{I}$.


\begin{definition}\label{def 2.3}
For any affine scheme $S$, the $S$-points of $\Gr_{T,\Ran}$  classify the triples $(\mathcal{I}, \mathcal{P}_T,\alpha)$, where $\mathcal{I}\in \Ran(S)$, $\mathcal{P}_T$ is a $T$-bundle on $\mathcal{D}_{\mathcal{I}}$, $\alpha$ is an isomorphism of $\mathcal{P}_T$ with the trivial $T$-bundle $\mathcal{P}^0_T$ on $\overset{\circ}{\mathcal{D}}_{\mathcal{I}}$.
\end{definition}
The prestack $\Gr_{T,\Ran}$ is called the Beilinson-Drinfeld (i.e., Ran-ified) affine Grassmannian, ref., \cite[Section 5.3.11]{[BD1]}.

\begin{rem}
By the Beauville-Laszlo theorem \cite{[BL]}, we can require that $\mathcal{P}_T$ is a $T$-bundle on $S\times X$, and $\alpha$ is an isomorphism of $\mathcal{P}_T$ with $\mathcal{P}^0_T$ on the complement of the graph of $\mathcal{I}$. The resulting prestack is the same.
\end{rem}

The important note here is that $\Gr_{T,\Ran}$ is factorizable over $\Ran$. Namely, we have an isomorphism
\begin{equation}
    \Gr_{T, \Ran}\mathop{\times}\limits_{\Ran}{(\Ran\times \Ran)_{disj}}\simeq \Gr_{T, \Ran}\boxtimes \Gr_{T, \Ran}\underset{\Ran\times \Ran}{\times}
    {(\Ran\times \Ran)_{disj}},
\end{equation}
with higher homotopy coherence ({See \cite[Section 5.1.2]{[GL1]}}).

Inside $\Gr_{T, \Ran}$, there is a closed factorization sub-prestack denoted by $\Gr^{\textnormal{neg}}_{T, \Ran}$ (\cite[4.6.2]{[GL1]}).

\begin{definition}
If $G$ is semi-simple and simply connected, then a $S$-point $(\mathcal{I}, \mathcal{P}_T,\alpha)$ of $\Gr_{T, \Ran}$ is in $\Gr^{\textnormal{neg}}_{T, \Ran}$ if 
\begin{itemize}
    \item for any dominant weight $\check{\lambda} \in\check{\Lambda}^+ $, the meromorphic map of the line bundles on $S\times X$
    \begin{equation}\label{2.4 eq}
        \check{\lambda}(\mathcal{P}_T) \to\check{\lambda}(\mathcal{P}^0_T )
    \end{equation}
induced by $\alpha$, is regular.
    \item  for any point $s \in S$ and any element $i \in \mathcal{I}$, there exists at least one $\check{\lambda} \in\check{\Lambda}^+$, such that \eqref{2.4 eq} has a zero at the point $s\to S \overset{i}{\to} X$.
\end{itemize}

For general reductive group $G$, we define $\Gr^{\textnormal{neg}}_{T, \Ran}$ as $\Gr^{\textnormal{neg}}_{T^{sc}, \Ran}$, where $T^{sc}$ is the Cartan subgroup of the simply connected cover of $[G,G]$.
\end{definition}

\subsubsection{}We also need the Ran space with a marked point.
\begin{definition}
Fix $x\in X$. We denote by $\Ran_{x}:=\Ran_{X,x}$ the prestack whose $S$-points classify non-empty sets $\mathcal{I}$ of $\Maps(S, X)$ with a distinguished element $\widetilde{x}$, where $\widetilde{x}$ denotes the constant map $\widetilde{x}: S\to x\to X$.
\end{definition}
Taking union defines a map
\begin{equation}
    \cup_x: \Ran\times \Ran_{x}\to \Ran_{x}.
\end{equation}
It equips $\Ran_{x}$ with a structure of module space over $\Ran$.

Let $\Gr_{T, \Ran_x}$ be $\Ran_x\underset{\Ran}{\times}\Gr_{T, \Ran}$. It has a closed sub-prestack $(\Gr^{\textnormal{neg}}_{T, \Ran_x})_{\infty\cdot x}$.
\begin{definition}
For any affine scheme $S$, a $S$-point $(\mathcal{I}, \mathcal{P}_T,\alpha)$ of $\Gr_{T, \Ran_x}$ belongs to  the sub-prestack $(\Gr^{\textnormal{neg}}_{T, \Ran_x})_{\infty\cdot x}$,
if  there exists a $T$-bundle $\mathcal{P}_{T,1}$  on $S \times X$ and an isomorphism \[\alpha': \mathcal{P}_{T,1} |_{ S \times (X\setminus x)} \simeq \mathcal{P}_{T} |_{ S \times (X\setminus x)},\] such that the resulting point $(\mathcal{I}, \mathcal{P}_{T,1},\alpha\circ\alpha')$ of $\Gr_{T, \Ran_x}$ belongs to $\Ran_x\mathop{\times}\limits_{\Ran}\Gr^{\textnormal{neg}}_{T, \Ran}$.
\end{definition}



\subsection{Configuration spaces}\label{ fixed point version configuration spaces}

\begin{definition}\label{def 2.7}
The configuration space $\Conf:=\Conf(X, \Lambda^{\textnormal{neg}})$ is defined as the scheme classifying colored divisors of $X$ with coefficients in $\Lambda^{\textnormal{neg}}\setminus\{0\}$, i.e., it classifies
\begin{equation}\label{D}
    D= \mathop{\sum}\limits_k \lambda_k\cdot x_k,\qquad \lambda_k\in \Lambda^{\textnormal{neg}}\setminus\{0\}.
\end{equation}
\end{definition}

\subsubsection{Connected components}
Connected components of $\Conf$ are indexed by $\Lambda^{\textnormal{neg}}\setminus\{0\}$, \[\Conf= \mathop{\bigsqcup}_{\Lambda^{\textnormal{neg}}\setminus\{0\}} \Conf^\lambda.\]
Here $\Conf^\lambda$ denotes the subscheme of $\Conf$ where we require the total degree of $D$ in \eqref{D} (i.e., $\sum_k \lambda_k$) to be $\lambda$. If $\lambda=-\sum_i n_i\cdot \alpha_i$, then ${\Conf^\lambda}$  is isomorphic to $\prod_i {X^{(n_i)}}$, where ${X^{(n_i)}}$ classifies unordered $n_i$ points in $X$.

Similar to the Ran space, $\Conf$ is equipped with a structure of non-unital commutative semi-group. There is a map
\begin{equation}\label{add conf}
    \add_\Conf: \Conf\times \Conf\to \Conf
\end{equation}
\[D_1, D_2\mapsto D_1+D_2.\]

If we restrict this map to the open subscheme $(\Conf\times \Conf)_{disj}$ with disjoint support condition, then it is \'{e}tale. 

Configuration space is essentially the same as $\Gr^{\textnormal{neg}}_{T, \Ran}$. The following lemma is from \cite[Lemma 4.6.4]{[GL1]}.

\begin{lem}\label{shf eq conf Gr}
Evaluation on fundamental weights gives rise to a morphism \begin{equation}\label{evalutiongrt}
    \Gr^{\textnormal{neg}}_{T, \Ran}\to \Conf.
\end{equation}
It induces an isomorphism of the sheafifications in the topology generated by finite surjective maps.
\end{lem}

In particular, (\ref{evalutiongrt}) induces an equivalence between categories of gerbes on $\Gr^{\textnormal{neg}}_{T, \Ran}$ and $\Conf$. Furthermore, it induces an equivalence of corresponding categories of twisted sheaves.

\subsubsection{}Similarly, we define the configuration space with a marked point.
\begin{definition}
Fix $x\in X$. We denote by $\Conf_{\infty\cdot x}$ the ind-scheme classifying the colored divisors on $X$ with $\Lambda$-coefficient
\begin{equation}\label{Dx}
 D= \lambda_x\cdot x+ \mathop{\sum}\limits_k \lambda_k\cdot x_k,
\end{equation}
such that $\lambda_k\in \Lambda^{\textnormal{neg}}$, $\lambda_x\in \Lambda$ and $x_k\neq x$.

\end{definition}

Regard $\Conf$ as a (non-unital) algebra in the category of prestacks, the addition map 
\begin{equation}\label{2.14}
    \add_{\Conf_x}: \Conf\times \Conf_{\infty\cdot x}\to \Conf_{\infty\cdot x}
\end{equation}
gives $\Conf_{\infty\cdot x}$ a module structure over $\Conf$.

If we restrict  $\add_{\Conf_x}$ to $(\Conf\times \Conf_{\infty\cdot x})_{disj}$, the open ind-scheme with disjoint support condition, then it is \'{e}tale. In particular, $\add_{\Conf_x}^!\simeq \add_{\Conf_x}^*$ on $(\Conf\times \Conf_{\infty\cdot x})_{disj}$.

Similar to (\ref{evalutiongrt}),  there is a map of prestacks
\begin{equation}\label{evalutiongrtinf}
    (\Gr^{\textnormal{neg}}_{T, \Ran})_{\infty\cdot x}\to \Conf_{\infty\cdot x}.
\end{equation}

\begin{lem}[{\cite[4.6.7]{[GL1]}}]\label{lemma 5.2}
The morphism (\ref{evalutiongrtinf}) induces an isomorphism of the sheafifications in the topology generated by finite surjective maps.
\end{lem}

\subsection{Ran-ified Fl and Gr}\label{Ran version of affine flags and affine Grassmannian}
The Beilinson-Drinfeld affine Grassmannian $\Gr_{G, \Ran}$ is similarly defined as $\Gr_{T, \Ran}$ (only replace $T$-bundles by $G$-bundles in the definition). By adding an Iwahori structure at $x$, we arrive the definition of the Beilinson-Drinfeld affine flags.

\begin{definition}\label{Ran version Fl}
$\Fl_{G,\Ran_x}$ is the prestack whose $S$-points classify the quadruples $(\mathcal{I}, \mathcal{P}_G, \alpha, \epsilon)$, where $(\mathcal{I}, \mathcal{P}_G, \alpha)\in \Gr_{G, \Ran}(S)$, and $\epsilon$ is a $B$-reduction of $\mathcal{P}_G$ over $S\times x$.
\end{definition}

An important feature of $\Gr_{G, \Ran}$ is that it is factorizable over $\Ran$. That is to say, we have an isomorphism
\begin{equation}\label{2.12 h}
    \Gr_{G, \Ran}\mathop{\times}\limits_{\Ran}{(\Ran\times \Ran)_{disj}}\simeq \Gr_{G, \Ran}\boxtimes \Gr_{G, \Ran}\underset{\Ran\times \Ran}{\times}
    {(\Ran\times \Ran)_{disj}},
\end{equation}
with higher homotopy coherence.

 $\Fl_{G, \Ran_x}$ is a factorization module space over $\Gr_{G, \Ran}$, i.e., for any  non-empty set with a distinguished point $(*\in {\cI})$, there is an isomorphism
\begin{equation}
\begin{split}
     \Fl_{G, \Ran_x}\mathop{\times}\limits_{\Ran_x}{(\Ran\times \Ran_x)_{disj}}\simeq \Gr_{G, \Ran}\boxtimes \Fl_{G, \Ran_x}\underset{\Ran\times \Ran_x}{\times}{(\Ran\times \Ran_x)_{disj}},
\end{split}
\end{equation}
with higher homotopy coherence.

\subsection{Gerbes used in this paper}\label{gerbe used}
In the Betti setting and the $\ell$-adic setting, we let $\mathcal{G}^G$ be a factorization $\mathsf{e}^{\mathsf{torsion},\times}$-gerbe  on $\Gr_{G, \Ran}$, which is compatible with the factorization structure on $\Gr_{G, \Ran}$ (see \eqref{2.12 h}). It is defined in \cite[Section 2.4]{[GL2]}, and is called metaplectic parameter. In the D-module setting, we should require $\mathcal{G}^G$ to be a \textit{tame} factorization gerbe on $\Gr_{G, \Ran}$ defined in \cite[Section 3.3]{[Zh]}. In this section, we will explain how to get gerbes on some prestacks from $\mathcal{G}^G$ on $\Gr_{G,\Ran}$.



\subsubsection{Gerbe on the Hecke prestack}
We denote by $\Hecke_G$ the Hecke prestack
which classifies the data: $(\mathcal{P}_{G,1}, \mathcal{P}_{G,2}, \alpha)$, where $\mathcal{P}_{G,1}$ and $\mathcal{P}_{G,2}$ are $G$-bundles on $X$ and $\alpha$ is an isomorphism of $\mathcal{P}_{G,1}$ and $\mathcal{P}_{G,2}$ over $X\setminus x$, $\alpha: \mathcal{P}_{G,1}|_{X\setminus x}\simeq \mathcal{P}_{G,2}|_{X\setminus x}$. Then by the $G(\mathcal{O})$-equivariance of $\mathcal{G}^G$ (see \cite[Section 7.3]{[GL2]}), $\mathcal{G}^G$ gives rise to a gerbe on the Hecke prestack. We denote the descent gerbe on $\Hecke_G$ by $\mathcal{G}^G_{\Hecke}$.

\subsubsection{$\omega^\rho$-twisted prestacks}\label{2.5.3}
Fix a square root of the canonical line bundle $\omega$ on $X$ and denote it by $\omega^{\otimes\frac{1}{2}}$. We define $\omega^\rho$ as the $T$-bundle induced from $\omega^{\otimes\frac{1}{2}}$ by the morphism of group schemes
\begin{equation}
    2\rho: \mathbb{G}_m\to T.
\end{equation}
Here $\rho$ is the sum of all fundamental coweights.

 If we replace the trivial $G$-bundle $\mathcal{P}_G^0$ by the $G$-bundle $\mathcal{P}_G^{\omega}:=\omega^\rho\overset{T}{\times}G$ in the definition of $\Gr_{G, \Ran}$, we will obtain the $\omega^\rho$-twisted Beilinson-Drinfeld affine Grassmannian. Let us denote it by $\Gr^{\omega^\rho}_{G, \Ran}$. It is still a factorization prestack over $\Ran$. Similarly, we can also define $\Fl^{\omega^\rho}_{G, \Ran_x}$, $\Gr_{G}^{\omega^\rho}$, $\Fl_{G}^{\omega^\rho}$, $G(\mathcal{K})^{\omega^\rho}$, $N(\mathcal{K})^{\omega^\rho}$,  $\Gr_{T, \Ran}^{\omega^\rho}$, $(\Gr_{T, \Ran}^{\omega^\rho})^{\textnormal{neg}}$, $(\Gr_{T, \Ran_x}^{\omega^\rho})_{\infty\cdot x}^{\textnormal{neg}}$, etc. Similar to the classical affine flags and affine Grassmannian, we have \[\Fl_{G}^{\omega^\rho}\simeq G(\mathcal{K})^{\omega^\rho}/I^{\omega^\rho},\] and \[\Gr_{G}^{\omega^\rho}\simeq G(\mathcal{K})^{\omega^\rho}/G(\mathcal{O})^{\omega^\rho}.\] Here $I^{\omega^\rho}$ and $G(\mathcal{O})^{\omega^\rho}$ are $\omega^\rho$-twisted version of $I$ and $G(\mathcal{O})$, respectively.

\subsubsection{Gerbes on $\Fl^{\omega^\rho}_G$ and $\Gr^{\omega^\rho}_G$}\label{2.5.4} By definition, taking $\mathcal{P}_{G,2}$ to be $\mathcal{P}_G^{\omega}$ defines a map $\Gr_{G, \Ran}^{\omega^\rho}\to \Hecke_G$. The pullback of $\mathcal{G}^G_{\Hecke}$ along this map is a factorization gerbe on $\Gr_{G, \Ran}^{\omega^\rho}$. With some abuse of notation, we denote it by $\mathcal{G}^G$. Its pullback to $\Fl_{G,\Ran_x}^{\omega^\rho}$ (resp. $\Fl_{G}^{\omega^\rho}$, $\Gr_{G}^{\omega^\rho}$, $G(\mathcal{K})^{\omega^\rho}$, etc) is also denoted by $\mathcal{G}^G$.

By \cite[Proposition 7.2.5]{[GL2]}, the pullback of $\mathcal{G}^G$ along $G(\mathcal{K})^{\omega^\rho}\to \Fl_{G,\Ran_x}^{\omega^\rho}$ is a multiplicative gerbe, i.e.,
\[m^!(\mathcal{G}^G)\simeq \mathcal{G}^G\boxtimes \mathcal{G}^G.\] Here $m$ denotes the multiplication map \[G(\mathcal{K})^{\omega^\rho}\times G(\mathcal{K})^{\omega^\rho}\to G(\mathcal{K})^{\omega^\rho}.\]

In particular, the gerbes $\mathcal{G}^G$ on $\Fl_{G}^{\omega^\rho}$ and $\Gr_{G}^{\omega^\rho}$ are equivariant with respect to the action of $G(\mathcal{K})^{\omega^\rho}$ against the gerbe $\mathcal{G}^G$.

\subsubsection{Gerbe on $\Conf$}Replace $G$ by $B$ in the definition of $\Gr_{G, \Ran}^{\omega^\rho}$, one can define a factorization prestack $\Gr_{B, \Ran}^{\omega^\rho}$. Consider the following diagram of prestacks:
\begin{center}
\begin{equation}
\begin{tikzpicture}[scale=1]
  \draw [->] (-1,1) -- (-2.5,0);
  \draw [->] (1,1) -- (2.5,0);
  \node at (0,1.2){$\Gr_{B, \Ran}^{\omega^\rho}$};
  \node [below] at (2.5,0){$\Gr_{T, \Ran}^{\omega^\rho}.$};
  \node [below] at (-2.5,0){$\Gr_{G, \Ran}^{\omega^\rho}$};
\end{tikzpicture}
\end{equation}
\end{center}
The pullback of $\mathcal{G}^G$ on $\Gr_{G,\Ran}^{\omega^\rho}$ along the left morphism gives a factorization gerbe on $\Gr_{B, \Ran}^{\omega^\rho}$. By \cite[Section 5.1]{[GL2]}, this factorization gerbe descends to a factorization gerbe on $\Gr_{T, \Ran}^{\omega^\rho}$. We denote the resulting gerbe by $\mathcal{G}^T$.

By constructions similar to (\ref{evalutiongrt}) and (\ref{evalutiongrtinf}), we have maps
\begin{equation}\label{omega to conf}
    (\Gr_{T, \Ran}^{\omega^\rho})^{\textnormal{neg}}\to \Conf,
\end{equation}
and
\begin{equation}\label{omega to confx}
    (\Gr_{T, \Ran_x}^{\omega^\rho})_{\infty\cdot x}^{\textnormal{neg}}\to \Conf_{\infty\cdot x}.
\end{equation}

By (a tiny modification of) Lemmas \ref{shf eq conf Gr} and \ref{lemma 5.2}, (\ref{omega to conf}) and (\ref{omega to confx}) induce equivalences of gerbes on the Beilinson-Drinfeld affine Grassmannians and Configuration spaces. Hence, we can descend the factorization gerbe $\mathcal{G}^T$ on $(\Gr_{T, \Ran}^{\omega^\rho})^{\textnormal{neg}}$ (resp. $(\Gr_{T, \Ran_x}^{\omega^\rho})_{\infty\cdot x}^{\textnormal{neg}}$) to a gerbe $\mathcal{G}^\Lambda$ on $\Conf$ (resp. $\Conf_{\infty\cdot x}$). Note that the above maps are compatible with the factorization structures, the gerbes on $\Conf$ and $\Conf_{\infty\cdot x}$ are factorizable. 

Following \cite[Section 4.2]{[GL2]}, we can get a quadratic form
\begin{equation}\label{quad f}
    q: \Lambda\to \mathsf{k}/\mathbb{Z}
\end{equation}
from a factorization gerbe $\mathcal{G}^\Lambda$ on $\Conf$ in the D-module setting. In the Betti setting and the $\ell$-adic setting, $q$ takes value in $\mathsf{e}^{\mathsf{torsion},\times}(-1):= \colim_n \Hom(\mu_n, \mathsf{e}^{\mathsf{torsion},\times})$. 
\subsubsection{Gerbe on $\Bun_G$}
Let $\Bun_G$ denote the algebraic stack classifying principal $G$-bundles on $X$. It is shown in \cite{[GL2]} that any factorization gerbe on $\Gr_{G, \Ran}$ descends to a gerbe on $\Bun_G$. We will also denote the resulting gerbe on $\Bun_G$ by $\mathcal{G}^G$.

\begin{rem}
The pullback of $\mathcal{G}^G$ on $\Bun_G$ along
\begin{equation}
    \Gr_{G,\Ran}^{\omega^\rho}\to \Bun_G
\end{equation}
is the gerbe $\mathcal{G}^G$ defined in Section \ref{2.5.4} tensored with the fiber $\mathcal{G}^G|_{\omega^\rho\in \Bun_G}$. 
\end{rem} 
\section{Statement of the main theorem}\label{sec 2}
In this section, we will introduce two sides of Theorem \ref{thm 2.1} explicitly:
\begin{enumerate}[label=(\arabic*)]
    \item the category of Whittaker sheaves on affine flags (Section \ref{Whittaker categories through invariant})
    \item the category of representations of the mixed quantum group (Section \ref{quantum groups}).
\end{enumerate}


\subsection{Definition of Whittaker category (through invariants)}\label{Whittaker categories through invariant}

In Section \ref{2.5.3}, we have already defined $\omega^\rho$-twisted prestacks. These objects have an advantage over the non-twisted prestacks: we can define the non-degenerate character $\chi$ of $N(\mathcal{K})^{\omega^\rho}$ and Whittaker sheaves on $\Fl^{\omega^\rho}_{G}$ canonically. In the rest of this paper, we will consider the $\omega^\rho$-twisted affine flags $\Fl_G^{\omega^\rho}$ and related geometric objects. One can show that the Whittaker category on $\Fl_G$ and the corresponding category on $\Fl_G^{\omega^\rho}$ are equivalent. 

Consider a non-degenerate character of $N(\mathcal{K})^{\omega^\rho}$,
\begin{equation}\label{naive chi}
    \chi:N(\mathcal{K})^{\omega^\rho}\overset{\projection}{\longrightarrow} N(\mathcal{K})^{\omega^\rho}/[N(\mathcal{K})^{\omega^\rho},N(\mathcal{K})^{\omega^\rho}]\overset{\sim}{\longrightarrow }\omega(\mathcal{K})^r\overset{\add}{\longrightarrow} \omega(\mathcal{K})\overset{\textnormal{residue}}{\longrightarrow} \mathbb{G}_a.
\end{equation}

The pullback of the exponential D-module along $\chi$ is a character D-module, we denote this character D-module by the same notation $\chi$. In the $\ell$-adic case, we use the Artin-Schreier sheaf instead of the exponential D-module here.



\begin{definition}\label{def Whit}
We define the Whittaker category on affine flags as
\begin{equation}
\Whit_q(\Fl^{\omega^\rho}_G):= \Shv_{\mathcal{G}^G}(\Fl^{\omega^\rho}_G)^{N(\mathcal{K})^{\omega^\rho},\chi}=\Shv_{\mathcal{G}^G}(N(\mathcal{K})^{\omega^\rho},\chi\backslash\Fl^{\omega^\rho}_G).
\end{equation}
Here $q$ is the quadratic form associated with $\mathcal{G}^G$ (ref. \eqref{quad f}).
\end{definition}
\subsubsection{} \label{N k}We note that Definition \ref{def Whit} involves taking invariants with respect to an ind-pro-group scheme, we need to be more precise about this definition. 

Write $N(\mathcal{K})^{\omega^\rho}$ as
\begin{equation}\label{eq 3.3}
    N(\mathcal{K})^{\omega^\rho}= \bigcup_{k\geq 0}N_k,
\end{equation}
where $N_k:= \textnormal{Ad}_{t^{-k\rho}}N(\mathcal{O})^{\omega^\rho}$.

First, by \cite[Section 4.4.3]{[Ber]}, we have
\begin{equation}\label{3.3}
    \Whit_q(\Fl^{\omega^\rho}_G)=\Shv_{\mathcal{G}^G}(\Fl^{\omega^\rho}_G)^{N(\mathcal{K})^{\omega^\rho},\chi}\simeq \mathop{\lim}\limits_{k, \oblv}\Shv_{\mathcal{G}^G}(\Fl^{\omega^\rho}_G)^{N_k,\chi}
\end{equation}


Fix a natural number $k\geq 0$. By (\ref{3.3}), we only need to define $\Shv_{\mathcal{G}^G}(\Fl^{\omega^\rho}_G)^{N_k,\chi} $.

Since $\Fl^{\omega^\rho}_G$ is an ind-scheme, we can write $\Fl^{\omega^\rho}_G$ as a colimit of finite-dimensional schemes $Y_i$. Furthermore, we can assume that each $Y_i$ is $N_k$-invariants. Then by
\[\Shv_{\mathcal{G}^G}(\Fl^{\omega^\rho}_G)\simeq \mathop{\lim}\limits_{i,!-\pullback}\Shv_{\mathcal{G}^G}(Y_i),\]
we have
\begin{equation}
    \Shv_{\mathcal{G}^G}(\Fl^{\omega^\rho}_G)^{N_k,\chi}\simeq \mathop{\lim}\limits_{i, !-\pullback}\Shv_{\mathcal{G}^G}(Y_i)^{N_k,\chi}.
\end{equation}

We note that $N_k$ is a pro-scheme of finite type, we can write it as
\[N_k= \mathop{\lim}\limits_{l}N^l_k,\] such that each $N^l_k$ is a finite-dimensional unipotent group scheme and the action of $N_k$ on $Y_i$ factors through $N^l_k$. Finally, we define
\begin{equation}
    \Shv_{\mathcal{G}^G}(Y_i)^{N_k,\chi}:= \Shv_{\mathcal{G}^G}(Y_i)^{N^l_k,\chi}.
\end{equation}
Since for any $l'\geq l$, the kernel of $N_k^{l'}\to N_k^{l}$ is unipotent, the above definition is independent of the choice of $N_k^l$.


\subsubsection{Averaging functors}
Denote by
\begin{equation}
    \oblv_{N(\mathcal{K})^{\omega^\rho},\chi}\colon \Whit_q(\Fl^{\omega^\rho}_G)\to \Shv_{\mathcal{G}^G}(\Fl^{\omega^\rho}_G)
\end{equation}
the fully faithful forgetful functor. 

It admits a (partially defined) left adjoint functor $\Av_!^{N(\mathcal{K})^{\omega^\rho},\chi}$. Since $\Av_!^{N(\mathcal{K})^{\omega^\rho},\chi}$ is a (partially defined) left adjoint functor, it commutes with filtered colimits. On the contrary, the right adjoint functor $\Av_*^{N(\mathcal{K})^{\omega^\rho},\chi}$ of $\oblv_{N(\mathcal{K})^{\omega^\rho},\chi}$ is discontinuous.

With the ind-pro-group scheme presentation \eqref{eq 3.3} of $N(\mathcal{K})^{\omega^\rho}$, we may write $\Av_!^{N(\mathcal{K})^{\omega^\rho},\chi}$ more precisely,
\begin{align}
    \Av_!^{N(\mathcal{K})^{\omega^\rho},\chi}= \mathop{\colim}\limits_{k, !-\averaging} \Av_!^{N_k,\chi}.
\end{align}
If $\Av_!^{N_k,\chi}(\mathcal{F})$ can be defined for any $k$, then $\Av_!^{N(\mathcal{K})^{\omega^\rho},\chi}(\mathcal{F})$ can be defined by taking colimit. In particular, $\Av_!^{N(\mathcal{K})^{\omega^\rho},\chi}$ can be defined in the $\ell$-adic setting and for ind-holonomic D-modules in the D-module setting.

\subsection{Mixed quantum groups}\label{quantum groups}
The quantum group used in this paper is not the classical quantum group. It is neither the Lusztig quantum group nor its graded dual. It is a combination of these two quantum groups: the positive part is the Lusztig quantum group $U^L_q(\check{\mathfrak{n}})$ and the negative part is the graded dual of it (i.e., the De Concini-Kac quantum group $U^{DK}_q(\check{\mathfrak{n}}^-)$).

\subsubsection{Mixed representation category}\label{Mixed representation category}
Let $\Vect_q^\Lambda$ denote $\Rep_q(\check{T})$, the braided monoidal category of $\mathsf{e}$-representations of the quantum torus $\check{T}$. We denote by $U_q^L(\check{\mathfrak{n}})\modu^{\mathsf{loc.nil}}$ the ind-completion of the derived category of finite-dimensional $U_q^L(\check{\mathfrak{n}})$-modules in $\Vect_q^\Lambda$. 

\begin{definition}
The category $\Rep_q^{\mix}(\check{G})$ is defined as $Z_{Dr, \Vect^\Lambda_q}(U_q^L(\check{\mathfrak{n}})\modu^{\mathsf{loc.nil}})$, which is the relative Drinfeld center of $U_q^L(\check{\mathfrak{n}})\modu^{\mathsf{loc.nil}}$ with respect to $\Vect_q^\Lambda$ (see \cite[27.2]{[GL1]}).
\end{definition}
\begin{rem}
At abelian category level, an object in $\Rep_q^{\mix}(\check{G})$  is a $\Lambda$-graded vector space with actions of $U^L_q(\check{\mathfrak{n}})$ (with the locally nilpotent condition) and $U^{DK}_q(\check{\mathfrak{n}}^-)$. 

\end{rem}

\subsubsection{Verma modules}
We denote by $\ind_{L\to Dr}$ the left adjoint functor of the forgetful functor from $\Rep_q^{\mix}(\check{G})$ to $U_q^L(\check{\mathfrak{n}})\modu^{\mathsf{loc.nil}}$, and by $\coind_{DK\to Dr}$ the right adjoint functor of the forgetful functor from $\Rep_q^{\mix}(\check{G})$ to $U_q^{DK}(\check{\mathfrak{n}}^-)\modu$. For $\lambda\in \Lambda$, we define Verma modules and coVerma modules in $\Rep_q^{\mix}(\check{G})$ as
\begin{align}\label{quan stan}
    V_\lambda^{\mix}:= &\ind_{L\to Dr}(\mathsf{e}^\lambda),\\
    V_\lambda^{\mix, \vee}:= &\coind_{DK\to Dr}(\mathsf{e}^\lambda).
\end{align}
Here $\mathsf{e}^\lambda$ denotes the 1-dimensional representation of $U_q^L(\check{\mathfrak{n}})$ (or, $U_q^{DK}(\check{\mathfrak{n}}^-)$) in $\Vect_q^\Lambda$, where the action of the torus corresponds to $\lambda$ and the action of the unipotent group is trivial.

Following \cite[5.3.2]{[Ga1]}, we have
\[\mathcal{H}om_{\Rep_q^{\mix}(\check{G})}(V_\lambda^{\mix}, V_\mu^{\mix, \vee})=\left\{\begin{aligned}
    \mathsf{e},\qquad \textnormal{if } \lambda=\mu,\\
    0,\qquad \textnormal{if } \lambda\neq\mu.
\end{aligned}
\right. \]
Furthermore, by construction, the objects $V_\lambda^{\mix}$ are compact and compactly generate $\Rep_q^{\mix}(\check{G})$. So there is a highest weight structure of $\Rep_q^{\mix}(\check{G})$ with standard objects $V_\lambda^{\mix}$ and costandard objects $V_\mu^{\mix, \vee}$.
\subsection{Statement of Theorem \ref{thm 2.1}}\label{statement of the main theorem}


\subsubsection{Avoid small torsion}\label{small torsion} A quadratic form $q$ avoids small torsion ({\cite[1.1.5]{[Ga6]}, \cite[35.1.2 (a)]{[Lus]}}) if for any long coroot $\alpha_l$ of a simple factor of $G$, there is \[\textnormal{ord}(q(\alpha_l)) \geq d_G + 1,\]
where $d_G$ = 1, 2, 3 is the lacing number (i.e., the
maximal number of edges in the Dynkin diagram).

\begin{thm}\label{thm 2.1}
In the setting of D-modules, when $q$ avoids small torsion, there exists a $t$-exact equivalence of highest weight categories
\begin{equation}\label{3.32}
\Whit_q(\Fl^{\omega^\rho}_G)\simeq \Rep_q^{\mix}(\check{G}).
\end{equation}
\end{thm}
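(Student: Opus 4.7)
The plan is to construct an explicit functor
\[F^L\colon \Whit_q(\Fl^{\omega^\rho}_G) \longrightarrow \Rep_q^{\mix}(\check{G})\]
by routing through a factorization category, and then to verify it is an equivalence of highest weight categories by matching standards and costandards on both sides. By \cite{[Ga5]}, the target is equivalent to the category $\Omega_\kappa^L\text{-}\Fact$ of factorization modules over the Lusztig factorization algebra $\Omega_\kappa^L$, naturally defined on $(\Gr_{T,\Ran}^{\omega^\rho})^{neg}_{\infty\cdot x}$ or, via Lemma \ref{lemma 5.2}, on $\Conf_x$. It therefore suffices to produce a functor into $\Omega_\kappa^L\text{-}\Fact$ and then establish that it is an equivalence.

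Following the hints in the introduction, I would define the $\lambda$-component of $F^L(\mathcal{F})$ by
\[F^L(\mathcal{F})_\lambda := C^\cdot\bigl(\Fl^{\omega^\rho}_G,\; \mathcal{F}\overset{!}{\otimes} j_!(\omega_{S_{\Fl,x}^{-,\lambda}})\bigr),\]
so that $F^L$ differs from the naive Jacquet functor $F^{KD}$ only in replacing $j_*$ by $j_!$ along the semi-infinite orbits. To upgrade these components to a factorization module, I would work Ran-wise on $\Fl_{G,\Ran_x}^{\omega^\rho}$ and exploit the factorization pattern of the semi-infinite orbits over $(\Ran_X^J\times \Ran_{X,x})_{disj}$ described in Section \ref{Ran version of affine flags and affine Grassmannian}. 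The crucial normalization point is that the choice $j_!$ (as opposed to $j_*$) is precisely what makes the resulting structure maps compatible with $\Omega_\kappa^L$ on the Lusztig side rather than with its Kac-De Concini analogue; this is checked by a local computation on a formal disk using Whittaker vanishing.

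Both sides carry natural highest weight structures indexed by $\lambda\in\Lambda$. On the quantum side the standards and costandards are $V_\lambda^{\mix}$ and $V_\lambda^{\mix,\vee}$ from \eqref{quan stan}. On the Whittaker side, guided by \cite{[AB]} and the discussion in Section \ref{work of ab}, I would construct twisted Wakimoto sheaves on $\Fl^{\omega^\rho}_G$ and apply $\Av_!^{N(\mathcal{K})^{\omega^\rho},\chi}$ and $\Av_*^{N(\mathcal{K})^{\omega^\rho},\chi}$ respectively to obtain standards $W_\lambda^!$ and costandards $W_\lambda^*$; the small-torsion hypothesis on $q$ enters precisely to guarantee the well-definedness and the expected cohomological degree of these averages. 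The easy half of the theorem is the costandards comparison: since $W_\lambda^*$ is essentially $*$-extended from a single semi-infinite stratum while $F^L$ is built entirely from $!$-operations, a direct stalk calculation, coupled with the coinduced description of $V_\lambda^{\mix,\vee}$ in Section \ref{quantum groups}, gives $F^L(W_\lambda^*) \simeq V_\lambda^{\mix,\vee}$. Combined with a local check of $t$-exactness, this pins down $F^L$ on a generating collection.

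The genuine difficulty, and the main obstacle I expect, is the standards matching $F^L(W_\lambda^!)\simeq V_\lambda^{\mix}$, because the relevant $*$-stalks are not accessible directly from the $!$-based definition of $F^L$. The strategy is to introduce a Verdier-type duality $\mathbb{D}$ on $\Whit_q(\Fl^{\omega^\rho}_G)$ that exchanges $W_\lambda^!$ with $W_\lambda^*$; since the Whittaker category is defined as strong invariants, its natural dual is a coinvariants category, and S.\,Raskin's identification \cite{[Ras2]} of invariant and coinvariant Whittaker categories is what transports $\mathbb{D}$ back to the invariants side. The key technical step is then to show that $\mathbb{D}$ intertwines $F^L$ with $F^{KD}$, which amounts to comparing a $!$-tensor and a $*$-tensor against $\omega_{S_{\Fl,x}^{-,\lambda}}$. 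To bridge this gap I would pass to the global picture via the local-global equivalence of Whittaker categories from \cite{[Ga5]}, replacing $\Fl^{\omega^\rho}_G$ by the Drinfeld compactification $(\overline{\Bun_N^{\omega^\rho}})_{\infty\cdot x}$, and invoke universal local acyclicity for the relevant sheaves on it: ULA forces $j_!$ and $j_*$ to agree when paired against a ULA sheaf, canonically identifying the two tensor products and completing the standards comparison. Once both standards and costandards match and $F^L$ is known to be $t$-exact, a standard recognition argument for highest weight categories promotes $F^L$ to the desired equivalence.
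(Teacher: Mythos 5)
Your proposal follows essentially the same route as the paper: replace $\Rep_q^{\mix}(\check{G})$ by $\Omega_q^{L}\text{-}\Fact$, define $F^L$ via $!$-tensoring with $j_!(\omega_{S^{-,\lambda}_{\Fl,x}})$ and Ran/configuration-space factorization, match costandards by a $!$-stalk computation, and reduce the standards comparison to a duality statement intertwining $F^L$ and $F^{KD}$ proved globally on the Drinfeld compactification via universal local acyclicity. The only slight misplacement is where the small-torsion hypothesis enters: in the paper it is needed only to identify the geometrically produced factorization algebra $\Omega_q^{L,\prime}=F^L_{\Gr}(\mathcal{F}_0)$ with $\Omega_q^L$ (the equivalence onto $\Omega_q^{L,\prime}\text{-}\Fact$ holds for all $q$), not for the well-definedness of the averaging functors.
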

\subsubsection{Highest weight category structure}
In Section \ref{standards and costandards}, we will define a collection of standard objects $\Delta_\lambda$ in $\Whit_q(\Fl^{\omega^\rho}_G)$ indexed by $\Lambda$. They are given by $!$-averaging the Wakimoto sheaves. Also, we will define a collection of costandrad objects $\nabla_\lambda\in \Whit_q(\Fl_G^{\omega^\rho})$. We will show that Verma modules $V_\lambda^{\mix}$ and $\Delta_\lambda$ match under the equivalence \eqref{3.32} and similarly for costandards.

\subsubsection{T-structure}
The equivalence $\eqref{3.32}$ is an equivalence at the derived level. We will define a t-structure on $\Whit_q(\Fl^{\omega^\rho}_G)$ in Section \ref{t structure} (see Definition \ref{t2}) using $\Delta_\lambda$, and we will show that it is compatible with the tautological t-structure on $\Rep^{\mix}_q(\check{G})$  under the equivalence.

\section{Factorization algebra and factorization module}\label{Fact geo repl}
Since \cite{[BFS]}, it is known that the category of modules over a Hopf algebra can be realized as the category of factorization modules over a factorization algebra. In this section, we review factorization modules and factorization algebras, and give an equivalent expression of Theorem \ref{thm 2.1} with factorization modules.
\subsection{Factorization algebras and factorization modules}\label{Factorization algebras} 

\begin{definition}
We call a twisted sheaf $\Omega\in \Shv_{\mathcal{G}^\Lambda}(\Conf)$ factorization algebra on $\Conf$ if it is compatible with the factorization property of $\Conf$, i.e., there is an isomorphism
\begin{equation}\label{4.2}
    \add_\Conf^!(\Omega)|_{(\Conf\times \Conf)_{disj}}\simeq \Omega\boxtimes \Omega|_{(\Conf\times \Conf)_{disj}},
\end{equation}
with higher homotopy coherence.
\end{definition}
The above definition makes sense in the $\mathcal{G}^\Lambda$-twisted case because \[\add_{\Conf}^!(\mathcal{G}^\Lambda)|_{(\Conf\times \Conf)_{disj}}\simeq \mathcal{G}^\Lambda\boxtimes \mathcal{G}^\Lambda|_{(\Conf\times \Conf)_{disj}}.\]

It is easy to see that the Verdier dual of a factorization algebra is also a factorization algebra.

\subsubsection{}
Given a factorization algebra $\Omega$ on $\Conf$, we can consider its module category on $\Conf_{\infty\cdot x}$.
\begin{definition}
We call a twisted sheaf $\mathcal{M}\in \Shv_{\mathcal{G}^\Lambda}(\Conf_{\infty\cdot x})$ factorization module over $\Omega$ if it is compatible with the factorization structure of $\Omega$, i.e., there is an isomorphism
\begin{equation}\label{4.3}
    \add_{\Conf_x}^!(\mathcal{M})|_{(\Conf\times \Conf_{\infty\cdot x})_{disj}}\simeq \Omega\boxtimes\mathcal{M}|_{(\Conf\times \Conf_{\infty\cdot x})_{disj}},
\end{equation}
with higher homotopy coherence.
\end{definition}
We denote by $\Omega-\Fact$ the category of factorization modules over $\Omega$.

\subsubsection{Structure of      \ {$\Omega-\Fact$}{}}\label{Structure of A}
Given $\lambda\in \Lambda$,  let $\Conf_{=\lambda\cdot x}$ be the locally closed subscheme of $\Conf_{\infty\cdot x}$ consisting of the points $D= \lambda\cdot x+\sum \lambda_i\cdot x_i$ such that $\lambda_i\in \Lambda^{\textnormal{neg}}\ \textnormal{and}\ x_i\neq x\  \forall\ i$. 

The restriction of (\ref{2.14}) to $\Conf\times \Conf_{=\lambda\cdot x}$ {induces a map \[\add_{\Conf_x}:(\Conf\times \Conf_{=\lambda\cdot x})_{disj}\longrightarrow \Conf_{=\lambda\cdot x}.\]} {We call $\cM\in \Shv_{\cG^\Lambda}(\Conf_{=\lambda\cdot x})$ a factorization module over $\Omega$, if there is $\add_{\Conf_x}^!(\mathcal{M})|_{(\Conf\times \Conf_{=\lambda\cdot x})_{disj}}\simeq \Omega\boxtimes\mathcal{M}|_{(\Conf\times \Conf_{=\lambda\cdot x})_{disj}}$}. We denote by $\Omega-\Fact_{=\lambda}$ the category of factorization modules on $\Conf_{=\lambda\cdot x}$ over $\Omega$.

Let $\mathcal{G}^\Lambda|_{\lambda\cdot x}$ be the fiber of $\mathcal{G}^\Lambda$ at $\lambda\cdot x$. The following lemma is from \cite[Lemma 5.3.5]{[GL1]}.

\begin{lem}\label{4.1.1}
Taking $!$-stalks at $\lambda\cdot x$ defines a $t$-exact equivalence
\begin{equation}
i_{\lambda}^!: \Omega-\Fact_{=\lambda}\simeq \Vect_{\mathcal{G}^\Lambda|_{\lambda\cdot x}}.
\end{equation}
 Here $\Vect_{\mathcal{G}^\Lambda|_{\lambda\cdot x}}$ denotes the category of $\mathcal{G}|_{\lambda\cdot x}$-twisted vector spaces.
\end{lem}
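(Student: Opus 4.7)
The plan is to exhibit an explicit quasi-inverse to $i_\mu^!$ (reading the subscript $\lambda$ in the statement as a typo for $\mu$) and to verify, via the factorization structure on $\Omega$, that a factorization module on $\Conf_{=\mu \cdot x}$ over $\Omega$ is determined by its stalk at the ``deepest'' point $\mu \cdot x$. First I would decompose $\Conf_{=\mu \cdot x} = \bigsqcup_{\nu \in \Lambda^{neg}} \Conf^\nu_{=\mu \cdot x}$ according to the total degree of the away-from-$x$ part, where $\Conf^0_{=\mu \cdot x} = \{\mu \cdot x\}$, and for $\nu \neq 0$ the translation $D \mapsto \mu \cdot x + D$ identifies $\Conf^\nu_{=\mu \cdot x}$ with the component $\Conf^\nu$ (supported on $X \smallsetminus \{x\}$). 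Compatibility of $\mathcal{G}^\Lambda$ with the additive structure \eqref{2.14} yields $\mathcal{G}^\Lambda|_{\Conf^\nu_{=\mu \cdot x}} \simeq \mathcal{G}^\Lambda|_{\Conf^\nu} \otimes \mathcal{G}^\Lambda|_{\mu \cdot x}$. I would then define the candidate inverse functor $\phi \colon \Vect_{\mathcal{G}^\Lambda|_{\mu \cdot x}} \to \Omega-\Fact_{=\mu}$ by setting $\phi(V)|_{\Conf^\nu_{=\mu \cdot x}} := \Omega|_{\Conf^\nu} \otimes V$ on each stratum, with factorization structure induced from \eqref{4.2} tensored with the identity on $V$; its higher coherence is inherited directly from that of $\Omega$.

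The relation $i_\mu^! \circ \phi \simeq \id$ is tautological, by restriction to the deepest stratum $\nu = 0$. For the other direction, given $\mathcal{M} \in \Omega-\Fact_{=\mu}$ with $V := i_\mu^! \mathcal{M}$, the factorization isomorphism \eqref{4.3} restricted to the disjoint locus $(\Conf^J \times \Conf_{=\mu \cdot x})_{disj}$ produces, using finiteness of $\add_{x,J}$ there (so $\add_{x,J}^! \simeq \add_{x,J}^*$), a canonical comparison map $\phi(V) \to \mathcal{M}$; I would show it is an isomorphism by comparing stalks on each stratum $\Conf^\nu_{=\mu \cdot x}$, covering this stratum (up to finite étale descent) by disjoint loci on which both sides reduce tautologically to $\Omega^{\boxtimes\bullet} \otimes V$. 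The $t$-exactness of $i_\mu^!$ then reduces to the $t$-exactness of tensoring the perversely normalized $\Omega$ with the single vector space $V$, which is immediate.

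The main obstacle is verifying that the stratum-wise construction of $\phi(V)$ assembles into a genuine factorization module in the derived sense, i.e., that the isomorphisms \eqref{4.3} glue across overlaps with all higher homotopy coherences. This is the essential content of the cited \cite[Lemma 5.3.5]{[GL1]} and is implemented cleanly by a \v{C}ech-type descent argument using the finite étale covers of $\Conf_{=\mu \cdot x}$ (and its higher powers) provided by the disjoint loci under the addition maps $\add_{x,J}$; the argument becomes formal once one has the full higher coherence of $\Omega$.
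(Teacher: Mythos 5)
Your proposal is correct in outline, but note that the paper does not actually prove this lemma: it is quoted verbatim from \cite[Lemma 5.3.5]{[GL1]} and no argument is given in the text. What you have written is essentially the standard proof of that cited statement, so the comparison is between your reconstruction and the reference rather than with anything in the paper itself. Your reconstruction is the right one: since the strata $\Conf^\nu_{=\mu\cdot x}$ have distinct total degrees they are clopen components of $\Conf_{=\mu\cdot x}$, and for $J=\{1,*\}$ the addition map restricted to $\Conf^\nu(X\setminus x)\times\{\mu\cdot x\}$ is in fact an \emph{isomorphism} onto the stratum $\Conf^\nu_{=\mu\cdot x}$ (not merely \'etale), so \eqref{4.3} immediately identifies the underlying sheaf of $\mathcal{M}$ on each component with $\Omega|_{\Conf^\nu(X\setminus x)}\otimes i_\mu^!\mathcal{M}$; no descent is needed for the pointwise identification, only for assembling the factorization structure on your $\phi(V)$ and for the functoriality of the comparison map, which you correctly isolate as the content of the cited lemma. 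Two small remarks. First, the clopen observation makes the $t$-exactness of $i_\mu^!$ itself tautological, and since a $t$-exact equivalence automatically has $t$-exact inverse, the only place where perversity of $\Omega$ genuinely enters is in checking that the essential image of $\phi$ on objects in the heart lands in the heart of $\Omega\text{-}\Fact_{=\mu}$; this is consistent with the (implicit here, explicit in Definition \ref{def 4.4}) standing assumption that $\Omega$ is perverse. Second, your reading of the mismatched subscripts $\lambda$ versus $\mu$ as a typo is the correct one.
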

\begin{definition}\label{def 4.4}
Assume that $\Omega$ is in the heart of $\Shv_{\mathcal{G}^\Lambda}(\Conf)$. Consider the perverse\footnotemark\ \footnotetext{Here 'perverse' means this object is concentrated in degree 0.}generator of $\Omega-\Fact_{=\lambda }$, and we denote its $*$ (resp, $!$)-pushforward along the locally closed embedding \[\Conf_{=\lambda\cdot x}\to \Conf_{\infty\cdot x}\] by $\nabla_{\lambda, \Omega}$ (resp, $\Delta_{\lambda, \Omega}$).

$\nabla_{\lambda, \Omega}$ is called costandard object of $\Omega-\Fact$ and $\Delta_{\lambda, \Omega}$ is called standard object.
\end{definition}

By definition, standard objects $\Delta_{\lambda, \Omega}$ are compact and generate $\Omega-\Fact$. Standard objeects $\Delta_{\lambda, \Omega}$ and costandard objects $\nabla_{\lambda, \Omega}$ are perverse.
\subsubsection{}
The Verdier duality functor is well-defined for both $\Delta_{\lambda, \Omega}$ and $\nabla_{\lambda, \Omega}$. Furthermore, we have
\begin{equation}\label{dualfact}
\mathbb{D}^{\verdier}(\Delta_{\lambda, \Omega})\simeq \nabla_{\lambda, \mathbb{D}^{\verdier}(\Omega)}\qquad \forall \lambda\in \Lambda.
\end{equation}
Here $\Delta_{\lambda, \Omega}\in \Shv_{\mathcal{G}^\Lambda}(\Conf_{\infty\cdot x})$ and $\nabla_{\lambda, \mathbb{D}^{\verdier}(\Omega)}\in \Shv_{(\mathcal{G}^\Lambda)^{-1}}(\Conf_{\infty\cdot x})$.

By definition, $\Delta_{\lambda, \Omega}$ and $\nabla_{\mu, \Omega}$ satisfy the following orthogonality property.
\begin{lem}\label{prop 4.1}
For $\lambda,\mu\in \Lambda$, we have
\[\mathcal{H}om_{\Omega-\Fact}(\Delta_{\lambda,\Omega}, \nabla_{\mu, \Omega})=\left\{\begin{aligned}
    \mathsf{e},\qquad \textnormal{if}\ \lambda=\mu,\\
    0, \qquad \textnormal{if}\ \lambda\neq\mu.
\end{aligned}\right. .\]
\end{lem}
\begin{proof}
It follows from the adjointness of $!$-pushforward and $!$-pullback along the locally closed embedding $\Conf_{=\lambda\cdot x}\to \Conf_{\infty\cdot x}$, and Lemma \ref{4.1.1}.
\end{proof}

\subsubsection{}
The tautological t-structure on $\Shv_{\mathcal{G}^\Lambda}(\Conf_{\infty\cdot x})$ gives a t-structure on $\Omega-\Fact$. In fact, by \cite[Proposition 5.4.2]{[GL1]}, we can describe this t-structure on $\Omega-\Fact$ by $\Delta_{\lambda,\Omega}$. To be more precise, an $\Omega$-factorization module $\mathcal{M}$ is coconnective as a twisted sheaf if and only if for any $\lambda\in \Lambda$,
\begin{equation}\label{factt}
    \Hom_{\Omega-\Fact}(\Delta_{\lambda,\Omega}[k], \mathcal{M})=0,\qquad\textnormal{if} \ k> 0.
\end{equation}

\subsection{An explicit description of {$\Omega_q^L$}{}}\label{An explicit description}

In this section, we will recall the factorization algebra $\Omega_q^L$ given in \cite[Section 2.3]{[Ga6]}. It is the category of modules over this factorization algebra that is expected to be equivalent to the Whittaker category on affine flags. 

$\Conf$ has an open subscheme  $\overset{\circ}{\Conf}$  removing all diagonals. A point $ D= \mathop{\sum}_k \lambda_k\cdot x_k$ belongs to $\overset{\circ}{\Conf}$ if and only if the coefficient of any point $x_k$ is a negative coroot. If $\lambda=-\sum_i n_i\cdot\alpha_i$, then the $\lambda$ connected component $\overset{\circ}{\Conf^\lambda}$  is isomorphic to $\prod_i \overset{\circ}{X^{(n_i)}}$. Here $\overset{\circ}{X^{(n_i)}}$ classifies unordered $n_i$ different points in $X$.

Following \cite[Section 17.1.2]{[GL1]}, $\mathcal{G}^\Lambda|_{\overset{\circ}{\Conf}}$ is canonically trivialized. In particular,
\begin{equation}\label{4.25}
\Shv_{\mathcal{G}^\Lambda}(\overset{\circ}{\Conf})\simeq \Shv(\overset{\circ}{\Conf}).
\end{equation}

The product of sign local systems on each $\overset{\circ}{X^{(n_i)}}$ gives rise to a factorization algebra on $\overset{\circ}{\Conf}$. Under the equivalence \eqref{4.25}, it can be regarded as a twisted factorization algebra on $\overset{\circ}{\Conf}$. We denote it by $\overset{\circ}{\Omega}_q$, where $q$ is the quadratic form in \eqref{quad f}.
\subsubsection{}
By factorization (i.e., \eqref{4.2}), we only need to indicate how to extend $\Omega^L_q$ from its restriction on $\Conf^\lambda\setminus X$ to $\Conf^\lambda$, for any $\lambda\in \Lambda^{\textnormal{neg}}\setminus \{0\}$. Here $X$ embeds into $\Conf^\lambda$ by assigning $x$ to $\lambda\cdot x$.

We denote by $\jmath_\lambda$ the open embedding from $\Conf^{\lambda}\setminus X$ to $\Conf^\lambda$ and by $l$ the length function of the Weyl group.

\begin{definition}\label{def lus}
The factorization algebra $\Omega^L_q$ is defined inductively as follows:
\begin{enumerate}[label=(\arabic*)]
    \item If $\lambda= w(\rho)-\rho$ and $l(w)=2$,
then \[\jmath_{\lambda,!}\circ \jmath_\lambda^!(\Omega^L_q)\overset{\sim}{\longrightarrow} \Omega^L_q.\]
    \item If $\lambda=  w(\rho)-\rho$ and $l(w)\leq 3$,

\[H^0(\jmath_{\lambda,!}\circ \jmath_\lambda^!(\Omega^L_q))\simeq \Omega^L_q.\]
    \item If $\lambda$ is not of the form $w(\rho)-\rho$, then \[\Omega^L_q\simeq \jmath_{\lambda, !*}\circ \jmath_\lambda^!(\Omega^L_q).\]
\end{enumerate}

\end{definition}
{The following lemma is indicated in \cite[Section 29]{[GL1]}, see \cite[Theorem 1.2.1]{[CF]} for a precise statement and proof.}
\begin{lem}\label{replacement}
In the D-module setting and the Betti setting, there is
\begin{equation}
     \Rep_q^{\mix}(\check{G}):= Z_{Dr, \Vect_q^\Lambda}(U_q^{L}(\check{\mathfrak{n}})\modu^{\mathsf{loc.nil}})\simeq \Omega_{q}^L-\Fact.
\end{equation}
Furthermore, under the above equivalence, $V_\lambda^{\mix}$ (resp. $V_\lambda^{\mix,\vee}$) corresponds to $\Delta_{\lambda, \Omega_q^L}$  (resp. $\nabla_{\lambda, \Omega_q^L}$). 
\end{lem}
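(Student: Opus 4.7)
The statement is attributed to \cite{[Ga6]}, so my plan is to follow the Koszul-duality strategy developed there and outline how to realize the equivalence and match standards/costandards. The plan decomposes into three main steps: (1) compare factorization algebras on $\Conf$ with graded braided Hopf algebras in $\Vect_q^\Lambda$; (2) compare factorization modules on $\Conf_x$ with relative Drinfeld center modules; (3) match the highest-weight generators.

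First I would invoke the general factorization Koszul duality that assigns to a factorization algebra $\Omega$ on $\Conf$ a $\Lambda^{neg}$-graded braided Hopf algebra $A(\Omega)$ in $\Vect_q^\Lambda$ (obtained essentially by taking $*$-cohomology of $\Omega$ along the strata $\Conf^{-\alpha_i} \simeq X$), and conversely. Apply this to $\Omega_q^L$. The defining inductive clauses in Definition 4.4.3 (the distinct behavior on strata of the form $w(\rho)-\rho$ according to $l(w)=2,3$, and otherwise) are tuned exactly so that the resulting Hopf algebra in $\Vect_q^\Lambda$ satisfies the quantum Serre relations and the Lusztig integrality conditions, giving
\[
A(\Omega_q^L)\simeq U_q^L(\check{N}).
\]
This identification, combined with Verdier duality (using Definition \ref{ omega kd}), gives dually $A(\Omega_q^{KD})\simeq U_q^{KD}(\check{N}^-)$.

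Next I would upgrade this to the module setting. A factorization module $\mathcal{M}\in \Omega-\Fact$ on $\Conf_x$ carries, via its restriction to the strata $\Conf_{=\mu\cdot x}$ and the factorization isomorphisms \eqref{4.3}, two compatible actions: the factorization action of $\Omega$ on the part of $\Conf_x$ near $x$ (equivalent to a $U_q^L(\check{N})$-action with local nilpotency, coming from the finiteness of the strata meeting any compact), and the action of $\Omega$ on the part disjoint from $x$ (equivalent to the dual, i.e.\ a $U_q^{KD}(\check{N}^-)$-action, via Verdier duality and factorization-cohomology). The braiding between these two actions is precisely encoded by the factorization gerbe $\mathcal{G}^\Lambda$, which matches the braiding of $\Vect_q^\Lambda$. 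This gives a functor
\[
\Omega_q^L-\Fact \longrightarrow Z_{Dr,\Vect_q^\Lambda}\bigl(U_q^L(\check{N})\modu^{loc.nil}\bigr) = \Rep_q^{\mix}(\check{G}),
\]
and an inverse is constructed symmetrically by spreading modules over $\Conf_x$ using the factorization structure.

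Finally I would match the highest-weight generators. The standard $\Delta_{\lambda,\Omega_q^L}$ is by definition (Definition \ref{def 4.4}) the $!$-pushforward from $\Conf_{=\lambda\cdot x}$ of the perverse generator, which by Lemma \ref{4.1.1} is the unique (up to twist) factorization module supported on this stratum. On the quantum side, $V_\lambda^{\mix}=\ind_{L\to Dr}(\mathbb{k}^\lambda)$ is the free object on the one-dimensional weight-$\lambda$ representation with no relations imposed by the $U_q^{KD}(\check{N}^-)$-action. Under the correspondence, $!$-extension from a stratum corresponds to the left adjoint of the forgetful functor (i.e.\ induction), so $\Delta_{\lambda,\Omega_q^L}\leftrightarrow V_\lambda^{\mix}$; dually, $*$-extension corresponds to coinduction, giving $\nabla_{\lambda,\Omega_q^L}\leftrightarrow V_\lambda^{\mix,\vee}$. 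Consistency is confirmed by comparing Proposition \ref{prop 4.1} with the Hom-computation stated just after \eqref{quan stan}.

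The main obstacle is Step (1): checking that the recursive definition of $\Omega_q^L$ on the diagonal loci $\Conf^\lambda$ for $\lambda = w(\rho)-\rho$ translates precisely into the Lusztig quantum Serre relations in $A(\Omega_q^L)$. This amounts to computing $*$- and $!$-stalks of the $!*$-extensions along the small diagonals and comparing them with the structure of quantum divided powers; this is the technical heart of \cite{[Ga6]} and depends in an essential way on the small-torsion hypothesis on $q$ (Section \ref{small torsion}) to guarantee the relevant cohomology does not degenerate. Once Step (1) is in place, Steps (2) and (3) are formal consequences of the factorization formalism and the characterization of the t-structure via \eqref{factt}.
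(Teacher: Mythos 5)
The paper gives no proof of this lemma at all --- it is imported wholesale from \cite{[Ga6]} (with the Drinfeld-center-to-factorization-modules equivalence and the matching of (co)standards going back to \cite{[Ga1]} and \cite[Sect.~29--31]{[GL1]}) --- so there is nothing internal to compare your argument against. Your outline is a faithful account of the strategy in those references: the Koszul-type duality between factorization algebras on $\Conf$ and graded braided Hopf algebras in $\Vect_q^\Lambda$, the identification of factorization modules at $x$ with the relative Drinfeld center, and the dictionary $!$-extension $\leftrightarrow$ $\ind_{L\to Dr}$, $*$-extension $\leftrightarrow$ $\coind_{KD\to Dr}$ (consistent with Proposition \ref{prop 4.1} versus the Hom-computation after \eqref{quan stan}). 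You also correctly locate the genuinely hard step --- showing that the recursive definition of $\Omega_q^L$ on the strata $\Conf^{w(\rho)-\rho}$ produces exactly the Lusztig form $U_q^L(\check N)$, which is where the small-torsion hypothesis enters --- and that step is precisely what is not reproved here but taken from \cite{[Ga6]}.
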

\begin{proof}
By Lefschetz principle and Riemann-Hilbert correspondence, we only need to prove the claim in the setting of constructible sheaves with coefficients in $\mathbb{C}$ in classical topology. 

Following \cite[Section 29.5.1]{[GL1]}, one can associate a factorization algebra with a Hopf algebra $A$. Furthermore, the relative Drinfeld center of $A$-mod is equivalent to the category of factorization modules over the corresponding factorization algebra. Apply it to our case, there is an equivalence of categories
\begin{equation}
    \Rep_q^\mix(\check{G})\simeq \Omega_{q, \textnormal{quant}}^L-\Fact.
\end{equation}
Here $\Omega_{q, \textnormal{quant}}^L$ denotes the $\mathcal{G}^\Lambda$-twisted factorization algebra associated with $U_q^L(\check{\mathfrak{n}})$.
for a certain  factorization algebra . 

However, by the Verdier dual of \cite[Section 2.3.8, Theorem 3.6.2]{[Ga6]}, there is $\Omega_{q, \textnormal{quant}}^L\simeq \Omega^L_q$.
\end{proof}

\subsection{Restatement of Theorem \ref{thm 2.1}}\label{Resta}
The following theorem is equivalent to Theorem \ref{thm 2.1}.
\begin{thm}\label{main theorem 1}
In the setting of D-modules, when $q$ avoids small torsion, there is a functor $F^L$ which establishes a t-exact\footnotemark\ equivalence

\[F^L: \Whit_q(\Fl^{\omega^\rho}_{G})\overset{\sim}{\longrightarrow} \Omega_{q}^{L}-\Fact,\]
and preserves standard objects and costandard objects. \footnotetext{The t-structure on $\Whit_q(\Fl^{\omega^\rho}_{G})$ will be given in Section \ref{t structure}.}
\end{thm}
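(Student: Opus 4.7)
The plan is to construct the functor $F^L$ explicitly, identify a collection of standard and costandard objects on the Whittaker side, and then exploit the highest weight structure to reduce the equivalence to a $\Hom$-computation between these generators. Following the road map of Section \ref{Road map}, I would build $F^L \colon \Whit_q(\Fl^{\omega^\rho}_{G,x}) \to \Omega_q^{L}-\Fact$ by modifying the naive Jacquet-type functor $F^{KD}$ so that, on the $\lambda$-component indexed by $\lambda \in \Lambda$, one takes cohomology of the $!$-tensor product of the input with $j_!(\omega_{S_{\Fl,x}^{-,\lambda}})$ rather than $j_*(\omega_{S_{\Fl,x}^{-,\lambda}})$. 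The first real check is that the output on $\Conf_x$ carries a factorization module structure over $\Omega_q^L$; this should follow formally from the factorization property of the $\omega^\rho$-twisted Ran-ified flag variety (\ref{def Ran Gr and Fl}) together with the compatibility of the gerbe $\mathcal{G}^G$ with that factorization (\ref{gerbe used}). On the Whittaker side, I would define $\Delta_\lambda$ and $\nabla_\lambda$ as the Whittaker averages of appropriately twisted Wakimoto-type sheaves, mirroring in the deformed setting the construction in \cite{[AB]} that sends Wakimoto sheaves to Verma modules.

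Next, I would verify two compatibilities. Compatibility of costandards, namely $F^L(\nabla_\lambda) \simeq \nabla_{\lambda, \Omega_q^L}$, is expected to be tautological: by Lemma \ref{4.1.1}, it reduces to computing the $!$-stalk of $F^L(\nabla_\lambda)$ at the point $\lambda \cdot x \in \Conf_x$, which unwinds to a fiber-wise statement about the character sheaf $\chi$ restricted to a single semi-infinite orbit and is handled by a direct calculation. Compatibility of standards, $F^L(\Delta_\lambda) \simeq \Delta_{\lambda, \Omega_q^L}$, is the serious difficulty, because $*$-stalks of $F^L(\Delta_\lambda)$ are not directly accessible. To get around this, I would introduce a Verdier-type autoduality on $\Whit_q(\Fl^{\omega^\rho}_{G,x})$: since the Whittaker category is defined as strong invariants, its naive dual is a category of coinvariants, and Raskin's equivalence between invariant and coinvariant Whittaker categories (\cite{[Ras2]}) supplies the desired autoduality. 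I would then show that this duality intertwines $F^L$ with $F^{KD}$ (composed with the duality $\Omega_q^L \leftrightarrow \Omega_q^{KD}$ from Definition \ref{ omega kd}), so that the computation of $*$-stalks of $F^L(\Delta_\lambda)$ is converted into a computation of $!$-stalks of $F^{KD}$ applied to the dual object, which is again accessible.

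The principal obstacle is justifying this intertwining, because it requires comparing a $!$-tensor product (used in $F^L$) with a $*$-tensor product (appearing on the dual side). This cannot be done purely locally. The plan, as outlined in the road map, is to pass to the global setting of Section \ref{Whittaker category: global definition}, where $\Whit_q(\Fl^{\omega^\rho}_{G,x})$ is identified with a category of twisted Whittaker D-modules on a Drinfeld-type compactification of $\Bun_N^{\omega^\rho}$ with a pole at $x$; there the relevant projection enjoys a universal local acyclicity property that identifies the two tensor products and therefore establishes the required intertwining. Once both compatibilities are proved, the theorem follows by a tautological highest weight argument: the $\Delta_\lambda$ generate $\Whit_q(\Fl^{\omega^\rho}_{G,x})$, the $\Hom$-pairing with the $\nabla_\mu$ matches the one in Proposition \ref{prop 4.1}, and the t-exactness of $F^L$ is read off from the characterization \eqref{factt} of the t-structure on $\Omega_q^L-\Fact$ together with the t-structure on $\Whit_q$ defined in Section \ref{t structure}. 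The small-torsion hypothesis on $q$ enters only through Lemma \ref{replacement}, which ensures that $\Omega_q^L-\Fact$ is the correct geometric model for $\Rep_q^{\mix}(\check{G})$.
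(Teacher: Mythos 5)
Your proposal follows essentially the same route as the paper: the same construction of $F^L$ as the $j_!$-modification of the Jacquet functor, the same standards/costandards via twisted Wakimoto sheaves, the same reduction of the standard-compatibility to a Verdier duality intertwining $F^L$ and $F^{KD}$ proved globally via universal local acyclicity, and the same highest-weight/t-structure endgame. One small correction: for this statement (whose target is $\Omega_q^{L}-\Fact$ rather than $\Rep_q^{\mix}(\check G)$) the small-torsion hypothesis is used not in Lemma~\ref{replacement} but in Lemma~\ref{15.1}, to identify the factorization algebra $\Omega_q^{L,'}=F^L_{\Gr}(\mathcal F_0)$ actually produced by the construction with $\Omega_q^L$; the equivalence onto $\Omega_q^{L,'}-\Fact$ holds for any $q$ (Theorem~\ref{strong thm}).
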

\begin{rem}
The above theorem holds in the Betti setting if we replace the Whittaker category by the Kirillov model.
\end{rem}

\section{Standard object and duality of Whittaker category}\label{local Whittaker category}

The principal goal of this section is to construct standard objects of $\Whit_q(\Fl^{\omega^\rho}_G)$ (Section \ref{standards and costandards}) and define the duality functor of $\Whit_q(\Fl^{\omega^\rho}_G)$ using a lemma from \cite{[Ras2]} (Section \ref{coinvariant}). They will play an important role in the proof of our main theorem.
\subsection{Relevant orbits}\label{relevant}
To study $\Whit_q(\Fl_G^{\omega^\rho})$, we should at first study  when a $N(\mathcal{K})^{\omega^\rho}$-orbit admits non-zero Whittaker sheaves support on it. 

It is known that the $N(\mathcal{K})^{\omega^\rho}$-orbits in $\Fl_G^{\omega^\rho}$ are indexed by the extended affine Weyl group $W^{\ext}:= W\ltimes \Lambda$. For any $\widetilde{w}\in W^{\ext}$, we denote by $S_{\Fl}^{\widetilde{w}}$ the $N(\mathcal{K})^{\omega^\rho}$-orbit in $\Fl_G^{\omega^\rho}$ of $\widetilde{w}\cdot I^{\omega^\rho}/I^{\omega^\rho}$. Let $\overline{S}_{\Fl}^{\widetilde{w}}$ denote the closure of $S_{\Fl}^{\widetilde{w}}$ in $\Fl_G^{\omega^\rho}$. 

The orbits which admit non-zero $\mathcal{G}^G$-twisted Whittaker sheaves are called relevant orbits. Since $N(\mathcal{K})^{\omega^\rho}$ is ind-pro-unipotent and $\mathcal{G}^G$ on $\Fl_G^{\omega^\rho}$ is $\mathcal{G}^G$-equivariant with respect to the action of $G(\mathcal{K})^{\omega^\rho}$, the gerbe $\mathcal{G}^G$ on any single $N(\mathcal{K})^{\omega^\rho}$-orbit admits a $N(\mathcal{K})^{\omega^\rho}$-equivariant trivialization. As a result, the necessary and sufficient condition for a $N(\mathcal{K})^{\omega^\rho}$-orbit ${S}^{\widetilde{w}}_{\Fl}$ to be relevant is that it is relevant in the non-twisted case. Namely,
\begin{equation}\label{5.1.1}
    \Stab_{N(\mathcal{K})^{\omega^\rho}}(\widetilde{w}I^{\omega^\rho}/I^{\omega^\rho})\subset \Ker(\chi).
\end{equation}

\begin{definition}
Given an element $\widetilde{w}\in W^{\ext}$, we denote by $l(\widetilde{w})$ the dimension of its Iwahori orbit in the affine flags (or the length of $\widetilde{w}$). To be more precise, if $\widetilde{w}=t^\lambda w$, then
\begin{equation}\label{length}
    l(\widetilde{w})=\sum_{\check{\alpha}\in \check{\Delta}^+, w^{-1}(\check{\alpha})>0}|\langle\check{\alpha}, \lambda\rangle|+\sum_{\check{\alpha}\in \check{\Delta}^+,w^{-1}(\check{\alpha})<0}|\langle\check{\alpha},\lambda\rangle+1|.
\end{equation}

\end{definition}

\begin{prop}
A $N(\mathcal{K})^{\omega^\rho}$-orbit $S_{\Fl}^{\widetilde{w}}\subset \Fl^{\omega^\rho}_G$ is relevant if and only if $t^\rho \widetilde{w}$ is the maximal length element in the left coset  $Wt^\rho \widetilde{w}\subset W^{\ext}$.

In particular, for any dominant coweight $\lambda\in \Lambda^+$, $t^\lambda\in W^\ext$ is relevant.
\end{prop}
\begin{proof}
Denote by \[{\widetilde{\check{\Delta}}}^+=\{ \check{\alpha}+ n\delta|\ \check{\alpha}\in \check{\Delta},\ n\geq 0\ \textnormal{if}\ \check{\alpha} \textnormal{ is positive, and }n\geq 1 \textnormal{if } \check{\alpha} \textnormal{ is negative} \}\cup \{n\delta|n\geq 0\}\] the set of  the roots of $G(\mathcal{K})^{\omega^\rho}$ corresponding to $I^{\omega^\rho}$. Here $\delta$ is the positive imaginary root generator. We denote by $\check{\Delta}^+$ (resp. $\check{\Pi}^+$) the set of positive (resp. positive simple) roots of $G$. 

If the $N(\mathcal{K})^{\omega^\rho}$-orbit of $\widetilde{w}\cdot I^{\omega^\rho}/I^{\omega^\rho}\in \Fl_G^{\omega^\rho}$ is relevant, we need the formula \eqref{5.1.1} holds. By a straightforward calculation
\begin{equation}
    \begin{split}
        &\Stab_{N(\mathcal{K})^{\omega^\rho}}(\widetilde{w}\cdot I^{\omega^\rho}/I^{\omega^\rho})= \widetilde{w}I^{\omega^\rho} \widetilde{w}^{-1}\cap N(\mathcal{K})^{\omega^\rho}\subset \Ker(\chi)\\
\Leftrightarrow\quad &\widetilde{w}({\widetilde{\check{\Delta}}}^+)\cap (\check{\Pi}^+-\delta)=\emptyset\\
\Leftrightarrow\quad & \widetilde{w}({\widetilde{\check{\Delta}}}^+)\cap t^{-\rho}(\check{\Pi}^+)=\emptyset\\
\Leftrightarrow\quad & t^{\rho}\widetilde{w}({\widetilde{\check{\Delta}}}^+)\cap \check{\Pi}^+=\emptyset\\
\Leftrightarrow\quad & -\{\check{\Pi}^+\}\subset t^{\rho}\widetilde{w}({\widetilde{\check{\Delta}}}^+)\\
\Leftrightarrow\quad & \widetilde{w}^{-1}t^{-\rho}(\check{\Delta}^+)< 0 .
\end{split}
\end{equation}
According to \cite[Lemma 3.11 a.]{[Kac]}, it means that for any simple reflection $r_i$ of $W$, there is $l(\widetilde{w}^{-1} t^{-\rho} r_i)\leq l(\widetilde{w}^{-1} t^{-\rho})$. We conclude that for any simple reflection $r_i$ of $W$, there is $l(r_i t^{\rho} \widetilde{w})\leq l(t^{\rho} \widetilde{w})$. It means $t^{\rho} \widetilde{w}$ is the unique maximal length element in $W t^{\rho} \widetilde{w}\subset W^{\ext}$.
\end{proof}
\begin{rem}
Relevant orbits of $\Fl_G$ are naturally indexed $\Lambda$. One can check that 
\begin{equation}
    \begin{split}
        \phi: W^{\ext}\to W^{\ext}\\
        \widetilde{w}\mapsto t^{-\rho} \widetilde{w}_l,
    \end{split}
\end{equation}
induces a bijection between $\Lambda$ and the set $\{\widetilde{w},\ \widetilde{w}\ \textnormal{is relevant}\}$. Here $\widetilde{w}_l$ denotes the unique maximal length element in the left coset $W\widetilde{w}$. For example, if $\widetilde{w}=t^0$, then $\phi(\widetilde{w})= t^{-\rho}w_0$; if $\widetilde{w}=t^{\rho+\lambda}$ and $\lambda$ is dominant, then $\phi(\widetilde{w})= t^\lambda$.
\end{rem}
\begin{prop}\label{prop 5.2}
The category of twisted Whittaker sheaves on $S^{\widetilde{w}}_{\Fl}$ is equivalent to $\Vect$ or $0$ depends on if $\widetilde{w}$ is relevant or not, i.e.,
$$ \Whit_q(S_{\Fl}^{\widetilde{w}})\simeq\left\{
\begin{aligned}
&\Vect, \qquad  &\textnormal{if}\ \widetilde{w}\ \textnormal{is relevant,}\\
&0,\qquad  &\textnormal{otherwise}.
\end{aligned}
\right.
$$
\end{prop}
\begin{proof}

There is a $N(\mathcal{K})^{\omega^\rho}$-equivariant trivialization of $\mathcal{G}^G$ on $S_{\Fl}^{\widetilde{w}}$, we have $\Whit_q(S_{\Fl}^{\widetilde{w}})\simeq \Whit(S_{\Fl}^{\widetilde{w}})$. Since $\Stab_{N(\mathcal{K})^{\omega^\rho}}(\widetilde{w}\cdot I^{\omega^\rho}/I^{\omega^\rho})$ is a connected pro-unipotent group, $\Whit(S_{\Fl}^{\widetilde{w}})$ is a full subcategory of $\Shv^{N(\mathcal{K})^{\omega^\rho},\chi}(N(\mathcal{K})^{\omega^\rho})$. The latter category is equivalent to $\Vect$, we only need to show that the fully faithful embedding \[\Whit(S_{\Fl}^{\widetilde{w}})\to \Shv^{N(\mathcal{K})^{\omega^\rho},\chi}(N(\mathcal{K})^{\omega^\rho})\] is actually an equivalence if $\widetilde{w}$ is relevant and is $0$ otherwise. 

If $\widetilde{w}$ is relevant,  $\Stab_{N(\mathcal{K})^{\omega^\rho}}(\widetilde{w}\cdot I^{\omega^\rho}/I^{\omega^\rho})\subset \Ker (\chi)$, so any Whittaker sheaf on $N(\mathcal{K})^{\omega^\rho}$ descends to a Whittaker sheaf on $S_{\Fl}^{\widetilde{w}}\simeq N(\mathcal{K})^{\omega^\rho}/ \Stab_{N(\mathcal{K})^{\omega^\rho}}(\widetilde{w}\cdot I^{\omega^\rho}/I^{\omega^\rho})$. If $\widetilde{w}$ is not relevant, we should prove that for any $\mathcal{F}\in \Whit(S_{\Fl}^{\widetilde{w}})$, there is $\mathcal{F}=0$. Since the action of $N(\mathcal{K})^{\omega^\rho}$ is transitive, we only need to show that the stalk of $\mathcal{F}$ is $0$ at $\widetilde{w}\cdot I^{\omega^\rho}/I^{\omega^\rho}$. It follows immediately from the fact that this fiber is equivariant with respect to $\Stab_{N(\mathcal{K})^{\omega^\rho}}(\widetilde{w}\cdot I^{\omega^\rho}/I^{\omega^\rho})$ against a non-trivial character.


\end{proof}
\begin{definition}\label{ver st}
Assume that $\widetilde{w}$ is relevant, we define

\[\Delta_{\widetilde{w}}^{\ver}:= \Av_!^{N(\mathcal{K})^{\omega^\rho},\chi}(\delta_{\widetilde{w}})[-l(w_0 t^\rho\widetilde{w})+ l(t^{-\rho}w_0)],\]
and
\[\nabla_{\widetilde{w}}^{\ver}:= \bar{j}_{{\widetilde{w}}, \Fl,*}\circ \bar{j}_{{\widetilde{w}}, \Fl}^!(\Delta_{\widetilde{w}}^{\ver}),\]
where $j_{{\widetilde{w}}, \Fl}$ is the locally closed embedding \[ j_{{\widetilde{w}}, \Fl}: S_{\Fl}^{\widetilde{w}}\to \Fl^{\omega^\rho}_G.\]
{They are defined up to tensoring by a line, i.e., depending on the trivialization of $\cG^G$ at $\widetilde{w}\in \Fl_G^{\omega^{\rho}}$.}

\begin{rem}
Here the superscript ''${\ver}$'' means \textit{Verma}. We expect, under Conjecture \ref{conj 2}, $\{\Delta_{\widetilde{w}}^{\ver}\}$ correspond to Verma modules in the category of modules over Kac-Moody Lie algebra.
\end{rem}
\end{definition}
The object $\Delta_{\widetilde{w}}^{\ver}$ is the $!$-pushforward of the generator of $\Whit_q(S_{\Fl}^{\widetilde{w}})$, and $\nabla_{\widetilde{w}}^{\ver}$ is the $*$-pushforward of the generator of $\Whit_q(S_{\Fl}^{\widetilde{w}})$. They have the following properties.

\begin{prop}\label{naive standard generate}\leavevmode
\begin{enumerate}[label={\upshape(\arabic*)}]
    \item $\{\Delta_{\widetilde{w}}^{\ver}, \widetilde{w}\ \textnormal{is relevant}\}$ (resp. $\{\nabla_{\widetilde{w}}^{\ver}, \widetilde{w}\ \textnormal{is relevant}\}$) compactly generate $\Whit_q(\Fl_G^{\omega^\rho})$.
    \item \[\mathcal{H}om_{\Whit_q(\Fl_G^{\omega^\rho})}(\Delta_{\widetilde{w}}^{\ver}, \nabla_{\widetilde{w}'}^{\ver})\simeq\left\{ 
\begin{aligned}
    0,\qquad \textnormal{if}\ \widetilde{w}\neq \widetilde{w}',\\
    \mathsf{e}, \qquad \textnormal{if}\ \widetilde{w}= \widetilde{w}'.
\end{aligned}
\right.  \] 
\end{enumerate}
\end{prop}
\begin{proof}
Since $j_{\widetilde{w}, \Fl,!}$ is the left adjoint functor of a continuous functor (i.e., $j^!_{\widetilde{w}, \Fl}$), it preserves compactness. In particular, $\Delta_{\widetilde{w}}^{\ver}$ is compact. Note that for $\mathcal{F}\in \Whit_q(\Fl_G)$, $\mathcal{H}om_{\Whit_q(\Fl_G^{\omega^\rho})}(\Delta_{\widetilde{w}}^{\ver}, \mathcal{F})$ is isomorphic to the $!$-stalks of $\mathcal{F}$ at $\widetilde{w}$ up to a shift. Hence, if $\mathcal{H}om_{\Whit_q(\Fl_G^{\omega^\rho})}(\Delta_{\widetilde{w}}^{\ver}, \mathcal{F})=0$ for any relevant $\widetilde{w}$, then $\mathcal{F}=0$.

To show the claim for $\{\nabla_{\widetilde{w}}^{\ver}, \widetilde{w}\ \textnormal{is relevant}\}$, note that the closure of any $N(\mathcal{K})^{\omega^\rho}$-orbit in $\Fl^{\omega^\rho}_G$ only contains finite many relevant orbits. Hence, any $\nabla_{\widetilde{w}}^{\ver}$ is a finite extension of objects in $\{\Delta_{\widetilde{w}}^{\ver}[k], \widetilde{w}\ \textnormal{is relevant}, k\in \mathbb{Z}\}$. In particular, $\nabla_{\widetilde{w}}^{\ver}$ is compact. Similarly, any $\Delta_{\widetilde{w}}^{\ver}$ is a finite extension of objects in $\{\nabla_{\widetilde{w}}^{\ver}[k], \widetilde{w}\ \textnormal{is relevant}, k\in \mathbb{Z}\}$. Hence, the objects $\nabla_{\widetilde{w}}^{\ver}$ compactly generate $\Whit_q(\Fl_G^{\omega^\rho})$ as the objects $\Delta_{\widetilde{w}}^{\ver}$ do.

The claim \textup{(2)} directly follows from the adjointness of $!$-pushforward and $!$-pullback, and Proposition \ref{prop 5.2}.
\end{proof}

As a corollary, an object $\mathcal{F}\in \Whit_q(\Fl_G^{\omega^\rho})$ is compact if and only if $\mathcal{F}$ is supported on {finitely many ${S}^{\widetilde{w}}_{\Fl}$}, and its restriction is compact in  {$\Whit_q({S}_{\Fl}^{\widetilde{w}})$, for any $\widetilde{w}$}.

\begin{rem}
We can define a highest weight category structure on $\Whit_q(\Fl_G^{\omega^\rho})$ with standards $\{\Delta_{\widetilde{w}}^{\ver}\}$ and costandards $\{\nabla_{\widetilde{w}}^{\ver}\}$. It is the highest weight category structure in \cite{[CDR]}. However, it is different from the one used in this paper. For generic $q$, we expect that they are essentially the same (up to a convolution).
\end{rem}


\subsection{Right equivariant sheaf}\label{Right monodromic D-modules}
Recall the main theorem of \cite{[AB]},
\[\Whit(\Fl_G)\simeq \QCoh(\check{\mathfrak{n}}/\check{B})\simeq \Rep^{\mix}(\check{G}).\]
It is not only an equivalence of plain categories, but also compatible with highest weight category structures on both sides. The standard objects in $\Rep^{\mix}(\check{G})$ are Verma modules $V^{\mix}_\lambda$ and the standard objects in $\Whit(\Fl_G)$ are given by the !-averaging of the BMW\footnotemark\ sheaves ${{J}}_\lambda$ \footnotetext{$BMW$ stands for R.Bezrukavnikov-I.Mirkovi\'c-M.Wakimoto}(i.e., Wakimoto sheaves). Hence, in the twisted case, we need to  define the twisted BMW sheaves ${{J}}_\lambda$ and use them to define standard objects $\Delta_\lambda$ in $\Whit_q(\Fl_G^{\omega^\rho})$ by $!$-averaging.
\subsubsection{}
Let $I^0$ be the unipotent radical of the $\omega^\rho$-twisted Iwahori subgroup $I^{\omega^\rho}$, and let $T^{\omega^\rho}$ be $I^{\omega^\rho}/I^0$. When there is no twisting, ${{J}}_\lambda$ is Iwahori-equivariant. However, there are 'much fewer' Iwahori-equivariant objects in the twisted case. 

Indeed, since $I^0$ is pro-unipotent, there is a unique (up to a non-canonical isomorphism) $I^0$-equivariant trivialization of the gerbe $\mathcal{G}^G$ on any Iwahori orbit \[\Fl_G^{\widetilde{w}}:= I^{\omega^\rho}\cdot \widetilde{w} \cdot I^{\omega^\rho}/I^{\omega^\rho}\] of $\Fl^{\omega^\rho}_G$. In the twisted case, the $I^0$-equivariant trivialization of $\mathcal{G}^G$ on $\Fl_G^{\widetilde{w}}$ is not necessarily  Iwahori-equivariant. Instead, it is equivariant with respect to a certain character of $T^{\omega^\rho}$.  For example,  when the quadratic form $q$ is generic, there is no $I^{\omega^\rho}$-equivariant sheaf on $\Fl_G^{t^\lambda w}$ unless $\lambda=0$. 

The solution to fix this problem is to consider $I^0$-equivariant sheaves on $\widetilde{\Fl}:=G(\mathcal{K})^{\omega^\rho}/I^0$, rather than Iwahori-equivariant sheaves on $\Fl_G$.


\subsubsection{}\label{5.2.2}
The exact sequence
\[1\to I^0\to I^{\omega^\rho}\to T^{\omega^\rho}\to 1\]
is split. Hence, we may consider the right action of $T^{\omega^\rho}$ on $\widetilde{\Fl}:=G(\mathcal{K})^{\omega^\rho}/I^0$. In order to define BMW sheaves in the twisted case, we need to consider sheaves on $\widetilde{\Fl}$ which are right $T^{\omega^\rho}$-equivariant against a character. This idea appears in \cite{[Be3]} and \cite{[LY]}.

Let $b(-,-)$ be the symmetric bilinear form associated with the quadratic form $q$, i.e., in the D-module setting,
\begin{equation}
    b(\lambda_1,\lambda_2):= q(\lambda_1+\lambda_2)- q(\lambda_1)-q(\lambda_2);
\end{equation}
in the Betti setting and the $\ell$-adic setting, 
\begin{equation}
    b(\lambda_1,\lambda_2):= q(\lambda_1+\lambda_2)\cdot q(\lambda_1)^{-1}\cdot q(\lambda_2)^{-1}.
\end{equation}
 In the D-module setting (resp. Betti setting and $\ell$-adic setting), we denote by 
    \begin{equation}
    b_\lambda:=b(\lambda,-)\colon\ \Lambda\to \mathsf{k}/\mathbb{Z}\ (\textnormal{resp.}\ \mathsf{e}^{\mathsf{torsion},\times}(-1)),
\end{equation}
the associated character of $T^{\omega^\rho}$. With some abuse of notations, we denote by $b_\lambda$ the associated Kummer sheaf on $T^{\omega^\rho}$. Its pullback to $I^{\omega^\rho}$ is also denoted by $b_\lambda$.
\begin{definition}
We define \[\Shv_{\mathcal{G}^G}(\widetilde{\Fl})^{I,\lambda}_\mu:= \Shv_{\mathcal{G}^G}(I^{\omega^\rho},b_\lambda\backslash \widetilde{\Fl}/ T^{\omega^\rho},b_\mu)\] as the category of left $(I^{\omega^\rho},b_\lambda)$-equivariant and right $(T^{\omega^\rho},b_\mu)$-equivariant twisted sheaves on $\widetilde{\Fl}$. 

Similarly, we define \[\Shv_{\mathcal{G}^G}(\widetilde{\Fl})^{I^0}_\mu:=\Shv_{\mathcal{G}^G}(I^0\backslash \widetilde{\Fl}/ T^{\omega^\rho},b_\mu) \] as the category of left $I^0$-equivariant and right $(T^{\omega^\rho},b_\mu)$-equivariant twisted sheaves on $\widetilde{\Fl}$.

When $\mu=0$, we omit $\mu$ in the notation. In this case, we can realize right $T^{\omega^\rho}$-equivariant objects on $\widetilde{\Fl}$ as  objects on $\Fl_G^{\omega^\rho}$.
\end{definition}




Let $\widetilde{\Fl}^{\widetilde{w}}$ be the preimage of ${\Fl}^{\widetilde{w}}_G$ in $\widetilde{\Fl}$. {We have equivalences
\[\Shv_{\mathcal{G}^G}(\widetilde{\Fl}^{\widetilde{w}})_{\mu}^{I^0}\simeq \Shv_{\mathcal{G}^G}((\widetilde{w}I^{\omega^\rho}/I^0)/T^{\omega^\rho}, b_\mu)\simeq \Shv_{\mathcal{G}^G}(\widetilde{w}I^{\omega^\rho}/I^{\omega^\rho}, b_\mu)\simeq \Shv_{\mathcal{G}^G}(\widetilde{w}T(\cO)^{\omega^\rho}/T(\cO)^{\omega^\rho}, b_\mu).\]
Given an identification of $\Shv_{\mathcal{G}^G}(\widetilde{w}T(\cO)^{\omega^\rho}/T(\cO)^{\omega^\rho}, b_\mu)$ with $\Vect$ which preserves the cohomological degree of $!$-fiber,} {w}e denote by $(c_{\widetilde{w}})_{\mu}$ the twisted sheaf on $\widetilde{\Fl}^{\widetilde{w}}$ {corresponding to $\mathsf{e}\in \Vect$. Different identifications will change the resulting twisted sheaf $(c_{\widetilde{w}})_{\mu}$ by tensoring by a line in cohomological degree $0$, so we can regard $(c_{\widetilde{w}})_{\mu}$ as an object defined up to tensoring by a line. } 
\begin{rem}
In the case $\mu=0$, $\Shv_{\mathcal{G}^G}(\widetilde{\Fl}^{\widetilde{w}})_{\mu}^{I^0}\simeq \Shv_{\mathcal{G}^G}({\Fl^{\widetilde{w}}_G})^{I^0}\simeq \Shv({\Fl^{\widetilde{w}}_G})^{I^0}$, and $(c_{\widetilde{w}})_{\mu}=c_{\widetilde{w}}$ is just the (twisted) {dualizing sheaf} on $\Fl_G^{\widetilde{w}}$ {with respect to the unique (up to a non-canonical isomorphism) $I^0$-equivariant trivialization on ${\Fl^{\widetilde{w}}_G}$}.
\end{rem}
\begin{definition}
Let $({{J}}_{\widetilde{w},!})_{\mu}\ (resp, ({{J}}_{\widetilde{w},*})_{\mu})$ be the ! (resp, *)-extension of $(c_{\widetilde{w}})_{\mu}[{-}l(\widetilde{w})]$ along the locally closed embedding \[ \widetilde{\Fl}^{\widetilde{w}}\to \widetilde{\Fl}.\]

\end{definition}



\subsection{Convolution product}\label{convolution product}
 Denote by
 \begin{equation}
     \widetilde{\pi}: G(\mathcal{K})^{\omega^\rho}\to G(\mathcal{K})^{\omega^\rho}/I^0\simeq \widetilde{\Fl}
 \end{equation}  the natural projection from $G(\mathcal{K})^{\omega^\rho}$ to $\widetilde{\Fl}$.

For a right $(I^{\omega^\rho}, b_\lambda)$-equivariant sheaf $\mathcal{F}_1\in \Shv_{\mathcal{G}^G}(\widetilde{\Fl})_\lambda$ and a left $(I^{\omega^\rho}, b_\lambda)$-equivariant and right $(I^{\omega^\rho}, b_\eta)$-equivariant sheaf $\mathcal{F}_2\in \Shv_{\mathcal{G}^G}(\widetilde{\Fl})^{I,\lambda}_{\eta}$. Consider the following diagram:
\begin{equation*}
\xymatrix{
  \cdots \ar@<-1ex>[r] \ar@<1ex>[r]\ar[r]&G(\mathcal{K})^{\omega^\rho}\mathop{\times} I^{\omega^\rho}\times \widetilde{\Fl}\ar@<-.5ex>[r] \ar@<.5ex>[r] & G(\mathcal{K})^{\omega^\rho}\times \widetilde{\Fl}\ar[r]\ar[r]^\pi&  G(\mathcal{K})^{\omega^\rho}\mathop{\times}\limits^{I^{\omega^\rho}} \widetilde{\Fl}.
}
\end{equation*}
Since the equivariant conditions translate to the descent condition, the external product $\widetilde{\pi}^!(\mathcal{F}_1)\boxtimes \mathcal{F}_2$ on $G(\mathcal{K})^{\omega^\rho}\times \widetilde{\Fl}$ descends to a twisted sheaf $\widetilde{\pi}^!(\mathcal{F}_1)\widetilde{\boxtimes} \mathcal{F}_2$ on $G(\mathcal{K})^{\omega^\rho}\mathop{\times}\limits^I \widetilde{\Fl}$, such that \[\pi^!(\widetilde{\pi}^!(\mathcal{F}_1)\widetilde{\boxtimes} \mathcal{F}_2)\simeq \widetilde{\pi}^!(\mathcal{F}_1)\boxtimes \mathcal{F}_2.\]

By the multiplicative property of $\mathcal{G}^G$, the pullback of $\mathcal{G}^G$ along the multiplication map \[m: G(\mathcal{K})^{\omega^\rho}\times G(\mathcal{K})^{\omega^\rho}/I^0\to G(\mathcal{K})^{\omega^\rho}/I^0\] is $\mathcal{G}^G\boxtimes \mathcal{G}^G$. In particular, the pushforward of a $\mathcal{G}^G\boxtimes \mathcal{G}^G$-twisted sheaf along the multiplication map is $\mathcal{G}^G$-twisted.
\begin{definition}

Set
\begin{equation}
    \mathcal{F}_1\mathop{\star}\limits \mathcal{F}_2:= m_*((\widetilde{\pi}^!(\mathcal{F}_1)\widetilde{\boxtimes} \mathcal{F}_2))\in \Shv_{\mathcal{G}^G}(\widetilde{\Fl})_{\eta}.
\end{equation}
\end{definition}


\begin{rem}
Since $m$ is (ind-) proper, it does not matter whether we consider $!$- or $*$-pushforward of $m$.
\end{rem}

 We denote by $\overline{\widetilde{w}}$ the image of $\widetilde{w}$ under the first projection $W^{\ext}= \Lambda\rtimes W\to \Lambda$. For $\widetilde{w}=t^\lambda w\in W^{\ext}$, $\overline{\widetilde{w}}=\lambda$.
\begin{rem}
$({{J}}_{\widetilde{w},?})_{\mu}$ ( ?=  ! or *) is left $(I^{\omega^\rho}, b_{\overline{\widetilde{w}t^\mu}})$-equivariant and right $(T^{\omega^\rho}, b_\mu)$-equivariant.
\end{rem} 

In order to define the twisted BMW sheaf for any coweight $\lambda\in \Lambda$, we need the following lemma which is an analog of \cite[Lemma 8]{[AB]}, and \cite[Lemma 3.4, 3.5]{[LY]}.

\begin{lem}\label{BMW}
For $\widetilde{w}, \widetilde{w}'\in W^{\ext}$ and $\mu\in \Lambda$, if $l(\widetilde{w}\widetilde{w}')=l(\widetilde{w})+l(\widetilde{w}')$,  then 

\textup{(1)} \[({{J}}_{\widetilde{w},!})_{\overline{\widetilde{w}'t^\mu}}\star ({{J}}_{\widetilde{w}',!})_{\mu}\simeq ({{J}}_{\widetilde{w}\widetilde{w}',!})_{\mu},\]
\[({{J}}_{\widetilde{w},*})_{\overline{\widetilde{w}'t^\mu}}\star ({{J}}_{\widetilde{w}',*})_{\mu}\simeq ({{J}}_{\widetilde{w}\widetilde{w}',*})_{\mu}.\]

\textup{(2)} $ ({{J}}_{\widetilde{w},!})_{\overline{\widetilde{w}^{-1}\mu}}\star ({{J}}_{\widetilde{w}^{-1},*})_{\mu}\simeq ({{J}}_0)_\mu\simeq  ({{J}}_{\widetilde{w},*})_{\overline{\widetilde{w}^{-1}\mu}}\star ({{J}}_{\widetilde{w}^{-1},!})_{\mu}$.

\end{lem}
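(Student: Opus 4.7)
The plan is to adapt the arguments of \cite[Lemma 8]{[AB]} and \cite[Lemmas 3.4--3.5]{[LY]} to the present twisted setting, the main new ingredient being the systematic tracking of the $\mathcal{G}^G$-twisting on $\widetilde{\Fl}$ and of the right $(T^{\omega^\rho}, b_\mu)$-monodromicity through the convolution product. The essential geometric input is that, under the length-additivity hypothesis, the multiplication map identifies the convolution Schubert cell with an honest Schubert cell.

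For part (i), associativity of $\star$ together with the length-additivity hypothesis reduces the statement to the case in which $\widetilde{w}$ is either a length-zero element $\omega \in \Omega$ or an affine simple reflection $s$. The length-zero case is automatic, since multiplication by $\omega$ is an isomorphism of $\widetilde{\Fl}$ sending Schubert cells to Schubert cells. For $\widetilde{w} = s$ a simple reflection with $l(s\widetilde{w}') = l(\widetilde{w}') + 1$, the restriction of the multiplication map
\begin{equation*}
 I^{\omega^\rho} s\, I^{\omega^\rho} \overset{I^{\omega^\rho}}{\times}\widetilde{\Fl}^{\widetilde{w}'} \longrightarrow \widetilde{\Fl}^{s\widetilde{w}'}
\end{equation*}
is an isomorphism, and the closure of the source maps properly and birationally to the closure of the target. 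By base change along the convolution diagram \eqref{convo dia}, the convolution of the $!$-extension of $(c_s)_{\overline{\widetilde{w}'t^\mu}}[1]$ with the $!$-extension of $(c_{\widetilde{w}'})_\mu[l(\widetilde{w}')]$ descends to a $\mathcal{G}^G$-twisted D-module whose restriction to $\widetilde{\Fl}^{s\widetilde{w}'}$ is the canonical generator $(c_{s\widetilde{w}'})_\mu[l(s\widetilde{w}')]$ and whose $!$-stalks on strictly smaller cells vanish; exchanging $!$ and $*$ handles the second identity. Writing $\widetilde{w}' = t^\nu w'$ and $\widetilde{w} = t^\sigma w$, the identity $\overline{\widetilde{w}\cdot t^{\overline{\widetilde{w}'t^\mu}}} = \sigma + w(\nu + w'\mu) = \overline{\widetilde{w}\widetilde{w}'t^\mu}$ in $W^{\ext} \simeq \Lambda \rtimes W$ confirms that the character subscript of the output has the predicted value.

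For part (ii), fix a reduced expression $\widetilde{w} = \omega \cdot s_{i_1} \cdots s_{i_n}$ and use part (i) iteratively to factor $(\mathfrak{J}_{\widetilde{w},!})$ and $(\mathfrak{J}_{\widetilde{w}^{-1},*})$ as ordered $\star$-products of the corresponding simple reflection pieces (and, for $\widetilde{w}^{-1}$, of $\omega^{-1}$). The resulting convolution telescopes, provided one knows that $(\mathfrak{J}_0)_?$ is the unit of $\star$ and that, for each affine simple reflection $s$ appearing,
\begin{equation*}
 (\mathfrak{J}_{s,!})_{\overline{s\mu}}\star (\mathfrak{J}_{s,*})_{\mu}\simeq (\mathfrak{J}_{0})_{\mu}.
\end{equation*}
This last identity is verified on the closed subvariety $\overline{\widetilde{\Fl}^{s}}$, which is a $\mathbb{P}^1$-fibration over $\widetilde{\Fl}^e$: using the recollement $\widetilde{\Fl}^e \hookrightarrow \overline{\widetilde{\Fl}^s} \hookleftarrow \widetilde{\Fl}^s$, one computes the $!$-stalks of the convolution at the two strata. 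The contribution from $\widetilde{\Fl}^s$ cancels by a Cousin-type long exact sequence, the $!$- and $*$-extensions being complementary, while the contribution at $\widetilde{\Fl}^e$ yields exactly the Kummer D-module $b_\mu$ on $T^{\omega^\rho}$, that is $(\mathfrak{J}_{0})_{\mu}$.

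The principal obstacle is the bookkeeping of trivializations of $\mathcal{G}^G$: on each Schubert cell the $I^0$-equivariant trivialization is unique only up to scaling, and one must ensure that the chosen trivializations are compatible with the convolution product. This compatibility follows from the multiplicative property $m^*\mathcal{G}^G \simeq \mathcal{G}^G \boxtimes \mathcal{G}^G$ of the gerbe together with the fact noted in Section \ref{Right monodromic D-modules} that $\mathcal{G}^G$ is $I^0$-equivariantly trivializable over each Iwahori orbit. Once this compatibility is pinned down, the entire argument reduces, character by character, to the untwisted computations of \cite{[AB]} and \cite{[LY]}.
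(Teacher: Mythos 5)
Your overall architecture coincides with the paper's: part (i) rests on the multiplication map being an isomorphism over the top convolution cell when lengths add, and part (ii) reduces to an affine simple reflection and a computation on $\overline{I^{\omega^\rho}sI^{\omega^\rho}/I^{\omega^\rho}}\simeq\mathbb{P}^1$. (Your extra reduction of (i) to simple reflections and length-zero elements is harmless but unnecessary; the paper argues directly for general $\widetilde{w},\widetilde{w}'$, since the Iwahori--Cartan decomposition already gives the isomorphism $I^{\omega^\rho}\widetilde{w}I^{\omega^\rho}\overset{I^{\omega^\rho}}{\times}I^{\omega^\rho}\widetilde{w}'I^{\omega^\rho}/I^0\to I^{\omega^\rho}\widetilde{w}\widetilde{w}'I^{\omega^\rho}/I^0$ whenever $l(\widetilde{w}\widetilde{w}')=l(\widetilde{w})+l(\widetilde{w}')$.)

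The one genuine gap is in part (ii), at the assertion that the contribution from the open stratum ``cancels by a Cousin-type long exact sequence, the $!$- and $*$-extensions being complementary.'' The $!$-stalk of $(\mathfrak{J}_{s,!})_{\overline{s\mu}}\star(\mathfrak{J}_{s,*})_{\mu}$ at the point $s$ is not a formal recollement cancellation: since $m$ is proper, base change identifies it with the \emph{global} cohomology, over the convolution fiber $m^{-1}(\bar{s})\simeq\mathbb{P}^1$, of $\mathfrak{J}_{s,!}\otimes^!\mathfrak{s}_{s}(\mathfrak{J}_{s,*})$, which is the $!$-pushforward to $\mathbb{P}^1$ of the $*$-extension from $\mathbb{G}_m$ to $\mathbb{A}^1$ of a rank-one $\mathbb{G}_m$-monodromic (Kummer-twisted) object. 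Its acyclicity is a genuine cohomological computation, not a consequence of $!$/$*$ complementarity; the paper obtains it from Braden's theorem (hyperbolic localization for the $\mathbb{G}_m$-action), and one could alternatively argue by hand from the vanishing of $H^{*}(\mathbb{G}_m,\mathcal{L})$ and $H^{*}_{c}(\mathbb{G}_m,\mathcal{L})$ for the relevant Kummer local system, treating the case of trivial monodromy separately via the standard triangle $j_!\to j_*\to i_*i^*j_*$. Without some such input your telescoping argument in (ii) does not close. The remaining ingredients — the identification of the stalk at $e$ via the projection formula, the character bookkeeping in $W^{\ext}\simeq\Lambda\rtimes W$, and the compatibility of the $I^0$-equivariant trivializations of $\mathcal{G}^G$ with the multiplicativity $m^*\mathcal{G}^G\simeq\mathcal{G}^G\boxtimes\mathcal{G}^G$ — match the paper's proof.
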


\begin{proof}

The proof is similar to the non-twisted case. Here we sketch the proof.

For (1), we only prove the first claim, the second one follows from the same argument.


By the Cartan decomposition of $G(\mathcal{K})^{\omega^\rho}$ by the Iwahori subgroup $I^{\omega^\rho}$, the multiplication map 
\begin{equation}\label{mult m}
    G(\mathcal{K})^{\omega^\rho}\mathop{\times}\limits^{I^{\omega^\rho}} G(\mathcal{K})^{\omega^\rho}/I^0\to G(\mathcal{K})^{\omega^\rho}/I^0
\end{equation} is an isomorphism after restricting to $I^{\omega^\rho} \widetilde{w} I^{\omega^\rho}\mathop{\times}\limits^{I^{\omega^\rho}} I^{\omega^\rho} \widetilde{w}' I^{\omega^\rho}/I^0\to I^{\omega^\rho} \widetilde{w}\widetilde{w}'I^{\omega^\rho}/I^0$. Note that the trivializations of gerbes on both sides are compatible. Now the first assertion follows from the fact that both $ ({{J}}_{\widetilde{w},!})_{\overline{\widetilde{w}'t^\mu}}\star ({{J}}_{\widetilde{w}',!})_{\mu}$ and  $({{J}}_{\widetilde{w}\widetilde{w}',!})_{\mu}$ have zero $*$-stalks outside $I^{\omega^\rho} \widetilde{w}\widetilde{w}'I^{\omega^\rho}/I^{\omega^\rho}\subset \Fl_G^{\omega^\rho}$.

To prove \textup{(2)}, by \textup{(1)}, we can reduce the question to the case when $\widetilde{w}$ is a simple reflection. In this case, $\overline{I^{\omega^\rho} \widetilde{w}I^{\omega^\rho}/I^{\omega^\rho}}\subset \Fl_G^{\omega^\rho}$ is isomorphic to $\mathbb{P}^1$.

The support of $({{J}}_{\widetilde{w},!})_{\overline{\widetilde{w}^{-1}\mu}}\star ({{J}}_{\widetilde{w}^{-1},*})_{\mu}$ is contained in $\overline{I^{\omega^\rho}\widetilde{w}I^{\omega^\rho}\cdot I^{\omega^\rho} \widetilde{w}^{-1}I^{\omega^\rho}/I^0}$, which is equal to the disjoint union $I^{\omega^\rho}\widetilde{w}I^{\omega^\rho}/I^0\sqcup I^{\omega^\rho}/I^0$. By the equivariance property of $({{J}}_{\widetilde{w},!})_{\overline{\widetilde{w}^{-1}\mu}}\star ({{J}}_{\widetilde{w}^{-1},*})_{\mu}$, we should prove that the $!$-stalks of $ ({{J}}_{\widetilde{w},!})_{\overline{\widetilde{w}^{-1}\mu}}\star ({{J}}_{\widetilde{w}^{-1},*})_{\mu}$ is zero at $\widetilde{w}$ and is $\mathsf{e}$ at $1$.

For $g_0\in G(\mathcal{K})^{\omega^\rho}$, taking pushforward along the map $G(\mathcal{K})^{\omega^\rho}\to G(\mathcal{K})^{\omega^\rho}: g\mapsto g_0g^{-1}$ induces a functor
\begin{equation}
    \iota_{g_0}: \Shv_{\mathcal{G}^G}(\widetilde{\Fl})^{I,\lambda}\to \Shv_{(\mathcal{G}^{G})^{-1}\otimes \mathcal{G}^G_{g_0}}(\widetilde{\Fl})_{-\lambda},
\end{equation}
where $\mathcal{G}_{g_0}^G$ denotes the fiber of $\mathcal{G}^G$ at $g_0$.

Note that $({{J}}_{\widetilde{w},!})_{\overline{\widetilde{w}^{-1}\mu}}\mathop{\otimes}\limits^! \iota_{g^0}(({{J}}_{\widetilde{w}^{-1},*})_{\mu})$ is right $T^{\omega^\rho}$-equivariant, so it {descends} to a twisted sheaf on ${\Fl}_G^{\omega^\rho}$. We denote the resulting sheaf by the same notation.



    The projection of $g_0$ in $G(\mathcal{K})^{\omega^\rho}/I^0$ is denoted by $\bar{g}_0$. The preimage of $\bar{g}_0$ along \eqref{mult m} is identified with $\Fl_G^{\omega^\rho}$ via the composition of the first projection map $p_1: G(\mathcal{K})^{\omega^\rho}\mathop{\times}\limits^{I^{\omega^\rho}} G(\mathcal{K})^{\omega^\rho}/I^0\to G(\mathcal{K})^{\omega^\rho}/I^0$ and $G(\mathcal{K})^{\omega^\rho}/I^0\to G(\mathcal{K})^{\omega^\rho}/I^{\omega^\rho}$.

    Under this identification, the $!$-restriction of $ ({{J}}_{\widetilde{w},!})_{\overline{\widetilde{w}^{-1}\mu}}\star ({{J}}_{\widetilde{w}^{-1},*})_{\mu}$ to the preimage of $\bar{g}_0$ is identified with

\[ ({{J}}_{\widetilde{w},!})_{\overline{\widetilde{w}^{-1}\mu}}\mathop{\otimes}\limits^! \iota_{g^0}(({{J}}_{\widetilde{w}^{-1},*})_{\mu})\in \Shv_{\mathcal{G}^G_{g_0} }({\Fl}_G^{\omega^\rho}).\]

By the base change theorem, the $!$-stalks of $({{J}}_{\widetilde{w},!})_{\overline{\widetilde{w}^{-1}\mu}}\star ({{J}}_{\widetilde{w}^{-1},*})_{\mu}$ at the point $\bar{g}_0$ is isomorphic to \[H(\Fl^{\omega^\rho}_G, ({{J}}_{\widetilde{w}},!)_{\overline{\widetilde{w}^{-1}\mu}}\mathop{\otimes}\limits^! \iota_{g^0}(({{J}}_{\widetilde{w}^{-1},*})_{\mu}))\in \Vect_{\mathcal{G}^G_{g_0} }.\] 

In particular, the $!$-stalks of $ ({{J}}_{\widetilde{w},!})_{ \overline{\widetilde{w}^{-1}\mu}}\star ({{J}}_{\widetilde{w}^{-1},*})_{\mu}$ at $\widetilde{w}$ is isomorphic to
\[H(\Fl^{\omega^\rho}_G, ({{J}}_{\widetilde{w},!})_{ \overline{\widetilde{w}^{-1}\mu}}\mathop{\otimes}\limits^!\iota_{\widetilde{w}}(({{J}}_{\widetilde{w}^{-1},*})_{\mu})).\]

 Under the identification $\overline{I^{\omega^\rho} \widetilde{w} I^{\omega^\rho}/I^{\omega^\rho}}\simeq \mathbb{P}^1$, $({{J}}_{\widetilde{w},!})_{ \overline{\widetilde{w}^{-1}\mu}}\mathop{\otimes}\limits^!\iota_{\widetilde{w}}({}_{ }({{J}}_{\widetilde{w}^{-1},*})_{\mu})$ is identified with the
$*$-extension from a $\mathbb{G}_m$-equivariant (with respect to certain Kummer sheaf) sheaf on $\mathbb{G}_m:= \mathbb{P}^1\setminus \{0,\infty\}$ to $\mathbb{A}^1:=\mathbb{P}^1\setminus \{\infty\}$, and then $!$-pushforward to $\mathbb{P}^1$. By Braden's theorem (see \cite[Proposition 3.2.2]{[DG]}), its cohomology equals its $*$-stalks at $\infty$, which equals zero. 

Over the point $1\in \Fl_G^{\omega^\rho}$, the $!$-stalks can be calculated by \[H(\Fl^{\omega^\rho}_G, ({{J}}_{\widetilde{w},!})_{ \overline{\widetilde{w}^{-1}\mu}}\mathop{\otimes}\limits^!\iota_{1}({}_{ }({{J}}_{\widetilde{w}^{-1},*})_{\mu})).\]

Note that the restriction of $({{J}}_{\widetilde{w},!})_{ \overline{\widetilde{w}^{-1}\mu}}\mathop{\otimes}\limits^!\iota_{1}({}_{ }({{J}}_{\widetilde{w}^{-1},*})_{\mu})$ to $\mathbb{A}^1$ is isomorphic to the constant object. By the projection formula, $({{J}}_{\widetilde{w},!})_{ \overline{\widetilde{w}^{-1}\mu}}\mathop{\otimes}\limits^!\iota_{\widetilde{w}}({}_{ }({{J}}_{\widetilde{w}^{-1},*})_{\mu})$ is the $*$-pushforward of the constant sheaf on $\mathbb{A}^1$, and its cohomology is $\mathsf{e}$.
\end{proof}

\subsection{Twisted BMW sheaf}\label{twisted BMW D-modules}
With the preparation given in the last several sections, finally, we can construct the BMW sheaves in the twisted cases.
\begin{definition}
Given $\lambda\in \Lambda $, such that $\lambda=\lambda_1-\lambda_2,  \lambda_1, \lambda_2\in \Lambda^+$. We define the twisted BMW sheaf $({{J}}_\lambda)_{\mu}\in \Shv_{\mathcal{G}^G}(\widetilde{\Fl})_{\mu}$ as
\begin{equation}\label{5.4.1}
    ({{J}}_\lambda)_{\mu}:= ({{J}}_{\lambda_1,!})_{-\lambda_2+\mu}\star ({{J}}_{-\lambda_2,*})_\mu.
\end{equation}

Similarly, we define the dual BMW sheaf as
\begin{equation}\label{5.4.2}
    ({{J}}_\lambda^{\mathbb{D}})_\mu:=({{J}}_{\lambda_{1,*}})_{-\lambda_2+\mu}\star ({{J}}_{-\lambda_{2,!}})_\mu.
\end{equation}

{
They are well-defined up to tensoring by a line in degree $0$, and according to Lemma \ref{BMW}, the definitions are independent of choices of $\lambda_1, \lambda_2$.

\begin{rem}
To be more precise, they are determined by $\Shv_{\cG^G}(t^\lambda T(\cO)^{\omega^\rho}/T(\cO)^{\omega^\rho},b_\mu)\simeq \Vect$ (equivalently, an identification $\Shv_{\cG^G}(t^{\lambda_1}T(\cO)^{\omega^\rho}/T(\cO)^{\omega^\rho},b_{-\lambda_2+\mu})\otimes \Shv_{\cG^G}(t^{-\lambda_2}T(\cO)^{\omega^\rho}/T(\cO)^{\omega^\rho},b_\mu)\simeq \Vect$, since there is a canonical equivalence \[\Shv_{\cG^G}(t^{\lambda_1}T(\cO)^{\omega^\rho}/T(\cO)^{\omega^\rho},b_{-\lambda_2+\mu})\otimes \Shv_{\cG^G}(t^{-\lambda_2}T(\cO)^{\omega^\rho}/T(\cO)^{\omega^\rho},b_\mu)\simeq \Shv_{\cG^G}(t^\lambda T(\cO)^{\omega^\rho}/T(\cO)^{\omega^\rho},b_\mu)).\] The identification determines the right-hand side of \eqref{5.4.1} (resp. \eqref{5.4.2}), and there are canonical isomorphisms between $({{J}}_{\lambda_1,!})_{-\lambda_2+\mu}\star ({{J}}_{-\lambda_2,*})_\mu$ (resp. $({{J}}_{\lambda_{1,*}})_{-\lambda_2+\mu}\star ({{J}}_{-\lambda_{2,!}})_\mu$) for different $\lambda_1,\lambda_2$.

\end{rem}
}

When $\mu=0$, we will omit the subscript $\mu$ in $({{J}}_\lambda)_{\mu}$ and $ ({{J}}_\lambda^{\mathbb{D}})_\mu$.

\end{definition}


By Lemma \ref{BMW},  twisted BMW sheaves admit a convolution product
\begin{equation}\label{5.4.3}
    ({{J}}_{\eta})_{\lambda+\mu}\mathop{\star}({{J}}_{\lambda})_\mu\simeq ({{J}}_{\eta+\lambda})_{\mu}\in \Shv_{\mathcal{G}^G }(\widetilde{\Fl})^{I,b_{\lambda+\mu+\eta}}_\mu.
\end{equation}

{

\subsection{Standards and Costandards}\label{standards and costandards}

\begin{definition}\label{stan def} 
{Given an identification $\Shv_{\cG^G}(t^\lambda T(\cO)^{\omega^\rho}/T(\cO)^{\omega^\rho})\simeq\Vect$ as before, i.e., a trivialization of $\cG^G|_{t^\lambda\in \Fl}$, } {w}e define
\begin{equation}
    \Delta_\lambda= \Av_!^{N(\mathcal{K})^{\omega^\rho},\chi}({{J}}_\lambda),
\end{equation}
and
\begin{equation}
    \nabla_\lambda=  \Av_*^{ren}(\mathop{\colim}\limits_{\mu{,\mu-\lambda}\in \Lambda^+}({{J}}_{\mu,*})_{\lambda-\mu}\mathop{\star}\limits{{J}}_{\lambda-\mu,*}),
\end{equation}
where $Av_*^{ren}$ is the renormalized averaging functor, \begin{equation}\label{Avren}
    \Av_*^{ren}:= \mathop{\colim}\limits_{k} \Av_*^{N_k,\chi}\otimes l_{0,k},
\end{equation}
$\Av_*^{N_k,\chi}$ is the $*$-averaging functor with respect to $(N_k, \chi)$ (see Section \ref{N k}) and $l_{k,k'}$ is the line of $*$-fiber of the dualizing sheaf of $N_{k'}/N_k$ at $1$ for $k'\geq k$. {Furthermore, up to a shift, $({{J}}_{\mu,*})_{\lambda-\mu}\mathop{\star}\limits{{J}}_{\lambda-\mu,*}$ is the $(N(\cO)^{\omega^\rho}$, $*)$-averaging of the $*$-extension of the twisted dualizing sheaf on $\Ad_{\mu}I^{\omega^\rho}t^\lambda I^{\omega^\rho}/I^{\omega^\rho}$ with respect to the unique $\Ad_{\mu}I^0$-equivariant trivialization. If we further require $\mu_2-\mu_1\in \Lambda^+$, we have $\Ad_{\mu_1}I^{\omega^\rho}t^\lambda I^{\omega^\rho}/I^{\omega^\rho}\subset \Ad_{\mu_2}I^{\omega^\rho}t^\lambda I^{\omega^\rho}/I^{\omega^\rho}$.  The transition maps between  $({{J}}_{\mu,*})_{\lambda-\mu}\mathop{\star}\limits{{J}}_{\lambda-\mu,*}$ are obtained from natural maps between the dualizing sheaves.

}

\end{definition}
{Similar to twisted BMW sheaves, we also regard $\Delta_\lambda$ and $\nabla_\lambda$ as objects which are well-defined up to tensoring by a line in degree $0$, i.e., depend on the choice of identifications.} We call $\{\Delta_\lambda,\ \lambda\in \Lambda\}$ standard objects of $\Whit_q(\Fl_G^{\omega^\rho})$ and $\{\nabla_\lambda,\ \lambda\in \Lambda\}$ costandard objects.

\begin{prop}
$\{\Delta_\lambda, \lambda\in \Lambda\}$ is a collection of compact generators of $\Whit_q(\Fl_G^{\omega^\rho})$.
\end{prop}

\begin{proof}
Taking convolution with BMW sheaf is invertible, in particular, it preserves compactness. Since $\Delta_\lambda\simeq\Av_!^{N(\mathcal{K})^{\omega^\rho},\chi}({{J}}_\lambda)\simeq \Av_!^{N(\mathcal{K})^{\omega^\rho},\chi}(({{J}}_0)_\lambda)\star {{J}}_\lambda$ and $\Av_!^{N(\mathcal{K})^{\omega^\rho},\chi}(({{J}}_0)_\lambda)$ is compact, the standard object $\Delta_\lambda$ is compact.

By an analysis of the support of the convolution product, one proves that $\supp(\Delta_\lambda)= \overline{S}^{\phi(\rho+\lambda)}_{\Fl}$. According to \cite[Lemma 11]{[Be2]}, we have \[\Delta_\lambda|_{{S}^{\phi(\rho+\lambda)}_{\Fl}}\simeq  \Av_!^{N(\mathcal{K})^{\omega^\rho},\chi}({{J}}_{\lambda,!})|_{{S}^{\phi(\rho+\lambda)}_{\Fl}}\simeq \Delta_{\phi(\rho+\lambda)}^{\ver}|_{{S}^{\phi(\rho+\lambda)}_{\Fl}}.\] Now the proposition directly follows from the fact that $\operatorname{rank}(\Delta_{\phi(\rho+\lambda)}^{\ver}|_{{S}^{\phi(\rho+\lambda)}_{\Fl}})=1$ (i.e., $\Delta_{\phi(\rho+\lambda)}^{\ver}|_{{S}^{\phi(\rho+\lambda)}_{\Fl}}$ is the generator of $\Whit_q(S_{\Fl}^{\phi(\rho+\lambda)})\simeq \Vect$).
\end{proof}

The standards $\Delta_\lambda$ and costandards $\nabla_\mu$ satisfy the orthogonality property.
\begin{prop}\label{stan}
\begin{equation}
\mathcal{H}om_{\Whit_q(\Fl_G^{\omega^\rho})}(\Delta_\lambda, \nabla_\mu)\simeq\left\{
\begin{aligned}
    0,\qquad \textnormal{if}\ \lambda\neq \mu, \\ \mathsf{e},\qquad \textnormal{if}\  \lambda=\mu.
\end{aligned}
\right.
\end{equation}
\end{prop}

\begin{proof}
\begin{equation}\label{5.16}
\begin{split}
     &\mathcal{H}om_{\Whit_q(\Fl_G^{\omega^\rho})}(\Delta_\lambda, \nabla_\mu)\\
    \simeq&\mathcal{H}om_{\Whit_q(\Fl_G^{\omega^\rho})}(\Av_!^{N(\mathcal{K})^{\omega^\rho},\chi}({{J}}_\lambda),  \Av_*^{ren}\mathop{\colim}\limits_{\alpha{,\alpha-\mu}\in \Lambda^+}(({{J}}_{\alpha,*})_{\mu-\alpha}\mathop{\star}{{J}}_{\mu-\alpha,*}))\\
    \simeq&\mathop{\colim}\limits_{\alpha{,\alpha-\mu}\in \Lambda^+}\mathcal{H}om_{\Whit_q(\Fl_G^{\omega^\rho})}(\Av_!^{N(\mathcal{K})^{\omega^\rho},\chi}({{J}}_\lambda), \Av_*^{ren}(({{J}}_{\alpha,*})_{\mu-\alpha}\mathop{\star}{{J}}_{\mu-\alpha,*}))\\
    \simeq&\mathop{\colim}\limits_{\alpha{,\alpha-\mu}\in \Lambda^+}\mathcal{H}om_{\Whit_q(\widetilde{\Fl})_{\mu-\alpha}}(\Av_!^{N(\mathcal{K})^{\omega^\rho},\chi}({{J}}_\lambda\star ({{J}}_{\alpha-\mu,!})_{\mu-\alpha}), 
    \Av_*^{ren}(({{J}}_{\alpha,*})_{\mu-\alpha} \\ &\mathop{\star}{{J}}_{\mu-\alpha,*}\star ({{J}}_{\alpha-\mu,!})_{\mu-\alpha})\\
    \simeq&\mathop{\colim}\limits_{\alpha{,\alpha-\mu}\in \Lambda^+}\mathcal{H}om_{\Whit_q(\widetilde{\Fl})_{\mu-\alpha}}(\Av_!^{N(\mathcal{K})^{\omega^\rho},\chi}( ({{J}}_{\lambda+\alpha-\mu,!})_{\mu-\alpha}), 
    \Av_*^{ren}(({{J}}_{\alpha,*})_{\mu-\alpha}))
    \end{split}
\end{equation}
By the following lemma \ref{prop 10.6}, the above colimit is isomorphic to \[\mathop{\colim}\limits_{\alpha{,\alpha-\mu}\in \Lambda^+}\mathcal{H}om_{\Whit_q(\widetilde{\Fl})_{\mu-\alpha}}(\Av_!^{N(\mathcal{K})^{\omega^\rho},\chi}(\delta_{\lambda+\alpha-\mu})_{\mu-\alpha}[\langle\mu-\lambda-\alpha, 2\check{\rho}\rangle], \Av_*^{ren}(\delta_{\alpha})_{\mu-\alpha}[\langle\alpha, 2\check{\rho}\rangle]).\]

{Up to tensoring by a line,} we denote by $(\delta_{\lambda+\alpha-\mu})_{\mu-\alpha}\in \Shv_{\mathcal{G}^G}(\widetilde{\Fl})_{\mu-\alpha}$ (resp. $(\delta_{\alpha})_{\mu-\alpha}\in \Shv_{\mathcal{G}^G}(\widetilde{\Fl})_{\mu-\alpha}$) the $!$ (equivalently, $*$)-extension of the Kummer sheaf corresponding to $\mu-\alpha$ on $\widetilde{\Fl}\mathop{\times}\limits_{\Fl_G}t^{\lambda+\alpha-\mu}$ (resp. $\widetilde{\Fl}\mathop{\times}\limits_{\Fl_G}t^\alpha$) to $\widetilde{\Fl}$. 

$\Av_!^{N(\mathcal{K})^{\omega^\rho},\chi}(\delta_{\lambda+\alpha-\mu})_{\mu-\alpha}[\langle\mu-\lambda-\alpha, 2\check{\rho}\rangle]$ is the $!$-extension of the generator of $\Whit_q(S^{\lambda+\alpha-\mu}_{\widetilde{\Fl}})_{\mu-\alpha}$ to $\widetilde{\Fl}$, and $\Av_*^{ren}(\delta_{\alpha})_{\mu-\alpha}[\langle\alpha, 2\check{\rho}\rangle]$ is the $*$-extension of the generator of $\Whit_q(S^{\lambda+\alpha-\mu}_{\widetilde{\Fl}})_{\mu-\alpha}$ to $\widetilde{\Fl}$. Hence, by adjointness, we have \eqref{5.16} is $\mathsf{e}$ if $\lambda=\mu$ and is $0$ otherwise.
\end{proof}


\begin{lem}\label{prop 10.6}
 For $\lambda\in \Lambda^+$, $\mu\in \Lambda$, we have

 \textup{(1)} \[\Av_*^{ren}((\delta_\lambda)_\mu)[\langle\lambda,2\check{\rho}\rangle]\simeq \Av_*^{ren}(({{J}}_{\lambda,*})_{\mu}).\]

 \textup{(2)} \[\Av^{N(\mathcal{K})^{\omega^\rho},\chi}_!((\delta_\lambda)_\mu)[-\langle\lambda,2\check{\rho}\rangle]\simeq \Av^{N(\mathcal{K})^{\omega^\rho},\chi}_!(({{J}}_{\lambda,!})_{\mu}).\]
\end{lem}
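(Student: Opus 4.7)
The strategy for both parts rests on two basic observations. First, the character $\chi$ is trivial on the pro-unipotent subgroup $N(\mathcal{O})^{\omega^\rho}=N_0\subset N(\mathcal{K})^{\omega^\rho}$, since the residue pairing vanishes on regular functions. Second, for $\lambda\in\Lambda^+$ dominant one has $\Ad_{t^\lambda}(N(\mathcal{O})^{\omega^\rho})\subseteq N(\mathcal{O})^{\omega^\rho}$, so the stabilizer of $t^\lambda\cdot I^0\in \widetilde{\Fl}$ inside $N(\mathcal{O})^{\omega^\rho}$ has codimension exactly $l(t^\lambda)=\langle\lambda,2\check{\rho}\rangle$. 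Consequently the $N(\mathcal{O})^{\omega^\rho}$-orbit of the torus fiber $\widetilde{\Fl}\times_{\Fl_G}\{t^\lambda\}$ coincides with the Iwahori orbit $\widetilde{\Fl}^{t^\lambda}$. The plan is to factor $\Av^{N(\mathcal{K})^{\omega^\rho},\chi}$ through $N(\mathcal{O})^{\omega^\rho}$-averaging and reduce to a finite-dimensional orbit computation on that single orbit.

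First I would establish the intermediate identities
\[
\Av_!^{N(\mathcal{O})^{\omega^\rho}}\bigl((\delta_\lambda)_\mu\bigr)\simeq (\mathfrak{J}_{\lambda,!})_\mu[-\langle\lambda,2\check{\rho}\rangle],\qquad \Av_*^{N(\mathcal{O})^{\omega^\rho}}\bigl((\delta_\lambda)_\mu\bigr)\simeq (\mathfrak{J}_{\lambda,*})_\mu[-\langle\lambda,2\check{\rho}\rangle],
\]
by base change along the $N(\mathcal{O})^{\omega^\rho}$-orbit map. Here the shift compensates precisely for the perverse normalization $[l(\widetilde{w})]$ built into the definition of $\mathfrak{J}_{\widetilde{w},?}$ in Section \ref{Right monodromic D-modules}. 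Since $N(\mathcal{O})^{\omega^\rho}$ is pro-unipotent, its $!$- and $*$-averagings agree on the subcategory of equivariant sheaves, so these two identities are two shadows of a single computation.

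Next I would use $\chi|_{N_0}=0$ together with the nesting $N_0\subseteq N_k$ for every $k\ge 0$ (see (\ref{eq 3.3})) to produce iteration formulas (with appropriate dualizing-sheaf shifts) comparing $\Av^{N_k,\chi}$ composed with $\Av^{N_0}$ to $\Av^{N_k,\chi}$ alone: in the $!$-case they agree, and in the $*$-case they agree up to exactly the twist $l_{0,k}$ of (\ref{Avren}). Passing to the colimit in $k$ (which commutes with $\Av_!$, and which on the $*$-side is the very content of $\Av_*^{ren}$) and combining with the intermediate identities of the preceding paragraph yields both (i) and (ii).

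The main obstacle is the $*$-case of part (i): $\Av_*^{N(\mathcal{K})^{\omega^\rho},\chi}$ does not preserve filtered colimits, which is the very reason for introducing the renormalization $\Av_*^{ren}$ in Definition \ref{stan def}. One must argue at each fixed level $N_k$ and check that the twist $l_{0,k}$ absorbs exactly the discrepancy between $\Av_*^{N_k,\chi}\circ\Av_*^{N_0}$ and $\Av_*^{N_k,\chi}$ before passing to the colimit; the $!$-case of part (ii) is essentially formal once the first-step orbit computation is in hand.
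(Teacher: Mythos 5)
Your proof is correct in substance, but it takes a different route from the one in the paper, so let me compare. The paper argues directly at the level of the full loop group: for dominant $\lambda$ one has $N(\mathcal{K})^{\omega^\rho}t^\lambda I^{\omega^\rho}=N(\mathcal{K})^{\omega^\rho}I^{\omega^\rho}t^\lambda I^{\omega^\rho}$, so both sides of (i) are $*$-extensions from the single relevant orbit $S^\lambda_{\widetilde{\Fl}}$, and since Whittaker sheaves on one orbit form $\Vect$ it remains only to match $!$-stalks at $t^\lambda$ by a direct computation. You instead factor the averaging through the subgroup $N_0=N(\mathcal{O})^{\omega^\rho}$: the identity $I^{\omega^\rho}t^\lambda I^{\omega^\rho}=N(\mathcal{O})^{\omega^\rho}t^\lambda I^{\omega^\rho}$ (the same dominance input, one level down) gives $\Av_?^{N_0}((\delta_\lambda)_\mu)\simeq(\mathfrak{J}_{\lambda,?})_\mu[-\langle\lambda,2\check{\rho}\rangle]$, and then transitivity of averaging finishes the job. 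Your route buys two things: it replaces the paper's unexplained ``direct calculation'' of stalks by a formal adjunction identity, and it is exactly the mechanism the paper itself deploys later (the proof of Proposition \ref{! corr iwahori} uses $\Av_*^{N(\mathcal{O})^{\omega^\rho}}(t^\alpha\cdot\mathcal{F})[\langle\alpha,2\check{\rho}\rangle]\simeq(\mathfrak{J}_{\alpha,*})\star\mathcal{F}$, which at $\mathcal{F}=\delta_0$ is your intermediate identity). The paper's version is shorter to state but leaves more to the reader.

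Two small corrections to your reasoning, neither of which breaks the argument. First, $\Av_*^{N_k,\chi}\circ\Av_*^{N_0}\simeq\Av_*^{N_k,\chi}$ holds \emph{on the nose}, with no twist: both are right adjoints to the same forgetful functor, because $N_0\subseteq N_k$ and $\chi|_{N_0}=0$ force every $(N_k,\chi)$-equivariant object to be $N_0$-equivariant. The line $l_{0,k}$ in \eqref{Avren} is not a discrepancy between these two compositions; it is part of the transition data of the colimit defining $\Av_*^{ren}$ and appears identically on both sides of (i), so it plays no role in the comparison. Second, your remark that the $!$- and $*$-identities of the first step are ``two shadows of a single computation'' because $N_0$ is pro-unipotent is imprecise: $\delta_\lambda$ is not $N_0$-equivariant, and $\Av_!^{N_0}(\delta_\lambda)$ and $\Av_*^{N_0}(\delta_\lambda)$ genuinely differ — by the $!$- versus $*$-extension off the open orbit — which is precisely why one lands on $\mathfrak{J}_{\lambda,!}$ and the other on $\mathfrak{J}_{\lambda,*}$. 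The shared content is only the identification of the orbit and the rank-one computation on it.
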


\begin{proof}

For \textup{(1)}, note that if $\lambda$ is dominant, then $N(\mathcal{K})^{\omega^\rho}t^\lambda I^{\omega^\rho}=N(\mathcal{K})^{\omega^\rho}I^{\omega^\rho} t^\lambda I^{\omega^\rho}$. Hence, both sheaves in \textup{(1)} are $*$-extensions from their restrictions on $S^\lambda_{\widetilde{\Fl}}\subset \widetilde{\Fl}$. As they are $(N(\mathcal{K})^{\omega^\rho},\chi)$-equivariant, we only need to prove that their $!$-stalks at any lift of $t^\lambda\in \widetilde{\Fl}$ coincide. It follows from the constructions that the $!$-stalks of both sides are isomorphic to $\mathsf{e}[-\langle\lambda,2\check{\rho}\rangle]$.


The proof of the second claim is absolutely similar.
\end{proof}

\begin{rem}
In particular, if $\lambda$ is dominant, there is an isomorphism  \[\Delta_\lambda\simeq \Delta_\lambda^{\ver}\overset{\eqref{ver st}}{:=} \Av_!^{N(\mathcal{K}), \chi}(\delta_\lambda)[-\langle\lambda,2\check{\rho}\rangle].\]
\end{rem}

\subsection{T-structure on Whittaker category}\label{t structure}

Recall the following lemma in \cite{[BR]}.
\begin{lem}
Let $\mathcal{C}$ be a compactly generated category with compact generators $\{c_i\}$, then there is a t-structure given by $$\mathcal{C}^{\geq 0}:=\{c|\ \Hom_{\mathcal{C}}(c_i[k], c)=0\ \ \forall \lambda\in \Lambda\ \textnormal{and}\ k>0\}.$$ 
\end{lem}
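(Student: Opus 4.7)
The plan is to verify the three axioms of a t-structure by defining $\mathcal{C}^{\leq -1}$ as the left orthogonal of the given $\mathcal{C}^{\geq 0}$ and then producing, for every $c \in \mathcal{C}$, a truncation triangle via a transfinite cell-attachment (small object) argument, with the compactness of the $c_i$ providing the key input.

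First, set $\mathcal{C}^{\leq -1} := {}^{\perp}\mathcal{C}^{\geq 0}$, the full subcategory of objects $X$ with $\Hom_{\mathcal{C}}(X, Y) = 0$ for every $Y \in \mathcal{C}^{\geq 0}$. By construction $\mathcal{C}^{\leq -1}$ is closed under small colimits, extensions, and non-negative shifts, and by the definition of $\mathcal{C}^{\geq 0}$ it contains every shift $c_i[k]$ with $k > 0$. The easy axioms --- that $\mathcal{C}^{\geq 0}$ is stable under $[-1]$ (equivalently, $\mathcal{C}^{\leq -1}$ under $[1]$) and that $\Hom(X, Y) = 0$ for $X \in \mathcal{C}^{\leq -1}$, $Y \in \mathcal{C}^{\geq 0}$ --- are then tautological.

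The substantive step is producing, for every $c \in \mathcal{C}$, a fiber sequence $\tau^{\leq -1} c \to c \to \tau^{\geq 0} c$ with outer terms in the respective subcategories. I would build $\tau^{\geq 0} c$ iteratively: put $c^{(0)} := c$ and, inductively, set $c^{(n+1)}$ to be the cofiber of the tautological evaluation map
\[
\bigoplus_{i, \, k > 0, \, \alpha \in \Hom(c_i[k], c^{(n)})} c_i[k] \xrightarrow{\operatorname{ev}} c^{(n)};
\]
then let $c^{(\omega)} := \colim_n c^{(n)}$ and declare $\tau^{\geq 0} c := c^{(\omega)}$, $\tau^{\leq -1} c := \operatorname{fib}(c \to c^{(\omega)})$.

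The one real hard point is checking that $c^{(\omega)} \in \mathcal{C}^{\geq 0}$: this is exactly where compactness is essential, since $\Hom(c_i[k], c^{(\omega)}) = \colim_n \Hom(c_i[k], c^{(n)})$ by compactness, and each class in $\Hom(c_i[k], c^{(n)})$ (with $k > 0$) is killed at stage $n+1$ by construction, so the colimit vanishes. That $\tau^{\leq -1} c \in \mathcal{C}^{\leq -1}$ then follows because the fiber is assembled from iterated cofibers of sums of the $c_i[k]$, $k > 0$, and $\mathcal{C}^{\leq -1}$, being a left orthogonal, is closed under such operations. The iteration organizes itself coherently in the DG/$\infty$-categorical setting as the standard construction of compactly generated t-structures, so no convergence subtleties arise beyond the compactness input noted above.
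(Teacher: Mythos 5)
Your proof is correct, but there is nothing in the paper to compare it against: the lemma is stated with a bare reference to \cite{[BR]} (Theorem III.2.3 of Beligiannis--Reiten) and no argument is given. What you have written is the standard proof from the literature (it is essentially the argument in the cited reference, and is Proposition 1.4.4.11 of Lurie's \emph{Higher Algebra} in the stable $\infty$-categorical setting). Two small points are worth making explicit. First, since the generators $c_i$ are compact, the countable iteration $c^{(0)}\to c^{(1)}\to\cdots$ already converges, so no genuinely transfinite induction is needed; ``$\omega$ steps'' is the whole story. Second, the closure properties you invoke for $\mathcal{C}^{\leq -1}={}^{\perp}\mathcal{C}^{\geq 0}$ (closure under colimits, extensions, and non-negative shifts, and the fact that it contains every $c_i[k]$ with $k>0$) require the orthogonality to be taken in the enriched sense, i.e.\ vanishing of the whole mapping complex rather than just of $H^0$; this is available precisely because the defining condition of $\mathcal{C}^{\geq 0}$ is imposed for \emph{all} $k>0$ simultaneously, so that for $Y\in\mathcal{C}^{\geq 0}$ one has $\Hom_{\mathcal{C}}(c_i[k+j],Y)=0$ for every $j\geq 0$. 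With that said, the compactness computation $\Hom(c_i[k],c^{(\omega)})=\colim_n\Hom(c_i[k],c^{(n)})=0$ and the identification of $\operatorname{fib}(c\to c^{(\omega)})$ as an iterated colimit of extensions of sums of the cells $c_i[k]$, $k>0$, do verify all three t-structure axioms as you describe.
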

In particular, we define a new t-structure (different from the one from the tautological t-structure on $\Shv_{\mathcal{G}^G}(\Fl_G^{\omega^\rho})$) on $\Whit_q(\Fl_G^{\omega^\rho})$ by the compact generators $\Delta_\lambda$, i.e.,
\begin{definition}\label{t2}
$\mathcal{F}\in \Whit_q(\Fl_G^{\omega^\rho})^{\geq 0}$ if and only if $$\Hom_{\Whit_q(\Fl_G^{\omega^\rho})}(\Delta_\lambda[k],\mathcal{F})=0\qquad \forall \lambda\in \Lambda\ \textnormal{and}\ k>0.$$
\end{definition}

After we proved our main theorem, we will see that $\Delta_\lambda$ and $\nabla_\lambda$ are in the heart of this t-structure (see Corollary \ref{heart}). Furthermore, $\Delta_\lambda$ and $\nabla_\lambda$  are of finite length for all $\lambda\in \Lambda$ if and only if $q$ is generic. When $q$ is a root of unity, costandard objects and irreducible objects are not even compact, but standard objects are still compact.

\subsection{Coinvariants}\label{coinvariant}
 By considering the coinvariant-Whittaker category, we can obtain the definition of the Verdier duality functor for $\Whit_q(\Fl_G^{\omega^\rho})$.
\begin{definition}

We define $\Whit_{q}(\Fl_G^{\omega^\rho})_{co}$ as the quotient DG-category of $\Shv_{\mathcal{G}^G}(\Fl_G^{\omega^\rho})$ by the full subcategory generated by
\begin{equation}\label{co}
    \Fib(\Av_*^{N_k,\chi}(\mathcal{F})\to \mathcal{F})
\end{equation}
$\textnormal{for all}\ \mathcal{F}\in \Shv_{\mathcal{G}^G}(\Fl_G^{\omega^\rho})\ \textnormal{and}\ k\in\mathbb{Z}.$

\end{definition}

\subsubsection{}The functor $\Av_*^{ren}$ (see (\ref{Avren})) maps all morphisms of the form \eqref{co} to isomorphisms. In particular, it induces a functor $\Av_*^{ren}$ from $\Whit_q(\Fl_G^{\omega^\rho})_{co}$ to $\Whit_q(\Fl_G^{\omega^\rho})$. The following lemma is proved in \cite[Theorem 2.1.1]{[Ras2]}.

\begin{lem}\label{invcoinveq}
The functor
\begin{equation}\label{5.7.2}
    \Av_*^{ren}: \Whit_q(\Fl_G^{\omega^\rho})_{co}\to \Whit_q(\Fl_G^{\omega^\rho})
\end{equation}
is an equivalence of categories.
\end{lem}

\subsubsection{} 
For a DG-category $\mathcal{C}$, we denote by $\mathcal{C}^\vee$ the dual category of $\mathcal{C}$ if it is dualizable. By definition, it is given by the DG-category of functors $\Fun(\mathcal{C}, \Vect)$. Since $\Whit_q(\Fl_G^{\omega^\rho})$ is compactly generated and any compactly generated category is dualizable, $\Whit_q(\Fl_G^{\omega^\rho})$ is dualizable. By Lemma \ref{invcoinveq}, $\Whit_q(\Fl_G^{\omega^\rho})_{co}$ is also dualizable. 




By definition, $\Whit_q(\Fl_G^{\omega^\rho})_{co}^\vee$ is the full subcategory of $ \Shv_{\mathcal{G}^G}(\Fl_G^{\omega^\rho})^\vee$ spanned by the functors
$$\Shv_{\mathcal{G}^G}(\Fl_G^{\omega^\rho})\to \Vect$$
\begin{equation}
    \begin{split}
        \mathcal{F}\mapsto \langle \mathcal{F}, \mathcal{F}_0\rangle,
   \end{split}
\end{equation}
such that for any $\mathcal{F}\in \Shv_{\mathcal{G}^G}(\Fl_G^{\omega^\rho})$ and $k$,

$$\langle \Av_*^{N_k,\chi}(\mathcal{F}), \mathcal{F}^0 \rangle\overset{\sim}{\longrightarrow} \langle\mathcal{F}, \mathcal{F}^0 \rangle,$$
i.e.,
$$\langle \mathcal{F}, \Av_*^{N_k,\chi}(\mathcal{F}^0) \rangle\overset{\sim}{\longrightarrow} \langle\mathcal{F}, \mathcal{F}^0 \rangle.$$

In other words, we require $\Av_*^{N_k,\chi}(\mathcal{F}^0)\simeq \mathcal{F}^0$ for any $k$. Hence, the duality functor \begin{equation}\label{5.40}
\begin{split}
      \Shv_{(\mathcal{G}^G)^{-1}}(\Fl_G^{\omega^\rho})  \simeq \Shv_{\mathcal{G}^G}(\Fl_G^{\omega^\rho})^\vee,\\
      \mathcal{F}_0\mapsto (\mathcal{F}\mapsto \langle \mathcal{F}, \mathcal{F}_0\rangle).
\end{split}
\end{equation} induces an equivalence of full subcategories \begin{equation}\label{dual invcoinv}
    \Whit_q(\Fl_G^{\omega^\rho})_{co}^\vee\simeq \Whit_{q^{-1}}(\Fl_G^{\omega^\rho}).
    \end{equation}.

\begin{definition}
The Verdier duality functor for Whittaker sheaves is defined as the composition of functors
\begin{equation}
  \mathbb{D}^{\verdier}:   \Whit_q(\Fl_G^{\omega^\rho})^\vee\underset{(\Av_*^{ren})^\vee}{\overset{\sim}{\longrightarrow}} \Whit_q(\Fl_G^{\omega^\rho})^\vee_{co} \underset{(\ref{dual invcoinv})}{\overset{\sim}{\longrightarrow}} \Whit_{q^{-1}}(\Fl_G^{\omega^\rho}).
\end{equation}
In particular, it defines an equivalence of subcategories generated by compact objects.
\begin{equation}
  \mathbb{D}^{\verdier}:   (\Whit_q(\Fl_G^{\omega^\rho})^c)^{op} {\overset{\sim}{\longrightarrow}} \Whit_{q^{-1}}(\Fl_G^{\omega^\rho})^c.
\end{equation}
\end{definition}



The dual of the standard object $\Delta_\lambda$ can be described by the dual BMW sheaf ${{J}}_\lambda^{\mathbb{D}}$.

\begin{prop}\label{LDK}
For any $\lambda\in \Lambda$, $\Delta_\lambda\in \Whit_q(\Fl_G^{\omega^\rho})$, we have
\begin{equation}
    \mathbb{D}^{\verdier}(\Delta_\lambda)\simeq \Av_*^{ren}({{J}}_\lambda^{\mathbb{D}}).
\end{equation}
\end{prop}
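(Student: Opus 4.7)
The plan is to reduce \propref{LDK} to two clean assertions: (a) $\mathbb{D}(\mathfrak{J}_\lambda) \simeq \mathfrak{J}_\lambda^{\mathbb{D}}$ in the $(\mathcal{G}^G)^{-1}$-twisted setting, and (b) the compatibility of functors
\[
\mathbb{D} \circ \Av_!^{N(\mathcal{K})^{\omega^\rho},\chi} \;\simeq\; \Av_*^{ren} \circ \mathbb{D}
\]
on compact objects of $\Shv_{\mathcal{G}^G}(\Fl_G^{\omega^\rho})$. Granting both, we immediately compute
\[
\mathbb{D}(\Delta_\lambda) \;=\; \mathbb{D}(\Av_!^{N(\mathcal{K})^{\omega^\rho},\chi}(\mathfrak{J}_\lambda)) \;\simeq\; \Av_*^{ren}(\mathbb{D}(\mathfrak{J}_\lambda)) \;\simeq\; \Av_*^{ren}(\mathfrak{J}_\lambda^{\mathbb{D}}).
\]
Assertion (a) follows directly from the definitions in \secref{twisted BMW D-modules}: Verdier duality swaps $!$- and $*$-extensions along the locally closed embeddings $\widetilde{\Fl}^{\widetilde{w}} \hookrightarrow \widetilde{\Fl}$ (compare the formulae for $(\mathfrak{J}_\lambda)_\mu$ and $(\mathfrak{J}_\lambda^{\mathbb{D}})_\mu$), and it commutes with the ind-proper convolution product from \secref{convolution product}.

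For assertion (b), I would argue level by level. Writing $N(\mathcal{K})^{\omega^\rho} = \bigcup_k N_k$, we have $\Av_!^{N(\mathcal{K})^{\omega^\rho},\chi} = \colim_k \Av_!^{N_k,\chi}$. Each $\Av_!^{N_k,\chi}$ factors through a finite-dimensional unipotent quotient $N_k^l$, where Verdier duality interchanges $!$- and $*$-averaging up to the orientation line of $N_k^l$ (the change $\chi \mapsto -\chi$ is absorbed by the self-duality of the Artin--Schreier sheaf). Lifting back to the pro-group $N_k$ and tracking the line across the tower, the discrepancy accumulates into $l_{0,k}$, the $*$-fiber at $1$ of the dualizing sheaf of $N_k/N_0$. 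Interpreting the resulting colimit in the coinvariant Whittaker category via \lemref{invcoinveq} and the equivalence \eqref{dual invcoinv}, the right-hand side becomes precisely the colimit $\colim_k \Av_*^{N_k,\chi} \otimes l_{0,k}$ that defines $\Av_*^{ren}$.

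The main obstacle is the careful tracking of the orientation lines $l_{0,k}$ in the pro-unipotent setting, and the verification that the colimit on the $!$-side dualizes to the specific twisted colimit defining $\Av_*^{ren}$. A concrete way to implement this is to check the identity at the level of pairings: for any compact $\mathcal{F} \in \Whit_{q^{-1}}(\Fl_G^{\omega^\rho})^c$, one computes both $\langle \mathcal{F}, \mathbb{D}(\Delta_\lambda)\rangle$ and $\langle \mathcal{F}, \Av_*^{ren}(\mathfrak{J}_\lambda^{\mathbb{D}})\rangle$ using the tautological pairing on the ambient twisted $D$-module categories together with the explicit expression of $\Delta_\lambda$ as a colimit of $!$-averages. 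Raskin's identification of invariant and coinvariant Whittaker categories (\lemref{invcoinveq}) is precisely what makes this duality pairing well-defined and computable in the ind-pro unipotent setting.
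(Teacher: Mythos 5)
Your reduction to (a) and (b) is sound and the conclusion is correct, but your route for step (b) is genuinely different from --- and considerably heavier than --- the paper's. The paper never establishes the commutation $\mathbb{D}\circ\Av_!^{N(\mathcal{K})^{\omega^\rho},\chi}\simeq\Av_*^{ren}\circ\mathbb{D}$ as a standalone functorial identity; instead it exploits the fact that $\Av_!^{N(\mathcal{K})^{\omega^\rho},\chi}$ is left adjoint to the forgetful functor, so that for every $\mathcal{F}\in\Whit_q(\Fl_G^{\omega^\rho})$ one has
$$\mathcal{H}om_{\Whit_q(\Fl_G^{\omega^\rho})}(\Delta_\lambda,\mathcal{F})\simeq\mathcal{H}om_{\Shv_{\mathcal{G}^G}(\Fl_G^{\omega^\rho})}(\mathfrak{J}_\lambda,\mathcal{F})\simeq\langle\mathcal{F},\mathfrak{J}_\lambda^{\mathbb{D}}\rangle,$$
the last step being your assertion (a) applied inside the ambient self-duality $\Shv_{\mathcal{G}^G}(\Fl_G^{\omega^\rho})^\vee\simeq\Shv_{(\mathcal{G}^G)^{-1}}(\Fl_G^{\omega^\rho})$. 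Since $\mathbb{D}$ on compact Whittaker objects is \emph{defined} by passing through the coinvariant category and then applying the equivalence $\Av_*^{ren}$ of \lemref{invcoinveq}, this chain already identifies $\mathbb{D}(\Delta_\lambda)$ with $\Av_*^{ren}(\mathfrak{J}_\lambda^{\mathbb{D}})$: all of the orientation-line bookkeeping you flag as the main obstacle is absorbed once and for all into the definition of $\Av_*^{ren}$ as the inverse of the coinvariants-to-invariants equivalence, so there is nothing left to track. Your closing suggestion to verify the identity at the level of pairings is in fact the paper's proof, except that the adjunction makes it a two-line computation --- no colimit presentation of $\Delta_\lambda$ and no level-by-level analysis of the tower $N_k$ are needed; note also that the test objects $\mathcal{F}$ for the pairing should range over $\Whit_q(\Fl_G^{\omega^\rho})$ (equivalently over $\Shv_{\mathcal{G}^G}(\Fl_G^{\omega^\rho})$, descending to $\Whit_q(\Fl_G^{\omega^\rho})_{co}$) rather than over $\Whit_{q^{-1}}(\Fl_G^{\omega^\rho})^c$ as you wrote, since $\mathbb{D}(\Delta_\lambda)$ lives in the $q^{-1}$-twisted category and is characterized as a functional on the $q$-twisted side. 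Your primary route would, if completed, buy the more general statement that duality intertwines $\Av_!$ with $\Av_*^{ren}$ on all (ind-)holonomic objects, but for the proposition at hand it is unnecessary effort.
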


\begin{proof}
The object $\Delta_\lambda$ corresponds to the functor 
\begin{equation}
    \begin{split}
        \Whit_q(\Fl_G^{\omega^\rho})&\to \Vect\\
        \mathcal{F}&\mapsto \mathcal{H}om_{\Whit_q(\Fl_G^{\omega^\rho})}(\Delta_\lambda, \mathcal{F}).
    \end{split}
\end{equation}
in $\Whit_q(\Fl_G^{\omega^\rho})^\vee$. By definition, its image under $(\Av_*^{ren})^\vee$ is the functor
\begin{equation}
    \begin{split}
        \Whit_q(\Fl_G^{\omega^\rho})_{co}&\to \Vect\\
        \mathcal{F}'&\mapsto \mathcal{H}om_{\Whit_q(\Fl_G^{\omega^\rho})}(\Delta_\lambda, \Av_*^{ren}(\mathcal{F}')).
    \end{split}
\end{equation}
Using the adjointness, there is
\begin{align*}
    \mathcal{H}om_{\Whit_q(\Fl_G^{\omega^\rho})}(\Delta_\lambda, \Av_*^{ren}(\mathcal{F}'))\simeq& \mathcal{H}om_{\Shv_{\mathcal{G}^G}(\Fl_G^{\omega^\rho})}({{J}}_\lambda, \Av_*^{ren}(\mathcal{F}')).
\end{align*}
Note that ${{J}}_\lambda$ goes to ${{J}}_\lambda^{\mathbb{D}}$ under the equivalence $(\Shv_{\mathcal{G}^G}(\Fl_G^{\omega^\rho}))^\vee\simeq \Shv_{(\mathcal{G}^G)^{-1}}(\Fl_G^{\omega^\rho})$, we have $\mathcal{H}om_{\Shv_{\mathcal{G}^G}(\Fl_G^{\omega^\rho})}({{J}}_\lambda, \Av_*^{ren}(\mathcal{F}'))  \simeq\langle \Av_*^{ren}(\mathcal{F}'), {{J}}_{\lambda}^{\mathbb{D}}\rangle$.

So, by regarding $\Whit_q(\Fl_G^{\omega^\rho})_{co}^\vee$ as a subcategory of $\Shv_{\mathcal{G}^G}(\Fl_G^{\omega^\rho})^\vee$, the image of $\Delta_\lambda$ in $\Shv_{\mathcal{G}^G}(\Fl_G^{\omega^\rho})^\vee$ can be realized as
\begin{equation}
    \begin{split}
        \Shv_{\mathcal{G}^G}(\Fl_G^{\omega^\rho})&\to \Vect\\
        \mathcal{F}&\mapsto \langle \Av_*^{ren}(\mathcal{F}), {{J}}_{\lambda}^{\mathbb{D}}\rangle.
    \end{split}
\end{equation}
Since there is an isomorphism
\[\langle \Av_*^{ren}(\mathcal{F}), {{J}}_{\lambda}^{\mathbb{D}}\rangle\simeq \langle \mathcal{F}, \Av_*^{ren}({{J}}_{\lambda}^{\mathbb{D}})\rangle,\]
$(\Av_*^{ren})^\vee(\Delta_\lambda)$ is the functor
\begin{equation}
    \begin{split}
        \Shv_{\mathcal{G}^G}(\Fl_G^{\omega^\rho})&\to \Vect\\
        \mathcal{F}&\mapsto \langle \mathcal{F}, \Av_*^{ren}({{J}}_{\lambda}^{\mathbb{D}})\rangle.
    \end{split}
\end{equation}
By the construction of (\ref{dual invcoinv}), the image of $(\Av_*^{ren})^\vee(\Delta_\lambda)$ under (\ref{dual invcoinv}) is $\Av_*^{ren}({{J}}_{\lambda}^{\mathbb{D}})$.

\end{proof}

\section{The functor to the category of factorization modules }\label{try proof 1}
In this section, we will construct the functor from the twisted Whittaker category to $\Omega_q^L-\Fact$ mimicking the constructions in \cite{[GL1]}. {That is to say, \cite{[GL1]} uses the Jacquet functor, which is the pullback-pushfoward functor along $\Gr_G\longleftarrow \Gr_{B^-}\longrightarrow \Gr_T$. \footnote{The Jacquet functor is also the functor needed for geometric Satake equivalence.} We will construct an \textit{Iwahori Jacquet} functor. 

In bref, our Iwahori Jacquet functor is an adaptation of the pullback-pushforward functor $\Whit(\Fl_G)\longrightarrow \Shv(\Gr_T)$ of the diagram
\[\xymatrix{
\Fl^1_{B^-}\ar[r]\ar[d]& \Fl_G\\
\Gr_T&.
}\]
Here, $\Fl^1_{B^-}$ is substack of $\Fl_{B^-}:=\Gr_{B^-}\underset{\pt/G}{\times}\pt/B$ where we require the $B^-$-bundle and the Iwahori structure to be transversal at $x$. The $\lambda$-component of this functor is $H(\Fl_G, j_*(\omega_{S_{\Fl}^{-,\lambda}})\overset{!}{\otimes}-)$. In fact, the $\lambda$-component of functor constructed in this section is $H(\Fl_G, j_!(\omega_{S_{\Fl}^{-,\lambda}})\overset{!}{\otimes}-)$.
}

The organization of this section is as follows.

In Section \ref{whittaker category on ran fl}, we will construct a closed sub-prestack $(\overline{S}^{w_0}_{\Fl, \Ran_x})_{\infty\cdot x}$ of the Beilinson-Drinfeld affine flags $\Fl^{\omega^\rho}_{G, \Ran_x}$. Lemma \ref{infth} ensures that we can regard a Whittaker sheaf on $\Fl_G^{\omega^\rho}$ as a Whittaker sheaf on $(\overline{S}^{w_0}_{\Fl, \Ran_x})_{\infty\cdot x}$ with factorization property.

In Section \ref{configuration gr and fl}, we will introduce the configuration version affine flags which is important for the construction of the functor. 

In Section \ref{subsection F}, we will construct the functor $F^L$ that appeared in Theorem \ref{main theorem 1}. We will also construct another closely related functor $F^{DK}$ in this section.

In Section \ref{! fiber}, we will describe the functors defined in the previous section by calculating the $!$-stalks of $F^L$ and $F^{DK}$ at $\lambda\cdot x\in \Conf_{\infty\cdot x}$.

In Section \ref{Equivalence of categories}, we will prove the main theorem of this paper modulo Proposition \ref{imstan}.

\subsection{Whittaker category on {$\Fl^{\omega^\rho}_{G, \Ran_x}$}{}}\label{whittaker category on ran fl}

 \subsubsection{}
Recall that we defined the Beilinson-Drinfeld affine flags $\Fl_{G, \Ran_x}^{\omega^\rho}$ in Section \ref{Ran version of affine flags and affine Grassmannian} and \ref{2.5.3}. The idea of the construction of the functor $F^L$ is to regard a twisted Whittaker sheaf on $\Fl^{\omega^\rho}_G$ as a twisted Whittaker sheaf on $\Fl_{G, \Ran_x}^{\omega^\rho}$ (and $\Fl_{G, \Conf_{\infty\cdot x}}^{\omega^\rho}$), and then pushforward along the projection to $\Conf_{\infty\cdot x}$. Let us start by explaining the definition of Whittaker sheaves on $\Fl^{\omega^\rho}_{G, \Ran_x}$. 

Let $N(\mathcal{K})^{\omega^\rho}_{\Ran}$ (resp. $N(\mathcal{O})^{\omega^\rho}_{\Ran}$) be Ran-ified loop group of $N$. It is the prestack classifying the data $(\mathcal{I}, \alpha)$, where $\mathcal{I}\in \Ran(S)$ and $\alpha$ is an automorphism of $\omega^\rho\overset{T}{\times} B$ on $\overset{\circ}{\mathcal{D}}_{\mathcal{I}}$ (resp. ${\mathcal{D}}_{\mathcal{I}}$), which is compatible with the identification of $\omega^\rho$. Similarly, one can define $N(\mathcal{K})^{\omega^\rho}_{\Ran_x}$ and $N(\mathcal{O})^{\omega^\rho}_{\Ran_x}$.

We define a character 
\begin{equation}\label{cha chi}
    \begin{split}
        \chi_{\Ran_x}: N(\mathcal{K})^{\omega^\rho}_{\Ran_x}\to N(\mathcal{K})^{\omega^\rho}_{\Ran_x}/[N(\mathcal{K})^{\omega^\rho}_{\Ran_x}, N(\mathcal{K})^{\omega^\rho}_{\Ran_x}]\to \omega^r_{\Ran_x}(\mathcal{K})\to \\
\overset{\textnormal{sum}}{\longrightarrow} \omega_{\Ran_x}(\mathcal{K})\overset{\textnormal{residue}}{\longrightarrow} \mathbb{G}_a
    \end{split}
\end{equation}
of $N(\mathcal{K})^{\omega^\rho}_{\Ran_x}$.
Similarly, we can define a character 
\begin{equation}
    \chi_{\Ran}: N(\mathcal{K})_{\Ran}^{\omega^\rho}\to \mathbb{G}_a
\end{equation}
of $N(\mathcal{K})_{\Ran}^{\omega^\rho}$.

For $\bar{x}=\{x_1, x_2, \cdots, x_n\}\in \Ran$, we denote by $\chi_{\bar{x}}$ the restriction of $\chi_{\Ran}$ to $N(\mathcal{K})^{\omega^\rho}_{\bar{x}}:= N(\mathcal{K})^{\omega^\rho}_{\Ran}\underset{\Ran}{\times}\{\bar{x}\}$. Note that the character $\chi$ of $N(\mathcal{K})^{\omega^\rho}_{{x}}$ in \eqref{naive chi} equals $\chi_x$ here.

\subsubsection{}Left multiplication gives an action of $N(\mathcal{K})^{\omega^\rho}_{\Ran}$ on $\Gr^{\omega^\rho}_{G,\Ran}$, and an action of $N(\mathcal{K})^{\omega^\rho}_{\Ran_x}$ on $\Fl^{\omega^\rho}_{G,\Ran_x}$. Following \cite[Proposition 7.2.5]{[GL2]}, the pullback of $\mathcal{G}^G$ on $\Fl^{\omega^\rho}_{G, \Ran_x}$ (see Section \ref{gerbe used}) to $N(\mathcal{K})^{\omega^\rho}_{\Ran_x}$ is a multiplicative gerbe, in particular, the gerbe $\mathcal{G}^G$  on $\Fl^{\omega^\rho}_{G, \Ran_x}$ is equivariant with respect to $N(\mathcal{K})^{\omega^\rho}_{\Ran_x}$ against $\mathcal{G}^G$.  Since $N$ is unipotent, $N(\mathcal{K})^{\omega^\rho}_{\Ran_x}$ is an ind-pro-affine space over $\Ran_x$. There is a canonical trivialization of $\mathcal{G}^G$ on $N(\mathcal{K})^{\omega^\rho}_{\Ran_x}$. Hence, the gerbe $\mathcal{G}^G$ on $\Fl^{\omega^\rho}_{G, \Ran_x}$ is equivariant with respect to the action of $N(\mathcal{K})^{\omega^\rho}_{\Ran_x}$.  In particular, we may consider the category of $(N(\mathcal{K})^{\omega^\rho}_{\Ran_x},\chi_{\Ran_x})$-equivariant sheaves. 

\begin{definition}
We define
$$\Whit_q(\Fl^{\omega^\rho}_{G, \Ran_x}):= \Shv_{\mathcal{G}^G}(\Fl^{\omega^\rho}_{G, \Ran_x})^{N(\mathcal{K})^{\omega^\rho}_{\Ran_x},\chi_{\Ran_x}}.$$
\end{definition}
\subsubsection{}
Now we define a closed $N(\mathcal{K})_{\Ran_{x}}^{\omega^\rho}$-invariant subspace  $(\overline{S}^{w_0}_{\Fl, \Ran_x})_{\infty \cdot x}\subset \Fl_{G, \Ran_x}^{\omega^\rho}$.

\begin{definition}\label{S}
A point $(\mathcal{I},\mathcal{P}_G,\alpha,\epsilon)\in \Fl_{G,\Ran_{x}}^{\omega^\rho}$ belongs to $ (\overline{S}^{w_0}_{\Fl, \Ran_x})_{\infty \cdot x}$ if and only if for any dominant weight $\check{\lambda}\in \check{\Lambda}^+$, the composite meromorphic map
\begin{equation}\label{eq 6.1}
    \kappa^{\check{\lambda}}: (\omega^{\frac{1}{2}})^{\langle\check{\lambda},2\rho\rangle}\to \mathcal{V}_{\mathcal{P}_G^{\omega}}^{\check{\lambda}}\to \mathcal{V}_{\mathcal{P}_G}^{\check{\lambda}}(\infty\cdot x)
\end{equation}
is regular on $X\setminus x$. Here $\mathcal{V}_{\mathcal{P}_G}^{\check{\lambda}}$ (resp. $\mathcal{V}_{\mathcal{P}_G^\omega}^{\check{\lambda}}$) is defined as the vector bundle associated with $\mathcal{P}_G$ (resp. $\mathcal{P}_G^{\omega}:= \omega^\rho\overset{T}{\times}G$) with fiber the Weyl module $\mathcal{V}_G^{\check{\lambda}}$. The first map is the map mapping to the highest weight vector, and the second map is induced by $\alpha$.
\end{definition}

\begin{definition}\label{def 6.1.6}
A point $(\mathcal{I},\mathcal{P}_G, \alpha)\in \Gr^{\omega^\rho}_{G,\Ran}$ belongs to $\overline{S}_{\Gr, \Ran}^0$, if for any dominant weight $\check{\lambda}\in \check{\Lambda}^+$, the composite meromorphic map
\begin{equation}\label{6.1.3}
    \kappa^{\check{\lambda}}: (\omega^{\frac{1}{2}})^{\langle\check{\lambda},2\rho\rangle}\to \mathcal{V}_{\mathcal{P}_G^{\omega}}^{\check{\lambda}}\to \mathcal{V}_{\mathcal{P}_G}^{\check{\lambda}}
\end{equation} is regular on $X$.

If we require that $\kappa^{\check{\lambda}}$ in \eqref{6.1.3} is injective on $X$ for any $\check{\lambda}\in \check{\Lambda}^+$, the resulting prestack $S^0_{\Gr, \Ran}$ is the unique open dense $N(\mathcal{K})_{\Ran}^{\omega^\rho}$-orbit in $\overline{S}^0_{\Gr, \Ran}$.
\end{definition}

\subsubsection{Factorization property} Following the argument in the proof of \cite[Proposition 2.4]{[BFGM]}, we can prove that the prestacks defined above satisfy the following factorization property.
\begin{lem}\label{basic fact property}\
\begin{enumerate}[label={\upshape(\arabic*)}]
    \item $\overline{S}^0_{\Gr, \Ran}$ is a factorization prestack, i.e., there is a canonical isomorphism of prestacks

\begin{equation}\label{6.1.5}
    \overline{S}^0_{\Gr, \Ran}\mathop{\times}\limits_{\Ran} (\Ran\times \Ran)_{disj}\simeq \overline{S}^0_{\Gr, \Ran}\times \overline{S}^0_{ \Gr, \Ran}\mathop{\times}\limits_{\Ran\times \Ran} (\Ran\times \Ran)_{disj}.
\end{equation}
    \item $(\overline{S}^{w_0}_{\Fl, \Ran_x})_{\infty \cdot x}$ factorizes with respect to $\overline{S}^0_{\Gr, \Ran}$, i.e., there is a canonical isomorphism of prestacks

\begin{equation}\label{6.1.6}
\begin{split}
   (\overline{S}^{w_0}_{\Fl, \Ran_x})_{\infty \cdot x}&\mathop{\times}\limits_{\Ran_x} (\Ran\mathop{\times}\Ran_{x})_{disj}\\ \simeq&\\ \overline{S}^0_{\Gr, \Ran}\times (\overline{S}^{w_0}_{\Fl, \Ran_x})_{\infty \cdot x}&\mathop{\times}\limits_{\Ran\times \Ran_x} (\Ran\times \Ran_{ x})_{disj}.
\end{split}
\end{equation}
\end{enumerate}
\end{lem}

\subsubsection{}\label{6.1.9}
Let $\Fl_{G,x}^{\omega^\rho}$ (resp. $\Gr_{G,x_i}^{\omega^\rho}$) denote the fiber of $\Fl_{G, \Ran_x}^{\omega^\rho}$ (resp. $\Gr_{G, \Ran}^{\omega^\rho}$) over $x$ (resp. $x_i$), it is isomorphic to $\Fl_G^{\omega^\rho}$ (resp. $\Gr_G^{\omega^\rho}$) by choosing a uniformizer. Denote by $\overline{S}^0_{\Gr, x_i}$ the closure of the $N(\mathcal{K})_{x_i}^{\omega^\rho}$-orbit of $t^0\in \Gr^{\omega^\rho}_{G,x_i}$.

By definition, the fiber of  {$(\overline{S}^{w_0}_{\Fl,\Ran_{x}})_{\infty\cdot x}$} over the point ${\cI}=\{x,x_1,x_2,\cdots,x_k\}\in \Ran_{x}$ is isomorphic to the product $\Fl^{\omega^\rho}_{G,x}\times\prod_{i=1}^k \overline{S}_{\Gr, x_i}^{0}$, and the fiber of $\overline{S}^0_{\Ran,\Gr}$ over the point ${\cI}=\{x_1,x_2,\cdots,x_k\}\in \Ran$ is isomorphic to $\prod_{i=1}^k \overline{S}^0_{\Gr, x_i}$. 

\subsubsection{Relation with $\Whit_q(\Fl_G^{\omega^\rho})$}
Consider the product space $\Ran_{x}\times \Fl^{\omega^\rho}_{G,x}$. The $N(\mathcal{K})^{\omega^\rho}_x$ action  on the second factor gives a $N(\mathcal{K})^{\omega^\rho}_x$-action on $\Ran_x\times \Fl^{\omega^\rho}_{G,x}$. The pullback of the gerbe $\mathcal{G}^G$ on $\Fl^{\omega^\rho}_{G,x}$ to $\Ran_x\times \Fl^{\omega^\rho}_{G,x}$ is still $N(\mathcal{K})^{\omega^\rho}_x$-equivariant, hence, we can consider the Whittaker category on  $\Ran_x\times \Fl^{\omega^\rho}_{G,x}$,
\begin{equation}
    \Whit_q(\Ran_x\times \Fl^{\omega^\rho}_{G,x}):= \Shv_{\mathcal{G}^G}(\Ran_x\times \Fl^{\omega^\rho}_{G, \Ran_x})^{N(\mathcal{K})_x^{\omega^\rho}, \chi}.
\end{equation}



There is a closed embedding
   \begin{equation}\label{unit}
 \unit: \Ran_x\times \Fl^{\omega^\rho}_{G,x} \to (\overline{S}^{w_0}_{\Fl, \Ran_{x}})_{\infty \cdot x},
   \end{equation}
which sends $\mathcal{I}\in \Ran_x, (\mathcal{P}_G, \alpha, \epsilon)\in \Fl_{G,x}^{\omega^\rho}$ to $(\mathcal{I}, \mathcal{P}_G, \alpha, \epsilon)\in (\overline{S}^{w_0}_{\Fl, \Ran_{x}})_{\infty \cdot x}$.
   Similarly, we have
     \begin{equation}\label{unit Gr}
\unit_{\Gr}: \Ran\times\overline{S}^{0}_{\Gr} \to \overline{S}_{\Gr, \Ran}^{0}.
   \end{equation}
   

 The $!$-pullback along the projection $$\pr_{\Ran_x}: \Ran_x\times \Fl_{G,x}^{\omega^\rho}\to \Fl_{G,x}^{\omega^\rho}$$ gives rise to a functor 
  \begin{equation}
      \pr_{\Ran_x}^!:\Shv_{\mathcal{G}^G}(\Fl_{G,x}^{\omega^\rho})\to \Shv_{\mathcal{G}^G}(\Ran_x\times \Fl_{G,x}^{\omega^\rho}).
  \end{equation}
 
By definition, $\pr_{\Ran_x}$ commutes with $N(\mathcal{K})_x^{\omega^\rho}$-actions, so $\pr_{\Ran_x}^!$ induces a functor between the corresponding Whittaker categories
\begin{equation}
    \Whit_q(\Fl_{G,x}^{\omega^\rho})\to \Whit_q(\Ran_x\times \Fl_{G,x}^{\omega^\rho}).
\end{equation}
\subsubsection{}
Consider the pullback functor along \eqref{unit} $$\unit^!:\Shv_{\mathcal{G}^G}((\overline{S}^{w_0}_{\Fl, \Ran_{x}})_{\infty \cdot x})\to \Shv_{\mathcal{G}^G}(\Ran_x\times \Fl_{G,x}^{\omega^\rho}). $$  We claim that this map induces a functor between the corresponding Whittaker categories
\begin{equation}
    \unit^!: \Whit_q((\overline{S}^{w_0}_{\Fl, \Ran_{x}})_{\infty \cdot x})\to  \Whit_q(\Ran_x\times \Fl^{\omega^\rho}_{G,x}).
\end{equation}

Indeed, consider the closed subgroup $N'$ in $N(\mathcal{K})_{\Ran_x}^{\omega^\rho}$ whose fiber over a point $\{x, x_1,x_2,\cdots,x_k\} \linebreak \in \Ran_x$ is given by $N(\mathcal{K})_x^{\omega^\rho}\times\prod_{i=1}^k N(\mathcal{O})_{x_i}^{\omega^\rho}$. Restriction to $x$ gives a projection \begin{equation}\label{projection of N}
    N'\to N(\mathcal{K})^{\omega^\rho}_x.
\end{equation}
The map $\eqref{unit}$ is compatible with $N'$-action, where the action of $N'$ on $\Ran_x\times \Fl_{G,x}^{\omega^\rho}$ is given by the projection \eqref{projection of N} and the action of $N(\mathcal{K})^{\omega^\rho}_x$ on $\Fl_{G,x}^{\omega^\rho}$. Since the kernel of the projection (\ref{projection of N}) is pro-unipotent, the forgetful functor \[\Shv_{\mathcal{G}^G}(\Ran_x\times \Fl_{G,x}^{\omega^\rho})^{N',\chi}{\to} \Shv_{\mathcal{G}^G}(\Ran_x\times \Fl_{G,x}^{\omega^\rho})^{N(\mathcal{K})_x^{\omega^\rho},\chi}\] is an equivalence. Hence, $\unit^!$ induces a functor
\begin{equation}
    \begin{split}
        \Shv_{\mathcal{G}^G}((\overline{S}^{w_0}_{\Fl, \Ran_{x}})_{\infty \cdot x})^{N(\mathcal{K})^{\omega^\rho}_{\Ran_x},\chi_{\Ran_x}}\to \Shv_{\mathcal{G}^G}(\Ran_x\times \Fl_{G,x}^{\omega^\rho})^{N',\chi}\simeq\\ \simeq \Shv_{\mathcal{G}^G}(\Ran_x\times \Fl_{G,x}^{\omega^\rho})^{N(\mathcal{K})_x^{\omega^\rho},\chi},
    \end{split}
\end{equation}
i.e.,
\begin{equation} \label{inf}
    \unit^!:\ \Whit_q((\overline{S}^{w_0}_{\Fl, \Ran_{x}})_{\infty \cdot x})\to \Whit_q(\Ran_x\times \Fl_{G,x}^{\omega^\rho}).
\end{equation}
Similarly, we can define
\begin{equation}\label{inf Gr}
    \unit^!_{\Gr}: \Whit_q(\overline{S}^0_{\Gr, \Ran})\to \Whit_q(\Ran\times \overline{S}^0_{\Gr}).
\end{equation}

{According to \cite[Theorem 6.2.5]{[Ga5]}, the functor (\ref{inf Gr}) is an equivalence.} By an argument similar to \cite[Section {6.2-}6.6]{[Ga5]}, we can prove the following lemma. 
\begin{lem}\label{infth}
{The functor} (\ref{inf}) is an equivalence.
\end{lem}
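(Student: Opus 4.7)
The plan is to follow the argument of \cite[Section 6.6]{[Ga5]} essentially verbatim, so the main work lies in verifying that every step transfers to the twisted affine flag setting. The conceptual content is that imposing the full $(N(\mathcal{K})^{\omega^\rho}_{\Ran_x}, \chi_{\Ran_x})$-equivariance condition already forces a D-module on $(\overline{S}^{w_0}_{\Fl, \Ran_x})_{\infty\cdot x}$ to be determined by its restriction to the ``fiber at $x$'' portion, because at every auxiliary point $y \neq x$ appearing in the Ran datum, the non-degenerate Whittaker character $\chi_y$ collapses the local Whittaker category on $\overline{S}^0_{\Gr, y}$ down to $\Vect$ with a canonical generator.

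First I would construct a right adjoint $\unit_*$ to $\unit^!$ on the level of Whittaker invariants. By the factorization isomorphism of Proposition~\ref{basic fact property}(b), over the locus where the auxiliary Ran points are disjoint from $x$ the source identifies with $\overline{S}^0_{\Gr, \Ran} \times (\overline{S}^{w_0}_{\Fl, \Ran_x})_{\infty\cdot x}$, and $\unit$ corresponds to the embedding of the ``empty auxiliary Ran datum'' locus. Combining $*$-pushforward along this embedding with the averaging construction reviewed at the end of Section~\ref{whittaker category on ran fl}, one obtains $\unit_*$ as a well-defined functor into $\Whit_q((\overline{S}^{w_0}_{\Fl, \Ran_x})_{\infty\cdot x})$.

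The fully-faithful part (i.e.\ the unit $\id \to \unit_* \circ \unit^!$ being an isomorphism) reduces via factorization to the local vanishing statement: the $(N(\mathcal{K})^{\omega^\rho}_y, \chi_y)$-averaged global sections functor on $\overline{S}^0_{\Gr, y}$ for $y \neq x$ returns $\Vect$, generated by the clean extension from $S^0_{\Gr, y}$. This is a standard local Whittaker computation (an analogue of Proposition~\ref{prop 5.2} combined with the contractibility of $S^0_{\Gr, y}$ modulo its stabilizer). Essential surjectivity then follows formally: given an object on the target side, its image under $\unit_*$ is in the source category, and the counit $\unit^! \circ \unit_* \to \id$ becomes an isomorphism by base change combined with the same vanishing, so $\unit_*$ is also fully faithful and the pair $(\unit^!, \unit_*)$ gives the desired equivalence. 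The argument for $\unit^!_{\Gr}$ is strictly simpler and follows the same template, with no Iwahori datum to track.

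The main obstacle is purely technical, not conceptual: one has to handle the ind-pro structure of $N(\mathcal{K})^{\omega^\rho}_{\Ran_x}$ over the prestack $\Ran_x$ so that $\Av_*^{N(\mathcal{K})^{\omega^\rho}_{\Ran_x}, \chi_{\Ran_x}}$ is well-defined and commutes with base change to fibers. The reduction to the subgroup $N'$ carried out in Section~\ref{whittaker category on ran fl} (whose kernel to $N(\mathcal{K})^{\omega^\rho}_x$ is pro-unipotent) takes care of most of this; once this setup is in place the geometric content of the proof is entirely encoded in Proposition~\ref{basic fact property} together with the local Whittaker vanishing, exactly as in \cite[Section 6.6]{[Ga5]}.
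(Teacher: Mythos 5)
Your proposal is correct and takes essentially the same approach as the paper: the paper's own ``proof'' of Lemma \ref{infth} is simply the citation ``by repeating the proof in \cite[Section 6.6]{[Ga5]} word-by-word,'' and your outline (factorization via Proposition \ref{basic fact property}, triviality of the Whittaker category at the auxiliary points of $\Ran$, and the pro-unipotent reduction to the subgroup $N'$) is a faithful account of what that transferred argument requires. The local vanishing input you invoke is exactly what the paper itself uses immediately afterwards in the proof of Corollary \ref{fact bas}, so nothing further is needed.
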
 

{
\begin{proof}
  We only sketch the proof. 
  
Given a finite set $\fI$ with a distinguished point, let $X^{\fI}_x$ be the subspace of $X^{\fI}$ such that the coordinate indexed by the distinguished point is $x$. Note that $\Ran_x=\colim X^{\fI}_x$, and we can define $\Whit_q((\overline{S}^{w_0}_{\Fl, X^{\fI}_{x}})_{\infty \cdot x})$ and $ \Whit_q(X^{\fI}_x\times \Fl_{G,x}^{\omega^\rho})$ similarly.

It is sufficient to show that $\unit_{\fI}^!: \Whit_q((\overline{S}^{w_0}_{\Fl, X^{\fI}_{x}})_{\infty \cdot x})\to \Whit_q(X^{\fI}_x\times \Fl_{G,x}^{\omega^\rho})$ is an equivalence for any finite set $\fI$ with a distinguished point, then the desired property follows by taking limit. For any such $\fI$, one can give $X^{\fI}_x$ a stratification $\{X^{\fB}_x\}$ according to the collision of points. Note that there is $(\overline{S}^{w_0}_{\Fl, X^{\fB}_{x}})_{\infty \cdot x}\simeq (((X-x)^k-\text{Diag})\underset{\Ran}{\times} \overline{S}^0_{\Gr,\Ran})\times \Fl_{G,x}^{\omega^\rho}$, where $k$ is the number of different elements in $\fB$ without the element containing the distinguished point. So, we have 
\begin{equation}
    \begin{split}
        \Whit_q((\overline{S}^{w_0}_{\Fl, X^{\fB}_{x}})_{\infty \cdot x})\simeq\Shv_{\cG^G}((((X-x)^k-\text{Diag})\underset{\Ran}{\times} \overline{S}^0_{\Gr,\Ran})\times \Fl_{G,x}^{\omega^\rho})^{N(\cK)^{\omega^\rho}_{X^{\fB}_x},\chi_{{\fB}}}\\ \simeq\Shv_{\cG^G}((((X-x)^k-\text{Diag})\underset{\Ran}{\times} {S}^0_{\Gr,\Ran})\times \Fl_{G,x}^{\omega^\rho})^{N(\cK)^{\omega^\rho}_{X^{\fB}_x},\chi_{{\fB}}}\\ \simeq \Shv_{\cG^G}((((X-x)^k-\text{Diag}))\times \Fl_{G,x}^{\omega^\rho})^{N(\cK)_x^{\omega^\rho},\chi_x} \simeq \Whit_q(X^{\fB}_{x}\times \Fl_{G,x}^{\omega^\rho}).
    \end{split}
\end{equation}
Here, the second equivalence follows from the fact that $\overline{S}^0_{\Gr,\Ran}\backslash{S}^0_{\Gr,\Ran}$ does not carry non-zero Whittaker sheaf, and the third equivalence follows from $N(\cK)^{\omega^\rho}_{X^{\fB}_x}\simeq (((X-x)^k-\text{Diag})\underset{\Ran}{\times} N(\cK)^{\omega^\rho}_{\Ran})\times N(\cK)_x^{\omega^\rho}$.
It implies that the restriction $\unit_{\fB}^!:\Whit_q((\overline{S}^{w_0}_{\Fl, X^{\fB}_{x}})_{\infty \cdot x})\to \Whit_q(X^{\fB}_{x}\times \Fl_{G,x}^{\omega^\rho})$ is an equivalence. 

In particular, $\unit^!_{\fI}$ is conservative, and we only need to construct the left adjoint functor $(\unit^!_{\fI})^L$ such that $\Id\to \unit^!_{\fI}\circ(\unit^!_{\fI})^L$ is an isomorphism.

For $n\geq 0$, let $I_{n}:= \Ad_{-n\rho}(G(\cO)^{\omega^\rho}\underset{G(\cO/t^n\cO)^{\omega^\rho}}{\times}N(\cO/t^n\cO)^{\omega^\rho})$. There is a canonical way to extend the character $\chi|_{I_n\cap N(\cK)^{\omega^\rho}}$ to $I_n$ such that it is trivial on the negative part (i.e., $B^-(\cO)\cap I_n$), we still use the same notation $\chi$.

    A quite non-trivial result of \cite[Theorem 2.7.1]{[Ras3]} says that the left adjoint $\Av_!^{N(\cK)^{\omega^\rho},\chi_x}$ is well-defined for any $(I_n,\chi_x)$-equivariant sheaf. With the same proof, one can show that $\Av_!^{N(\cK)_{{\fI}}^{\omega^\rho},\chi_{\fI}}$ is well-defined for any $(I_n',\chi_x)$-equivariant sheaf on $\Fl_{G,X^{\fI}_x}^{\omega^\rho}$. Here, $I_n'\subset G(\cK)^{\omega^\rho}_{\fI}$ is the subgroup whose fiber over $\{x,x_1,x_2,\cdots,x_n\}$ is $I_n\times \prod_{i=1}^n G(\cO)^{\omega^\rho}_{x_i}$, and $\chi_{\fI}$ is the character given by the map $I_n'\to I_n\overset{\chi}{\to}\BG_a$.

    To construct the left adjoint functor $(\unit_{\fI}^!)^L$ of $\unit_{\fI}^!$, we should prove that $\Av_!^{N(\cK)_{{\fI}}^{\omega^\rho},\chi_{\fI}}$ is well-defined on the image of the composition
    \[\Whit_q(X^{\fI}_{x}\times \Fl_G^{\omega^\rho})\overset{\oblv^{N(\cK)^{\omega^\rho},\chi_x}}{\longrightarrow}\Shv_{\cG^G}(X^{\fI}_{x}\times \Fl_G^{\omega^\rho})\overset{\unit_!}{\longrightarrow} \Shv_{\cG^G}((\overline{S}^{w_0}_{\Fl, X^{\fI}_{x}})_{\infty \cdot x}).\]
Then, $(\unit_{\fI}^!)^L$ is given by the composition of the above functor and $\Av_!^{N(\cK)_{{\fI}}^{\omega^\rho},\chi_{\fI}}$.

Note that the category $\Whit_q(X^{\fI}_{x}\times \Fl_G^{\omega^\rho})$ is generated by applying $\Av_!^{N(\cK)^{\omega^\rho},\chi_x}$ to $(I_n,\chi_x)$-equivariant objects on $X^{\fI}_{x}\times \Fl_G^{\omega^\rho}$. So, we only need to show that $\Av_!^{N(\cK)_{{\fI}}^{\omega^\rho},\chi_{\fI}}$ is well-defined on the image of 
\begin{equation}\label{6.1.17}
    \Shv_{\cG^G}(X^{\fI}_{x}\times \Fl_G^{\omega^\rho})^{I_n,\chi_x}\overset{\oblv^{I_n,\chi_x}}{\longrightarrow}\Shv_{\cG^G}(X^{\fI}_{x}\times \Fl_G^{\omega^\rho})\overset{\unit_!}{\longrightarrow} \Shv_{\cG^G}((\overline{S}^{w_0}_{\Fl, X^{\fI}_{x}})_{\infty \cdot x}).
\end{equation}

Furthermore, note that $\unit_{\fI}:X^{\fI}_x\times \Fl_{G,x}^{\omega^\rho}\to (\overline{S}^{w_0}_{\Fl, X^{\fI}_{x}})_{\infty \cdot x}$ is $I_n'$-invariant, where $I_n'$ acts on $X^{\fI}_{x}\times \Fl_G^{\omega^\rho}$ via $I_n'\to I_n$. In particular, the image of \eqref{6.1.17} is $({I_n',\chi_x})$-equivariant, and $\Av_!^{N(\cK)_{{\fI}}^{\omega^\rho},\chi_{\fI}}$ is well-defined for those equivariant sheaves.

Finally, by repeating the same argument as in \cite[Section 6.5]{[Ga5]} we can prove that the functor $(\unit_{\fI}^!)^L$ satisfies the desired property, i.e., $\Id\to \unit_{\fI}^!\circ (\unit_{\fI}^!)^L$ is an isomorphism.
\end{proof}

}

\begin{definition}
For $\mathcal{F}\in \Whit_q(\Fl_G^{\omega^\rho})$, we denote by $\sprd_{\Fl, \Ran_x}(\mathcal{F})$ the Whittaker sheaf on $(\overline{S}^{w_0}_{\Fl, \Ran_x})_{\infty\cdot x}$ corresponding to $\pr_{\Ran_x}^!(\mathcal{F})$ under the equivalence  (\ref{inf}).
\end{definition}
Using the fiber description of $(\overline{S}^0_{\Fl,\Ran_{x}})_{\infty\cdot x}$ in Section \ref{6.1.9}, we can describe $\sprd_{\Fl, \Ran_x}(\mathcal{F})$ more explicitly. Namely, the restriction of $\sprd_{\Fl, \Ran_x}(\mathcal{F})$ to $\Fl_{G,x}^{\omega^\rho}$ is $\mathcal{F}$, and its restriction to $\overline{S}^0_{\Gr, x_i}$ is the generator of $\Whit_q(\overline{S}^0_{\Gr, x_i})$.

\begin{definition}Let  $\Vac $ denote the Whittaker sheaf on $\overline{S}^0_{\Gr, \Ran}$ which is uniquely characterized by the property that its !-pullback to $\Ran$ via the canonical section 
\begin{equation}
    \begin{split}
    s_{\Ran}: \Ran\to \overline{S}^0_{\Gr, \Ran}\\
    \mathcal{I}\mapsto (\mathcal{I}, \mathcal{P}_G^{\omega}, \id)
    \end{split}
\end{equation}
is the dualizing sheaf on $\Ran$, i.e., $s_{\Ran}^!(\Vac)\simeq \omega_{\Ran}$. 
\end{definition}
It is important that $\Vac$ and $\sprd_{\Fl, \Ran_x}(\mathcal{F})$ satisfy the following factorization properties.
\begin{cor}\label{fact bas}\
\begin{enumerate}[label={\upshape(\arabic*)}]
    \item $\Vac$ is a factorization algebra on $\overline{S}^0_{\Gr, \Ran}$, i.e., there is a canonical isomorphism
\[\Vac\boxtimes \Vac|_{(\overline{S}^0_{\Gr, \Ran}\times \overline{S}^0_{\Gr, \Ran})_{disj}}\simeq \Vac|_{\overline{S}^0_{\Gr, \Ran}\underset{\Ran}{\times} (\Ran\times\Ran)_{disj}},\]
which is compatible with \eqref{6.1.5}.
    \item $\sprd_{\Fl, \Ran_x}(\mathcal{F})\in \Whit_q((\overline{S}^{w_0}_{\Fl, \Ran_{x}})_{\infty \cdot x})$ is a factorization module over $\Vac$, i.e., there is a canonical isomorphism
    \[ \Vac\boxtimes \sprd_{\Fl, \Ran_x}(\mathcal{F})|_{(\overline{S}^0_{\Gr, \Ran}\times (\overline{S}^{w_0}_{\Fl, \Ran_{x}})_{\infty \cdot x})_{disj}}\]\[\simeq\] \[\sprd_{\Fl, \Ran_x}(\mathcal{F})|_{(\overline{S}^{w_0}_{\Fl, \Ran_{x}})_{\infty \cdot x})\underset{\Ran_{ x}}{\times}(\Ran\times \Ran_{ x})_{disj}},\]
which is compatible with \eqref{6.1.6}.
\end{enumerate}
\end{cor}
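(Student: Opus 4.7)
The plan is to deduce both factorization properties directly from the factorization isomorphisms of the underlying prestacks (Proposition \ref{basic fact property}) together with the equivalences furnished by Lemma \ref{infth}. The key observation is that $\Vac$ and $\sprd_{\Fl, \Ran_x}(\mathcal{F})$ are each characterized by a simple pullback, so it suffices to check that the two sides of each claimed isomorphism have matching restrictions under that pullback.

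For part (a), first note that Proposition \ref{basic fact property}(a) gives a canonical isomorphism of prestacks
\[\overline{S}^0_{\Gr,\Ran}\underset{\Ran}{\times}(\Ran\times\Ran)_{disj} \;\simeq\; \overline{S}^0_{\Gr,\Ran}\times\overline{S}^0_{\Gr,\Ran}\underset{\Ran\times\Ran}{\times}(\Ran\times\Ran)_{disj}.\]
So both $\Vac|_{disj}$ and $(\Vac\boxtimes\Vac)|_{disj}$ naturally live on the same prestack. I would then invoke Lemma \ref{infth} to identify each with an object on $\overline{S}^0_{\Gr}\times \Ran|_{disj}$ via $\unit_{\Gr}^!$, where $(\Ran\times\Ran)_{disj}$ is regarded as an open sub-prestack of $\Ran$ via the union map. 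Under this identification (and its bi-factor version), both objects become pullbacks along the unit section from $\omega_{(\Ran\times\Ran)_{disj}}$: the first because $s_{\Ran}^!(\Vac)\simeq \omega_{\Ran}$ together with the fact that the union map sends disjoint pairs of diagonal sections to the diagonal section; the second because $\omega_{\Ran}\boxtimes\omega_{\Ran}|_{disj}\simeq\omega_{(\Ran\times\Ran)_{disj}}$ and the defining property of $\Vac$ applied on each factor. The uniqueness in the construction of $\Vac$ (coming from the equivalence of Lemma \ref{infth}) then yields the desired canonical isomorphism.

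For part (b), I would repeat the argument in the flag setting. Proposition \ref{basic fact property}(b) provides the prestack-level factorization, and the equivalence $\unit^!$ from Lemma \ref{infth} reduces the comparison of $\sprd_{\Fl,\Ran_x}(\mathcal{F})|_{disj}$ with $\Vac\boxtimes\sprd_{\Fl,\Ran_x}(\mathcal{F})|_{disj}$ to a comparison on $\Ran\times\Ran_x\times\Fl_{G,x}^{\omega^\rho}|_{disj}$. By construction, $\sprd_{\Fl,\Ran_x}(\mathcal{F})$ corresponds to $\pr_{\Ran_x}^!(\mathcal{F})$, which is manifestly compatible with the extra $\Ran$-factor: pullback along the union map $\cup_x\colon \Ran\times\Ran_x\to\Ran_x$ composed with $\pr_{\Ran_x}^!$ agrees with $\pr^!_{\Ran\times\Ran_x,\Fl_{G,x}^{\omega^\rho}}\circ\pr^!_{\Fl_{G,x}^{\omega^\rho}}(\mathcal{F})$, matching the external product structure after the unit identification.

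The only potentially subtle point is that the identifications produced by Lemma \ref{infth} need to intertwine the factorization maps on the two sides. This is essentially formal once one follows the proof of Lemma \ref{infth} in \cite[Section 6.6]{[Ga5]}, where the equivalence is set up via restriction to the subgroup $N'$ whose formation is itself compatible with disjoint unions of points; there are no truly novel geometric inputs required. Hence the whole corollary is formal from Proposition \ref{basic fact property}, Lemma \ref{infth}, and the defining normalization of $\Vac$.
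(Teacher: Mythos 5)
Your proposal is correct and follows essentially the same route as the paper: both reduce the claim, via the equivalences of Lemma~\ref{infth} (equivalently, the identification $\Whit_q(\overline{S}^0_{\Gr,\Ran})\simeq \Shv(\Ran)$ by $!$-restriction along $s_{\Ran}$) and the prestack-level factorization of Proposition~\ref{basic fact property}, to the statement that $\omega_{\Ran_x}$ factorizes with respect to $\omega_{\Ran}$ under the union map on the disjoint locus. The compatibility of the unit identifications with the factorization maps, which you flag as the only subtle point, is exactly what the paper's commutative diagram involving $\unit$ and $\cup_x\times\id$ records.
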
 

\begin{proof}
\textup{(1)} is \cite[Theorem 8.4.6 (a)]{[GL1]}.
We only show \textup{(2)}.

It is known that for any $x\in \overline{S}^0_{\Gr, \Ran}\setminus S^0_{\Gr, \Ran}$, we have \[\Stab_{N(\mathcal{K})^{\omega^\rho}_{\Ran}}(x)\nsubseteq \Ker(\chi_{\Ran}).\] 

It implies $\Whit_q(\overline{S}^0_{\Gr, \Ran}\setminus S^0_{\Gr, \Ran})=0$. In particular,  \[\Whit_q(\overline{S}^0_{\Gr, \Ran})\simeq \Whit_q(S^0_{\Gr, \Ran}).\]

Since $N(\mathcal{K})^{\omega^\rho}_{\Ran}$ acts transitively on $S^0_{\Gr, \Ran}$ over $\Ran$ and $N(\mathcal{K})^{\omega^\rho}_{\Ran}$ is ind-pro-unipotent, taking $!$-stalks along $s_{\Ran}$ induces an equivalence of categories \[\Whit_q(S^0_{\Gr, \Ran})\simeq \Shv(\Ran).\] 

Consider the following commutative diagram:
$$\xymatrix{
\Ran_x\times \Fl_{G,x}^{\omega^\rho}\ar[r]^{\unit} & (\overline{S}^{w_0}_{\Fl, \Ran_x})_{\infty\cdot x}\\
(\Ran\times \Ran_x\times \Fl_{G,x}^{\omega^\rho})_{disj}\ar[u]^{\cup_x\times \id}\ar[r]^{s_{\Ran}\times \unit}&(\overline{S}^0_{\Gr, \Ran}\times (\overline{S}^{w_0}_{\Fl, \Ran_x})_{\infty\cdot x})_{disj}\ar[u]}.$$
By Lemma \ref{infth}, we need to prove $$(\cup_x\times \id)^!\circ \unit^!(\sprd_{\Fl, \Ran_x}(\mathcal{F}))\simeq \omega_{\Ran}\boxtimes \unit^!(\sprd_{\Fl, \Ran_x}(\mathcal{F}))|_{disj},$$
which follows from the facts that $\unit^!(\sprd_{\Fl, \Ran_x}(\mathcal{F}))\simeq \omega_{\Ran_x}\boxtimes \mathcal{F}$, and the (twisted) sheaf $\omega_{\Ran_x}$ on $\Ran_x$ factorizes with respect to $\omega_{\Ran}$, i.e., we have \[\cup_x^!(\omega_{\Ran_x})|_{disj}\simeq \omega_{\Ran}\boxtimes\omega_{\Ran_x}|_{disj}.\]
\end{proof}

\subsection{Configuration version of {$\Gr_G^{\omega^\rho}$ and $\Fl_G^{\omega^\rho}$}{}}\label{configuration gr and fl}
The most important prestacks in this paper are constructed in this section. They are analogs of the constructions in \cite{[GL1]}. The target of the functor $F^L$ lives on $\Conf_{\infty\cdot x}$. Hence, it is convenient to consider prestacks over the configuration space. In this section, we will explain the configuration version $\Gr_G^{\omega^\rho}$ and $\Fl_G^{\omega^\rho}$ and related factorization prestacks. 

\begin{definition}\label{def conf}
Let $\Gr_{G, \Conf}^{\omega^\rho}$ (resp. $\Fl_{G, \Conf_{\infty\cdot x}}^{\omega^\rho}$) be the prestack over $\Conf$ (resp. $\Conf_{\infty\cdot x}$) which classifies the data $(D, \mathcal{P}_G, \alpha)$ (resp. $(D, \mathcal{P}_G, \alpha, \epsilon)$), here $D\in \Conf$ (resp. $\Conf_{\infty\cdot x}$ ), $\mathcal{P}_G\in \Bun_G,\ \alpha:\mathcal{P}_G|_{X\setminus \supp(D)}\simeq \mathcal{P}_G^{\omega}|_{X\setminus \supp(D)}$, and $\epsilon$ is a $B$-reduction of $\mathcal{P}_G$ at $x$.
 
 Similarly, we can define the configuration analog of $N(\mathcal{K})_{\Ran}^{\omega^\rho}$, $N(\mathcal{O})_{\Ran}^{\omega^\rho}$, $N(\mathcal{K})_{\Ran_{x}}^{\omega^\rho}$, $N(\mathcal{O})_{\Ran_{x}}^{\omega^\rho}$, etc. We denote the resulting prestacks by $N(\mathcal{K})_{\Conf}^{\omega^\rho}$, $N(\mathcal{O})_{\Conf}^{\omega^\rho}$, $N(\mathcal{K})_{\Conf_{\infty\cdot x}}^{\omega^\rho}$, and $N(\mathcal{O})_{\Conf_{\infty\cdot x}}^{\omega^\rho}$, respectively.
\end{definition}
\subsubsection{}
Note that $$\Gr_{G,\Ran}^{\omega^\rho}\mathop{\times}\limits_{\Ran} (\Gr^{\omega^\rho}_{T, \Ran})^{\textnormal{neg}}\simeq \Gr^{\omega^\rho}_{G, \Conf}\mathop{\times}\limits_{\Conf} (\Gr^{\omega^\rho}_{T, \Ran})^{\textnormal{neg}}.$$ As a result, the gerbe $\mathcal{G}^G$ on $\Gr^{\omega^\rho}_{G, \Ran}$ gives a gerbe on $\Gr^{\omega^\rho}_{G, \Conf}\mathop{\times}\limits_{\Conf} (\Gr^{\omega^\rho}_{T, \Ran})^{\textnormal{neg}}$. By Lemma \ref{shf eq conf Gr}, it descends to a gerbe on $\Gr^{\omega^\rho}_{G, \Conf}$. We still denote it by $\mathcal{G}^G$. Similarly, we can define gerbes on other prestacks in Definition \ref{def conf}.

To define the functor $F^L$, we need to define two sub-prestacks of $\Fl_{G, \Conf_{\infty\cdot x}}^{\omega^\rho}$: one carries Whittaker sheaves, and the other one carries the kernel. The former space is given by $(\overline{S}^{w_0}_{\Fl, \Conf_{\infty\cdot x}})_{\infty\cdot x}$, and the latter is $S^{-, \Conf_{\infty\cdot x}}_{\Fl, \Conf_{\infty\cdot x}}$.
\begin{definition}
Denote by $\overline{S}^{0}_{\Gr, \Conf}$ the closed sub-prestack of $\Gr_{G, \Conf}^{\omega^\rho}$ such that the maps $\kappa^{\check{\lambda}}$ in \eqref{6.1.3}
extend to regular maps on $X$  and satisfy the Pl\"{u}cker relations.

We denote by $(\overline{S}^{w_0}_{\Fl, \Conf_{\infty\cdot x}})_{\infty\cdot x}$ the closed sub-prestack of $\Fl_{G, \Conf_{\infty\cdot x}}^{\omega^\rho}$ such that the maps $\kappa^{\check{\lambda}}$ in \eqref{eq 6.1} are regular on $X\setminus x$  and satisfy Pl\"{u}cker relations.
\end{definition}
\begin{definition}
Let $S^{-, \Conf}_{\Gr, \Conf}$ (resp. $\overline{S}^{-, \Conf}_{\Gr, \Conf}$) denote the prestack classifying the data $(D, \mathcal{P}_G, \alpha)$, such that for any $\check{\lambda}$ dominant, the induced map

\begin{equation}
    \kappa^{-,\check{\lambda}}:\  '\mathcal{V}_{\mathcal{P}_G}^{\check{\lambda}}\to '\mathcal{V}_{\mathcal{P}_G^{\omega}}^{\check{\lambda}}\to (\omega^{\frac{1}{2}})^{\langle\check{\lambda}, 2\rho\rangle}(\langle \check{\lambda}, -D\rangle),
\end{equation}
which is a priori defined on $X\setminus \supp(D)$, extends to a surjective (resp. regular) map on the whole curve $X$ and satisfies the Pl\"{u}cker relations. Here $'\mathcal{V}_{\mathcal{P}_G}^{\check{\lambda}}$ (resp. $'\mathcal{V}_{\mathcal{P}_G^\omega}^{\check{\lambda}}$) is the vector bundle associated with $\mathcal{P}_G$ (resp. $\mathcal{P}_G^{\omega}= \omega^\rho\overset{T}{\times}G$) with fiber the dual Weyl module $'\mathcal{V}_G^{\check{\lambda}}$. The first map is induced by $\alpha$ and the second map is the map mapping to the highest weight vector.

Let $S^{-, \Conf_{\infty\cdot x}}_{\Fl, \Conf_{\infty\cdot x}}$ (resp. $\overline{S}^{-, \Conf_{\infty\cdot x}}_{\Fl, \Conf_{\infty\cdot x}}$) denote the prestack classifying the data from $S^{-, \Conf}_{\Gr, \Conf}$ (resp. $\overline{S}^{-, \Conf}_{\Gr, \Conf}$) plus a $B$-reduction of $\mathcal{P}_G$ at $x$. 

\end{definition}

The fiber of $\Fl_{G, \Conf_{\infty\cdot x}}^{\omega^\rho}$ (resp. $\Gr_{G, \Conf}^{\omega^\rho}$) over the point \eqref{Dx} (resp. \eqref{D}) is canonically isomorphic to
\begin{equation}\label{6.23}
    \Fl_{G,x}^{\omega^\rho}\times \prod_{1\leq i\leq k} \Gr_{G,x_i}^{\omega^\rho}.
\end{equation}
\begin{equation}
   \textnormal{(resp.}\ \prod_{1\leq i\leq k} \Gr_{G,x_i}^{\omega^\rho}\textnormal{)}
\end{equation}

\subsubsection{Factorization property}Similar to Lemma \ref{basic fact property}, $\overline{S}^0_{\Gr, \Conf}$ and $(\overline{S}^{w_0}_{\Fl, \Conf_{\infty\cdot x}})_{\infty \cdot x}$ are factorizable. That is to say,

\begin{equation}\label{conf 0 fact Gr}
\begin{split}
        \overline{S}^0_{\Gr, \Conf}&\mathop{\times}\limits_{\Conf} (\Conf\times \Conf)_{disj}\\ &\simeq\\ \overline{S}^0_{\Gr, \Conf}\times \overline{S}^0_{\Gr, \Conf}&\mathop{\times}\limits_{\Conf\times \Conf} (\Conf\times \Conf)_{disj},
\end{split}
\end{equation}
and
\begin{equation}\label{conf 0 fact}
\begin{split}
    (\overline{S}^{w_0}_{\Fl, \Conf_{\infty\cdot x}})_{\infty \cdot x}&\mathop{\times}\limits_{\Conf_{\infty\cdot x}} (\Conf\mathop{\times}\Conf_{x})_{disj}\\ &\simeq\\ \overline{S}^0_{\Gr, \Conf}\times (\overline{S}^{w_0}_{\Fl, \Conf_{\infty\cdot x}})_{\infty \cdot x}&\mathop{\times}\limits_{\Conf\times \Conf_{\infty\cdot x}} (\Conf\times \Conf_{ x})_{disj}.
\end{split}
\end{equation}

Furthermore, $S^{-, \Conf}_{\Gr, \Conf}$ is a factorization prestack, $S^{-, \Conf_{\infty\cdot x}}_{\Fl, \Conf_{\infty\cdot x}}$ is factorizable with respect to $S^{-, \Conf}_{\Gr, \Conf}$.

\subsubsection{Relative position}\label{relative p}
The prestack $S^{-, \Conf_{\infty\cdot x}}_{\Fl, \Conf_{\infty\cdot x}}$ admits a stratification given by the \textit{relative position} of the $B$-reduction given by $\epsilon$ and the $B^-$-reduction given by the morphisms $\{\kappa^{{-,\check{\lambda}}}\}$. To be more precise, the morphisms $\{\kappa^{-,\check{\lambda}}\}$ are surjective, so they induce a $B^-$-reduction of $\mathcal{P}_G$ at $x$, i.e., we have a map
\begin{equation}
    S^{-,\Conf_{\infty\cdot x}}_{\Fl, \Conf_{\infty\cdot x}}\to \Bun_G\mathop{\times}\limits_{\pt/G}\pt/B^-
\end{equation}
given by sending a point $(D, \mathcal{P}_G, \alpha, \epsilon)$ of $S^{-,\Conf_{\infty\cdot x}}_{\Fl, \Conf_{\infty\cdot x}}$ to $\mathcal{P}_G$ and its $B^-$-reduction at $x$ induced by $\{\kappa^{-,\check{\lambda}}\}$. In addition, $\epsilon$ also gives a map
\begin{equation}
    S^{-,\Conf_{\infty\cdot x}}_{\Fl, \Conf_{\infty\cdot x}}\to \Bun_G\mathop{\times}\limits_{\pt/G}\pt/B.
\end{equation}

Note that their compositions with the functors of inductions to $G$-bundles coincide, so we have a map of relative position
\begin{equation}
    \operatorname{rp}: S^{-,\Conf_{\infty\cdot x}}_{\Fl, \Conf_{\infty\cdot x}}\to \Bun_G\mathop{\times}\limits_{\pt/G}\pt/B\mathop{\times}\limits_{\pt/G}\pt/B^-\simeq \Bun_G\mathop{\times}\limits_{\pt/G} B^-\backslash G/B.
\end{equation}

The Bruhat decomposition gives a double coset decomposition of $(B^-,B)$ in $G$ and it induces a stratification of $B^-\backslash G/B$. We denote by $\Br^w\subset B^-\backslash G/B$ the Bruhat cell corresponding to $B^-wB$.

\begin{definition}
For $w\in W$, let us denote by $S^{-,w, \Conf_{\infty\cdot x}}_{\Fl, \Conf_{\infty\cdot x}}$ the preimage of $\Br^w$ in $S^{-,\Conf_{\infty\cdot x}}_{\Fl, \Conf_{\infty\cdot x}}$. In particular, $S^{-,1, \Conf_{\infty\cdot x}}_{\Fl, \Conf_{\infty\cdot x}}$ is open dense in $S^{-,\Conf_{\infty\cdot x}}_{\Fl, \Conf_{\infty\cdot x}}$.
\end{definition}

Similar to Lemma \ref{basic fact property}, the prestacks $S^{-,w, \Conf_{\infty\cdot x}}_{\Fl, \Conf_{\infty\cdot x}}$ and $S^{-,\Conf_{\infty\cdot x}}_{\Fl, \Conf_{\infty\cdot x}}$ (and their closures in $\Fl^{\omega^\rho}_{G,\Conf_{\infty\cdot x}}$) factorize with respect to $S^{-,\Conf}_{\Gr, \Conf}$ (resp. $\overline{S}^{-,\Conf}_{\Gr, \Conf}$). i.e.,

\begin{equation}\label{fact conf - w}
\begin{split}
        S^{-,w, \Conf_{\infty\cdot x}}_{\Fl, \Conf_{\infty\cdot x}}&\mathop{\times}\limits_{\Conf_{\infty\cdot x}} (\Conf\times \Conf_{\infty\cdot x})_{disj}\\ &\simeq\\ S^{-,\Conf}_{\Gr, \Conf}\times S^{-,w, \Conf_{\infty\cdot x}}_{\Fl, \Conf_{\infty\cdot x}}&\mathop{\times}\limits_{\Conf\times \Conf_{\infty\cdot x}} (\Conf\times \Conf_{\infty\cdot x})_{disj}
\end{split}
\end{equation}

\begin{equation}\label{fact conf - w '}
\begin{split}
      \textnormal{(resp.}\  \overline{S}^{-,w, \Conf_{\infty\cdot x}}_{\Fl, \Conf_{\infty\cdot x}}&\mathop{\times}\limits_{\Conf_{\infty\cdot x}} (\Conf\times \Conf_{\infty\cdot x})_{disj}\\&\simeq\\ \overline{S}^{-,\Conf}_{\Gr, \Conf}\times \overline{S}^{-,w, \Conf_{\infty\cdot x}}_{\Fl, \Conf_{\infty\cdot x}}&\mathop{\times}\limits_{\Conf\times \Conf_{\infty\cdot x}} (\Conf\times \Conf_{\infty\cdot x})_{disj}\textnormal{).}
\end{split}
\end{equation}

\subsubsection{Description of fibers}
The fiber of $S^{-,w,\Conf_{\infty\cdot x}}_{\Fl, \Conf_{\infty\cdot x}}$ over the point $D= \lambda_x\cdot x+ \mathop{\sum}\limits_i \lambda_i\cdot x_i\in \Conf_{\infty\cdot x}$ is canonically isomorphic to
\begin{equation}\label{fiber of S conf conf}
    S^{-,t^{\lambda_x} w}_{\Fl,x}\times \prod_{i} S^{-,\lambda_i}_{\Gr,x_i}.
\end{equation}

Here $S^{-,t^{\lambda_x} w}_{\Fl,x}\subset \Fl^{\omega^\rho}_{G,x}$ denotes the $N^-(\mathcal{K})^{\omega^\rho}$-orbit of $t^{\lambda_x} w\in \Fl_{G,x}^{\omega^\rho}$, and $S^{-,\lambda_i}_{\Gr,x_i}\subset \Gr^{\omega^\rho}_{G,x}$ denotes the $N^-(\mathcal{K})^{\omega^\rho}$-orbit of $t^{\lambda_i} \in \Gr_{G, x_i}^{\omega^\rho}$.

\begin{rem}
The above identification of the fiber is compatible with the one given in \eqref{6.23}.
\end{rem} 




{
\subsection{Semi-infinite sheaf on $S_{\Fl,\Conf_{\infty\cdot x}}^{-,\Conf_{\infty\cdot x}}$}\label{section 6.3}
In Appendix \ref{app a}, we review the theory of semi-infinite sheaves on affine flags. In this section, we use it to define the $!$-extension semi-infinite sheaf on $S_{\Fl,\Conf_{\infty\cdot x}}^{-,w,\Conf_{\infty\cdot x}}$.

Consider the sub-prestack $\overline{S}^{-, 1}_{\Fl,\Ran_x}$ of $\Fl_{G,\Ran_x}^{\omega^\rho}$, which classifies the data $(\cI, \mathcal{P}_G, \alpha, \epsilon)$, such that for any $\check{\lambda}$ dominant, the induced map
\begin{equation}
    \kappa^{-,\check{\lambda}}:\  '\mathcal{V}_{\mathcal{P}_G}^{\check{\lambda}}\to '\mathcal{V}_{\mathcal{P}_G^{\omega}}^{\check{\lambda}}\to (\omega^{\frac{1}{2}})^{\langle\check{\lambda}, 2\rho\rangle},
\end{equation}
which is a priori defined on $X-\cI$, extends to a  regular map on the whole curve $X$ and satisfies the Pl\"{u}cker relations. We let ${S}^{-,1}_{\Fl,\Ran_x}$ be the substack where we require that the extended map $\kappa^{-,\check{\lambda}}$ to be surjective and the induced $B^-$-bundle and $\epsilon$ to be transversal at $x$.

The restriction of $(\cG^G)^{-1}$ to ${S}^{-,1}_{\Fl,\Ran_x}$ is canonically trivialized, we denote by $\omega_{{S}^{-,1}_{\Fl,\Ran_x}}$ the $(\cG^G)^{-1}$-twisted dualizing sheaf on ${S}^{-,1}_{\Fl,\Ran_x}$. According to Proposition \ref{Prop A 1.5}, the $!$-extension is well-defined for $\omega_{{S}^{-,1}_{\Fl,\Ran_x}}$, and we denote it by $j_!(\omega_{{S}^{-,1}_{\Fl,\Ran_x}})$. 

\subsubsection{}
The twisted dualizing sheaf $\omega_{{S}^{-,1}_{\Fl,\Ran_x}}$ and its $!$-extension $j_!(\omega_{{S}^{-,1}_{\Fl,\Ran_x}})$ naturally acquire $T(\cO)_{\Ran_x}^{\omega^\rho}$-equivariant structures, and $(\cG^G)^{-1}$ is $T(\cO)_{\Ran_x}^{\omega^\rho}$-equivariant, we can think $j_!(\omega_{{S}^{-,1}_{\Fl,\Ran_x}})$ as a sheaf on $T(\cO)_{\Ran_x}^{\omega^\rho}\backslash \overline{S}_{\Fl,\Ran_x}^{-,1}$. In particular, the construction of $j_!(\omega_{S_{\Fl,\Ran_x}^{-,1}})$ admits a $T$-twisted construction.

That is to say, given a prestack $\cY$ with a map $\cY\longrightarrow T(\cO)^{\omega^\rho}_{\Ran_x}\backslash \Ran_x$, we consider the fiber product
\[{}_{\cY}\Fl:=\cY\underset{T(\cO)^{\omega}_{\Ran_x}\backslash \Ran_x}{\times} T(\cO)_{\Ran_x}^{\omega^\rho}\backslash \Fl_{G,\Ran_x}^{\omega^\rho}.\]

We let $({}_{\cY}\cG^G)^{-1}$ be the pullback of the descent gerbe on $T(\cO)_{\Ran_x}^{\omega^\rho}\backslash \Fl_{G,\Ran_x}^{\omega^\rho}$ and ${}_{\cY}j_!(\omega_{S_{\Fl,\Ran_x}^{-,1}})$ be the $!$-pullback of $j_!(\omega_{S_{\Fl,\Ran_x}^{-,1}})$ along the projection
\begin{equation}\label{map 6.3.3}
    {}_{\cY}\Fl\longrightarrow T(\cO)_{\Ran_x}^{\omega^\rho}\backslash \Fl_{G,\Ran_x}^{\omega^\rho}.
\end{equation}

\subsubsection{}
Let $\cY=(\Gr_{T,\Ran_x}^{\omega^\rho})_{\infty\cdot x}^{\text{neg}}$, we have the following identification
\begin{equation}\label{A 3.1}
    {}_{\cY}\Fl= \Fl_{G,\Ran_x}^{\omega^\rho}\mathop{\times}\limits_{\Ran_x} (\Gr^{\omega^\rho}_{T, \Ran_x})_{\infty\cdot x}^{\textnormal{neg}}\simeq \Fl_{G,\Conf_{\infty\cdot x}}^{\omega^\rho}\mathop{\times}\limits_{\Conf_{\infty\cdot x}} (\Gr^{\omega^\rho}_{T, \Ran_x})_{\infty \cdot x}^{\textnormal{neg}}.
\end{equation}

Under the above identification, the preimage of $T(\cO)_{\Ran_x}^{\omega^\rho}\backslash {S}_{\Fl,\Ran_x}^{-,1}$ under \eqref{map 6.3.3} is identified with the product ${S}^{-,1, \Conf_{\infty\cdot x}}_{\Fl, \Conf_{\infty\cdot x}}\mathop{\times}\limits_{\Conf_{\infty\cdot x}} (\Gr^{\omega^\rho}_{T, \Ran_x})_{\infty \cdot x}^{\textnormal{neg}}$. Additionally, the gerbe $({}_{\cY}\cG^G)^{-1}$ on $\Fl_{G,\Conf_{\infty\cdot x}}^{\omega^\rho}\mathop{\times}\limits_{\Conf_{\infty\cdot x}} (\Gr^{\omega^\rho}_{T, \Ran_x})_{\infty \cdot x}^{\textnormal{neg}}$ is identified with the ratio gerbe
\begin{equation}
\cG^{G,T,\ratio}=(\cG^G)^{-1}\otimes \cG^\Lambda.
\end{equation}

Since $(\Gr^{\omega^\rho}_{T, \Ran_x})_{\infty \cdot x}^{\textnormal{neg}}\longrightarrow \Conf_{\infty\cdot x}$ is an isomorphism in $h$-topology, the gerbes and the corresponding categories of twisted sheaves on ${}_{\cY}\Fl$ and $\Fl_{G,\Conf_{\infty\cdot x}}^{\omega^\rho}$ are the same. 

\begin{definition}
     We denote the sheaf corresponding to ${}_{\cY}j_!(\omega_{S_{\Fl,\Ran_x}^{-,1}})$ by $j_!({\omega_{S^{-,1, \Conf_{\infty\cdot x}}_{\Fl, \Conf_{\infty\cdot x}}}})\in \Shv_{\cG^{G,T,\ratio}}(\Fl_{G,\Conf_{\infty\cdot x}}^{\omega^\rho})$. Its restriction to $S^{-,1, \Conf_{\infty\cdot x}}_{\Fl, \Conf_{\infty\cdot x}}$ is the dualizing sheaf under the canonical trivialization of $\cG^{G,T,\ratio}|_{S^{-,1, \Conf_{\infty\cdot x}}_{\Fl, \Conf_{\infty\cdot x}}}$.
\end{definition}


\subsubsection{}
The above constructions also work for affine Grassmannian. 
To be more precise, let $j_!(\omega_{S_{\Gr,\Ran}^{-,0}})\in \Shv_{(\cG^G)^{-1}}(\overline{S}_{\Gr,\Ran}^{-,0})$ be the $!$-extension of the dualizing sheaf on ${S_{\Gr,\Ran}^{-,0}}$ (which is defined similarly as ${S_{\Fl,\Ran_x}^{-,1}}$, but without the Iwahori structure). Given $\cY:=(\Gr_{T,\Ran}^{\omega^\rho})^{\text{neg}}\longrightarrow T(\cO)_{\Ran}^{\omega^\rho}\backslash \Gr_{G,\Ran}^{\omega^\rho}$, we can also define a twisted sheaf $j_!({\omega_{S^{-,\Conf}_{\Gr, \Conf}}})\in \Shv_{\cG^{G,T,\ratio}}(\Gr_{G,\Conf}^{\omega^\rho})$ which corresponds to ${}_{\cY}j_!(\omega_{S^{-,0}_{\Gr,\Ran}})$. 

Using the fact that $j_!(\omega_{S_{\Fl,\Ran_x}^{-,1}})$ is factorizable with respect to $j_!(\omega_{S_{\Gr,\Ran}^{-,0}})$, and pulling-back along ${}_{\cY}\Fl\longrightarrow T(\cO)_{\Ran_x}^{\omega^\rho}\backslash \Fl_{G,\Ran_x}^{\omega^\rho}$ and ${}_{\cY}\Gr\longrightarrow T(\cO)_{\Ran}^{\omega^\rho}\backslash \Gr_{G,\Ran}^{\omega^\rho}$ preserve factorization structures, we conclude that $j_!({\omega_{S^{-,\Conf}_{\Gr, \Conf}}})$ and $j_!({\omega_{S^{-,1, \Conf_{\infty\cdot x}}_{\Fl, \Conf_{\infty\cdot x}}}})$ satisfy the factorization properties.

}


{That is to say, $j_!(S^{-, \Conf}_{\Gr, \Conf})$ is a factorization algebra,
\begin{equation}\label{factalg-}
\begin{split}
    &j_!(\omega_{S^{-, \Conf}_{\Gr, \Conf}})|_{\overline{S}^{-, \Conf}_{\Gr, \Conf}\mathop{\times}\limits_{\Conf} (\Conf\times \Conf)_{disj}} \\
\simeq &j_!(\omega_{S^{-, \Conf}_{\Gr, \Conf}})\boxtimes j_!(\omega_{S^{-, \Conf}_{\Gr, \Conf}})|_{\overline{S}^{-,\Conf}_{\Gr, \Conf}\times \overline{S}^{-, \Conf}_{\Gr, \Conf}\mathop{\times}\limits_{\Conf\times \Conf} (\Conf\times \Conf)_{disj}},
\end{split}
\end{equation}
and $j_!(\omega_{S^{-,1,\Conf_{\infty\cdot x}}_{\Fl, \Conf_{\infty\cdot x}}})$ factorizes with respect to $j_!(\omega_{S^{-, \Conf}_{\Gr, \Conf}})$, i.e.,
\begin{equation}\label{fact-}
\begin{split}
    &j_!(\omega_{S^{-,1,\Conf_{\infty\cdot x}}_{\Fl, \Conf_{\infty\cdot x}}})|_{\overline{S}^{-,1, \Conf_{\infty\cdot x}}_{\Fl, \Conf_{\infty\cdot x}}\mathop{\times}\limits_{\Conf_{\infty\cdot x}} (\Conf\times \Conf_{\infty\cdot x})_{disj}} \\
\simeq &j_!(\omega_{S^{-,\Conf}_{\Gr, \Conf}})\boxtimes j_!(\omega_{S^{-,1,\Conf_{\infty\cdot x}}_{\Fl, \Conf_{\infty\cdot x}}})|_{\overline{S}^{-,\Conf}_{\Gr, \Conf}\times \overline{S}^{-,1, \Conf_{\infty\cdot x}}_{\Fl, \Conf_{\infty\cdot x}}\mathop{\times}\limits_{\Conf\times \Conf_{\infty\cdot x}} (\Conf\times \Conf_{\infty\cdot x})_{disj}}.
\end{split}
\end{equation}}

\subsection{Constructions of functors}\label{subsection F}
In this section, we will define the functor $F^L: \Whit_q(\Fl^{\omega^\rho}_G)\to \Shv_{\mathcal{G}^\Lambda}(\Conf_{\infty\cdot x}).$ which is used in Theorem \ref{main theorem 1}.

To start with, let us summarize the prestacks defined in previous sections of this paper in the following diagram:
\begin{center}
\begin{equation}\label{diagram 15.2}
  \xymatrix{
&\Fl_{G, \Conf_{\infty\cdot x}}^{\omega^\rho}&\\
&(\overline{S}^{w_0}_{\Fl, \Conf_{\infty\cdot x}})_{\infty\cdot x}\ar[u]\ar[d]&\overline{S}^{-, \Conf_{\infty\cdot x}}_{\Fl, \Conf_{\infty\cdot x}}\ar[lu]\\
\Ran_x\times \Fl_{G,x}^{\omega^\rho} \ar[d]^{\pr_{\Ran_x}}\ar[r]^{\unit}&(\overline{S}^{w_0}_{\Fl, \Ran_x})_{\infty\cdot x}&(\overline{S}^{w_0}_{\Fl, \Conf_{\infty\cdot x}})_{\infty\cdot x}\cap \overline{S}^{-, \Conf_{\infty\cdot x}}_{\Fl, \Conf_{\infty\cdot x}}\ar[dd]^{v_{\Conf_{\infty\cdot x}}}\ar[lu]\ar[u]\\
\Fl_{G,x}^{\omega^\rho}&&&\\
&&\Conf_{\infty\cdot x}.\\
&&}
\end{equation}
\end{center}

The morphism $\unit: \Ran_x\times \Fl_{G,x}^{\omega^\rho}\to (\overline{S}^{w_0}_{\Fl, \Ran_x})_{\infty\cdot x}$ is given by (\ref{unit}).

\subsubsection{{{Construction of}} $F^L$} $F^L$ can be constructed via the following steps:
\begin{enumerate}[label=(\arabic*)]
    \item Given a twisted Whittaker sheaf $\mathcal{F}\in \Whit_q(\Fl_{G,x}^{\omega^\rho})$, first of all, we !-pullback it to $\Ran_x\times \Fl_{G,x}^{\omega^\rho}$ along the morphism $\pr_{\Ran_x}$. By Lemma \ref{infth}, it gives rise to a twisted Whittaker sheaf $\sprd_{\Fl, \Ran_x}(\mathcal{F})$ on  $(\overline{S}^{w_0}_{\Fl, \Ran_x})_{\infty\cdot x}$.
    \item Consider the image of $\sprd_{\Fl, \Ran_x}(\mathcal{F})$ under the following functor
\begin{align*}
    \Shv_{\mathcal{G}^G}((\overline{S}^{w_0}_{\Fl, \Ran_x})_{\infty \cdot x})\to \Shv_{\mathcal{G}^G}((\overline{S}^{w_0}_{\Fl, \Ran_x})_{\infty \cdot x}\mathop{\times}\limits_{\Ran_x} (\Gr^{\omega^\rho}_{T, \Ran_x})^{\textnormal{neg}}_{\infty \cdot x})\\ \simeq \Shv_{\mathcal{G}^G}((\overline{S}^{w_0}_{\Fl, \Conf_{\infty\cdot x}})_{\infty \cdot x}\mathop{\times}\limits_{\Conf_{\infty\cdot x}} (\Gr^{\omega^\rho}_{T, \Ran_x})^{\textnormal{neg}}_{\infty \cdot x}) \simeq \Shv_{\mathcal{G}^G}((\overline{S}^{w_0}_{\Fl, \Conf_{\infty\cdot x}})_{\infty \cdot x}).
\end{align*}
The first functor above is given by $!$-pullback. The second one follows from the isomorphism
\[(\overline{S}^{w_0}_{\Fl, \Ran_x})_{\infty \cdot x}\mathop{\times}\limits_{\Ran_x} (\Gr^{\omega^\rho}_{T, \Ran_x})_{\infty\cdot x}^{\textnormal{neg}}\simeq (\overline{S}^{w_0}_{\Fl, \Conf_{\infty\cdot x}})_{\infty\cdot x}\mathop{\times}\limits_{\Conf_{\infty\cdot x}} (\Gr^{\omega^\rho}_{T, \Ran_x})_{\infty \cdot x}^{\textnormal{neg}}.\] The third one is given by Lemma \ref{lemma 5.2}. 
We denote the resulting sheaf by $\sprd_{\Fl}(\mathcal{F})$.
\begin{equation}\label{def of sprd}
    \sprd_{\Fl}: \Whit_q(\Fl_G^{\omega^\rho})\to \Shv_{\mathcal{G}^G}((\overline{S}^{w_0}_{\Fl, \Conf_{\infty\cdot x}})_{\infty \cdot x}).
\end{equation}
\item Take $!$-tensor product of $\sprd_{\Fl}(\mathcal{F})$ with the semi-infinite $!$-extension sheaf $j_!(\omega_{S_{\Fl, \Conf_{\infty\cdot x}}^{-, 1,\Conf_{\infty\cdot x}}})$ defined in Section \ref{configuration gr and fl}.
\item Then take $!$ (or equivalently, take $*$)-pushforward along the projection $v_{\Conf_{\infty\cdot x}}$ with cohomology shift $\langle \lambda, 2\check{\rho} \rangle$ on the connected component $\Conf_{\infty\cdot x}^\lambda$ of $\Conf_{\infty\cdot x}$. 
\end{enumerate}
\begin{definition}
To summarize, the functor $$F^L: \Whit_q(\Fl_{G,x}^{\omega^\rho})\to \Shv_{\mathcal{G}^\Lambda}(\Conf_{\infty\cdot x})$$ is defined as
\begin{equation}\label{6.35}
    \mathcal{F}\mapsto v_{\Conf_{\infty\cdot x}, *}(\sprd_{\Fl}(\mathcal{F})\overset{!}{\otimes} j_!({\omega_{S^{-,{1}, \Conf_{\infty\cdot x}}_{\Fl, \Conf_{\infty\cdot x}}}})|_{(\overline{S}^{w_0}_{\Fl, \Conf_{\infty\cdot x}})_{\infty \cdot x}\cap\overline{S}^{-, \Conf_{\infty\cdot x}}_{\Fl, \Conf_{\infty\cdot x}}})[\deg],
\end{equation}
where the shift $[\deg]$ equals $\langle \lambda, 2\check{\rho} \rangle$ on the connected component $\Conf_{\infty\cdot x}^\lambda$.
\end{definition}
\begin{rem}
The resulting sheaf $F^L(\mathcal{F})$ is $\mathcal{G}^\Lambda$-twisted. Indeed, $\sprd_{\Fl}(\mathcal{F})\in \Shv_{\mathcal{G}^G}((\overline{S}^{w_0}_{\Fl, \Ran_x})_{\infty \cdot x})$ and $j_!({\omega_{S^{-,{1,} \Conf_{\infty\cdot x}}_{\Fl, \Conf_{\infty\cdot x}}}})\in \Shv_{\mathcal{G}^{G, T, \ratio}}(\overline{S}^{-, \Conf_{\infty\cdot x}}_{\Fl, \Conf_{\infty\cdot x}})$. Note that $\mathcal{G}^{G, T, \ratio}$ is the quotient of $\mathcal{G}^G$ by $\mathcal{G}^\Lambda$. Hence, the tensor product of the sheaf $\sprd_{\Fl}(\mathcal{F})\in \Shv_{\mathcal{G}^G}((\overline{S}^{w_0}_{\Fl, \Ran_x})_{\infty \cdot x})$ and the sheaf $j_!({\omega_{S^{-,{1,} \Conf_{\infty\cdot x}}_{\Fl, \Conf_{\infty\cdot x}}}})\in \Shv_{\mathcal{G}^{G, T, \ratio}}(\overline{S}^{-, \Conf_{\infty\cdot x}}_{\Fl, \Conf_{\infty\cdot x}})$ is ${\mathcal{G}^\Lambda}$-twisted.
\end{rem}
\subsubsection{{Construction of} $F^L_{\Gr}$}Similarly, we consider the following diagram:
\begin{center}
\begin{equation}\label{diagram 15.5}
\xymatrix{ &\Gr_{G, \Conf}^{\omega^\rho}&\\
&\overline{S}^{0}_{\Gr, \Conf}\ar[u]\ar[d]&\overline{S}^{-, \Conf}_{\Gr, \Conf}\ar[lu]\\
\Ran\times \overline{S}_{\Gr}^{0} \ar[d]^{\pr_{\Ran}}\ar[r]^{\unit_{\Gr}}&(\overline{S}^{0}_{\Gr, \Ran})&(\overline{S}^{0}_{\Gr, \Conf})\cap \overline{S}^{-, \Conf}_{\Gr, \Conf}\ar[dd]^{v_{\Conf}}\ar[lu]\ar[u]\\
\overline{S}_{\Gr}^{0}&&\\
&&\Conf.\\
&&}
\end{equation}
\end{center}

By applying the same steps (1), (2) as in the above construction (with a tiny modification: replace affine flags by the affine Grassmannian), we get a functor
\begin{equation}
    \sprd_{\Gr}: \Whit_q(\overline{S}^0_{\Gr})\to \Shv_{\mathcal{G}^G}(\overline{S}^{0}_{\Gr, \Conf}).
\end{equation}

\begin{definition}
The functor $$F^L_{\Gr}: \Whit_q(\overline{S}^0_{\Gr})\to \Shv_{\mathcal{G}^\Lambda}(\Conf)$$
is defined as
\begin{equation}\label{15.7}
    \mathcal{F}\mapsto v_{\Gr,*}(\sprd_{\Gr}(\mathcal{F})|_{(\overline{S}^{0}_{\Gr, \Conf})}\overset{!}{\otimes} j_!({\omega_{S^{-, \Conf}_{\Gr, \Conf}}})|_{(\overline{S}^{0}_{\Gr, \Conf})\cap\overline{S}^{-, \Conf}_{\Gr, \Conf}})[\deg].
\end{equation}
\end{definition}
Recall that $\Whit_q(\overline{S}^0_{\Gr})\simeq \Whit_q(S^0_{\Gr})\simeq \Vect$, hence, there exists a unique irreducible Whittaker sheaf on $\overline{S}^0_{\Gr}$. We denote it by $\mathcal{F}_0$. Set $\Omega_q^{L,'}:= F^L_{\Gr}(\mathcal{F}_0)$. The following lemma is proved in \cite[Theorem 6.2.5]{[Ga6]}.

\begin{lem}\label{15.1}
In the setting of D-modules, when $q$ avoids small torsion, there is an isomorphism of factorization algebras $\Omega_q^{L,'}\simeq \Omega_q^L$.
\end{lem}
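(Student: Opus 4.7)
The plan is to verify that $\Omega_q^{L,'}$ satisfies the three inductive properties that characterize $\Omega_q^L$ in Section \ref{An explicit description}, after first checking that $\Omega_q^{L,'}$ is indeed a factorization algebra. The factorization property is essentially formal: the three ingredients of $\Omega_q^{L,'}$ --- namely the factorization of $\overline{S}^{0}_{\Gr, \Conf}$ in \eqref{conf 0 fact Gr}, the factorization-algebra structure on $j_!(\omega_{S^{-,\Conf}_{\Gr,\Conf}})$ in \eqref{factalg-}, and the factorization-algebra structure on $\sprd_{\Gr}(\mathcal{F}_0)$ inherited from $\Vac$ via Corollary \ref{fact bas} --- are compatible, so that $\add_J^!$ commutes with the formation of $v_{\Conf,*}$ of the $!$-tensor product in \eqref{15.7}.

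Next, I would identify $\Omega_q^{L,'}|_{\overset{\circ}{\Conf}}$ with $\overset{\circ}{\Omega}_q$. By factorization, it suffices to compute $F^L_{\Gr}(\mathcal{F}_0)$ at a generic point $-\alpha_i\cdot x$ for each simple coroot, since $\overset{\circ}{\Conf}$ decomposes as $\bigsqcup \prod_i \overset{\circ}{X^{(n_i)}}$. At such a point, the fiber of $v_{\Conf}$ is the transversal intersection $\overline{S}^{0}_{\Gr,x}\cap \overline{S}^{-,-\alpha_i}_{\Gr,x}$, which is a one-dimensional space whose twisted cohomology, with coefficients in the $!$-tensor product of the $!$-extended dualizing sheaf with (the restriction of) $\sprd_{\Gr}(\mathcal{F}_0)$, reduces to a Kummer/Mellin computation. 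This reproduces the sign-local-system twist at each colliding simple coroot, giving the identification on the complement of all diagonals.

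Third, I would extend the identification across all of $\Conf$ by checking the clean/IC-extension properties that define $\Omega_q^L$: for $\lambda = w(\rho)-\rho$ with $l(w)=2$, show $\Omega_q^{L,'} \simeq j_{\lambda,big,!}\circ j_{\lambda,big}^!\,\Omega_q^{L,'}$; for $l(w) = 3$, show the corresponding statement at the level of $H^0$; and for $\lambda$ of other shapes, prove the $!*$-extension property. Each of these reduces to showing that the pullback of $\sprd_{\Gr}(\mathcal{F}_0)\overset{!}{\otimes}j_!(\omega_{S^{-,\Conf}_{\Gr,\Conf}})$ to the diagonal copy $X \hookrightarrow \Conf^\lambda$ has controlled cohomology, which is a local computation at a single colliding point.

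The main obstacle is this third step, which is where the small-torsion hypothesis on $q$ enters in an essential way. The condition $\operatorname{ord}(q(\alpha_l))\geq d+1$ rules out resonances that would otherwise produce spurious classes along the diagonals, ensuring that the naive extension is the correct one. Concretely, one invokes a Braden-type hyperbolic restriction argument together with the explicit description of $\overline{S}^{0}_{\Gr,x}\cap \overline{S}^{-,\lambda}_{\Gr,x}$ as an affine variety with a $T$-action, and reads off the cohomology using the Kostant-style vanishing supplied by the small-torsion assumption. This is the content of \cite[Theorem 6.2.5]{[Ga6]}, which we invoke directly.
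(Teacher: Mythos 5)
Your proposal is correct and ends up in the same place as the paper, which offers no internal argument at all: it simply cites \cite[Theorem 6.2.5]{[Ga6]}, exactly as you do in your final paragraph. Your preceding sketch of that theorem's internal structure (factorization compatibility, identification with $\overset{\circ}{\Omega}_q$ on the open locus, and the extension properties where the small-torsion hypothesis enters) is a reasonable account of the cited result, modulo the minor point that property (ii) in the paper's definition of $\Omega_q^L$ concerns $l(w)\leq 3$ rather than $l(w)=3$.
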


\begin{prop}\label{prop 15.2}
Given any $\mathcal{F}\in \Whit_q(\Fl_G^{\omega^\rho})$, $F^L(\mathcal{F})$ has a naturally defined $\Omega_q^{L,'}$-factorization module structure.
\end{prop}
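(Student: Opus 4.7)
The plan is to assemble the $\Omega_q^{L,'}$-factorization module structure of $F^L(\mathcal F)$ from the factorization properties of each ingredient entering the definition \eqref{6.35}, and then propagate them through the $!$-tensor product and the $*$-pushforward along $v_{\Conf_x}$. Since $\Omega_q^{L,'}=F^L_{\Gr}(\mathcal F_0)$ is produced by the parallel construction \eqref{15.7} on $\overline{S}^{0}_{\Gr}$ applied to the vacuum Whittaker object, it suffices to check that each step of the construction of $F^L(\mathcal F)$ is a ``module'' over the corresponding step of the construction of $\Omega_q^{L,'}$.

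First, I would lift the factorization statement of Corollary \ref{fact bas} from $\sprd_{\Fl,\Ran_x}(\mathcal F)$ on $(\overline{S}^{w_0}_{\Fl,\Ran_x})_{\infty\cdot x}$ to the configuration-space version: namely, that $\sprd_{\Fl}(\mathcal F)$ is a factorization module on $(\overline{S}^{w_0}_{\Fl,\Conf_x})_{\infty\cdot x}$ over $\sprd_{\Gr}(\mathcal F_0)=\Vac$ on $\overline{S}^{0}_{\Gr,\Conf}$. This follows from the space-level factorization \eqref{conf 0 fact} together with Lemma \ref{lemma 5.2}, which turns the Ran-space factorization into a $\Conf$-space one, compatibly with the union/addition maps and the descent of the gerbes $\mathcal G^G$ and $\mathcal G^\Lambda$. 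Next, \eqref{fact-} already says that $j_!(\omega_{S^{-,\Conf_x}_{\Fl,\Conf_x}})$ is a factorization module over $j_!(\omega_{S^{-,\Conf}_{\Gr,\Conf}})$. Combining these two via $!$-tensor product yields a factorization-module structure on the tensor product, because $!$-pullback along $\add_{x,J}$ commutes with $!$-tensor product, and the gerbes match: by Definition \ref{def 6.14} one has $\mathcal G^G\otimes\mathcal G^{G,T,\ratio}=\mathcal G^\Lambda$ on both factors, so the resulting tensor product is $\mathcal G^\Lambda$-twisted on $\Conf_x$ and on $\Conf$.

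Finally, I would push forward along $v_{\Conf_x}$. The projection fits, over the disjoint locus, into a Cartesian square with $v_{\Conf}$ on the Grassmannian factors, using the factorization of the intersection $(\overline{S}^{w_0}_{\Fl,\Conf_x})_{\infty\cdot x}\cap\overline{S}^{-,\Conf_x}_{\Fl,\Conf_x}$ inherited from \eqref{conf 0 fact} and \eqref{fact conf - w}. Since $v_{\Conf_x}$ is ind-proper once one imposes the semi-infinite support conditions coming from both the $\overline{S}^{w_0}$ and the $\overline{S}^{-}$ conditions, proper base change together with the additivity of the shift $[\deg]$ in the $\Lambda$-grading produces the desired isomorphism
\[
\add_{x,J}^{!}(F^L(\mathcal F))|_{(\Conf^{J-*}\times\Conf_x)_{disj}}\;\simeq\;(\Omega_q^{L,'})^{\boxtimes(|J|-1)}\boxtimes F^L(\mathcal F)|_{(\Conf^{J-*}\times\Conf_x)_{disj}},
\]
and higher homotopy coherence is inherited from that of the factorization data assembled above.

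The step I expect to be the main obstacle is the ind-properness/base-change verification for $v_{\Conf_x}$ together with the matching of the twisting gerbes: one must check that the semi-infinite cutoff makes the projection suitably proper (so that $!$- and $*$-pushforwards coincide in our setting) and that the twisting $\mathcal G^\Lambda$ on $\Conf_x$ transports under addition to $\mathcal G^\Lambda\boxtimes\cdots\boxtimes\mathcal G^\Lambda$ compatibly with the isomorphisms used in the construction of $\sprd_{\Fl}$ and $\sprd_{\Gr}$. Everything else amounts to transferring factorization data through the equivalences already established in Sections \ref{Factorization algebras}--\ref{configuration gr and fl}.
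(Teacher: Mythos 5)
Your proposal is correct and follows essentially the same route as the paper's proof: both reduce to showing that $\sprd_{\Fl}(\mathcal F)\overset{!}{\otimes} j_!(\omega_{S^{-,1,\Conf_x}_{\Fl,\Conf_x}})$ factorizes with respect to $\sprd_{\Gr}(\mathcal F_0)\overset{!}{\otimes} j_!(\omega_{S^{-,\Conf}_{\Gr,\Conf}})$ via Corollary \ref{fact bas}, \eqref{fact-}, and the fact that a tensor product of factorization modules is a factorization module over the tensor product of the algebras, then transport this through the factorization-compatible map $v_{\Conf_x}$. The extra care you take with the gerbe bookkeeping and the base change along $v_{\Conf_x}$ is consistent with what the paper implicitly uses.
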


\begin{proof}
By the factorization property of $\overline{S}^{0}_{\Gr, \Conf}$, $\overline{S}^{-, \Conf}_{\Gr, \Conf}$, $(\overline{S}^{w_0}_{\Fl, \Conf_{\infty\cdot x}})_{\infty\cdot x}$, and $ \overline{S}^{-, \Conf_{\infty\cdot x}}_{\Fl, \Conf_{\infty\cdot x}}$ (see \eqref{conf 0 fact Gr}, \eqref{conf 0 fact}, \eqref{fact conf - w}, and \eqref{fact conf - w '}), we obtain that the prestack $\overline{S}^{0}_{\Gr, \Conf}\cap \overline{S}^{-, \Conf}_{\Gr, \Conf}$ is factorizable, and the prestack $(\overline{S}^{w_0}_{\Fl, \Conf_{\infty\cdot x}})_{\infty\cdot x}\cap \overline{S}^{-, \Conf_{\infty\cdot x}}_{\Fl, \Conf_{\infty\cdot x}}$ is a factorization module space with respect to $\overline{S}^{0}_{\Gr, \Conf}\cap \overline{S}^{-, \Conf}_{\Gr, \Conf}$. Note that $v_{\Conf}$ and $v_{\Conf_{\infty\cdot x}}$ are compatible with the factorization structures on $\overline{S}^{0}_{\Gr, \Conf}\cap \overline{S}^{-, \Conf}_{\Gr, \Conf}$ and $(\overline{S}^{w_0}_{\Fl, \Conf_{\infty\cdot x}})_{\infty\cdot x}\cap \overline{S}^{-, \Conf_{\infty\cdot x}}_{\Fl, \Conf_{\infty\cdot x}}$, hence, it suffices to show that  $$\sprd_{\Fl}(\mathcal{F})|_{((\overline{S}^{w_0}_{\Fl, \Conf_{\infty\cdot x}})_{\infty \cdot x})}\overset{!}{\otimes} j_!({\omega_{S^{-,1, \Conf_{\infty\cdot x}}_{\Fl, \Conf_{\infty\cdot x}}}})$$ factorizes with respect to $\sprd_{\Gr}(\mathcal{F}_0)|_{(\overline{S}^{0}_{\Gr, \Conf})}\overset{!}{\otimes} j_!({\omega_{S^{-, \Conf}_{\Gr, \Conf}}})$.

According to Corollary \ref{fact bas}, the Whittaker sheaf $\sprd_{\Fl, \Ran_x}(\mathcal{F})$ on $(\overline{S}^{w_0}_{\Fl, \Ran_x})_{\infty\cdot x}$ factorizes with respect to the factorization algebra $\Vac$. Since the $!$-pullback from $(\overline{S}^{w_0}_{\Fl, \Ran_x})_{\infty\cdot x}$ to $(\overline{S}^{w_0}_{\Fl, \Conf_{\infty\cdot x}})_{\infty\cdot x}$ is compatible with the factorization structure and $\sprd_{\Gr}(\mathcal{F}_0)$ is exactly the pullback of $\Vac$, we obtain that $\sprd_{\Fl}(\mathcal{F})$ is a factorization module over $\sprd_{\Gr}(\mathcal{F}_0)$. 

By (\ref{fact-}), $j_!(\omega_{S^{-,1, \Conf_{\infty\cdot x}}_{\Fl, \Conf_{\infty\cdot x}}})$ factorizes with respect to $j_!(\omega_{S^{-, \Conf}_{\Gr, \Conf}})$.

Now Proposition \ref{prop 15.2} follows from the fact that the tensor product of factorization modules is a factorization module over the tensor product of the corresponding factorization algebras.
\end{proof}

The functor $F^L$ defined in \eqref{6.35} factors through $\Omega_q^{L,'}-\Fact$. We will also denote by $F^L$ the resulting functor
\begin{equation}\label{6.42}
    F^L: \Whit_q(\Fl_G^{\omega^\rho})\to \Omega^{L,'}_{q}-\Fact.
\end{equation}

Of course, Theorem \ref{main theorem 1} can be deduced from the following stronger statement.

\begin{thm}\label{strong thm}
For any $q$, the functor $F^L$ in \eqref{6.42} is a t-exact equivalence that preserves standards and costandards. 
\end{thm}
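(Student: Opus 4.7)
The plan is to follow the five-step road map laid out in Section \ref{Road map}, with the heart of the argument being the compatibility of standards and costandards under $F^L$. First I would verify that $F^L$ maps $\nabla_\lambda$ to $\nabla_{\lambda,\Omega_q^{L,'}}$. This should be a direct $!$-stalk calculation at a point $\mu\cdot x\in \Conf_x$: using the factorization property of $\sprd_{\Fl}$ over $\Vac$, the definition of $\nabla_\lambda$ as a renormalized $*$-averaging, and the description of the fiber of $(\overline{S}^{w_0}_{\Fl,\Conf_x})_{\infty\cdot x}\cap \overline{S}^{-,\Conf_x}_{\Fl,\Conf_x}$ over $\mu\cdot x$ from \eqref{fiber of S conf conf}, one is reduced to computing sections of $\nabla_\lambda$ along $S^{-,t^\mu w_0}_{\Fl,x}$. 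By construction $\nabla_\lambda$ is the $*$-extension of the Whittaker generator on $S^{w_0}_{\Fl}\cdot t^\mu$, so Braden-type/adjointness arguments give $\mathbb{k}$ when $\mu=\lambda$ and $0$ otherwise; this matches the !-stalk of $\nabla_{\lambda,\Omega_q^{L,'}}$ by Lemma \ref{4.1.1}. T-exactness of $F^L$ then follows from the characterization \eqref{factt} of the t-structure on $\Omega_q^{L,'}-\Fact$ together with Proposition \ref{stan}.

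The hard step is showing $F^L(\Delta_\lambda)\simeq \Delta_{\lambda,\Omega_q^{L,'}}$ (Proposition \ref{imstan} in the road map). Since $\Delta_\lambda$ is defined as a $!$-averaging of a BMW sheaf, the naive attempt would compute $*$-stalks of $F^L(\Delta_\lambda)$, which is intractable because $F^L$ itself involves a $!$-tensor with $j_!(\omega_{S^{-,\Conf_x}_{\Fl,\Conf_x}})$. My plan is to introduce the companion functor $F^{KD}$ built with $j_*$ in place of $j_!$ (the Jacquet functor alluded to in Section \ref{Road map}), and to use the duality framework of Section \ref{coinvariant}: by Lemma \ref{invcoinveq} and Proposition \ref{LDK}, $\mathbb{D}(\Delta_\lambda)\simeq \Av_*^{ren}(\mathfrak{J}_\lambda^{\mathbb{D}})$, while on the factorization side $\mathbb{D}(\Delta_{\lambda,\Omega_q^{L,'}})\simeq \nabla_{\lambda,\Omega_{q^{-1}}^{KD}}$ by \eqref{dualfact} together with Definition \ref{ omega kd}. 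The goal becomes the intertwining
\[
\mathbb{D}\circ F^L\;\simeq\;F^{KD}\circ\mathbb{D},
\]
after which the desired identification of $F^L(\Delta_\lambda)$ follows from the already established costandard compatibility applied to $F^{KD}$ for the dual parameter.

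The intertwining identity is not formal because it asks to interchange a $!$-tensor with a $*$-tensor against a complementary semi-infinite sheaf. Here I would follow the reduction suggested in Section \ref{Road map}: replace the local Whittaker category with its global incarnation on the Drinfeld compactification $\overline{\Bun_N^{\omega^\rho}}_{\infty\cdot x}$ via the comparison of \cite{[Ga5]}, transport $F^L$ and $F^{KD}$ to this global picture where they are given by (shifted) $!$- and $*$-pushforwards along a proper-on-supports stratification, and then deduce the $!/*$-comparison from universal local acyclicity of the relevant sheaves with respect to the projection to $\Conf_x$. This is the content of Proposition \ref{imstan glob} that will be proved in Section \ref{step 5}; I expect this to be the principal technical obstacle, since one must verify ULA in a setting with twisting and with the non-trivial semi-infinite stratification of the compactification.

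Once the standard and costandard compatibilities are in place, the final step is purely categorical. By Proposition \ref{stan} and Proposition \ref{prop 4.1}, we have $\Hom(\Delta_\lambda,\nabla_\mu)=\delta_{\lambda,\mu}\,\mathbb{k}$ on both sides, and $\{\Delta_\lambda\}$ compactly generates $\Whit_q(\Fl_G^{\omega^\rho})$ while $\{\Delta_{\lambda,\Omega_q^{L,'}}\}$ compactly generates $\Omega_q^{L,'}-\Fact$ (by \eqref{factt} and the standard dévissage along the stratification of $\Conf_x$). A standard argument about highest weight categories, using that $F^L$ matches compact generators and preserves Hom-pairings between standards and costandards, upgrades $F^L$ to a fully faithful functor on compact objects, hence an equivalence after ind-completion. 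Note that throughout this argument we do not need the small-torsion hypothesis for $F^L$ itself to be an equivalence onto $\Omega_q^{L,'}-\Fact$; the avoidance of small torsion enters only through Lemma \ref{15.1} to identify $\Omega_q^{L,'}$ with $\Omega_q^L$, recovering Theorem \ref{main theorem 1}.
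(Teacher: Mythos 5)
Your overall route is the same as the paper's: costandard compatibility by a $!$-stalk computation, the hard standard compatibility via the companion functor $F^{KD}$, the duality $\mathbb{D}\circ F^{L}\simeq F^{KD}\circ\mathbb{D}$ proved after globalizing to the Drinfeld compactification and invoking ULA, and the observation that small torsion enters only through the identification $\Omega_q^{L,'}\simeq\Omega_q^{L}$. All of that matches Sections \ref{Equivalence of categories}--\ref{step 5}.

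There is, however, a genuine gap in your final ``purely categorical'' step. Knowing that $F^{L}$ sends compact generators to compact generators and that $\mathcal{H}om(\Delta_\lambda,\nabla_\mu)=\delta_{\lambda\mu}\,\mathbb{k}$ is preserved does \emph{not} upgrade to full faithfulness: full faithfulness requires control of $\mathcal{H}om(\Delta_\lambda,\mathcal{F})$ for \emph{arbitrary} $\mathcal{F}$ (equivalently of all derived Homs $\mathcal{H}om(\Delta_\lambda,\Delta_\mu[n])$, which are not determined by the standard--costandard pairing). The paper closes this by the corepresentability statement (Corollary \ref{6.4.3}), $i_\lambda^!\bigl(F^L(\mathcal{F})\bigr)\simeq \mathcal{H}om_{\Whit_q(\Fl_G^{\omega^\rho})}(\Delta_\lambda,\mathcal{F})$, combined with the adjunction $\mathcal{H}om_{\Omega-\Fact}(\Delta_{\lambda,\Omega},\mathcal{M})\simeq i_\lambda^!(\mathcal{M})$ from Lemma \ref{4.1.1}; together with Proposition \ref{imstan} these give
\begin{equation*}
\mathcal{H}om(\Delta_\lambda,\mathcal{F})\simeq i_\lambda^!\bigl(F^L(\mathcal{F})\bigr)\simeq \mathcal{H}om\bigl(F^L(\Delta_\lambda),F^L(\mathcal{F})\bigr)
\end{equation*}
for every $\mathcal{F}$, which is exactly what full faithfulness needs after dévissage along the generators. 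You essentially have this corepresentability available from your own stalk computation in the costandard step, but as written your last paragraph replaces it with an appeal to ``a standard argument about highest weight categories'' that does not go through. A second, minor imprecision: $\nabla_\lambda$ is not literally a $*$-extension of the Whittaker generator from a single orbit (it is $\Av_*^{ren}$ of a colimit of convolutions); the clean way to get $i_\mu^!(F^L(\nabla_\lambda))=\delta_{\lambda\mu}\mathbb{k}$ is again Corollary \ref{6.4.3} plus Proposition \ref{stan}, rather than a direct orbit-by-orbit computation.
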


\subsubsection{}
By constructions similar to $F^L$ and $F^L_{\Gr}$, if we replace the semi-infinite $!$-extension sheaves $j_!({\omega_{S^{-, \Conf}_{\Gr, \Conf}}})$ and $j_!({\omega_{S^{-,1, \Conf_{\infty\cdot x}}_{\Fl, \Conf_{\infty\cdot x}}}})$ by the semi-infinite $*$-extension sheaves, then we can define the following functors.

\begin{definition}
\begin{equation}
\begin{split}
        F^{DK}: \Whit_q(\Fl_{G,x}^{\omega^\rho})&\to \Shv_{\mathcal{G}^\Lambda}(\Conf_{\infty\cdot x})\\
        \mathcal{F}&\to v_{\Conf_{\infty\cdot x}, *}(\sprd_{\Fl}(\mathcal{F})|_{(\overline{S}^{w_0}_{\Fl, \Ran_x})_{\infty \cdot x}}\overset{!}{\otimes} j_*({\omega_{S^{-,1, \Conf_{\infty\cdot x}}_{\Fl, \Conf_{\infty\cdot x}}}}))[\deg],
\end{split}
\end{equation}
\begin{equation}\label{15.9}
    \begin{split}
        F^{DK}_{\Gr}: \Whit_q(\overline{S}^0_{\Gr})&\to \Shv_{\mathcal{G}^\Lambda}(\Conf)\\
        \mathcal{F}&\to v_{\Gr,*}(\sprd_{\Gr}(\mathcal{F})|_{(\overline{S}^{0}_{\Gr, \Conf})}\overset{!}{\otimes} j_*({\omega_{S^{-, \Conf}_{\Gr, \Conf}}}))[\deg].
    \end{split}
\end{equation}
\end{definition}

Similarly, we define $\Omega_q^{DK,'}:= F^{DK}_{\Gr}(\mathcal{F}_0)$. When $q$ avoids small torsion, we have $\Omega_q^{DK,'}\simeq \Omega_q^{DK}$ (\cite[Theorem 3.6.2]{[Ga6]}). By the same proof as that of Proposition \ref{prop 15.2}, we have

\begin{prop}\label{prop 15.3}
$F^{DK}$ factors through $\Omega_q^{DK,'}-\Fact$, i.e., it gives rise to a functor
\begin{equation}
   F^{DK}: \Whit_q(\Fl_{G,x}^{\omega^\rho})\to \Omega_q^{DK,'}-\Fact.
\end{equation}
\end{prop}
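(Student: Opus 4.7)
The plan is to mirror the proof of Proposition~\ref{prop 15.2} verbatim, replacing the semi-infinite $!$-extensions $j_!(\omega_{S^{-,\Conf}_{\Gr, \Conf}})$ and $j_!(\omega_{S^{-,1,\Conf_x}_{\Fl, \Conf_x}})$ by their $*$-counterparts $j_*(\omega_{S^{-,\Conf}_{\Gr, \Conf}})$ and $j_*(\omega_{S^{-,1,\Conf_x}_{\Fl, \Conf_x}})$. Since $\Omega_q^{KD,'} = F^{KD}_{\Gr}(\mathcal{F}_0)$ is defined using exactly the same formula \eqref{15.9} that differs from \eqref{15.7} only in $!$ vs.\ $*$, the factorization algebra structure on $\Omega_q^{KD,'}$ and the factorization module structure on $F^{KD}(\mathcal{F})$ are produced by the same machinery.

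Concretely, the factorization properties of $\overline{S}^0_{\Gr,\Conf}$, $(\overline{S}^{w_0}_{\Fl,\Conf_x})_{\infty\cdot x}$, $\overline{S}^{-,\Conf}_{\Gr, \Conf}$ and $\overline{S}^{-,\Conf_x}_{\Fl, \Conf_x}$ (the last two with the Bruhat stratum preserved by the factorization, see \eqref{fact conf - w} and \eqref{fact conf - w '}) together with the compatibility of $v_{\Conf}$ and $v_{\Conf_x}$ with these structures are unchanged. Similarly, the factorization of $\sprd_{\Fl}(\mathcal{F})$ over $\sprd_{\Gr}(\mathcal{F}_0)$ coming from Corollary~\ref{fact bas} does not involve the extension and is reused as is. Finally, $!$-tensor product and $*$-pushforward preserve factorization structures, so the only genuinely new input required is the analogue of \eqref{factalg-}--\eqref{fact-} for $*$-extensions, namely
\begin{equation}
j_*(\omega_{S^{-,\Conf}_{\Gr,\Conf}})\boxtimes j_*(\omega_{S^{-,1,\Conf_x}_{\Fl,\Conf_x}})\bigr|_{disj}\simeq j_*(\omega_{S^{-,1,\Conf_x}_{\Fl,\Conf_x}})\bigr|_{disj},
\end{equation}
and the corresponding factorization identity for $j_*(\omega_{S^{-,\Conf}_{\Gr,\Conf}})$ alone.

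To establish this last point, I would observe that on the disjoint locus the open embeddings $j$ factorize (being base-changed from the factorization of the open strata $S^{-,1,\Conf_x}_{\Fl, \Conf_x}$ and $S^{-,\Conf}_{\Gr, \Conf}$ inside their Drinfeld compactifications), and that for open embeddings with cartesian product decomposition, $*$-pushforward commutes with external tensor product. Since the dualizing sheaf $\omega$ is itself factorizable under the equivalence \eqref{7.46}, applying $j_*$ on each factor and then restricting to the disjoint locus yields the desired isomorphism. The main obstacle I anticipate is purely bookkeeping, namely keeping track that all gerbe twists match, i.e.\ that the twistings pulled back from $\mathcal{G}^{G,T,\ratio}$ on each factor compose to the twisting on $\overline{S}^{-,\Conf_x}_{\Fl,\Conf_x}$ under the factorization; this is a consequence of Definition~\ref{def 6.14} and the factorization structure on both $\mathcal{G}^G$ and $\mathcal{G}^\Lambda$, and is in any case already implicit in the proof of Proposition~\ref{prop 15.2}.
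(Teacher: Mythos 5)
Your proposal is correct and follows exactly the route the paper takes: the paper's entire proof of Proposition \ref{prop 15.3} is the single remark that it is ``the same proof as that of Proposition \ref{prop 15.2},'' i.e.\ replace the $!$-extensions by $*$-extensions throughout. Your additional verification that the $*$-extensions $j_*(\omega_{S^{-,\Conf}_{\Gr,\Conf}})$ and $j_*(\omega_{S^{-,1,\Conf_x}_{\Fl,\Conf_x}})$ themselves factorize is exactly the one new input needed, and your justification via the factorization of the open strata and of the gerbe $\mathcal{G}^{G,T,\ratio}$ is the intended one.
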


\subsection{{Calculation of the $!$-stalks of $F^L$ and $F^{DK}$}{}}\label{! fiber}
By Lemma \ref{4.1.1}, in the category of $\Omega-\Fact$, the standard object $\Delta_{\lambda, \Omega}$ is always uniquely characterized by the requirement that its $*$-stalks at $\mu\cdot x, \mu\in \Lambda$ is $\mathsf{e}$ if $\lambda=\mu$ and $0$ otherwise, and the costandard object $\nabla_{\lambda, \Omega}$ is uniquely characterized by the requirement that its $!$-stalks at $\mu\cdot x, \mu\in \Lambda$ is $\mathsf{e}$ if $\lambda=\mu$ and $0$ otherwise. Hence, in order to prove that $F^L$ sends standard objects to standard objects, costandard objects to costandard objects, we only need to find an explicit expression of the $!$-stalks and $*$-stalks of the image of $F^L$. 

The theory of sheaves on prestack is friendly with taking $!$-stalks. There are two reasons for this: the first one is that the $!$-pullback functor is always well-defined, the second one is that we have a base change theorem for $!$-pullback (see \cite[Corollary 3.1.4]{[GR2]}), hence, the calculation will be much easier than the calculation of $*$-stalks.

In this section, we will give an explicit formula (Proposition \ref{15.4 prop}) for the $!$-stalks of $F^L$ and $F^{DK}$ at $\lambda\cdot x$.








\subsubsection{}Consider the following Cartesian diagram:

\begin{center}
\begin{equation}
    \xymatrix{
\overline{S}^{-, \lambda}_{\Fl,x}\ar[r]\ar[d]&\overline{S}_{\Fl, \Conf_{\infty\cdot x}}^{-, \Conf_{\infty\cdot x}}\cap (\overline{S}^{w_0}_{\Fl, \Conf_{\infty\cdot x}})_{\infty\cdot x}\ar[d]^{v_{\Conf_{\infty\cdot x}}}\\
\lambda\cdot x\ar[r]^{i_\lambda}& \Conf_{\infty\cdot x}.
}
\end{equation}
\end{center}

Choosing a trivialization of the fiber of $\mathcal{G}^\Lambda$ at $\lambda\cdot x$. By the base change theorem (ref. \cite[Corollary 3.1.4]{[GR2]}), we have
 \begin{equation}\label{15.13}
 \begin{split}
     i_{\lambda}^!(F^L(\mathcal{F}))\simeq & i_{\lambda}^{!}\circ v_{\Conf_{\infty\cdot x},*}(\sprd_{\Fl}(\mathcal{F})|_{((\overline{S}^{w_0}_{\Fl, \Conf_{\infty\cdot x}})_{\infty \cdot x})}\overset{!}{\otimes} j_!({\omega_{S^{-,1, \Conf_{\infty\cdot x}}_{\Fl, \Conf_{\infty\cdot x}}}}))[\langle \lambda, 2\check{\rho}\rangle]\\
    \simeq & H(\Fl_{G,x}^{\omega^\rho}, i_{\lambda, \Conf_{\infty\cdot x}}^{!}(\sprd_{\Fl}(\mathcal{F}))\mathop{\otimes}\limits^!  i_{\lambda, \Conf_{\infty\cdot x}}^{-, \Conf_{\infty\cdot x},!}(j_!({\omega_{S^{-,1, \Conf_{\infty\cdot x}}_{\Fl, \Conf_{\infty\cdot x}}}})) [\langle \lambda, 2\check{\rho}\rangle]).
\end{split}
 \end{equation}

In the above formula,
\begin{itemize}
    \item $i_{\lambda, \Conf_{\infty\cdot x}}^{-, \Conf_{\infty\cdot x}}$ denotes the embedding of $\overline{S}^{-, \lambda}_{\Fl,x}$ into $\overline{S}_{\Fl, \Conf_{\infty\cdot x}}^{-, \Conf_{\infty\cdot x}}$,
   \item $i_{\lambda, \Conf_{\infty\cdot x}}$ denotes the embedding of $\Fl_{G,x}^{\omega^\rho}$ into $\overline{S}_{\Fl, \Conf_{\infty\cdot x}}^{-, \Conf_{\infty\cdot x}}$.
\end{itemize}

First, by the construction (\ref{def of sprd}), there is
\[ i_{\lambda, \Conf_{\infty\cdot x}}^{!}(\sprd_{\Fl}(\mathcal{F}))\simeq \mathcal{F}\qquad  \forall \lambda\in \Lambda\ \textnormal{and}\ \mathcal{F}\in \Whit_q(\Fl_{G,x}^{\omega^\rho}).\]

Second, {by Corollary \ref{cor A}}, we have a base change theorem for $!$-pushforward and $!$-pullback for semi-infinite sheaves. Namely, there is an isomorphism
\[i_{\lambda, \Conf_{\infty\cdot x}}^{-, \Conf_{\infty\cdot x},!}(j_!({\omega_{S^{-,1, \Conf_{\infty\cdot x}}_{\Fl, \Conf_{\infty\cdot x}}}}))\simeq j_!(\omega_{S_{\Fl,x}^{-,\lambda}})\qquad \forall \lambda\in \Lambda.\]

Here, $\omega_{S_{\Fl,x}^{-,\lambda}}$ denotes the {$(\cG^G)^{-1}\otimes\cG^\Lambda|_{\lambda\cdot x}$-} twisted dualizing sheaf on $S_{\Fl,x}^{-,\lambda}$ and $j_!(\omega_{S_{\Fl,x}^{-,\lambda}})$ denotes its $!$-pushforward to $\overline{S}_{\Fl,x}^{-,\lambda}$. {By choosing a trivialization of the fiber $\cG^\Lambda|_{\lambda\cdot x}$ (which is equivalent to choosing a trivialization of $\cG^G|_{t^\lambda\in \Fl}$), we can think $j_!(\omega_{S_{\Fl,x}^{-,\lambda}})$ as the $!$-extension of the {$(\cG^G)^{-1}$-} twisted dualizing sheaf on $S_{\Fl,x}^{-,\lambda}$ under the unique $N^-(\cK)^{\omega^\rho}_x$-equivariant trivialization.


}

From the above observations, we deduce the following proposition.

\begin{prop}\label{15.4 prop}
Choosing a trivialization of $\mathcal{G}^\Lambda|_{\lambda\cdot x}$, there exists an isomorphism
\begin{equation}
    i_{\lambda}^!(F^L(\mathcal{F}))\simeq H(\Fl_{G,x}^{\omega^\rho}, \mathcal{F}\mathop{\otimes}\limits^! j_!(\omega_{S_{\Fl,x}^{-,\lambda}})[\langle \lambda, 2\check{\rho}\rangle] ).
\end{equation}
\end{prop}

Similarly, we have the following proposition.

\begin{prop}\label{corep*H}
Choosing a trivialization of $\mathcal{G}^\Lambda|_{\lambda\cdot x}$, there exists an isomorphism
\begin{equation}
    i_{\lambda}^!(F^{DK}(\mathcal{F}))\simeq H(\Fl_{G,x}^{\omega^\rho}, \mathcal{F}\mathop{\otimes}\limits^! j_*(\omega_{S_{\Fl,x}^{-,\lambda}})[\langle \lambda, 2\check{\rho}\rangle] ).
\end{equation}
\end{prop}

\subsubsection{}
The following corollary relates the functor $i_\lambda^!(F^L)$ with the standard objects that we constructed in Definition \ref{stan def}.

\begin{cor}\label{6.4.3}
Given $\lambda\in \Lambda$ and a trivialization of $\mathcal{G}^\Lambda|_{\lambda\cdot x}$, there exists an isomorphism
\begin{equation}\label{corep}
    i_{\lambda}^!(F^L(\mathcal{F}))\simeq \mathcal{H}om_{\Whit_q(\Fl_G^{\omega^\rho})}(\Delta_\lambda, \mathcal{F}).
\end{equation}
\end{cor}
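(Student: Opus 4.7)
The plan is to combine Proposition \ref{15.4 prop} with the $(\Av_!,\oblv)$-adjunction that defines $\Delta_\lambda$, reducing the corollary to a Whittaker-averaging comparison between the Wakimoto sheaf $\mathfrak{J}_\lambda$ and the semi-infinite $!$-extension $j_!(\omega_{S^{-,\lambda}_{\Fl,x}})$.

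First, by Proposition \ref{15.4 prop},
\begin{equation*}
i_\lambda^!(F^L(\mathcal{F}))\simeq H\bigl(\Fl_{G,x}^{\omega^\rho},\ \mathcal{F}\overset{!}{\otimes} j_!(\omega_{S_{\Fl,x}^{-,\lambda}})\bigr)[\langle\lambda,2\check\rho\rangle].
\end{equation*}
Using that $\omega_{S_{\Fl,x}^{-,\lambda}}$ is the $\overset{!}{\otimes}$-unit on $S_{\Fl,x}^{-,\lambda}$ and the projection formula for $(j_!,j^!)$, the right-hand side rewrites as
\begin{equation*}
\mathcal{H}om_{\Shv_{\mathcal{G}^G}(\Fl_G^{\omega^\rho})}\bigl(j_!(\omega_{S_{\Fl,x}^{-,\lambda}})[-\langle\lambda,2\check\rho\rangle],\ \mathcal{F}\bigr).
\end{equation*}
On the other hand, from $\Delta_\lambda=\Av_!^{N(\mathcal{K})^{\omega^\rho},\chi}(\mathfrak{J}_\lambda)$ and the partial adjunction between $\Av_!^{N(\mathcal{K})^{\omega^\rho},\chi}$ and the fully faithful $\oblv_{N(\mathcal{K})^{\omega^\rho},\chi}$, one gets
\begin{equation*}
\mathcal{H}om_{\Whit_q(\Fl_G^{\omega^\rho})}(\Delta_\lambda,\mathcal{F})\simeq \mathcal{H}om_{\Shv_{\mathcal{G}^G}(\Fl_G^{\omega^\rho})}(\mathfrak{J}_\lambda,\mathcal{F}).
\end{equation*}

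Hence the corollary reduces to producing a natural isomorphism
\begin{equation*}
\Av_!^{N(\mathcal{K})^{\omega^\rho},\chi}\bigl(j_!(\omega_{S^{-,\lambda}_{\Fl,x}})[-\langle\lambda,2\check\rho\rangle]\bigr)\simeq \Delta_\lambda,
\end{equation*}
after which the desired equality follows from applying $\mathcal{H}om(-,\mathcal{F})$ and reusing the adjunction. Both sides are Whittaker D-modules supported on the closure of a single relevant $N(\mathcal{K})^{\omega^\rho}$-orbit, namely the orbit of $\phi(\rho+\lambda)$ in the notation of the proof following Definition \ref{stan def}; moreover, both are compact objects of $\Whit_q(\Fl_G^{\omega^\rho})$. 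It therefore suffices to compare their $!$-stalks on the open orbit, which reduces, via a Braden-style argument parallel to that of Lemma \ref{BMW}(ii), to the non-trivial intersection $S^{-,\lambda}_{\Fl,x}\cap N(\mathcal{K})^{\omega^\rho}\phi(\rho+\lambda)I^{\omega^\rho}/I^{\omega^\rho}$, where the calculation matches the one performed right after Definition \ref{stan def}.

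The main obstacle is this last averaging comparison. While the $N^-(\mathcal{K})^{\omega^\rho}$-orbit $S^{-,\lambda}_{\Fl,x}$ is semi-infinite and the Iwahori orbit supporting $\mathfrak{J}_\lambda$ is finite-dimensional, the relevant intersection is finite-dimensional and carries compatible equivariant trivializations of the gerbe $\mathcal{G}^G$, so the question can be tested on finite-type slices. The resulting computation is the affine-flag version of the Grassmannian computation carried out in \cite{[Ga5]} and \cite{[Ras1]}, modified by the extra Iwahori structure; the shift $[\langle\lambda,2\check\rho\rangle]$ is precisely the one that cancels the dimension discrepancy between $S^{-,\lambda}_{\Fl,x}$ and the Iwahori orbit $\Fl_G^{t^\lambda}$.
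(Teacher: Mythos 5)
Your first step (invoking Proposition \ref{15.4 prop}) matches the paper, but the two reductions you build on it both have genuine gaps. First, the passage from $H(\Fl_{G,x}^{\omega^\rho},\mathcal{F}\overset{!}{\otimes} j_!(\omega_{S^{-,\lambda}_{\Fl,x}}))$ to $\mathcal{H}om(j_!(\omega_{S^{-,\lambda}_{\Fl,x}})[-\langle\lambda,2\check\rho\rangle],\mathcal{F})$ is not the projection formula: the general identity is $H(X,\mathcal{A}\overset{!}{\otimes}\mathcal{B})\simeq \mathcal{H}om(\mathbb{D}\mathcal{A},\mathcal{B})$, so the corepresenting object would be $\mathbb{D}(j_!(\omega_{S^{-,\lambda}_{\Fl,x}}))\simeq j_*$ of the (twisted) constant sheaf, not the $!$-extension of the dualizing sheaf itself; and since $S^{-,\lambda}_{\Fl,x}$ is a semi-infinite $N^-(\mathcal{K})^{\omega^\rho}$-orbit, this object is not compact in $\Shv_{\mathcal{G}^G}(\Fl_G^{\omega^\rho})$, so even the corrected Hom-interpretation needs justification. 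The paper avoids this entirely: it first uses the $N(\mathcal{O})^{\omega^\rho}$-equivariance of $\mathcal{F}$ to replace $j_!(\omega_{S^{-,\lambda}_{\Fl,x}})$ by $\Av_*^{N(\mathcal{O})^{\omega^\rho}}(j_!(\omega_{S^{-,\lambda}_{\Fl,x}}))[\langle\lambda,2\check\rho\rangle]\simeq\mathfrak{J}_\lambda^{\mathbb{D}}$ (Proposition \ref{! corr iwahori}, proved by writing the semi-infinite $!$-extension as $\colim_\alpha t^\alpha\mathfrak{J}_{-\alpha+\lambda,!}$), then reads the global sections as the $!$-stalk of $\mathcal{F}\star\mathfrak{J}_{-\lambda}$ at $t^0$ and moves $\mathfrak{J}_\lambda$ across by the invertibility of Wakimoto convolution, landing on $\mathcal{H}om(\Delta_\lambda,\mathcal{F})$ by adjunction. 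Your proposal never engages with this averaging computation, which is the real content of the corollary.

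Second, your concluding identification step is invalid as stated. Two objects of $\Whit_q(\Fl_G^{\omega^\rho})$ supported on the closure $\overline{S}^{\widetilde w}_{\Fl}$ of a single relevant orbit are \emph{not} determined by their $!$-restrictions to the open orbit $S^{\widetilde w}_{\Fl}$: already $\Delta^{\ver}_{\widetilde w}$ and $\nabla^{\ver}_{\widetilde w}$ agree there and differ globally, and the paper itself notes (after Definition \ref{stan def}) that $\Delta_\lambda$ and $\Delta^{\ver}_{\phi(\rho+\lambda)}$ have the same restriction to the open orbit while generating a \emph{different} highest weight structure. So ``it suffices to compare their $!$-stalks on the open orbit'' cannot close the argument; you would need to know that both sides are clean or $!$-extensions from that orbit, which is exactly what is not true for $\Delta_\lambda=\Av_!^{N(\mathcal{K})^{\omega^\rho},\chi}(\mathfrak{J}_\lambda)$ in general.
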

\begin{proof}
According to Proposition \ref{15.4 prop}, we have to prove
$$H(\Fl_{G,x}^{\omega^\rho}, \mathcal{F}\mathop{\otimes}\limits^! j_!(\omega_{S_{\Fl,x}^{-,\lambda}})[\langle \lambda, 2\check{\rho}\rangle] )\simeq \mathcal{H}om_{\Whit_q(\Fl_G^{\omega^\rho})}(\Delta_\lambda, \mathcal{F}).$$


By the assumption, $\mathcal{F}$ is $(N(\mathcal{K})_x^{\omega^\rho}, \chi)$-equivariant. In particular, it is $(N(\mathcal{O})_x^{\omega^\rho}, \chi)$-equivariant. 

Note that $\chi |_{N(\mathcal{O})_x^{\omega^\rho}}$ is trivial, we have
\begin{align*}
    H(\Fl_{G,x}^{\omega^\rho}, \mathcal{F}\mathop{\otimes}\limits^! j_!(\omega_{S_{\Fl,x}^{-,\lambda}})[\langle \lambda, 2\check{\rho}\rangle] )=& H(\Fl_{G,x}^{\omega^\rho}, \mathcal{F}\mathop{\otimes}\limits^!\Av_*^{N(\mathcal{O})^{\omega^\rho}}( j_!(\omega_{S_{\Fl,x}^{-,\lambda}})[\langle \lambda, 2\check{\rho}\rangle] )).
    \end{align*}   
  By Proposition \ref{! corr iwahori} below, we have
    \begin{align*}
    H(\Fl_{G,x}^{\omega^\rho}, \mathcal{F}\mathop{\otimes}\limits^! \Av_*^{N(\mathcal{O})^{\omega^\rho}}(j_!(\omega_{S_{\Fl,x}^{-,\lambda}})[\langle \lambda, 2\check{\rho}\rangle] ))\simeq& H(\Fl_{G,x}^{\omega^\rho}, \mathcal{F}\mathop{\otimes}\limits^! {{J}}_{\lambda}^{\mathbb{D}})
    \end{align*}
According to the construction of the convolution product in Section \ref{convolution product}, it is isomorphic to the $!$-stalks of the convolution product $\mathcal{F}\star ({{J}}_{-\lambda})_{\lambda}$ at $t^0\in \widetilde{\Fl}$. Furthermore, there exist isomorphisms
    \begin{align*}& \mathcal{H}om_{\Shv_{\mathcal{G}^G}(\widetilde{\Fl})_{\lambda}}((\delta_0)_{\lambda}, \mathcal{F}\star ({{J}}_{-\lambda})_{\lambda})\\
    \simeq & \mathcal{H}om_{\Whit_q(\widetilde{\Fl})_\lambda}(\Av_!^{N(\mathcal{K}), \chi}((\delta_0)_{\lambda}), \mathcal{F}\star ({{J}}_{-\lambda})_\lambda)\\
    \simeq & \mathcal{H}om_{\Whit_q(\widetilde{\Fl})_\lambda}(\Av_!^{N(\mathcal{K}), \chi}(({{J}}_0)_\lambda), \mathcal{F}\star ({{J}}_{-\lambda})_\lambda)\\
    \simeq &\mathcal{H}om_{\Whit_q({\Fl_G^{\omega^\rho}})}(\Av_!^{N(\mathcal{K}), \chi}(({{J}}_0)_\lambda)\star {{J}}_\lambda, \mathcal{F})\\
    \simeq &\mathcal{H}om_{\Whit_q({\Fl_G^{\omega^\rho}})}(\Av_!^{N(\mathcal{K}), \chi}({{J}}_\lambda), \mathcal{F})\\
    \simeq &\mathcal{H}om_{\Whit_q(\Fl_G^{\omega^\rho})}(\Delta_\lambda, \mathcal{F}).
\end{align*}

\end{proof}

    \begin{prop}\label{! corr iwahori}
Given $\lambda \in \Lambda$, there is an isomorphism
\begin{equation}
    \Av_*^{N(\mathcal{O})^{\omega^\rho}}(j_!(\omega_{S_{\Fl,x}^{-,\lambda}})[\langle \lambda, 2\check{\rho}\rangle])\simeq {{J}}^{\mathbb{D}}_{\lambda}.
\end{equation}
\end{prop}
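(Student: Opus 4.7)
\emph{Proof plan.} The strategy is the natural one: both sides should be $I^0$-equivariant twisted D-modules with matching $T^{\omega^\rho}$-monodromy character, hence determined by their $!$-stalks at the $T$-fixed points $t^\mu$ of $\Fl_G^{\omega^\rho}$. The argument is the twisted analogue of a calculation going back to Arkhipov--Bezrukavnikov (cf.\ \cite{[AB]}), adapted to the present factorizable gerbe setting. I would first reduce to the case where $\lambda$ is dominant (or anti-dominant), and then verify the dominant case by a direct stalk calculation.

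For the reduction, write $\lambda = \lambda_1 - \lambda_2$ with $\lambda_1, \lambda_2 \in \Lambda^+$. By the definition of $\mathfrak{J}^{\mathbb{D}}_\lambda$ together with Lemma \ref{BMW}, we have, after tracking the monodromy shifts,
\begin{equation*}
\mathfrak{J}^{\mathbb{D}}_\lambda \simeq (\mathfrak{J}_{\lambda_1,*})_{-\lambda_2} \star (\mathfrak{J}_{-\lambda_2,!})_0.
\end{equation*}
On the geometric side, the closure $\overline{S^{-,\lambda}_{\Fl,x}}$ admits an analogous convolution description as the Iwahori-quotiented product of $\overline{S^{-,\lambda_1}_{\Fl,x}}$ and $\overline{S^{-,-\lambda_2}_{\Fl,x}}$, under which $j_!(\omega_{S^{-,\lambda}_{\Fl,x}})$ corresponds to the $\widetilde{\boxtimes}$-convolution of the two factor sheaves. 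Because $\Av_*^{N(\mathcal{O})^{\omega^\rho}}$ commutes with right-convolution by objects that are pulled back from $G(\mathcal{K})^{\omega^\rho}/I^{\omega^\rho}$, this reduces the problem to the cases $\lambda \in \Lambda^+$ and $\lambda \in -\Lambda^+$.

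For $\lambda$ dominant, $\mathfrak{J}^{\mathbb{D}}_\lambda = \mathfrak{J}_{\lambda,*}$ is the $*$-extension from the Iwahori orbit $\Fl_G^{t^\lambda}$. Both sides are left $I^0$-equivariant: the LHS because $N(\mathcal{O})^{\omega^\rho}$-averaging combines with the $N^-(\mathfrak{m}) \cdot T(\mathfrak{m})$-equivariance coming from the ambient $N^-(\mathcal{K})^{\omega^\rho}$-action on $j_!(\omega_{S^{-,\lambda}_{\Fl,x}})$ to produce $I^0$-equivariance on the image. The right $T^{\omega^\rho}$-monodromy on both sides corresponds to the character $b_\lambda$, which on the LHS is read off from the gerbe trivialization at $\lambda \cdot x$ together with the cohomological shift $\langle\lambda, 2\check{\rho}\rangle$. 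I would then construct the comparison map by adjunction, using that $\chi|_{N(\mathcal{O})^{\omega^\rho}}$ is trivial and that $\Fl_G^{t^\lambda} \cap S^{-,\lambda}_{\Fl,x} = \{t^\lambda\}$ is a single transverse intersection point of matching codimension $\langle\lambda, 2\check{\rho}\rangle$. Verification reduces to a $!$-stalk computation: for $\mu \neq \lambda$ (with $\mu$ giving an $I$-orbit in the relevant closure), base change identifies the stalk with the cohomology of $N(\mathcal{O})^{\omega^\rho} \cdot t^\mu \cap \overline{S^{-,\lambda}_{\Fl,x}}$, which vanishes by a standard contracting $\mathbb{G}_m$-action argument, while for $\mu = \lambda$ the transverse intersection gives a one-dimensional stalk. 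The anti-dominant case is dual.

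The principal obstacle is the first step: establishing the convolution description of $j_!(\omega_{S^{-,\lambda}_{\Fl,x}})$ along the Iwahori and checking its compatibility with $\Av_*^{N(\mathcal{O})^{\omega^\rho}}$. Averaging can, in principle, enlarge the support in ways that destroy a naive convolution factorization, and the factorizable gerbe $\mathcal{G}^G$ introduces additional twisting shifts (encoded by the bilinear form $b$ from Section \ref{Right monodromic D-modules}) that must be tracked carefully across the convolution. A secondary subtlety is ensuring that no higher cohomology appears in the $N(\mathcal{O})^{\omega^\rho}$-averaging, so that the isomorphism holds strictly and not merely up to a perverse filtration; this should follow from the (partial) pro-unipotence of $N(\mathcal{O})^{\omega^\rho}$ acting on each finite-dimensional stratum and from the purity of $j_!(\omega_{S^{-,\lambda}_{\Fl,x}})$.
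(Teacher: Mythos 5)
Your proposal takes a genuinely different route from the paper, and as written it has gaps that would prevent it from closing.

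The paper's proof is a two-line colimit argument: it writes
$j_!(\omega_{S_{\Fl,x}^{-,\lambda}})[\langle\lambda,2\check\rho\rangle]$ as
$\colim_{\alpha-\lambda\in\Lambda^+} t^{\alpha}\cdot\mathfrak{J}_{\lambda-\alpha,!}[\langle\alpha,2\check\rho\rangle]$
(the semi-infinite orbit closure as an increasing union of translated Iwahori orbit closures), invokes the twisted version of the identity
$\Av_*^{N(\mathcal{O})^{\omega^\rho}}(t^{\alpha}\cdot\mathcal{F})[\langle\alpha,2\check\rho\rangle]\simeq(\mathfrak{J}_{\alpha,*})\star\mathcal{F}$
for $I^0$-equivariant $T$-monodromic $\mathcal{F}$ (from \cite[Section 5.2]{[Ga3]}), and commutes $\Av_*^{N(\mathcal{O})^{\omega^\rho}}$ with the colimit; each term $(\mathfrak{J}_{\alpha,*})_{\lambda-\alpha}\star\mathfrak{J}_{\lambda-\alpha,!}$ is $\mathfrak{J}_\lambda^{\mathbb{D}}$ by Lemma \ref{BMW}, so the colimit is constant. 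This presentation is precisely what lets one avoid ever computing stalks of the averaged object or analyzing its support.

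The concrete problems with your plan are the following. First, your justification of $I^0$-equivariance of the left-hand side is not valid: $\Av_*^{N(\mathcal{O})^{\omega^\rho}}$ is averaging for the \emph{left} $N(\mathcal{O})^{\omega^\rho}$-action, and this does not interact with the left $N^-(\mathfrak{m})$-equivariance of $j_!(\omega_{S_{\Fl,x}^{-,\lambda}})$ in any obvious way, since these subgroups do not commute; $I^0$-equivariance of the output is a consequence of the theorem, not an input you can assume. Second, and more seriously, your stalk verification is inconsistent with the answer: for dominant $\lambda$ the target $\mathfrak{J}_\lambda^{\mathbb{D}}=\mathfrak{J}_{\lambda,*}$ is a $*$-extension from the open Iwahori orbit, so its $!$-stalks at fixed points $t^{\mu}$ with $\mu\neq\lambda$ in the closure do \emph{not} vanish in general; if your contraction argument really gave vanishing of the $!$-stalks of the left-hand side at all $t^{\mu}$, $\mu\neq\lambda$, the two sides could not be isomorphic. (Matching $!$-stalks on all strata would in any case only be a necessary condition; you still need the comparison map, which you only sketch.) Third, the reduction step — a convolution factorization of $j_!(\omega_{S_{\Fl,x}^{-,\lambda}})$ itself as a twisted product over the Iwahori — is exactly the hard content here, and you correctly flag it as the principal obstacle without supplying it; in the paper this role is played instead by the colimit presentation together with the $\Av_*$/translation identity, which is the one genuinely new input you are missing.
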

\begin{proof} 
{We regard $j_!(\omega_{S_{\Fl,x}^{-,\lambda}})$ as a $(\cG^G)^{-1}$-twisted sheaf. Thus, it is the $!$-extension of the $(\cG^G)^{-1}$-twisted dualizing sheaf on $S_{\Fl,x}^{-,\lambda}$ under the unique (up to a choice of trivialization of $(\cG^G)^{-1}|_{t^\lambda\in \Fl}$) $N^-(\cK)^{\omega^\rho}_x$-equivariant trivialization.

Assuming $\alpha$ to be very dominant, we have $t^\alpha I^{\omega^\rho} t^{-\alpha+\lambda} I^{\omega^\rho}/I^{\omega^\rho}\subset N^-(\cK)^{\omega^\rho} t^{\lambda} I^{\omega^\rho}/I^{\omega^\rho}$, and $t^\alpha I^{\omega^\rho} t^{-\alpha+\lambda} I^{\omega^\rho}/I^{\omega^\rho}=t^\alpha N^-(t\cO)^{\omega^\rho} t^{-\alpha+\lambda} I^{\omega^\rho}/I^{\omega^\rho}$. Here, $N^-(t\cO)^{\omega^\rho}$ denotes the negative part of $I^{\omega^\rho}$.

Since $I^0$ is pro-unipotent, the twisting $(\cG^G)^{-1}$ on $I^{\omega^\rho} t^{-\alpha+\lambda} I^{\omega^\rho}/I^{\omega^\rho}=I^{0} t^{-\alpha+\lambda} I^{\omega^\rho}/I^{\omega^\rho}$ has a unique (up to a choice of trivialization of $(\cG^G)^{-1}|_{t^{-\alpha+\lambda}\in \Fl}$) $I^0$-equivariant trivialization. By definition, $J_{-\alpha+\lambda,!}$ is the $!$-extension of the $(\cG^G)^{-1}$-twisted constant perverse sheaf on $I^{\omega^\rho} t^{-\alpha+\lambda} I^{\omega^\rho}/I^{\omega^\rho}$ with respect to this trivialization.

Given an element $g\in G(\cK)^{\omega^\rho}$ and a trivialization of $(\cG^G)^{-1}|_g$, we can define the left transition functor $g\cdot-:\Shv_{(\cG^G)^{-1}}(\Fl_G^{\omega^\rho})\to \Shv_{(\cG^G)^{-1}}(\Fl_G^{\omega^\rho})$. Now, we choose a trivialization of $(\cG^G)^{-1}$ at $t^\alpha\in T^{\omega^\rho}(\cK)$ such that it matches the chosen trivializations $(\cG^G)^{-1}|_{t^{-\alpha+\lambda}\in \Fl}$ and $(\cG^G)^{-1}|_{t^\lambda\in \Fl}$ under the isomorphism $(\cG^G)^{-1}|_{t^\alpha\in G(\cK)}\otimes (\cG^G)^{-1}|_{t^{-\alpha+\lambda}\in \Fl}\simeq (\cG^G)^{-1}|_{t^\lambda\in \Fl}$. 

Left-multipling with $t^\alpha$, the $I^0$-equivariant trivialization on $I^{\omega^\rho} t^{-\alpha+\lambda} I^{\omega^\rho}/I^{\omega^\rho}$ becomes the unique $\Ad_{\alpha}I^0$-equivariant trivialization on $t^\alpha I^{\omega^\rho} t^{-\alpha+\lambda} I^{\omega^\rho}/I^{\omega^\rho}$.\footnote{However, conjugating the canonical trivialization of $\cG^G$ on $T(\cO)^{\omega^\rho}$ by $t^\alpha$ will change the trivialization by the character sheaf $b_\alpha$. So, left-transition will change the $T(\cO)^{\omega^\rho}$-equivariant structure.} It coincides with the restriction of the unique $N^-(\cK)^{\omega^\rho}$-equivariant trivialization on $N^-(\cK)^{\omega^\rho} t^{\lambda} I^{\omega^\rho}/I^{\omega^\rho}$ (since different trivializations differ by a tame local system on the affine space $t^\alpha I^{\omega^\rho} t^{-\alpha+\lambda} I^{\omega^\rho}/I^{\omega^\rho}$, which has to be trivial). In particular, $t^\alpha {{J}}_{-\alpha+\lambda,!}$ exactly coincides with the $!$-extension of the $!$-restriction of $j_!(\omega_{S_{\Fl,x}^{-,\lambda}})$ on  $t^\alpha I^{\omega^\rho} t^{-\alpha+\lambda} I^{\omega^\rho}/I^{\omega^\rho}$ up to a shift by $\langle \alpha-\lambda, 2\check{\rho}\rangle$.  The adjointness of $!$-pushforward and $!$-pullback gives rise to transition maps between $t^\alpha {{J}}_{-\alpha+\lambda,!}[\langle \alpha-\lambda, 2\check{\rho}\rangle]$.
}

Now we write $j_!(\omega_{S_{\Fl,x}^{-,\lambda}})[\langle \lambda, 2\check{\rho}\rangle]$ as $\mathop{\colim}\limits_{{\alpha,}\alpha-\lambda\in \Lambda^+} t^\alpha {{J}}_{-\alpha+\lambda,!}[\langle \alpha, 2\check{\rho}\rangle]$. Note  that ${{J}}_{-\alpha+\lambda,!}$ is $I^0$-equivariant and $T$-equivariant with respect to a character {$b_{-\lambda+\alpha}$.} {F}or such a sheaf $\mathcal{F}$, we have \[\Av_*^{N(\mathcal{O})^{\omega^\rho}}(t^\alpha\cdot \mathcal{F})[\langle \alpha, 2\check{\rho}\rangle]\simeq ({{J}}_{\alpha,*})_{-\lambda+\alpha}{\star}\mathcal{F}.\]
{
Indeed, for any such a $(\cG^G)^{-1}$-twisted sheaf $\cF$, taking $*$-averaging of $t^\alpha\cdot \cF$ with respect to $N(\mathcal{O})^{\omega^\rho}$ is given by taking the convolution of $\cF$ with the $*$-extension of the twisted constant sheaf on $N(\mathcal{O})^{\omega^\rho} t^\alpha$. Since $\cF$ is $(I^{\omega^\rho}, b_{-\lambda+\alpha})$-equivariant, we can first take right $(I^{\omega^\rho}, b_{-\lambda+\alpha})$-averaging of the constant sheaf on $N(\mathcal{O})^{\omega^\rho} t^\alpha$ and then take the convolution with $\cF$ after descending along $G(\cK)^{\omega^\rho}\times \Fl_G^{\omega^\rho}\longrightarrow G(\cK)^{\omega^\rho}\overset{I^{\omega^\rho}}{\times} \Fl_G^{\omega^\rho}$. Up to a shift, the right $(I^{\omega^\rho}, b_{-\lambda+\alpha})$-averaging of the constant sheaf on $N(\mathcal{O})^{\omega^\rho} t^\alpha$ is isomorphic to the pullback of $(J_{\alpha,*})_{-\lambda+\alpha}$.
}

So there is
\begin{equation}
    \begin{split}
        \Av_*^{N(\mathcal{O})^{\omega^\rho}}(\mathop{\colim}\limits_{{\alpha,} \alpha-\lambda \in \Lambda^+} t^\alpha {{J}}_{-\alpha+\lambda,!}[\langle \alpha, 2\check{\rho}\rangle])\simeq& \mathop{\colim}\limits_{{\alpha,}\alpha-\lambda\in \Lambda^+} ({{J}}_{\alpha,*})_{-\lambda+\alpha}\star {{J}}_{-\alpha+\lambda,!}
        \simeq {{J}}^{\mathbb{D}}_{\lambda}.
    \end{split}
\end{equation}
Here we use the fact that $\Av^{N(\mathcal{O})^{\omega^\rho}}_*$ commutes with colimits.
\end{proof}

Similarly, we can calculate $N(\mathcal{O})^{\omega^\rho}$-averaging of $j_*(\omega_{S_{\Fl,x}^{-,\lambda}})$.

\begin{prop}\label{*eq}
For $\lambda \in \Lambda$, we have $$\Av_*^{N(\mathcal{O})^{\omega^\rho}}(j_*(\omega_{S_{\Fl,x}^{-,\lambda}})[\langle \lambda, 2\check{\rho}\rangle])=\mathop{\colim}\limits_{\alpha{,\alpha-\lambda}\in \Lambda^+} ({{J}}_{\alpha,*})_{{\lambda}-\alpha}\star {{J}}_{-\alpha+\lambda,*}.$$ 
\end{prop}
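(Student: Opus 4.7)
The strategy is to mimic almost verbatim the proof of Proposition~\ref{! corr iwahori}, substituting $*$-extensions for $!$-extensions throughout. The starting point is a presentation of $j_*(\omega_{S_{\Fl,x}^{-,\lambda}})[\langle \lambda, 2\check{\rho}\rangle]$ as a filtered colimit indexed by $\alpha \in \Lambda$ with $\alpha - \lambda \in \Lambda^+$. Concretely, for $\alpha$ sufficiently dominant relative to $\lambda$, the translate $t^{\alpha}\cdot \mathfrak{J}_{-\alpha+\lambda,*}$ (suitably shifted) realizes the $*$-extension of the dualizing sheaf on the $N^-(\mathcal{K})^{\omega^\rho}$-orbit $S_{\Fl,x}^{-,\lambda}$ restricted to an increasing family of open subsets exhausting $\overline{S}_{\Fl,x}^{-,\lambda}$. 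Taking the colimit gives
\[
j_*(\omega_{S_{\Fl,x}^{-,\lambda}})[\langle \lambda, 2\check{\rho}\rangle] \;\simeq\; \mathop{\colim}\limits_{\alpha-\lambda\in \Lambda^+} t^{\alpha}\cdot \mathfrak{J}_{-\alpha+\lambda,*}[\langle \alpha, 2\check{\rho}\rangle].
\]
This is the $*$-analogue of the formula used in Proposition~\ref{! corr iwahori}.

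Next I would apply $\Av_*^{N(\mathcal{O})^{\omega^\rho}}$ to both sides. The key calculational input is the twisted convolution identity already used in the proof of Proposition~\ref{! corr iwahori}: for $\mathcal{F}$ an $I^0$-equivariant, $T^{\omega^\rho}$-monodromic object one has
\[
\Av_*^{N(\mathcal{O})^{\omega^\rho}}(t^{\alpha}\cdot \mathcal{F})[\langle \alpha, 2\check{\rho}\rangle] \;\simeq\; (\mathfrak{J}_{\alpha,*})_{\lambda-\alpha} \star \mathcal{F}.
\]
Applying this to $\mathcal{F} = \mathfrak{J}_{-\alpha+\lambda,*}$ term by term in the colimit yields the desired formula, provided $\Av_*^{N(\mathcal{O})^{\omega^\rho}}$ commutes with this filtered colimit. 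The latter property was asserted at the end of the proof of Proposition~\ref{! corr iwahori} and may be invoked here verbatim (it is really a statement about $N(\mathcal{O})^{\omega^\rho}$-averaging on the ind-scheme of ind-finite type $\Fl_G^{\omega^\rho}$, where each successive term of the colimit is supported on a finite-dimensional closed subscheme).

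The main subtlety, and the only genuine point of difference from Proposition~\ref{! corr iwahori}, is the identification of the colimit presentation in the first step: one must verify that the transition maps $t^{\alpha}\cdot \mathfrak{J}_{-\alpha+\lambda,*}[\langle \alpha, 2\check{\rho}\rangle] \to t^{\alpha'}\cdot \mathfrak{J}_{-\alpha'+\lambda,*}[\langle \alpha', 2\check{\rho}\rangle]$ (for $\alpha' - \alpha \in \Lambda^+$) induce on the $*$-side the natural system whose colimit is $j_*$. In the $!$-case this was automatic because $!$-extension of an increasing union of opens is itself, and the transition maps are the canonical adjunction units. In the $*$-case, one instead needs to check the compatibility via Lemma~\ref{BMW}(i), which gives $(\mathfrak{J}_{\alpha'-\alpha,*})_{-\alpha+\lambda}\star \mathfrak{J}_{-\alpha+\lambda,*}\simeq \mathfrak{J}_{-\alpha'+\lambda,*}$, together with the fact that on each $I^0$-orbit inside $S_{\Fl,x}^{-,\lambda}$, the $*$-extension eventually agrees with the translate $t^\alpha\cdot \mathfrak{J}_{-\alpha+\lambda,*}$ for $\alpha$ sufficiently large. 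Once this bookkeeping is in place, the rest of the argument is formal, and unlike in Proposition~\ref{! corr iwahori} the colimit does not collapse further---which is precisely why the right-hand side of \ref{*eq} remains an explicit colimit rather than a single BMW object.
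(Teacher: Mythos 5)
Your proposal is correct and matches the paper's intent exactly: the paper gives no separate argument for this proposition beyond the word ``Similarly'' pointing at Proposition \ref{! corr iwahori}, and your $*$-analogue of that argument --- the colimit presentation $j_*(\omega_{S_{\Fl,x}^{-,\lambda}})[\langle \lambda, 2\check{\rho}\rangle]\simeq \mathop{\colim}\limits_{\alpha-\lambda\in\Lambda^+} t^\alpha\cdot \mathfrak{J}_{-\alpha+\lambda,*}[\langle \alpha, 2\check{\rho}\rangle]$, the averaging identity $\Av_*^{N(\mathcal{O})^{\omega^\rho}}(t^\alpha\cdot\mathcal{F})[\langle\alpha,2\check{\rho}\rangle]\simeq(\mathfrak{J}_{\alpha,*})_{\lambda-\alpha}\star\mathcal{F}$, and continuity of $\Av_*^{N(\mathcal{O})^{\omega^\rho}}$ --- is the intended proof. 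One small correction to your commentary: the subschemes $t^\alpha\cdot\Fl_G^{t^{-\alpha+\lambda}}$ exhausting $S_{\Fl,x}^{-,\lambda}$ are closed in one another rather than open, so in both the $!$- and the $*$-case the colimit presentation arises from $\omega_{S^{-,\lambda}}\simeq\mathop{\colim}\limits_\alpha i_{\alpha*}i_\alpha^!\,\omega_{S^{-,\lambda}}$ together with continuity of the (partially defined) extension functors, and the two cases are on the same footing at this step.
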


\subsection{Proof of Theorem \ref{thm 2.1} modulo Proposition \ref{imstan}}\label{Equivalence of categories}
This section will be devoted to the proof of Theorem~\ref{main theorem 1} using Proposition \ref{imstan}.

First of all, let us check the compatibility of costandard objects under $F^L$.

\begin{prop}\label{im costand}
For any $\lambda\in \Lambda$, there is an isomorphism
\begin{equation}
    F^L(\nabla_\lambda)\simeq \nabla_{\lambda, \Omega_{q}^{L,'}}.
\end{equation}
\end{prop}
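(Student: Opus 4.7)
The strategy is to compare the $!$-stalks at points of the form $\mu\cdot x\in\Conf_x$ on both sides and then upgrade to an isomorphism using the factorization module structure of $F^L(\nabla_\lambda)$.

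First I would compute, for any $\mu\in\Lambda$,
\begin{equation*}
    i_\mu^!\bigl(F^L(\nabla_\lambda)\bigr)\simeq \mathcal{H}om_{\Whit_q(\Fl_G^{\omega^\rho})}(\Delta_\mu,\nabla_\lambda),
\end{equation*}
via Corollary \ref{6.4.3}, and then invoke Proposition \ref{stan} to see that this is $\mathbb{k}$ if $\mu=\lambda$ and zero otherwise. On the factorization side, $\nabla_{\lambda,\Omega_q^{L,'}}$ is by Definition \ref{def 4.4} the $*$-pushforward along $i_{\lambda,\Conf_x}$ of the generator $\mathcal{G}_\lambda$ of $\Omega_q^{L,'}-\Fact_{=\lambda}$, and Proposition \ref{prop 4.1} gives the parallel formula $i_\mu^!(\nabla_{\lambda,\Omega_q^{L,'}})\simeq \delta_{\mu,\lambda}\mathbb{k}$; so the $!$-stalks at the distinguished points match.

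By Proposition \ref{prop 15.2}, $F^L(\nabla_\lambda)$ is an $\Omega_q^{L,'}$-factorization module, so its $!$-restriction to each stratum $\Conf_{=\mu\cdot x}$ is again a factorization module there, and by Lemma \ref{4.1.1} such a restriction is determined by its $!$-fiber at the single point $\mu\cdot x$. Combined with the previous step, this forces $i_{\mu,\Conf_x}^!(F^L(\nabla_\lambda))$ to vanish for $\mu\neq\lambda$ and to equal $\mathcal{G}_\lambda$ for $\mu=\lambda$. Adjunction for the locally closed embedding $i_{\lambda,\Conf_x}$ now produces a canonical morphism
\begin{equation*}
    F^L(\nabla_\lambda)\longrightarrow i_{\lambda,\Conf_x,*}(\mathcal{G}_\lambda)=\nabla_{\lambda,\Omega_q^{L,'}}
\end{equation*}
corresponding to the identification $i_{\lambda,\Conf_x}^!(F^L(\nabla_\lambda))\simeq\mathcal{G}_\lambda$.

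To conclude, I would check this morphism is an isomorphism by applying $i_{\mu,\Conf_x}^!$ for every $\mu\in\Lambda$: both sides then give the same object in $\Omega_q^{L,'}-\Fact_{=\mu}$ (zero unless $\mu=\lambda$) by the stratum-wise analysis above and the analogous property of $\nabla_{\lambda,\Omega_q^{L,'}}$ obtained by base change from its definition as a $*$-pushforward from $\Conf_{=\lambda\cdot x}$. The main subtlety is this final globalization step: one must justify that vanishing of $!$-restrictions to every stratum $\Conf_{=\mu\cdot x}$ implies vanishing of the cone, which is legitimate because $\{\Conf_{=\mu\cdot x}\}_{\mu\in\Lambda}$ is a stratification of $\Conf_x$ that is locally finite on any finite-type subscheme, so the standard inductive stratification argument applies.
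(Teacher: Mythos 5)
Your proof is correct and follows essentially the same route as the paper: compute the $!$-stalks of $F^L(\nabla_\lambda)$ at the points $\mu\cdot x$ via Corollary \ref{6.4.3}, evaluate them by Proposition \ref{stan}, and conclude because a costandard object in $\Omega_q^{L,'}-\Fact$ is uniquely determined by these $!$-stalks (Lemma \ref{4.1.1}); the paper leaves this last characterization implicit where you spell it out. The only slight imprecision is that the adjunction producing the map into $i_{\lambda,\Conf_x,*}(\mathcal{G}_\lambda)$ is for $i_\lambda^*$ rather than $i_\lambda^!$, which is harmless once the stalk computation has shown the object is supported on the closure of $\Conf_{=\lambda\cdot x}$, where the two restrictions agree.
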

\begin{proof}
By the isomorphism (\ref{corep}), there is
\begin{equation*}
        i_\mu^!(F^L(\nabla_\lambda))\simeq \mathcal{H}om_{\Whit_q(\Fl_G^{\omega^\rho})}(\Delta_\mu, \nabla_\lambda)
\end{equation*}

Now the claim follows from Proposition \ref{stan}.
\end{proof}

Then we prove Theorem \ref{strong thm} with the help of the following proposition. In particular, by Lemma \ref{15.1}, we actually prove Theorem \ref{main theorem 1} (=Theorem \ref{thm 2.1}).

\begin{prop}\label{imstan}
For $\lambda\in \Lambda$, the functor $$F^L: \Whit_q(\Fl_{G,x}^{\omega^\rho})\to \Omega_{q}^{L,'}-\Fact$$ sends standards to standards, i.e.,
$$F^L(\Delta_\lambda)=\Delta_{\lambda,\Omega^{L,'}_q}.$$
\end{prop}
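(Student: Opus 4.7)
The plan is to identify $F^L(\Delta_\lambda)$ with $\Delta_{\lambda, \Omega_q^{L,'}}$ by analyzing its restriction to the stratification of $\Conf_x$ by the locally closed subschemes $\Conf_{=\mu\cdot x}$, $\mu\in\Lambda$. By Lemma \ref{4.1.1}, the standard object $\Delta_{\lambda, \Omega_q^{L,'}}$ is characterized as the $!$-extension of the perverse generator from $\Conf_{=\lambda\cdot x}$, so what must be shown is that $i_{\mu\cdot x}^*(F^L(\Delta_\lambda))$ vanishes for $\mu\notin\lambda+\Lambda^{neg}$ (together with $\mu\neq\lambda$), and that the restriction to the diagonal stratum $\Conf_{=\lambda\cdot x}$ recovers the generator. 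The $!$-stalk formula of Corollary \ref{6.4.3} alone is insufficient: it only identifies the $!$-stalks, whereas the defining feature of a standard object is captured by its $*$-stalks (equivalently, by the fact that it is a $!$-extension).

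The route I will take is the one sketched in Section \ref{Road map}: use the Verdier duality functor of Section \ref{coinvariant} to convert the $*$-stalk problem for $F^L$ on $\Delta_\lambda$ into a $!$-stalk problem for $F^{KD}$ on the Whittaker dual $\mathbb{D}(\Delta_\lambda)$. By Proposition \ref{LDK} we have $\mathbb{D}(\Delta_\lambda)\simeq \Av_*^{ren}(\mathfrak{J}_\lambda^{\mathbb{D}})$, and the target of this step is an intertwining of the form
\begin{equation}\label{intertwine-plan}
\mathbb{D} \circ F^L \;\simeq\; F^{KD} \circ \mathbb{D},
\end{equation}
compatible with the factorization-module duality $\mathbb{D}(\Delta_{\lambda,\Omega_q^{L,'}})\simeq \nabla_{\lambda,\Omega_{q^{-1}}^{KD,'}}$ recorded in \eqref{dualfact}. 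Once \eqref{intertwine-plan} is in place, Proposition \ref{corep*H} computes the relevant $!$-stalks as cohomology on $\Fl_{G,x}^{\omega^\rho}$ of $\mathbb{D}(\Delta_\lambda)\mathop{\otimes}\limits^! j_*(\omega_{S_{\Fl,x}^{-,\mu}})$, which by Proposition \ref{*eq} and the convolution description of $\mathfrak{J}_\lambda^{\mathbb{D}}$ can be evaluated explicitly, yielding the expected vanishing for $\mu\notin\lambda+\Lambda^{neg}\cup\{0\}$ and the generator at $\mu=\lambda$.

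The main obstacle lies in establishing \eqref{intertwine-plan}. The two functors $F^L$ and $F^{KD}$ differ precisely in the replacement of $j_!(\omega_{S^{-,1,\Conf_x}_{\Fl,\Conf_x}})$ by $j_*(\omega_{S^{-,1,\Conf_x}_{\Fl,\Conf_x}})$ in the defining formula \eqref{6.35}, and under Verdier duality these two semi-infinite D-modules are swapped; but both $F^L$ and $F^{KD}$ use $!$-tensor products against $\sprd_{\Fl}(-)$, and it is not automatic that duality converts this $!$-tensor product into the analogous $!$-tensor product against the dualized factor. This is precisely the comparison of $!$- and $*$-tensor products flagged in the introduction. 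To overcome it, I plan to leave the local setting and pass to a global model: via the local-global equivalence of Whittaker categories of \cite{[Ga5]}, the problem reduces to the corresponding statement on a Drinfeld-style compactification (the content of Proposition \ref{imstan glob} in Section \ref{Whittaker category: global definition}). On the global side, the relevant extensions of semi-infinite D-modules turn out to be universally locally acyclic over $\Conf_x$, and ULA is exactly what makes $!$- and $*$-tensor products against them coincide when paired against the (global counterpart of) $\sprd_{\Fl}(\mathcal{F})$. The execution of this ULA step, carried out in Section \ref{step 5}, is the genuinely nontrivial ingredient, while the local-to-global reduction and the duality bookkeeping of Section \ref{coinvariant} package the rest of the argument.
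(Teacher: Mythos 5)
Your proposal follows essentially the same route as the paper: reduce to the global functors via the local-global equivalence of Lemma \ref{thm2}, establish the intertwining $\mathbb{D}\circ F^{KD}_{\glob}\simeq F^{L}_{\glob}\circ\mathbb{D}$ by means of the ULA property of the $!$-extended semi-infinite kernel on the Drinfeld compactification (together with the factorization argument needed to spread the isomorphism from the good open locus to the whole Zastava space), and then evaluate $F^{KD}(\mathbb{D}(\Delta_\lambda))$ explicitly via Propositions \ref{corep*H} and \ref{*eq}. The only imprecision is that the ULA in Lemma \ref{ULA thm +} is relative to the projection to $\Bun_G'$ rather than to $\Conf_x$, and it holds only on the strata satisfying condition (X) — which is exactly why the factorization bootstrap is required.
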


\begin{proof} (of Theorem \ref{strong thm}).


To prove the fully faithfullness of $F^L$, we need to prove that the following map is an isomorphism
$$\mathcal{H}om_{\Whit_q(\Fl_{G,x}^{\omega^\rho})}(\mathcal{F}_1,\mathcal{F}_2){\overset{F^L}{\longrightarrow}} \mathcal{H}om_{\Omega_{q}^{L,'}-\Fact}(F^L(\mathcal{F}_1),F^L(\mathcal{F}_2)),$$
for any $\mathcal{F}_1, \mathcal{F}_2\in \Whit_q(\Fl_{G,x}^{\omega^\rho})$.

Since the standards $\{\Delta_\lambda, \lambda\in \Lambda\}$ generate the category $\Whit_q(\Fl_{G,x}^{\omega^\rho})$ by cohomology shifts, colimits and extensions, it is sufficient to prove
{
\begin{equation}\label{eq 6.6.2}
\mathcal{H}om_{\Whit_q(\Fl_{G,x}^{\omega^\rho})}(\Delta_{\lambda},\mathcal{F}_2)\overset{F^L}{\simeq} \mathcal{H}om_{\Omega_{q}^{L,'}-\Fact}(F^L(\Delta_{\lambda}),F^L(\mathcal{F}_2)),
\end{equation}
for any $\mathcal{F}_2\in \Whit_q(\Fl_{G,x}^{\omega^\rho})$
}

{Fix a $\Delta_{\lambda}$. Since both $\Delta_\lambda$ and $F^L(\Delta_\lambda)=\Delta_{\lambda,\Omega_q^{L,'}}$ are compact, we only need to construct a collection of compact generators $\{\Delta'_{\lambda, \widetilde{w}}, \widetilde{w} \text{\ relevant}\}$ of $\Whit_q(\Fl_{G,x}^{\omega^\rho})$ and prove that $F^L$ induces an isomorphism \eqref{eq 6.6.2} if $\cF_2= \Delta'_{\lambda, \widetilde{w}}$. 

Without loss of generality, we can assume that $\lambda$ is domininat, the construction of $\Delta'_{\lambda, \widetilde{w}}$ for general $\lambda$ follows by taking the convolution with twisted BMW sheaf. 

In this case, we let $\Delta'_{\lambda, \widetilde{w}}=\Delta_\lambda$ if $\widetilde{w}=t^\lambda$, and $\Delta'_{\lambda, \widetilde{w}}=\nabla^{\ver}_{\widetilde{w}}$ if $\widetilde{w}\neq t^\lambda$. It is not hard to see that they form a collection of compact generators. Indeed, according to Proposition \ref{naive standard generate}, $\{\nabla^{\ver}_{\widetilde{w}},  \widetilde{w} \text{\ relevant}\}$ is a collection of compact generators. Furthermore, $\nabla^{\ver}_{\lambda}$ is a finite extension of $\Delta_{\lambda,\lambda}'=\Delta_{\lambda}$ and $\nabla^{\ver}_{\widetilde{w}}$, for $\widetilde{w}\neq t^\lambda$.

If $\widetilde{w}\neq t^\lambda$, by Corollary \ref{6.4.3} and Proposition \ref{imstan}, we have $ \mathcal{H}om_{\Whit_q(\Fl_{G,x}^{\omega^\rho})}(\Delta_{\lambda},\mathcal{F}_2)=i_\lambda^!(F^L(\mathcal{F}_2))=\mathcal{H}om_{\Omega_{q}^{L,'}-\Fact}(F^L(\Delta_{\lambda}),F^L(\mathcal{F}_2))=0$. The map \eqref{eq 6.6.2} has to be $0$.

If $\widetilde{w}= t^\lambda$, by Proposition \ref{imstan}, both sides of \eqref{eq 6.6.2} are $\mathsf{e}$, the map \eqref{eq 6.6.2} is an isomorphism since $F^L$ sends $\id$ to $\id$.
}

{Then, we note that standards $\{\Delta_{\lambda, \Omega_q^{L,'}}\}$ generate the category $\Omega_{q}^{L,'}-\Fact$ under cohomology shifts, extensions, and colimits, and the functor $F^L$ is compatible with cohomology shift, extensions, and colimits. Hence, $F^L$ is essentially surjective by Proposition \ref{imstan}.}

To prove the t-exactness of $F^L$, note that according to \eqref{3.32} and \eqref{factt}, t-structures on both sides are defined by the 'Hom' with standard objects. According to Proposition \ref{imstan}, the functor $F^L$ preserves standards. Hence, $F^L$ is t-exact.
\end{proof}

\begin{cor} \label{heart}
The objects $\Delta_\lambda$ and $\nabla_\lambda$ are in the heart of $\Whit_q(\Fl_G^{\omega^\rho})$.
\end{cor}
\begin{proof}
Note that $F^L$ preserves standards and costandards, and $F^L$ is t-exact, we only need to prove that $\Delta_{\lambda, \Omega_q^{L,'}}$ and $\nabla_{\lambda, \Omega_q^{L,'}}$ are in the heart of the t-structure on $\Omega_q^{L,'}-\Fact$. The later claim follows from the fact that $\Omega_q^L$ is perverse, and all standard objects and costandards in $\Omega-\Fact$ are perverse if $\Omega$ is perverse.
\end{proof}

The proof of Proposition \ref{imstan} is hard. It will occupy the rest of the paper and finally be given in Section~\ref{proof of imstan}.

\section{Global Whittaker category}\label{Whittaker category: global definition}
From now on, we will focus on the proof of Proposition \ref{imstan}. As we noted before, it is very hard to calculate $*$-stalks. Luckily, we can use duality functor to transfer the calculation of $*$-stalks to a calculation of $!$-stalks. To make the calculation possible, we introduce the global counterparts of the category $\Whit_q(\Fl_G^{\omega^\rho})$ and the functor $F^L$. In this section, our aim is to transfer Proposition \ref{imstan} to Proposition \ref{imstan glob} by the local-global comparison. 

\subsection{Drinfeld compactifications}\label{Drinfeld compactification sec}
Fix $x\in X$. The Drinfeld compactification is introduced in \cite[Section 1]{[BG]}. In this section, we define the Whittaker category on the (Iwahori version) Drinfeld compactification. 
\begin{definition}\label{Drinfeld compact ' inf N}

Let $(\overline{\Bun_N^{\omega^\rho}})'_{\infty\cdot x}$ be the stack classifying the triples $(\mathcal{P}_G,\{\kappa^{\check{\lambda}}, \check{\lambda}\in \Lambda^+\}, \epsilon)$,  where $\mathcal{P}_G\in \Bun_G$, $\{\kappa^{\check{\lambda}}, \check{\lambda}\in \Lambda^+\}$ is a family of morphisms of coherent sheaves \[\kappa^{\check{\lambda}}:(\omega^{\frac{1}{2}})^{\langle\check{\lambda}, 2\rho\rangle}\to \mathcal{V}_{\mathcal{P}_G}^{\check{\lambda}}(\infty\cdot x)\qquad \forall \check{\lambda}\in \Lambda^+\] 
which satisfy the Pl\"{u}cker relations, such that it is regular over $X\setminus x$, and $\epsilon$ is a $B$-reduction of $\mathcal{P}_G$ at $x$.

If we omit the Iwahori structure (i.e., $\epsilon$) at $x$ and ask $\kappa^{\check{\lambda}}$ to be defined and regular on the whole curve $X$ for any dominant weight $\check{\lambda}$, we will denote the resulting algebraic stack by $\overline{\Bun_N^{\omega^\rho}}$.
\end{definition}

\subsubsection{}
Note that there is a projection map from $\Fl_{G,x}^{\omega^\rho}$ to $(\overline{\Bun_N^{\omega^\rho}})_{\infty\cdot x}'$,
\begin{equation}\label{12.17}
    \pi_{\Fl, x}: \Fl_{G,x}^{\omega^\rho}\to (\overline{\Bun_N^{\omega^\rho}})_{\infty\cdot x}'.
\end{equation}

This morphism sends $(\mathcal{P}_G, \alpha,\epsilon)\in \Fl_{G,x}^{\omega^\rho}$ to $(\mathcal{P}_G, \{\kappa^{\check{\lambda}}, \check{\lambda}\in \check{\Lambda}^+\}, \epsilon)$. Here $\kappa^{\check{\lambda}}$ is induced from $\alpha$, i.e., for any dominant weight $\check{\lambda}$, $$\kappa^{\check{\lambda}}:(\omega^{\frac{1}{2}})^{\langle \check{\lambda}, 2\check{\rho}\rangle}\to \mathcal{V}_{\mathcal{P}^\omega}^{\check{\lambda}}\overset{\alpha^{-1}}{\to} \mathcal{V}_{\mathcal{P}_G}^{\check{\lambda}}.$$

Similarly, by omitting $\epsilon$, we have a projection map from $\overline{S}^0_{\Gr}$ to $\overline{\Bun_N^{\omega^\rho}}$,

\begin{equation}\label{12.18}
    \pi_{\Gr, x}: \overline{S}^0_{\Gr}\to \overline{\Bun_N^{\omega^\rho}}.
\end{equation}

\subsubsection{} $(\overline{\Bun_N^{\omega^\rho}})'_{\infty\cdot x}$ and $\overline{\Bun_N^{\omega^\rho}}$ project to $\Bun_G$. By taking the ratio of pullback of $\mathcal{G}^G$ on $\Bun_G$ and the fiber $\mathcal{G}^G|_{\mathcal{P}_G^{\omega^\rho}\in \Bun_G}$, we get gerbes on $(\overline{\Bun_N^{\omega^\rho}})'_{\infty\cdot x}$ and $\overline{\Bun_N^{\omega^\rho}}$. We denote the resulting the gerbes by $\mathcal{G}^G$. By constructions in Section \ref{gerbe used},  their pullbacks along the projections \eqref{12.17} and \eqref{12.18} are isomorphic to the same-named gerbes on $\Fl_{G}^{\omega^\rho}$ and $\Gr_G^{\omega^\rho}$.

\subsection{Global Whittaker category}\label{global whittaker category}
In \cite{[FGV]}, the authors defined the category $\Whit_{q}(\overline{\Bun_N^{\omega^\rho}})$. We can define the twisted Whittaker category on $(\overline{\Bun_N^{\omega^\rho}})'_{\infty\cdot x}$  similarly. 

Given a point $\bar{y}=\{y_1, y_2,\cdots, y_n\}$ in $\Ran$, which is disjoint from $x$, i.e., $x\neq y_i$ for any $i$.
\begin{definition}
We define $((\overline{\Bun_N^{\omega^\rho}})'_{\infty\cdot x})_{good\ at\ \bar{y}}$ as the open substack of $(\overline{\Bun_N^{\omega^\rho}})'_{\infty\cdot x}$, such that for any dominant weight $\check{\lambda}$, the map $\kappa^{\check{\lambda}}$ is injective on the fiber over any point $y_i\in \bar{y}$.
\end{definition}

 Since $\{\kappa^{\check{\lambda}}, \lambda\in \Lambda^+\}$ are injective bundle maps near $\bar{y}$, they give rise to a $N^{\omega^\rho}$-reduction of $\mathcal{P}_G$ near $\bar{y}$, which means there exists a $B$-bundle $\mathcal{P}_B$ on the disk $\mathcal{D}_{\bar{y}}$, such that $$\mathcal{P}_G|_{\mathcal{D}_{\bar{y}}}\simeq \mathcal{P}_B|_{\mathcal{D}_{\bar{y}}}\mathop{\times}\limits^B G,$$ and $\beta_{\bar{y}}^T: \mathcal{P}_B\mathop{\times}\limits^B T\simeq \omega^\rho$.
 
Similar to \cite[Section 2.3]{[Ga2]}, we construct a $N(\mathcal{O})_{\bar{y}}^{\omega^\rho}$-principal bundle $_{\bar{y}}((\overline{\Bun_N^{\omega^\rho}})'_{\infty\cdot x})_{good\ at\ \bar{y}}$ over the stack $((\overline{\Bun_N^{\omega^\rho}})'_{\infty\cdot x})_{good\ at\ \bar{y}}$. This bundle classifies a data from $((\overline{\Bun_N^{\omega^\rho}})'_{\infty\cdot x})_{good\ at\ \bar{y}}$ plus a choice of identification of the $B$-bundle $\mathcal{P}_B|_{\mathcal{D}_{\bar{y}}}$ with the $B$-bundle induced from $\omega^\rho$, such that it is compatible with $\beta_{\bar{y}}^T$. 

By a standard gluing procedure (see \cite[Lemma 3.2.7]{[FGV]}), we can extend the $N(\mathcal{O})_{\bar{y}}^{\omega^\rho}$-action on $_{\bar{y}}((\overline{\Bun_N^{\omega^\rho}})'_{\infty\cdot x})_{good\ at\ \bar{y}}$ to an action of  $N(\mathcal{K})_{\bar{y}}^{\omega^\rho}$. 

\begin{definition}\label{globdef}
A twisted Whittaker sheaf on $(\overline{\Bun_N^{\omega^\rho}})'_{\infty\cdot x}$ is a twisted sheaf on $(\overline{\Bun_N^{\omega^\rho}})'_{\infty\cdot x}$ such that its pullback to ${}_{\bar{y}}((\overline{\Bun_N^{\omega^\rho}})'_{\infty\cdot x})_{good\ at\ \bar{y}}$ is $(N(\mathcal{K})^{\omega^\rho}_{\bar{y}}, {-}\chi_{\bar{y}})$-equivariant for any $\bar{y}$ disjoint with $x$. We denote the category of $\mathcal{G}^G$-twisted Whittaker sheaf on $(\overline{\Bun_N^{\omega^\rho}})'_{\infty\cdot x}$ by $\Whit_{q}((\overline{\Bun_N^{\omega^\rho}})_{\infty\cdot x}')$.
\end{definition}

Applying the method of the proof of \cite[Theorem 5.2.2]{[Ga5]}, we have
\begin{lem}\label{thm local glob}
$$\pi_{\Fl, x}^!: \Whit_{q}((\overline{\Bun_N^{\omega^\rho}})_{\infty\cdot x}') \to \Whit_q(\Fl_{G,x}^{\omega^\rho})$$ is an equivalence of categories.
 $$\pi_{\Gr, x}^!: \Whit_{q}(\overline{\Bun_N^{\omega^\rho}}) \to \Whit_q(\overline{S}^0_{\Gr})$$ is an equivalence of categories.
\end{lem}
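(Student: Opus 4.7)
The plan is to follow the strategy of \cite[Section 6]{[Ga5]}, deducing the desired equivalence by factoring $\pi_{\Fl,x}$ through the Ran-ified semi-infinite space and combining with Lemma \ref{infth}. First I would introduce the Ran-ified projection $\pi_{\Fl,\Ran_x}\colon (\overline{S}^{w_0}_{\Fl,\Ran_x})_{\infty\cdot x}\to (\overline{\Bun_N^{\omega^\rho}})'_{\infty\cdot x}$ sending $(I,\mathcal{P}_G,\alpha,\epsilon)$ to the triple $(\mathcal{P}_G,\{\kappa^{\check\lambda}\},\epsilon)$, where each $\kappa^{\check\lambda}$ is the composite $(\omega^{\otimes 1/2})^{\langle\check\lambda,2\rho\rangle}\to \mathcal{V}^{\check\lambda}_{\mathcal{P}^\omega_G}\to \mathcal{V}^{\check\lambda}_{\mathcal{P}_G}$ from Definition \ref{S}. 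This fits into a commutative triangle with the unit map \eqref{unit} and the ``inclusion of $\{x\}$'' section $s_x\colon \Fl_{G,x}^{\omega^\rho}\to \Ran_x\times\Fl_{G,x}^{\omega^\rho}$, yielding a factorization $\pi_{\Fl,x}^!\simeq (s_x\times\id)^!\circ\unit^!\circ\pi_{\Fl,\Ran_x}^!$.

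The argument then splits into two claims. The first is that $\pi_{\Fl,\Ran_x}^!$ induces an equivalence $\Whit_{q}((\overline{\Bun_N^{\omega^\rho}})'_{\infty\cdot x})\simeq \Whit_q((\overline{S}^{w_0}_{\Fl,\Ran_x})_{\infty\cdot x})$. To establish this, I would observe that, fiber-wise over a point $\bar y=\{x,y_1,\dots,y_k\}\in\Ran_x$, the map $\pi_{\Fl,\Ran_x}$ presents the source as the quotient of an $N(\mathcal{K})^{\omega^\rho}_{\Ran_x}$-torsor over the good-at-$\bar y$ locus of $(\overline{\Bun_N^{\omega^\rho}})'_{\infty\cdot x}$ introduced just before Definition \ref{globdef}. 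The Whittaker-equivariance condition of Definition \ref{globdef} is precisely what is needed to descend $\mathcal{G}^G$-twisted D-modules along this presentation, so $\pi_{\Fl,\Ran_x}^!$ becomes an equivalence of equivariant categories. The second claim is that the residual composite $(s_x\times\id)^!\circ\unit^!$ is an equivalence $\Whit_q((\overline{S}^{w_0}_{\Fl,\Ran_x})_{\infty\cdot x})\simeq \Whit_q(\Fl_{G,x}^{\omega^\rho})$; here $\unit^!$ is already an equivalence by Lemma \ref{infth}, and under this identification the image of $\pi_{\Fl,\Ran_x}^!$ consists of objects of the form $\pr_{\Ran_x}^!(\mathcal{F})$, on which $(s_x\times\id)^!$ is inverse to $\pr_{\Ran_x}^!$ by base change.

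The affine Grassmannian version $\pi_{\Gr,x}^!$ is handled by the same argument with the Iwahori data at $x$ dropped throughout: replace $\pi_{\Fl,\Ran_x}$ by the analogous map $\pi_{\Gr,\Ran}\colon \overline{S}^0_{\Gr,\Ran}\to \overline{\Bun_N^{\omega^\rho}}$, invoke \eqref{inf Gr} in place of \eqref{inf}, and use the canonical section $s_{\Ran}$ introduced just after Proposition \ref{basic fact property} (the argument for essential surjectivity is actually cleaner here because $\overline{\Bun_N^{\omega^\rho}}$ has no marked point).

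The main obstacle will be making precise the descent step of the first claim, namely that the Whittaker structure prescribed locally on the Ran-ified semi-infinite space matches the global Whittaker structure defined by equivariance over \emph{all} disjoint finite subsets of $X\setminus\{x\}$. This is the technical heart of \cite[Section 6]{[Ga5]}, and its transport to the Iwahori-ramified setting should be essentially mechanical, because the Iwahori structure at $x$ is preserved by every geometric map appearing and the gerbe $\mathcal{G}^G$ is compatible with all $N$-reductions by the construction recalled in Section \ref{gerbe used}.
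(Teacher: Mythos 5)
Your proposal is correct and follows essentially the same route as the paper: the paper gives no argument beyond invoking ``the method of the proof of the main theorem in \cite{[Ga5]}'', and your two-step factorization through the Ran-ified space $(\overline{S}^{w_0}_{\Fl,\Ran_x})_{\infty\cdot x}$ --- using Lemma~\ref{infth} for the local leg and the descent over the good-at-$\bar y$ loci for the global leg --- is precisely that method, transported to the Iwahori setting. You also correctly identify where the real technical content lives (the descent/gluing step of \cite[Section 6]{[Ga5]}), which is exactly the part the paper likewise leaves to the reference.
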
                             

We denote by $\Delta_{\glob}^{\lambda}$ the twisted sheaf $\pi_{\Fl,x}^!(\Delta_\lambda)[d_g]$, where 
$d_g:= \dim(\Bun_N^{\omega^\rho})$.
\begin{rem}
Although the local Whittaker categories are equivalent to the global Whittaker categories, we have to use both of them in this paper: we use the local Whittaker category to show the factorization property, and we use the global Whittaker category to show prove Proposition \ref{imstan}.  
\end{rem}

{
\subsubsection{Proof of Lemma \ref{thm local glob}}\label{section 7.2.5}
The second claim of Lemma \ref{thm local glob} is the statement of \cite[Theorem 5.2.2]{[Ga5]}. To be self-contained, we prove the first claim.

First, we give the algebraic ind-stack $(\overline{\Bun_N^{\omega^\rho}})_{\infty\cdot x}'$ a stratification. By definition, the algebraic ind-stack $(\overline{\Bun_N^{\omega^\rho}})_{\infty\cdot x}'$ classifies the data of points $(\cP_G, \sigma, \epsilon)$, where $\cP_G$ is a $G$-bundle on $X$, $\sigma$ is a section of $X-x$ in $T\backslash 
\overline{N\backslash G}^{\aff}\overset{G}{\times} \cP_G$ which generically lies in $
{T\backslash (N\backslash G)}\overset{G}{\times} \cP_G$, such that the induced map to $T\backslash \pt$ is given by $\omega^{\rho}$, and $\epsilon$ is a $B$-reduction of $\cP_G$ at $x$.

According to \cite[Lemma 1.3.7]{[Zhu]}, we can trivialize $\cP_G$ on the formal disc $\cD_x$. Furthermore, we can choose an isomorphism $\cP_G|_{\cD_x}\overset{\phi}{\simeq} \cP_G^\omega$, such that the Iwahori structure $\epsilon$ goes to $\omega^\rho\overset{T}{\times} B|_x$ under this isomorphism.

For any geometric point in $(\overline{\Bun_N^{\omega^\rho}})_{\infty\cdot x}'$, taking the restriction of $\sigma$ to $\overset{\circ}{\cD}_x$, we obtain a map $\sigma|_{\overset{\circ}{\cD}_x}:\overset{\circ}{\cD}_x \to T\backslash (N\backslash G)\overset{G}{\times}\cP_G\simeq T\backslash(N\backslash G)\overset{G}{\times}\cP^{\omega}_G$, such that the induced map to $T\backslash \pt$ is given by $\omega^\rho$. It gives rise to a point in $(N\backslash G)^{\omega^\rho}(\cK)$. Also, different identifications $\phi$ preserving the $B$-reduction structure at $x$ differ by a multiplication by Iwahori $I^{\omega^\rho}$. Hence, for any geometric point in $(\overline{\Bun_N^{\omega^\rho}})_{\infty\cdot x}'$, we can obtain a point in $|(N\backslash G)^{\omega^\rho}(\cK)/I^{\omega^\rho}|=|N(\cK)^{\omega^\rho}\backslash \Fl_G^{\omega^\rho}|=W^{\ext}$. 

For any $t^\lambda w\in W^{\ext}$, we denote the corresponding locally closed substack by $(\overline{\Bun_N^{\omega^\rho}})_{=\lambda\cdot x}^w$. It has an open substack $({\Bun_N^{\omega^\rho}})_{=\lambda\cdot x}^w$ where we require $\kappa^{\check{\lambda}}$ to be injective on $X-x$.

The projection $\pi_{\Fl, x}$ induces a map for each stratum:
\begin{equation}
    \pi_{\Fl, x}: 
S^{t^\lambda w}_{\Fl, x}\longrightarrow (\overline{\Bun_N^{\omega^\rho}})_{=\lambda\cdot x}^w. 
\end{equation}

We claim that $\pi_{\Fl,x}^!$ induces a strata-wise equivalence, i.e., 
\begin{equation}
  \pi_{\Fl, x}^!:  \Whit_q((\overline{\Bun_N^{\omega^\rho}})_{=\lambda\cdot x}^w)\overset{\sim}{\longrightarrow} \Whit_q(S^{t^\lambda w}_{\Fl, x}).
\end{equation}

First, using the same proof as \cite[Lemma 6.2.8]{[FGV]}, one can show that $(\overline{\Bun_N^{\omega^\rho}})_{=\lambda\cdot x}^w- ({\Bun_N^{\omega^\rho}})_{=\lambda\cdot x}^w$ does not carry non-zero Whittaker sheaf. 

Furthermore, there is an isomorphism of stacks
\begin{equation}\label{eq 7.2.3}
    ({\Bun_N^{\omega^\rho}})_{=\lambda\cdot x}^w\simeq N^{\omega^\rho}_{X-x}\backslash S^{t^\lambda w}_{\Fl,x},
\end{equation}
where $N^{\omega^\rho}_{X-x}$ is the mapping space $X-x\longrightarrow N^{\omega^\rho}$ and the right-hand side of \eqref{eq 7.2.3} is understood as the fpqc (equivalently, \'{e}tale) sheafification of the prestack quotient.

Also, for any geometric point $\bar{y}=\{y_1, y_2, \cdots, y_n\}\in \Ran_{X-x}$, we have
\begin{equation}
   ({\Bun_N^{\omega^\rho}})_{=\lambda\cdot x, good\ at\ \bar{y}}^w\simeq ({\Bun_N^{\omega^\rho}})_{=\lambda\cdot x}^w\simeq N^{\omega^\rho}_{X-\bar{y}-x}\backslash \prod_{i=1}^{n} S^0_{\Gr, y_i}\times S^{t^\lambda w}_{\Fl,x},
\end{equation}
where $N^{\omega^\rho}_{X-\bar{y}-x}$ acts on $\prod_{i=1}^{n} S^0_{\Gr, y_i}\times S^{t^\lambda w}_{\Fl,x}$ diagonally.

It follows immediately that
\begin{equation}
   {}_{\bar{y}}({\Bun_N^{\omega^\rho}})_{=\lambda\cdot x, good\ at\ \bar{y}}^w\simeq N^{\omega^\rho}_{X-\bar{y}-x}\backslash \prod_{i=1}^{n} N(\cK)_{y_i}^{\omega^\rho}\times S^{t^\lambda w}_{\Fl,x},
\end{equation}
and the $(N(\cK)^{\omega^\rho}_{\bar{y}},-\chi_{\bar{y}})$-equivariant twisted D-modules on ${}_{\bar{y}}({\Bun_N^{\omega^\rho}})_{=\lambda\cdot x, good\ at\ \bar{y}}^w$ are exactly those twisted D-modules on $S^{t^\lambda w}_{\Fl,x}$ which are $(N^{\omega^\rho}_{X-\bar{y}-x}, \chi_{x})$-equivariant. 

Now, the strata-wise equivalence follows from 
\begin{equation}
  \Whit_q(({\Bun_N^{\omega^\rho}})_{=\lambda\cdot x}^w) \simeq \bigcap \Shv_{\cG^G}^{N^{\omega^\rho}_{X-\bar{y}-x}, \chi_{x}} (S^{t^\lambda w}_{\Fl,x})\simeq \bigcap \Shv_{\cG^G}^{N(\cK)_x^{\omega^\rho}, \chi_{x}} (S^{t^\lambda w}_{\Fl,x})\simeq \Shv_{\cG^G}^{N(\cK)_x^{\omega^\rho}, \chi_{x}} (S^{t^\lambda w}_{\Fl,x}).
\end{equation}

To prove Lemma \ref{thm local glob}, it is sufficient to show that $\pi^!_{\Fl,x}$ induces a functor between the Whittaker categories and is fully faithful. (Then, the essentially surjective property follows from $\Whit_q(\Fl_{G,x}^{\omega^\rho})$ is generated by $*$-extensions of objects in $\Whit_q(S_{\Fl,x}^{t^\lambda w}$)).

The functor $\pi_{\Fl,x}^!$ sends any global Whittaker sheaf to local Whittaker sheaf: $\Whit_q((\overline{\Bun_N^{\omega^\rho}})_{\infty\cdot x}')$ is generated by $*$-extensions of objects in $\Whit_q({\Bun_N^{\omega^\rho}})_{=\lambda\cdot x}^w)$, and $\pi^!_{\Fl,x}$ sends any such sheaf to a local Whittaker sheaf, then using the fact that the local Whittaker category is a full cocomplete subcategory of $\Shv_{\cG^G}(\Fl_{G,x}^{\omega^\rho})$, we obtain that $\pi^!_{\Fl,x}$ sends any global Whittaker sheaf to a local Whittaker sheaf.

Consider the following commutative diagram
\[\xymatrix{
&\Ran_{X,x}\times \Fl^{\omega^\rho}_{G,x}\ar[rd]^{\unit}\ar[ld]&\\
\Fl^{\omega^\rho}_{G,x}\ar[rd]^{\pi_{\Fl,x}}&&(\overline{S}^{w_0}_{\Fl,\Ran_x})_{\infty\cdot x}\ar[ld]^{\pi_{\Fl, \Ran_{X,x}}}\\
&(\overline{\Bun_N^{\omega^\rho}})_{\infty\cdot x}'.&}
\]

Since $\Ran_x$ is contractible (ref. \cite[Theorem 1.6.5]{[Ga11]} and \cite[Proposition 4.3.3]{[BD]}), we only need to show that the $!$-pullback along the right-hand side induces a fully faithful embedding:
\begin{equation}
\Whit_q((\overline{\Bun_N^{\omega^\rho}})_{\infty\cdot x}')\longrightarrow \Whit_q((\overline{S}^{w_0}_{\Fl,\Ran_x})_{\infty\cdot x})\longrightarrow \Whit_q(\Ran_{X,x}\times \Fl^{\omega^\rho}_{G,x}).
\end{equation}

According to Lemma \ref{infth}, there is $\Whit_q((\overline{S}^{w_0}_{\Fl,\Ran_x})_{\infty\cdot x})\simeq \Whit_q(\Ran_{X,x}\times \Fl^{\omega^\rho}_{G,x})$. So, we only need to show $\pi_{\Fl, \Ran_{X,x}}: \Whit_q((\overline{\Bun_N^{\omega^\rho}})_{\infty\cdot x}')\to \Whit_q((\overline{S}^{w_0}_{\Fl,\Ran_x})_{\infty\cdot x})$ is fully faithful. Note that $\Whit_q((\overline{\Bun_N^{\omega^\rho}})_{\infty\cdot x}')$ and $ \Whit_q((\overline{S}^{w_0}_{\Fl,\Ran_x})_{\infty\cdot x})$ are full subcategories of $\Shv_{\cG^G}((\overline{\Bun_N^{\omega^\rho}})_{\infty\cdot x}')$ and $\Shv_{\cG^G}((\overline{S}^{w_0}_{\Fl,\Ran_x})_{\infty\cdot x})$, respectively. We only need to prove $\pi_{\Fl,\Ran_{X,x}}:\Shv_{\cG^G}((\overline{\Bun_N^{\omega^\rho}})_{\infty\cdot x}') \longrightarrow \Shv_{\cG^G}((\overline{S}^{w_0}_{\Fl,\Ran_x})_{\infty\cdot x})$ is fully faithful.

Denote by $\Bun_G^{N, \text{gen}}$ the stack which classifies principal $G$-bundles on $X$ with a generic $N^{\omega^\rho}$-reduction. Consider the following Cartesian diagram 
\begin{equation}\label{uhc dia}
    \xymatrix{
(\overline{S}^{w_0}_{\Fl,\Ran_x})_{\infty\cdot x}\ar[r]\ar[d]& \Gr_{G, \Ran}^{\omega^\rho}\ar[d]\\
(\overline{\Bun_N^{\omega^\rho}})_{\infty\cdot x}'\ar[r]& \Bun_G^{N, \text{gen}}.
}
\end{equation}
Now the desired fully faithfulness follows from the fact that $\Gr_{G, \Ran}^{\omega^\rho}\longrightarrow \Bun_G^{N, \text{gen}}$ is  universally homologically
contractible, ref \cite[Theorem A.1.10]{[Ga4]}. 

}

\subsection{Global semi-infinite !-extension sheaf}\label{global semi-infinite ! extension D-module}
Before we define global functors corresponding to the functors $F^L$ and $F^{DK}$, we should construct the global analog of the semi-infinite sheaves $j_!(\omega_{S^{-,\Conf}_{\Gr, \Conf}})$,  $j_!(\omega_{S^{-,w,\Conf_{\infty\cdot x}}_{\Fl, \Conf_{\infty\cdot x}}})$ defined in Section \ref{configuration gr and fl}.

\begin{definition}\label{define B-}
We denote by $\Bun_{B^-}'$ the algebraic stack classifying $B^-$-bundles on $X$ plus a $B$-reduction of the induced $G$-bundle at the point $x$. In other words, it is the fiber product of $\Bun_{B^-}$ with the classifying stack of $B$ over the classifying stack of $G$.

 We define $\overline{\Bun}{}_{B^-}'$ as the Drinfeld compactification of $\Bun_{B^-}'$. It classifies the quadruples $\{\mathcal{P}_T, \mathcal{P}_G, \{\kappa^{-, \check{\lambda}}, \check{\lambda}\in \Lambda^+\},\epsilon \}$, where $\mathcal{P}_T\in \Bun_T$ is a $T$-bundle on $X$, $\mathcal{P}_G$ is a $G$-bundle on $X$ and 
\begin{equation}
    \kappa^{-,\check{\lambda}}:\  '\mathcal{V}_{\mathcal{P}_G}^{\check{\lambda}}\to (\omega^{\frac{1}{2}})^{\langle\check{\lambda}, 2\rho\rangle}\qquad \forall\check{\lambda}\in \Lambda^+
\end{equation}
is a collection of morphisms which are regular  on $X$ and satisfy the Pl\"{u}cker relations.

By omitting the Iwahori structure $\epsilon$, we get the Drinfeld compactification $\overline{\Bun}{}_{B^-}$ of $\Bun_{B^-}$.
\end{definition}

\begin{rem}
If we require $\kappa^{-.\check{\lambda}}$ to be surjective in the definition of $\overline{\Bun}{}_{B^-}'$ (resp. $\overline{\Bun}{}_{B^-}$), the resulting stack is ${\Bun}_{B^-}'$ (resp. $\Bun_{B^-}$).
\end{rem} 

\begin{definition}
We  define the gerbe $\mathcal{G}_{\glob}^{G, T, \ratio}$ on $\overline{\Bun}{}_{B^-}$ (resp. $\overline{\Bun}{}_{B^-}'$) as $(\mathcal{G}^G)^{-1}\otimes (\mathcal{G}^T)$.
\end{definition}

$\Bun_{B^-}'$ has a relative position map
\begin{equation}
    \Bun_{B^-}'\to \Bun_G\mathop{\times}\limits_{\pt/G}\pt/B\mathop{\times}\limits_{\pt/G}\pt/B^-\simeq \Bun_G\mathop{\times}\limits_{\pt/G} B^-\backslash G/B.
\end{equation} We will denote the preimage of the Bruhat cell $\Br^w\subset B^-\backslash G/B$ in $\Bun_{B^-}'$ by $\Bun_{B^-}^w$, $w\in W$. For convenience, we denote by $\Bun_{B^-}^{''}$ the stack $\Bun_{B^-}^{1}$.

By the definitions of $\mathcal{G}^G$ and $\mathcal{G}^T$ in Section \ref{gerbe used}, we see that the gerbe $\mathcal{G}_{\glob}^{G, T, \ratio}$ is canonically trivial on $\Bun_{B^-}''\subset \overline{\Bun}{}_{B^-}'$ and $\Bun_{B^-}\subset \overline{\Bun}{}_{B^-}$. Hence, the categories of $\mathcal{G}_{\glob}^{G, T, \ratio}$-twisted sheaves on $\Bun_{B^-}''$ and $\Bun_{B^-}$ are equivalent to the categories of non-twisted sheaves on the corresponding stacks. In particular, we can consider the constant sheaf in the twisted case.

\begin{definition}
We denote by $j^-_{!, \glob, \Fl}$ (resp. $j^-_{!, \glob, \Gr}$) the $!$-extension of the $\mathcal{G}_{\glob}^{G, T, \ratio}$-twisted {perverse} constant sheaf on $\Bun_{B^-}''$ (resp. $\Bun_{B^-}$) to $\overline{\Bun}{}_{B^-}'$ (resp. $\overline{\Bun}{}_{B^-}$). 
\end{definition}

\subsection{Zastava spaces}\label{Zastava space}
 Zastava spaces are introduced in \cite{[FM]}. They play an important role in our global construction of the functor. Let us recall the definitions of the Zastava space and related stacks in this section.

\begin{definition}
We define the compactified Zastava space $\bar{Z}_{\Gr}$ and Zastava space ${Z}_{\Gr}$ as
\begin{equation*}
    \begin{split}
        \bar{Z}_{\Gr}:=\overline{\Bun_N^{\omega^\rho}}\mathop{\times'}\limits_{\Bun_G} \overline{\Bun}{}_{B^-},\\
         {Z}_{\Gr}:=\overline{\Bun_N^{\omega^\rho}}\mathop{\times'}\limits_{\Bun_G} {\Bun}_{B^-},
    \end{split}
\end{equation*}
where $\times'$ means the open substack of the fiber product such that the composition of $\kappa^{\check{\lambda}}$ and $\kappa^{-, \check{\lambda}}$ is non-zero for any dominant weight $\check{\lambda}$.
\end{definition}

\begin{definition}Similarly, we define the affine flags version of Zastava spaces as
\begin{equation}
    \begin{split}
        (\bar{Z}_{\Fl,x})_{\infty \cdot x}:=\overline{(\Bun_N^{\omega^\rho})}'_{\infty\cdot x}\mathop{\times'}\limits_{\Bun_G'} \overline{\Bun}{}_{B^-}',\\
        ({Z}_{\Fl,x})_{\infty \cdot x}:=\overline{(\Bun_N^{\omega^\rho})}'_{\infty\cdot x}\mathop{\times'}\limits_{\Bun_G'} {\Bun_{B^-}^{''}}.
    \end{split}
\end{equation}

\end{definition}


Since we assume that $[G,G]$ is simply-connected, taking zeros of the composition of $\kappa^{\check{\lambda}}$ and $\kappa^{-,\check{\lambda}}$ gives ind-proper maps

\begin{equation}\label{eva}
    v_{\Fl, \glob}:\ (\bar{Z}_{\Fl,x})_{\infty \cdot x}\to \Conf_{x},
    \end{equation}
and
\begin{equation}
    v_{\Gr, \glob}:\ \bar{Z}_{\Gr}\to \Conf.
\end{equation}



By \cite[Section 2.3]{[BFGM]}, the Zastava spaces satisfy factorization property.
\begin{lem}\label{factzas}
There exists isomorphisms
\begin{equation}
    \bar{Z}_{\Gr}\mathop{\times}\limits_{\Conf} (\Conf\times \Conf)_{disj}\simeq (\bar{Z}_{\Gr}\times \bar{Z}_{\Gr})\mathop{\times}\limits_{\Conf\times \Conf} (\Conf\times \Conf)_{disj},
\end{equation}
and
\begin{equation}
    \begin{split}
        (\bar{Z}_{\Fl,x})_{\infty \cdot x}&\mathop{\times}\limits_{\Conf_{\infty\cdot x}} (\Conf\times \Conf_{\infty\cdot x})_{disj}\\ &\simeq\\ (\bar{Z}_{\Gr}\times (\bar{Z}_{\Fl,x})_{\infty \cdot x})&\mathop{\times}\limits_{\Conf\times \Conf_{\infty\cdot x}} (\Conf\times \Conf_{\infty\cdot x})_{disj}.
    \end{split}
\end{equation}
\end{lem}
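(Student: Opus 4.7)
The plan is to establish both isomorphisms by constructing mutually inverse maps at the level of $S$-points, in direct parallel with the argument already given for Proposition \ref{basic fact property} (where the same factorization structures were exhibited on the ``semi-infinite'' versions $\overline{S}^0_{\Gr,\Ran}$ and $(\overline{S}^{w_0}_{\Fl,\Ran_x})_{\infty\cdot x}$). The Zastava spaces package essentially the same data as these semi-infinite orbits, but in the global / compactified form where we also record the negative reduction $\kappa^{-,\check{\lambda}}$ and the Plücker relations between both families, so the gluing procedure should be a minor variant of what has already been done.

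Concretely, for the first assertion consider a point of $\bar Z_{\Gr}$ over $(D_1,D_2)\in(\Conf\times\Conf)_{disj}$. By $v_{\Gr,\glob}$ the maps $\kappa^{\check\lambda}$ and $\kappa^{-,\check\lambda}$ are mutually inverse (up to $\mathcal{O}^\times$) on $X-\supp(D_1+D_2)$, so on the open sets $U_i:=X-\supp(D_i)$ the $G$-bundle $\mathcal{P}_G$ is canonically trivialized against $\mathcal{P}_G^{\omega}$ outside $\supp(D_{3-i})$. I would use this pair of trivializations to produce two new $G$-bundles $\mathcal{P}_{G,i}$ (one for each $i=1,2$): set $\mathcal{P}_{G,i}$ to equal $\mathcal{P}_G$ on $U_{3-i}$ and to equal $\mathcal{P}_G^\omega$ on $U_i$, with the transition on $U_1\cap U_2$ given by the trivialization above; then restrict $\kappa^{\check\lambda}$ and $\kappa^{-,\check\lambda}$ to obtain a Zastava datum over $D_i$. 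The Plücker relations and the ``$\times'$'' genericity condition are preserved since they are local along $X$. Conversely, given $(\mathcal{P}_{G,1},\ldots)$ over $D_1$ and $(\mathcal{P}_{G,2},\ldots)$ over $D_2$, glue the two bundles along $U_1\cap U_2=X-\supp(D_1+D_2)$ using the $N$-trivializations from $\overline{\Bun_N^{\omega^\rho}}$ (valid because $D_1,D_2$ have disjoint support); the sections $\kappa^{\check\lambda},\kappa^{-,\check\lambda}$ glue automatically. Verifying that these constructions are mutually inverse is straightforward.

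For the second assertion the argument is identical, with one extra bookkeeping point: the marked point $x$ lies in the support of the $\Conf_x$-factor (never in the support of $\Conf$), so the $B$-reduction $\epsilon$ at $x$, which is part of a point of $(\bar Z_{\Fl,x})_{\infty\cdot x}$, is carried entirely by the affine-flags factor. In the gluing step one uses the $N$-trivialization on $U_1\cap U_2$ to identify the fiber of the glued bundle at $x$ with that of $\mathcal{P}_{G,1}$ (the piece that already carries $\epsilon$), so $\epsilon$ transports without ambiguity. Factorization of the evaluation maps $v_{\Fl,\glob}$ and $v_{\Gr,\glob}$ along the gluing is tautological from the additivity of the divisor-of-zeros construction.

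The main obstacle I anticipate is purely bookkeeping rather than conceptual: one must check carefully that the Plücker relations and the genericity condition $\times'$ really descend to each factor and extend back under gluing, and that the whole construction is functorial in $S$ so that it upgrades to an isomorphism of prestacks (not just a bijection on points). This is where the proof in \cite{[FGV]} and \cite{[BFGM]} spends most of its effort; for us, Proposition \ref{basic fact property} has already absorbed the analogous difficulty, so the present lemma should be obtained by adapting that argument verbatim, with the marked point $x$ treated exactly as in part b) of that proposition.
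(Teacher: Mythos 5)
Your proposal is correct and follows essentially the same route as the paper, which simply cites \cite{[FGV]} for this lemma; the underlying argument there (and in \cite{[BFGM]}) is exactly the gluing-of-bundles construction you describe, which is also the paper's own proof of Proposition \ref{basic fact property} that you correctly adapt. The only points you gloss over (splitting the $T$-bundle datum of $\overline{\Bun}_{B^-}$ according to the divisor components, and checking the genericity condition $\times'$ factor by factor) are routine and do not affect the argument.
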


\subsection{Construction of global functors}\label{subsection F glob}
Let us consider the following diagram:

\begin{center}
\begin{equation}\label{diagram 16.1}
    \xymatrix{
&(\bar{Z}_{\Fl,x})_{\infty \cdot x}\ar[ld]^{\bar{\mathfrak{q}}_Z'}\ar[dd]^{v_{\Fl, \glob}}\ar[rd]^{\bar{\mathfrak{p}}_Z'}&\\
(\overline{\Bun_N^{\omega^\rho}})'_{\infty\cdot x}&&\overline{\Bun}{}_{B^-}'\\
&\Conf_{\infty\cdot x}&}
\end{equation}

\end{center}

\begin{definition}\label{functorglob}
We define \textit{global} functors $F_{\glob}^L$ and $F_{\glob}^{DK}$ as
\begin{equation}
\begin{split}
F^{L}_{\glob}: \Whit_{q}((\overline{\Bun_N^{\omega^\rho}})'_{\infty\cdot x})&\to \Shv_{\mathcal{G}^\Lambda}(\Conf_{\infty\cdot x})\\
        \mathcal{F}&\mapsto v_{\Fl, \glob, !}(\bar{\mathfrak{q}}_Z^{',!}(\mathcal{F})\mathop{\otimes}\limits^{!} \bar{\mathfrak{p}}_Z^{',!} (j^-_{!, \glob, \Fl} [\dim \Bun_G'])),
\end{split}
\end{equation}
and
\begin{equation}
\begin{split}
F^{DK}_{\glob}: \Whit_{q}((\overline{\Bun_N^{\omega^\rho}})'_{\infty\cdot x})&\to \Shv_{\mathcal{G}^\Lambda}(\Conf_{\infty\cdot x})\\
\mathcal{F}&\mapsto v_{\Fl, \glob, !}(\bar{\mathfrak{q}}_Z^{',!}(\mathcal{F})\mathop{\otimes}\limits^{!}\bar{\mathfrak{p}}_Z^{',!} (j^-_{*, \glob, \Fl}[\dim \Bun_G'])).
\end{split}
\end{equation}

\end{definition}

Similarly, we consider the diagram without the Iwahori structures:
\begin{center}
\begin{equation}\label{diagram 16.4}
  \xymatrix{
&\bar{Z}_{\Gr}\ar[ld]^{\bar{\mathfrak{q}}_Z}\ar[dd]^{v_{\Gr, \glob}}\ar[rd]^{\bar{\mathfrak{p}}_Z}&\\
\overline{\Bun_N^{\omega^\rho}}&&\overline{\Bun}{}_{B^-}\\
&\Conf&}
\end{equation}
\end{center}
\begin{definition}\label{functorglob gr}
We define functors $F^{L}_{\glob, \Gr}$ and $F^{L}_{\glob, \Gr}(\mathcal{F})$ as
\begin{equation}\label{16.5}
\begin{split}
F^{L}_{\glob, \Gr}: \Whit_{q}(\overline{\Bun_N^{\omega^\rho}})&\to \Shv_{\mathcal{G}^\Lambda}(\Conf)\\
      \mathcal{F}&\mapsto v_{\Gr,\glob, !}(\bar{\mathfrak{q}}_Z^{!}(\mathcal{F})\mathop{\otimes}\limits^{!} \bar{\mathfrak{p}}_Z^{!} (j^-_{!, \glob, \Gr}[\dim \Bun_G])),
\end{split}
\end{equation}
and
\begin{equation}\label{16.6}
\begin{split}
F^{DK}_{\glob, \Gr}: \Whit_{q}(\overline{\Bun_N^{\omega^\rho}})&\to \Shv_{\mathcal{G}^\Lambda}(\Conf)\\
     \mathcal{F}&\mapsto v_{\Gr,\glob, !}(\bar{\mathfrak{q}}_Z^{!}(\mathcal{F})\mathop{\otimes}\limits^{!}\bar{\mathfrak{p}}_Z^{!} (j^-_{*, \glob, \Gr}[\dim \Bun_G])).
\end{split}
\end{equation}
\end{definition}

{
\subsection{Comparison between local-global functors}

\subsubsection{}
Recall that in Section \ref{section 6.3}, we define a ${\cY}$-parametrized prestack ${}_{\cY}\Fl$, for any prestack $\cY$ with a map to $T(\cO)^{\omega^\rho}_{\Ran_x}\backslash \Ran_x$. Now, we consider two cases: $\cY=(\Gr_{T,\Ran_x}^{\omega^\rho})_{\infty\cdot x}^{\text{neg}}$ and $\cY=\Bun_T\times \Ran_x$. We denote the resulting prestacks by ${}_{\Gr_T}\Fl$ and ${}_{\Bun_T}\Fl$, respectively.

Note that the restriction of $\cG^G$ on $\Bun_{B^-}':=\Bun_{B^-}\underset{\pt/G}{\times}\pt/B$ is canonically identified with $\cG^T$, which implies that $\cG^{G,T,\ratio}$ on $\Bun_{B^-}'$ is canonically trivial. Let $j_{!,\glob,\Fl}^-\in \Shv_{((\cG)^{G,T,\ratio})^{-1}}(\overline{\Bun}_{B^-}')$ be the $!$-extension of the (twisted) constant perverse on $\Bun_{B^-}'$.

The algebraic stack $\overline{\Bun}_{B^-}'$ is an algebraic stack over $\Bun_T$ and can be regarded as a $T$-twisted construction of $(\overline{\Bun}_{N^-}^{\omega^\rho})^{1}$ (the negative analog of $(\overline{\Bun}_{N}^{\omega^\rho})^{w_0}$ in Section \ref{app a}). In other words, it is a stack over $\Bun_T$ and the fiber over $\cP_T$ is the $\cP_T$-twisted $(\overline{\Bun}_{N^-}^{\omega^\rho})^{1}$. We have a natural projection \[{}_{\Bun_T}\pi_{\Fl,\Ran_x}: {}_{\Bun_T}\overline{S}_{\Fl, \Ran_x}^{-,1}\longrightarrow \overline{\Bun}^{'}_{B^-}.\] 

A relative version of the proof of Lemma \ref{equation A.2.1} yields the following lemma
\begin{lem}
    \begin{equation}
        {}_{\Bun_T}\pi_{\Fl,\Ran_x}^!(j_{!,\glob,\Fl}^-)[\dim {\Bun}_{B^-}']= {}_{\Bun_T}j_!(\omega_{S_{\Fl,\Ran_x}^{-,1}}).
    \end{equation}
\end{lem}

Furthermore, by definition, the $!$-pullback of ${}_{\Bun_T}j_!(\omega_{S_{\Fl,\Ran_x}^{-,1}})$ along ${}_{\Gr_T}\Fl\longrightarrow {}_{\Bun_T}\Fl$ goes to $ {}_{\Gr_T}j_!(\omega_{S_{\Fl,\Ran_x}^{-,1}})$. This implies the following lemma 
\begin{lem}\label{Lema a 3.5}
\begin{equation}
     \pi^!_{S_{\Conf}\to \Bun_B} (j_{!,\glob,\Fl}^-)[\dim {\Bun}_{B^-}']\simeq j_!(\omega_{{S}^{-,1, \Conf_{\infty\cdot x}}_{\Fl, \Conf_{\infty\cdot x}}}),
\end{equation}
where $\pi_{S_{\Conf}\to \Bun_B}$ is the natural projection from $\overline{S}^{-,1, \Conf_{\infty\cdot x}}_{\Fl, \Conf_{\infty\cdot x}}$ to $\overline{\Bun}'_{B^-}$.
\end{lem}

}

 {With the preparations above, we prove the following proposition, which is the analog of \cite[Proposition 20.3.4]{[GL1]} in the affine flags case.}

\begin{lem}\label{thm2}
$$F^L_{\glob}\simeq F^L\circ \pi_{x, \Fl}^![d_g],$$ and
$$F^{DK}_{\glob}\simeq F^{DK}\circ \pi_{x, \Fl}^![d_g].$$
\end{lem}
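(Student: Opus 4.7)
The plan is to reduce this to a base-change computation following closely the method of \cite[Proposition 20.3.4]{[GL1]}. The starting point is that both functors are defined by the same three-step recipe --- pull back to a ``semi-infinite'' space, tensor with a $!$-extended semi-infinite D-module, push forward to the configuration space --- differing only in whether these spaces are the local ones of \eqref{diagram 15.2} or the global Zastava spaces of \eqref{diagram 16.1}. So I would set up a common ambient diagram. Concretely, forgetting the trivialization $\alpha$ away from $x \cup \supp(D)$ defines a natural morphism
\begin{equation*}
\mathfrak{u}\colon (\overline{S}^{w_0}_{\Fl, \Conf_x})_{\infty\cdot x} \cap \overline{S}^{-, \Conf_x}_{\Fl, \Conf_x} \longrightarrow (\bar{Z}_{\Fl,x})_{\infty\cdot x},
\end{equation*}
sitting inside a diagram that maps to \eqref{diagram 16.1} via the analogous projections $\pi_{\Fl,x}$ on the left factor and the forgetful maps to $\overline{\Bun}_{B^-}'$ on the right.

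Next, I would establish the three compatibilities that, together, promote $\mathfrak{u}$ (and its two coordinate projections) to a comparison of the two recipes. First, by Lemma \ref{thm local glob} and the construction of $\sprd_{\Fl}$ in \eqref{def of sprd}, the pullback of $\mathcal{F} \in \Whit_{q}((\overline{\Bun_N^{\omega^\rho}})'_{\infty\cdot x})$ along $\bar{\mathfrak{q}}'_Z$ agrees, after restriction along $\mathfrak{u}$, with $\sprd_{\Fl}(\pi_{x,\Fl}^!(\mathcal{F}))$ up to a shift by $d_g$ accounting for the smoothness of $\pi_{x,\Fl}$. Second, by the very definition of $\Bun_{B^-}''$ (Definition \ref{define B-}) and the relative-position stratification of Section \ref{relative p}, the $!$-pullback of $j^-_{!, \glob, \Fl}$ to $\overline{S}^{-, \Conf_x}_{\Fl, \Conf_x}$ is canonically identified with $j_!(\omega_{S^{-,1,\Conf_x}_{\Fl, \Conf_x}})$, up to a shift by $\dim \Bun_G'$. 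Third, the evaluation map $v_{\Fl, \glob}$ intertwines with $v_{\Conf_x}$ via $\mathfrak{u}$ in a way compatible with proper pushforward.

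Given these compatibilities, the lemma follows from $!$-base change applied to the square comparing the local and global pushforward squares, with the shift $[d_g]$ emerging as the difference between $\dim \Bun_G'$ and the local normalization $\langle \lambda, 2\check\rho \rangle$ in \eqref{6.35}, since these two shifts agree modulo the relative dimension of $\pi_{x,\Fl}$. The argument for $F^{KD}$ is identical, substituting $j_*$ for $j_!$ throughout and using the $*$-version of the second compatibility.

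The main obstacle is the second point above: proving that $\bar{\mathfrak{p}}_Z^{',!}(j^-_{!,\glob,\Fl})$ restricts under $\mathfrak{u}$ to the local $!$-extension $j_!(\omega_{S^{-,1,\Conf_x}_{\Fl, \Conf_x}})$. The issue is that $!$-extension does not in general commute with pullback; one needs the projection from the local to the global $\overline{\Bun}_{B^-}'$-side to be smooth (or at least to have the property that the local and global strata match up under it), and this requires a careful analysis of the Bruhat stratification at the point $x$ for the Iwahori version --- cleaner in the affine Grassmannian case. Once the analogous identification at the level of $F^L_{\glob, \Gr}$ is in place (which is essentially the content used in the proof of Lemma \ref{omega'}), the Iwahori version is obtained by incorporating the extra $B$-reduction at $x$; this is the point at which the factorization identity \eqref{fact conf - w} and the factorization property of the global Zastava space (Lemma \ref{factzas}) are used to reduce the check to a formal neighborhood of $x$, where the local and global pictures coincide by construction.
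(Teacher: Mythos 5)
Your proposal is correct and follows essentially the same route as the paper: identify $(\bar{Z}_{\Fl,x})_{\infty\cdot x}$ with $(\overline{S}^{w_0}_{\Fl,\Conf_x})_{\infty\cdot x}\cap\overline{S}^{-,\Conf_x}_{\Fl,\Conf_x}$ (so that $v_{\Fl,\glob}$ matches $v_{\Conf_x}$), match $\bar{\mathfrak{q}}_Z^{',!}(\mathcal{F})$ with $\sprd_{\Fl}(\pi_{\Fl,x}^!\mathcal{F})$ via Lemma \ref{thm local glob}, and identify the $!$-pullback of $j^-_{!,\glob,\Fl}$ with the local $j_!(\omega_{S^{-,\Conf_x}_{\Fl,\Conf_x}})$ up to the stated shifts. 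The one compatibility you flag as the main obstacle is exactly the point the paper delegates to \cite[Theorem 3.3.3]{[Ga4]}, so your outline matches the paper's proof step for step.
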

\begin{proof}
The Zastava space $(\bar{Z}_{\Fl,x})_{\infty\cdot x}$ is isomorphic to $(\overline{S}_{\Fl, \Conf_{\infty\cdot x}}^{w_0})_{\infty\cdot x}\cap (\overline{S}_{\Fl, \Conf_{\infty\cdot x}}^{-,\Conf_{\infty\cdot x}})$ (\cite[Proposition 20.2.2]{[GL1]}). Under this identification, $v_{\Conf_{\infty\cdot x}}$ is identified with $v_{\Fl, \glob}$.

Now the lemma follows from the following two facts:
\begin{itemize}
    \item The $!$-pullback of $j_{!,\glob,\Fl}^-[\dim \Bun_G']$ along $(\bar{Z}_{\Fl,x})_{\infty\cdot x}\to \overline{\Bun}{}_{B^-}'$ is isomorphic to the sheaf $j_!(\omega_{S_{\Fl, \Conf_{\infty\cdot x}}^{-,\Conf_{\infty\cdot x}}})[\deg+d_g]|_{(\bar{Z}_{\Fl,x})_{\infty\cdot x}}$.
    \item The sheaf $\sprd_{\Fl}\circ \pi_{\Fl,x}^!(\mathcal{F})$ is isomorphic to the $!$-pullback of $\mathcal{F}$ along $(\overline{S}_{\Fl, \Conf_{\infty\cdot x}}^{w_0})_{\infty\cdot x}\to (\overline{\Bun}_N^{\omega^\rho})_{\infty\cdot x}$.
\end{itemize}

The first fact follows from {Lemma \ref{Lema a 3.5}, } and the second one follows from Lemma \ref{thm local glob}.
\end{proof}
Similarly, by a local-global comparison, one can prove that
 \[\Omega^{L,'}_{q}\simeq v_{\Gr,\glob, *}(\bar{\mathfrak{q}}_Z^{!}(\mathcal{F}_\emptyset)\mathop{\otimes}\limits^{!} \bar{\mathfrak{p}}_Z^{!} (j^-_{!, \glob, \Gr}[\dim \Bun_G])),\]
and 
\[\Omega^{DK'}_q\simeq v_{\Gr,\glob, *}(\bar{\mathfrak{q}}_Z^{!}(\mathcal{F}_\emptyset)\mathop{\otimes}\limits^{!} \bar{\mathfrak{p}}_Z^{!} (j^-_{*, \glob, \Gr}[\dim \Bun_G])).\]
Here $\mathcal{F}_{\emptyset}$ is the unique irreducible object in $\Whit_{q}(\overline{\Bun_N^{\omega^\rho}})$. 

\begin{rem}
\textit{A priori}, it is not easy at all to show that $F^L_{\glob}$ factors through a category of factorization modules and the image of $\mathcal{F}_\emptyset$ under $F^L_{\glob,\Gr}$ admits a factorization algebra structure. It is the reason why we have to start from the local Whittaker category and construct Ran-ified affine flags in preceding sections.
\end{rem}

By Lemma \ref{thm2}, to prove Proposition \ref{imstan}, we only need to prove the following proposition.

\begin{prop}\label{imstan glob}
For any $\lambda\in \Lambda$, there exists an isomorphism
\begin{equation}
F^L_{\glob}(\Delta_{\glob}^\lambda)\simeq \Delta_{\lambda, \Omega_q^{L,'}}.
\end{equation}
\end{prop}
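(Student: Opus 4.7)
The plan is to sidestep a direct calculation of $*$-stalks of $F^L_{\glob}(\Delta_{\glob}^\lambda)$, which is what would be needed to compare with $\Delta_{\lambda, \Omega_q^{L,'}}$ via Lemma~\ref{4.1.1}, but is intractable given the $!$-tensor product with $j^-_{!,\glob,\Fl}$ in Definition~\ref{functorglob}. The $!$-stalk calculation, by contrast, is accessible by proper base change: this is precisely what yielded the easy costandard compatibility of Proposition~\ref{im costand}. Accordingly, I will transport Proposition~\ref{imstan glob} to its Verdier-dual form, which converts standards into costandards and $F^L_{\glob}$ into $F^{KD}_{\glob}$ working in the $q^{-1}$-twisted setting.

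Concretely: by~\eqref{dualfact} and Definition~\ref{ omega kd}, we have $\mathbb{D}(\Delta_{\lambda,\Omega_q^{L,'}}) \simeq \nabla_{\lambda, \Omega_{q^{-1}}^{KD,'}}$, so Proposition~\ref{imstan glob} is equivalent to
\[
\mathbb{D}\bigl(F^L_{\glob}(\Delta_{\glob}^\lambda)\bigr) \simeq \nabla_{\lambda, \Omega_{q^{-1}}^{KD,'}}.
\]
By Proposition~\ref{LDK}, Lemma~\ref{thm local glob}, and Lemma~\ref{thm2}, the object $\mathbb{D}(\Delta_{\glob}^\lambda)$ lies in $\Whit_{q^{-1}}((\overline{\Bun_N^{\omega^\rho}})'_{\infty\cdot x})$ as a (shifted) costandard-type object. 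Repeating the proof of Proposition~\ref{im costand} verbatim, with Proposition~\ref{corep*H} in place of Proposition~\ref{15.4 prop}, shows that $F^{KD}_{\glob}$ sends such dual standards to $\nabla_{\lambda, \Omega_{q^{-1}}^{KD,'}}$. The whole proposition thus reduces to the intertwining
\[
\mathbb{D}\circ F^L_{\glob} \simeq F^{KD}_{\glob}\circ \mathbb{D}
\]
on compact objects, where the left-hand duality is the Whittaker duality of Section~\ref{coinvariant} and the right-hand duality is Verdier duality of factorization modules.

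The main obstacle is this intertwining formula. Unwinding Definition~\ref{functorglob}, it requires exchanging the $!$-tensor product on $(\bar{Z}_{\Fl,x})_{\infty\cdot x}$ with a $*$-tensor product, together with swapping $j^-_{!,\glob,\Fl}\leftrightarrow j^-_{*,\glob,\Fl}$ (which are mutually Verdier dual up to cohomological shifts, essentially by the relative-position stratification of Section~\ref{relative p}). I plan to achieve this by establishing \emph{universal local acyclicity} of $\bar{\mathfrak{p}}_Z^{',!}(j^-_{?,\glob,\Fl})$, for $?\in\{!,*\}$, with respect to the ind-proper evaluation map $v_{\Fl,\glob}\colon(\bar{Z}_{\Fl,x})_{\infty\cdot x}\to \Conf_x$. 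Granted ULA, the projection formula and ind-properness of $v_{\Fl,\glob}$ force Verdier duality to commute through $v_{\Fl,\glob,!}$ and to convert the two versions of the semi-infinite D-module into one another, which yields the desired intertwining. The ULA claim itself will be reduced by factorization (Lemma~\ref{factzas} together with~\eqref{factalg-}--\eqref{fact-}) to the affine Grassmannian Zastava space $\bar{Z}_{\Gr}$, and from there to its single-point fibers, where it can be checked directly using the explicit fiber decomposition~\eqref{fiber of S conf conf}. The delicate new feature relative to the affine Grassmannian setting treated in~\cite{[GL1]} is the presence of the Iwahori structure at $x$ and the fact that standards in the affine flag Whittaker category are defined via convolution with BMW sheaves (Definition~\ref{stan def}) rather than as $!$-averaging of $\delta$-functions; the ULA argument must track these extra cohomological shifts and $T^{\omega^\rho}$-monodromies uniformly across all connected components of $\Conf_x$.
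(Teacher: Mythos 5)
Your overall architecture coincides with the paper's: dualize the statement, show that $F^{KD}_{\glob}$ sends $\mathbb{D}(\Delta^\lambda_{\glob})$ to $\nabla_{\lambda,\Omega_{q^{-1}}^{KD,'}}$ by a $!$-stalk computation, and reduce everything to the intertwining $F^L_{\glob}\circ\mathbb{D}\simeq\mathbb{D}\circ F^{KD}_{\glob}$, which is exactly Theorem~\ref{dualfunctor} and is indeed proved by a ULA argument. One caveat on the costandard half: this is not a verbatim repetition of Proposition~\ref{im costand}, because $\mathbb{D}(\Delta_\lambda)\simeq\Av_*^{ren}(\mathfrak{J}_\lambda^{\mathbb{D}})$ (Proposition~\ref{LDK}) is not a priori one of the costandards $\nabla_\mu$, so the orthogonality of its $!$-stalks under $F^{KD}$ must be computed directly via Proposition~\ref{corep*H}, Proposition~\ref{*eq} and a chain of convolution identities for BMW sheaves; your plan points at the right tools but underestimates this step.

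The genuine gap is in the ULA step. The exchange of $\bar{\mathfrak{q}}_Z^{',!}(\mathcal{F})\otimes^!\bar{\mathfrak{p}}_Z^{',!}(j^{-}_{!,\glob,\Fl})$ with its $*$-version is an application of Lemma~\ref{17.2 lemma} to the Cartesian square exhibiting the Zastava space as (an open subset of) $(\overline{\Bun_N^{\omega^\rho}})'_{\infty\cdot x}\times_{\Bun_G'}\overline{\Bun}_{B^-}'$; the smooth base in that lemma is $\Bun_G'$, so the ULA you need is that of $j^{-,w_0}_{!,\glob,\Fl}$ with respect to $\overline{\Bun}_{B^-}'\to\Bun_G'$, not with respect to $v_{\Fl,\glob}\colon(\bar{Z}_{\Fl,x})_{\infty\cdot x}\to\Conf_x$ (commuting $\mathbb{D}$ through $v_{\Fl,\glob,!}$ is free by ind-properness and needs no ULA). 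Moreover this ULA fails on all of $\overline{\Bun}_{B^-}'$: by Lemma~\ref{ULA thm +} it holds only on the stable range $\overline{\Bun}_{B^-,\leq\mu}^{',\lambda}$ with $\langle\lambda-\mu,\check{\alpha}_i\rangle>d$, and that lemma is a nontrivial external input (resting on a resolution of singularities of the Iwahori Drinfeld compactification) which cannot be ``checked directly on single-point fibers'' --- ULA is a statement about uniformity across fibers, not a fiberwise condition, and the hard new case is precisely the Iwahori piece at $x$, which does not reduce to $\bar{Z}_{\Gr}$.

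Consequently factorization must enter with the opposite logic from what you propose: it is not used to reduce the ULA to the affine Grassmannian Zastava space, but to propagate the duality isomorphism, already established on the stable-range open subset $(\bar{Z}_{\Fl,x})^{s}_{\infty\cdot x}$, to all of $(\bar{Z}_{\Fl,x})_{\infty\cdot x}$. Concretely, one covers each component $(\bar{Z}_{\Fl,x})^{\lambda_2}_{\infty\cdot x}$ by the smooth maps $r_1^{\lambda_2}$ from $Z_{\Gr}^{\circ,\lambda_1}\times(\bar{Z}_{\Fl,x})^{\lambda_2}_{\infty\cdot x}$ with $\lambda_1$ sufficiently antidominant that the second projection $r_2^{\lambda_2}$ lands in the stable range, and compares the two pullbacks up to a smooth twist. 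One also first replaces $j^{-}_{?,\glob,\Fl}$ by the $w_0$-versions $j^{-,w_0}_{?,\glob,\Fl}$, for which the ULA theorem is actually stated. Without these ingredients your intertwining formula, and hence the proposition, is not established.
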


\section{Proof of Proposition \ref{imstan glob}: Duality}\label{step 5}
In this section, we will study the relationship between $F_{\glob}^{L}$ and $F_{\glob}^{DK}$. We want to prove that $F^L_{\glob}$ and $F^{DK}_{\glob}$ are Verdier dual to each other
$$F^L_{\glob}\circ\mathbb{D}^{\verdier}(\mathcal{F})\simeq \mathbb{D}^{\verdier}\circ F^{DK}_{\glob}(\mathcal{F}): \Whit_{q}((\overline{\Bun_N^{\omega^\rho}})_{\infty \cdot x}')^{loc.c}\to \Omega_q^{L}-\Fact. $$
The method is given by reducing the above isomorphism to some stack where we can apply the \textit{universally locally acyclic} property (ULA).

\subsection{Universally locally acyclic}


In \cite[Section 5]{[BG]}, the authors introduced the notion of ULA\footnotemark.\footnotetext{It is expected to be equivalent to the notion of universally locally acyclic introduced in \cite{[D]}.} Roughly speaking, a sheaf is {ULA} with respect to a morphism if its singular support over each fiber is the same. 
The following lemma (it is straightforward from the definition in \cite[Section 5.1.2]{[BG]}) explains why the notion of ULA is important. 

\begin{lem}\label{17.2 lemma}
For algebraic stacks $\mathcal{X}, \mathcal{Y}$ and $\mathcal{W}$, consider the following Cartesian diagram of algebraic stacks:
$$\xymatrix{
\mathcal{X}\mathop{\times}\limits_{\mathcal{W}} \mathcal{Y}\ar[r]^{p_1}\ar[d]^{p_2}&\mathcal{X}\ar[d]^{q_2}\\
\mathcal{Y}\ar[r]^{q_1}&\mathcal{W}
}$$
Given $\mathcal{F}_1\in \Shv(\mathcal{X}),\mathcal{F}_2\in \Shv(\mathcal{Y})$. If we assume that $\mathcal{F}_1$ is ULA with respect to $q_2$ and $\mathcal{W}$ is smooth and of dimension $d$, then the following canonical map (see \cite[Section 5.1]{[BG]})

$$p_1^*(\mathcal{F}_1)\mathop{\otimes}\limits^* p_2^*(\mathcal{F}_2)[-d]\overset{\sim}{\longrightarrow} p_1^!(\mathcal{F}_1)\mathop{\otimes}\limits^! p_2^!(\mathcal{F}_2)[d]$$ is an isomorphism.
\end{lem}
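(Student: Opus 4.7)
The statement is a standard technical consequence of the compatibility of ULA sheaves with base change, combined with the smoothness of $\mathcal{W}$. I would deduce it essentially formally from the results of \cite[Section 5]{[BG]}.

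First, I would invoke the key property of ULA sheaves: since $\mathcal{F}_1$ is ULA with respect to $q_2$, the base-changed sheaf $p_1^*(\mathcal{F}_1)$ is again ULA with respect to $p_2$, and there is a natural isomorphism comparing $p_1^!(\mathcal{F}_1)$ with $p_1^*(\mathcal{F}_1)$ tensored with the relative dualizing complex of $p_2$, realized as $p_2^!(\mathbb{k}_{\mathcal{Y}})$. This base-change compatibility is one of the principal consequences of the ULA condition recorded in \emph{loc.\ cit.}, and is precisely what makes it possible to freely interchange $*$- and $!$-pullbacks along $p_1$ when paired with $\mathcal{F}_1$, at the cost of a controlled twist.

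Second, I would use the smoothness of $\mathcal{W}$ to convert this twist into an explicit cohomological shift. Since $\mathcal{W}$ is smooth of dimension $d$, the dualizing complex satisfies $\omega_{\mathcal{W}} \simeq \mathbb{k}_{\mathcal{W}}[2d]$, and by base change along the Cartesian square this translates into an identification of $p_2^!(\mathbb{k}_{\mathcal{Y}})$ with $p_1^*(q_2^!\mathbb{k}_{\mathcal{W}})$ up to the shift $[2d]$. Combining with the general identity $p_2^!(\mathcal{F}_2) \simeq p_2^*(\mathcal{F}_2) \mathop{\otimes}\limits^! p_2^!(\mathbb{k}_{\mathcal{Y}})$, which holds for any morphism $p_2$, one obtains the desired identification, with the shift $[2d]$ distributed symmetrically as $[-d]$ on the $*$-side and $[d]$ on the $!$-side.

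Finally, I would verify that the composite isomorphism assembled above coincides with the canonical map of \cite[Section 5.1]{[BG]}; this is a direct chase through the construction of that map. The main obstacle I anticipate is purely organizational bookkeeping of shifts and canonical natural transformations, rather than any substantive new input: once the ULA machinery of \emph{loc.\ cit.}\ and the smoothness identification $\omega_{\mathcal{W}} \simeq \mathbb{k}_{\mathcal{W}}[2d]$ are in place, the proof is entirely formal.
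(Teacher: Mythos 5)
You should first be aware that the paper does not actually prove this lemma: it is imported from \cite[Section 5.1]{[BG]} without argument, and in \emph{loc.\ cit.}\ the assertion that the displayed canonical map is an isomorphism for every test object $\mathcal{F}_2$ is essentially the \emph{defining} property of the ULA condition (the informal description via singular support that precedes the lemma in this paper is a characterization, not the working definition). So there is no proof in the paper to compare yours against, and any genuine proof must begin by fixing which characterization of ULA is taken as the definition; with the definition of \cite[Section 5.1]{[BG]} the statement holds by fiat.

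Taking your proposal on its own terms, there is a real gap in the second step. The claimed ``general identity'' $p_2^!(\mathcal{F}_2)\simeq p_2^*(\mathcal{F}_2)\mathop{\otimes}\limits^! p_2^!(\mathbb{k}_{\mathcal{Y}})$ does \emph{not} hold for an arbitrary morphism $p_2$: there is only a canonical map between the two sides, and it already fails to be an isomorphism for $p_2$ a closed embedding $\{0\}\hookrightarrow \mathbb{A}^1$ and $\mathcal{F}_2$ the extension by zero of the constant sheaf on $\mathbb{G}_m$, where $p_2^*(\mathcal{F}_2)=0$ while $p_2^!(\mathcal{F}_2)\neq 0$. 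Since $p_2$ in the lemma is the base change of the \emph{arbitrary} morphism $q_2$, no smoothness is available to rescue this step. The failure is not cosmetic: the entire content of the lemma is that the ULA hypothesis on $\mathcal{F}_1$ alone controls how \emph{both} factors interact with the $*/!$ discrepancy, so any argument that decouples the two factors --- handling $\mathcal{F}_1$ by ``ULA base change'' and $\mathcal{F}_2$ by a formal identity --- cannot work. Relatedly, your first step (that after arbitrary base change $p_1^!(\mathcal{F}_1)$ agrees with $p_1^*(\mathcal{F}_1)$ twisted by the relative dualizing complex) is precisely the special case $\mathcal{F}_2=\mathbb{k}_{\mathcal{Y}}$ of the lemma rather than an independent input; unless you start from a definition of ULA from which it follows (e.g.\ the singular-support or vanishing-cycles definition, followed by a non-characteristic pullback argument), you are assuming a form of what is to be proved.
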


\subsection{{Duality of $F_{\glob}^L$ and $F_{\glob}^{DK}$}{}}\label{Duality of }
We denote by  $\overline{\Bun}{}_{B^-}^{', \lambda}$ the substack of  $\overline{\Bun}{}_{B^-}'$ such that the degree of the $T$-bundle is $-\lambda+(2g-2)\rho$, and denote by $\overline{\Bun}_{B^-, \leq \mu}^{',\lambda}$ the open substack of $\overline{\Bun}{}_{B^-}^{', \lambda}$ such that the total order of
degeneracy of the generalized $B^-$-reductions is no more than $\mu$.

The following lemma is a tiny modification of \cite[Propsoition 4.1.1]{[Camp]} (see \cite{[Drinfeld Iwahori]} for a detailed proof).
\begin{lem}\label{ULA thm +}
There exists an integer $d$ which depends only on the genus of $X$, such that, for any $\mu\in \Lambda^{\textnormal{pos}}$, and $\lambda\in \Lambda$ satisfying the condition (X): for any $0\leq\mu'\leq \mu$, $$ \langle-\lambda-\mu', \check{\alpha}_i\rangle> d,$$
the restriction of $j^{-}_{!,\glob, \Fl}$ to $\overline{\Bun}_{B^-, \leq \mu}^{',\lambda}$ is ULA with respect to the natural projection
\begin{equation}\label{b- projection}
    \overline{\Bun}_{B^-, \leq \mu}^{',\lambda}\to \Bun_G'.
\end{equation}

\end{lem}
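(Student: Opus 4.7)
The plan is to deduce the lemma from its non-Iwahori analogue \cite[Theorem 4.2.1]{[Camp]}, which asserts that under the same bound $\langle \lambda - \mu, \check\alpha_i \rangle > d$, the sheaf $j^-_{!,\glob,\Gr}$ on $\overline{\Bun}^{\lambda}_{B^-,\leq \mu}$ is ULA with respect to the projection to $\Bun_G$, with $d$ depending only on the genus of $X$. The forgetful map $\pi\colon \Bun_G' \to \Bun_G$ (adding a $B$-reduction at $x$) is smooth, and the square
\[
\begin{tikzcd}
\overline{\Bun}_{B^-,\leq\mu}^{',\lambda} \ar[r,"\tilde\pi"] \ar[d] & \overline{\Bun}_{B^-,\leq\mu}^{\lambda} \ar[d] \\
\Bun_G' \ar[r,"\pi"] & \Bun_G
\end{tikzcd}
\]
is Cartesian, with $\tilde\pi$ also smooth. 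Since ULA is preserved under smooth base change, $\tilde\pi^!(j^-_{!,\glob,\Gr})$ is ULA on $\overline{\Bun}^{',\lambda}_{B^-,\leq \mu}$ over $\Bun_G'$. By the Cartesian property, this pulled-back sheaf is precisely the shifted $!$-extension of the constant sheaf from $\tilde\pi^{-1}(\Bun^\lambda_{B^-}) = \Bun^{',\lambda}_{B^-}$.

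The remaining point is that $j^{-,w_0}_{!,\glob,\Fl}$ is the $!$-extension from the smaller open stratum $\Bun^{w_0}_{B^-}\subset\Bun^{',\lambda}_{B^-}$, not from the full preimage. Denoting by $j_{w_0}\colon\Bun^{w_0}_{B^-}\hookrightarrow\Bun^{',\lambda}_{B^-}$ and $\tilde{\imath}\colon\Bun^{',\lambda}_{B^-}\hookrightarrow\overline{\Bun}_{B^-,\leq\mu}^{',\lambda}$ the two open embeddings, I would factor
\[
j^{-,w_0}_{!,\glob,\Fl} \;=\; \tilde{\imath}_!\,(j_{w_0})_!\,\underline{k}_{\Bun^{w_0}_{B^-}}[\dim\Bun_G'],
\]
and reduce ULA for this sheaf to the already-established ULA for $\tilde\pi^!(j^-_{!,\glob,\Gr}) = \tilde{\imath}_!\,\underline{k}_{\Bun^{',\lambda}_{B^-}}[\dim\Bun_G']$. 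The passage between the two amounts to a purely local operation in the $G/B$-fiber of the smooth map $\Bun^{',\lambda}_{B^-}\to\Bun^\lambda_{B^-}$: namely, the $!$-extension of the constant sheaf from the open Bruhat cell $B^-w_0 B/B\subset G/B$ to $G/B$. Since $G/B$ is smooth and proper, this extension is ULA relative to the base (checked either from the microlocal characterization of ULA, or by dévissage along the Bruhat stratification, using that each stratum is an affine-space bundle over the base); consequently $(j_{w_0})_!\underline{k}$ is ULA over $\Bun^\lambda_{B^-}$, and therefore $j^{-,w_0}_{!,\glob,\Fl}$ inherits ULA over $\Bun_G'$ from the corresponding property of $\tilde\pi^!(j^-_{!,\glob,\Gr})$ via a standard transitivity argument for $!$-extensions along open immersions.

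The main obstacle does not lie in the Iwahori-to-non-Iwahori reduction just sketched, but in the underlying non-Iwahori result: Drinfeld's quantitative estimate which produces the constant $d$. Its proof requires showing that the $!$-stalks of $j^-_{!,\glob,\Gr}$ along the defect strata of $\overline{\Bun}_{B^-}$ vanish in a sufficient range of cohomological degrees once $\langle\lambda-\mu,\check\alpha_i\rangle > d$. In the twisted setting with the gerbe $\mathcal{G}^{G,T,\ratio}_{\glob}$ this must be re-verified, but the twisting is tame and the combinatorics of the defect strata are unchanged, so the same genus-dependent constant $d$ works.
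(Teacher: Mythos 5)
Your smooth--base-change step is fine as far as it goes: since $\overline{\Bun}{}_{B^-}^{\prime,\lambda}=\overline{\Bun}{}_{B^-}^{\lambda}\times_{\Bun_G}\Bun_G'$ and $\Bun_G'\to\Bun_G$ is smooth, Campbell's theorem does give that $\tilde\pi^!(j^-_{!,\glob,\Gr})$, i.e.\ the $!$-extension of the constant sheaf from the \emph{entire} preimage $\Bun_{B^-}^{\prime,\lambda}$, is ULA over $\Bun_G'$. The gap is in the passage from this sheaf to $j^{-,w_0}_{!,\glob,\Fl}$. First, a point that is not merely conventional: $B^-w_0B=w_0Bw_0^{-1}\cdot w_0\cdot B=w_0B$, so $B^-w_0B/B$ is a single point of $G/B$, not the open cell; with the paper's normalization ($Br^w=BwB^-$, with $w=1$ labelling the open dense cell, cf.\ the statement that $S^{-,1,\Conf_x}_{\Fl,\Conf_x}$ is the open dense stratum), $\Bun_{B^-}^{w_0}$ is a \emph{closed} substack of $\Bun_{B^-}'$ of codimension $\dim G/B$ --- the graph of the tautological section of the $G/B$-bundle $\Bun_{B^-}'\to\Bun_{B^-}$. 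Hence $j^{-,w_0}_{!,\glob,\Fl}=\tilde\jmath_!\,(i_{w_0})_*(\underline{\mathbb{k}})$ with $i_{w_0}$ a closed immersion, and your ``transitivity argument for $!$-extensions along open immersions'' does not apply.

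Second, and more seriously, even after correcting this there is no general principle of the kind you invoke. You have (a) $\tilde\jmath_!(\underline{\mathbb{k}}_{\Bun_{B^-}^{\prime,\lambda}})$ is ULA over $\Bun_G'$, and (b) $(i_{w_0})_*\underline{\mathbb{k}}$ is ULA relative to the $G/B$-fibration $\Bun_{B^-}^{\prime,\lambda}\to\Bun_{B^-}^{\lambda}$; but these do not combine to give that $\tilde\jmath_!\,(i_{w_0})_*\underline{\mathbb{k}}$ is ULA over $\Bun_G'$, since $\Bun_{B^-}^{\lambda}\to\Bun_G'$ is not smooth and ULA does not ``compose'' through it. The entire content of the lemma --- the reason condition (X) and the genus-dependent constant $d$ appear at all --- is the behaviour of the $!$-extension along the defect strata of $\overline{\Bun}{}_{B^-}^{\prime,\lambda}$, in particular where the defect divisor collides with $x$; your fibrewise analysis takes place entirely over the defect-free locus $\Bun_{B^-}^{\lambda}$ and never sees this boundary, and the closure of $\Bun_{B^-}^{w_0}$ meets the defect strata differently from $\Bun_{B^-}^{\prime}$, so the required stalk estimates for $j^{-,w_0}_{!,\glob,\Fl}$ are genuinely new. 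This is exactly why the paper does not prove the lemma here but cites \cite{[Drinfeld Iwahori]}, where one redoes Campbell's strategy with the Iwahori structure: a Kontsevich-type resolution of singularities adapted to the stratum $\Bun_{B^-}^{w_0}$ and smooth over $\Bun_G'$ is constructed, and condition (X) is used to identify the pushforward of its constant sheaf with the $!$-extension. Your closing remark that the tame twist is harmless is correct, but the Iwahori-to-non-Iwahori reduction is precisely where the missing work lies; it is not a formal consequence of base change.
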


In this section, we will prove the following theorem using Lemma \ref{ULA thm +}.
\begin{thm}\label{dualfunctor}
$F_{\glob}^L$ and $F_{\glob}^{DK}$ are Verdier dual to each other, i.e.,
\begin{equation}\label{17.3}
    F_{\glob}^L\circ\mathbb{D}^{\verdier}\simeq \mathbb{D}^{\verdier}\circ F_{\glob}^{DK}.
\end{equation}
\end{thm}
{The proof of  Theorem \ref{dualfunctor} follows a standard factorization argument that has been used in various references such as \cite[Section 16.4]{[GN]}, \cite[Proof of Proposition 3.6.5]{[Ga3]}, \cite[Section 21.2]{[GL1]}, etc.}

\subsubsection{Step I}\label{step 1}


We want to prove that the natural transformation 
\begin{equation*}
    \begin{split}
        \mathbb{D}^{\verdier}(v_{\Fl,\glob,!}(\bar{\mathfrak{q}}_Z^{!}(\mathcal{F})\mathop{\otimes}\limits^{!}&\bar{\mathfrak{p}}_Z^{!} (j^{-}_{*, \glob, \Fl}[\dim \Bun_G'])))\\ \downarrow&\\
         v_{\Fl,\glob, !}(\bar{\mathfrak{q}}_Z^{!}(\mathbb{D}^{\verdier}(\mathcal{F}))\mathop{\otimes}\limits^{!}&\bar{\mathfrak{p}}_Z^{!} (j^{-}_{!, \glob, \Fl}[\dim \Bun_G'])).
    \end{split}
\end{equation*}
which is obtained in \cite[Section 5.1]{[BG]} is an isomorphism for any locally compact object $\mathcal{F}\in \Whit_q((\overline{\Bun_N^{\omega^\rho}})_{\infty \cdot x}')^{loc.c}$.

By definition, $v_{\Fl, \glob}$ is {ind-proper}, hence, it suffices to prove \[\mathbb{D}^{\verdier}(\bar{\mathfrak{q}}_Z^{!}(\mathcal{F})\mathop{\otimes}\limits^{!}\bar{\mathfrak{p}}_Z^{!} (j^{-}_{*, \glob, \Fl}[\dim \Bun_G'])))\simeq \bar{\mathfrak{q}}_Z^{!}(\mathbb{D}^{\verdier}(\mathcal{F}))\mathop{\otimes}\limits^{!}\bar{\mathfrak{p}}_Z^{!} (j^{-}_{!, \glob,  \Fl}[\dim \Bun_G']).\]

We only need to prove
\begin{equation*}
\begin{split}
        \bar{\mathfrak{q}}_Z^{*}(\mathcal{F})\mathop{\otimes}\limits^{*}\bar{\mathfrak{p}}_Z^{*} (\mathbb{D}^{\verdier}(j^{-}_{*, \glob, \Fl})))[-\dim \Bun_G']\simeq \bar{\mathfrak{q}}_Z^{!}(\mathcal{F})\mathop{\otimes}\limits^{!}\bar{\mathfrak{p}}_Z^{!} (j^{-}_{!, \glob,  \Fl})[\dim \Bun_G'],
\end{split}
\end{equation*}
for any $\mathcal{F}\in \Whit_{q^{-1}}((\overline{\Bun_N^{\omega^\rho}})_{\infty \cdot x}')^{loc.c}$. Since $j^{-}_{*, \glob,  \Fl}$ and $j^{-}_{!, \glob, \Fl}$ are dual to each other, we should prove
\begin{equation}\label{17.6}
\begin{split}
    \bar{\mathfrak{q}}_Z^{*}(\mathcal{F})\mathop{\otimes}\limits^{*}\bar{\mathfrak{p}}_Z^{*} (j^{-}_{!, \glob, \Fl}))[-\dim \Bun_G']\simeq \bar{\mathfrak{q}}_Z^{!}(\mathcal{F})\mathop{\otimes}\limits^{!}\bar{\mathfrak{p}}_Z^{!} (j^{-}_{!, \glob, \Fl})[\dim \Bun_G'].
\end{split}
\end{equation}

It indicates us to use Lemma \ref{17.2 lemma}. That is to say, if we can prove that $j^{-}_{!, \glob,  \Fl}$ is ULA with respect to the projection morphism (\ref{b- projection}), then (\ref{17.6}) follows from Lemma \ref{17.2 lemma}. But in fact, we do not need such a strong property. we can recover the isomorphism (\ref{17.6}) by factorization property from its restriction to an open subset.




\begin{definition}\label{Z lambda mu}
 Denote by $(\bar{Z}_{\Fl,x})_{\infty \cdot x}^{\lambda, \leq \mu}$ the preimage of $\overline{\Bun}_{B^-, \leq \mu}^{',\lambda}$ in $(\bar{Z}_{\Fl,x})_{\infty \cdot x}$ under the projection morphism $(\bar{Z}_{\Fl,x})_{\infty \cdot x}\to \overline{\Bun}{}_{B^-}'$. Note that $(\bar{Z}_{\Fl,x})_{\infty \cdot x}^{\lambda, \leq \mu}$ is open in $(\bar{Z}_{\Fl,x})_{\infty \cdot x}$ for any $\lambda\in \Lambda, \mu\in \Lambda^{\textnormal{pos}}$.
\end{definition}


Combine Lemma \ref{17.2 lemma} with Lemma \ref{ULA thm +}, we obtain the following corollary.
\begin{cor}\label{dual}
If $\lambda$, $\mu$ satisfy the condition (X), then the natural transformation (\ref{17.6}) is an isomorphism on $(\bar{Z}_{\Fl,x})_{\infty \cdot x}^{\lambda, \leq \mu}$,
for any twisted sheaf $\mathcal{F}\in \Shv_{\mathcal{G}^G}((\overline{\Bun_N^{\omega^\rho}})'_{\infty\cdot x})$.
\end{cor}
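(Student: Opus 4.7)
The plan is to deduce the statement as a direct application of the ULA-comparison result of Lemma \ref{17.2 lemma} to the Cartesian square
\[
\xymatrix{
(\bar{Z}_{\Fl,x})_{\infty \cdot x}^{\lambda, \leq \mu} \ar[r]^-{\bar{\mathfrak{p}}_Z'} \ar[d]_-{\bar{\mathfrak{q}}_Z'} & \overline{\Bun}_{B^-, \leq \mu}^{',\lambda} \ar[d] \\
(\overline{\Bun_N^{\omega^\rho}})'_{\infty\cdot x} \ar[r] & \Bun_G'
}
\]
which is obtained by restricting diagram \eqref{diagram 16.1} to the open substack $(\bar{Z}_{\Fl,x})_{\infty \cdot x}^{\lambda, \leq \mu}$ introduced in Definition \ref{Z lambda mu}. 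First I would verify that this square really is Cartesian; this is immediate from the definitions of the Zastava space as a fibre product and of $(\bar{Z}_{\Fl,x})_{\infty \cdot x}^{\lambda, \leq \mu}$ as the preimage of $\overline{\Bun}_{B^-, \leq \mu}^{',\lambda}$. In particular, the base $\Bun_G'$ is smooth; denote its dimension by $\dim \Bun_G'$, which is the integer $d$ appearing in Lemma \ref{17.2 lemma}.

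Next I invoke the hypothesis that $\lambda$ and $\mu$ satisfy the condition (X) from Lemma \ref{ULA thm +}. Under this condition, Lemma \ref{ULA thm +} asserts that the restriction of $j^{-,w_0}_{!,\glob,\Fl}$ to $\overline{\Bun}_{B^-, \leq \mu}^{',\lambda}$ is ULA with respect to the vertical map $\overline{\Bun}_{B^-, \leq \mu}^{',\lambda} \to \Bun_G'$. This is precisely the hypothesis needed in Lemma \ref{17.2 lemma} for the sheaf $\mathcal{F}_1$ on the upper-right corner; the other sheaf $\mathcal{F}_2 = \mathcal{F}$ on the lower-left corner may be arbitrary.

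Applying Lemma \ref{17.2 lemma} with $\mathcal{F}_1 = j^{-,w_0}_{!,\glob,\Fl}|_{\overline{\Bun}_{B^-, \leq \mu}^{',\lambda}}$ and $\mathcal{F}_2 = \mathcal{F}$ then yields the canonical isomorphism
\[
\bar{\mathfrak{p}}_Z^{\prime,*}(j^{-,w_0}_{!, \glob, \Fl}) \mathop{\otimes}\limits^{*} \bar{\mathfrak{q}}_Z^{\prime,*}(\mathcal{F})[-\dim \Bun_G']
\iso
\bar{\mathfrak{p}}_Z^{\prime,!}(j^{-,w_0}_{!, \glob, \Fl}) \mathop{\otimes}\limits^{!} \bar{\mathfrak{q}}_Z^{\prime,!}(\mathcal{F})[\dim \Bun_G']
\]
over the open substack $(\bar{Z}_{\Fl,x})_{\infty \cdot x}^{\lambda, \leq \mu}$, which is exactly the restriction of \eqref{17.6} to this open locus. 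Since the result is a formal consequence of the two cited lemmas applied to the Cartesian square above, there is no serious obstacle; the only points requiring care are that $\Bun_G'$ is smooth (so that Lemma \ref{17.2 lemma} applies with the indicated cohomological shift) and that the twistings match, but the latter is automatic because the twisting on $\bar{\mathfrak{q}}_Z^{\prime,*}(\mathcal{F})$ is pulled back from the lower-left corner and therefore is transverse to the ULA condition verified on the upper-right corner.
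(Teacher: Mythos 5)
Your proposal is correct and is exactly the argument the paper intends: the corollary is stated in the paper as an immediate combination of Lemma \ref{17.2 lemma} and Lemma \ref{ULA thm +}, which is what you carry out. The only minor imprecision is that the Zastava space is by definition an \emph{open substack} of the fibre product $(\overline{\Bun_N^{\omega^\rho}})'_{\infty\cdot x}\times_{\Bun_G'}\overline{\Bun}_{B^-}'$ (cut out by the non-vanishing of the composite $\kappa^{-,\check\lambda}\circ\kappa^{\check\lambda}$) rather than the full fibre product, so strictly speaking one applies Lemma \ref{17.2 lemma} to the honest Cartesian square and then restricts the resulting isomorphism to this open locus, which changes nothing.
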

We set
\begin{equation}
    (\bar{Z}_{\Fl,x})_{\infty \cdot x}^s:= \underset{\lambda\in \Lambda, \mu\in \Lambda^{\textnormal{pos}},\ \textnormal{condition} (X)}{\bigcup} (\bar{Z}_{\Fl,x})_{\infty \cdot x}^{\lambda, \leq \mu},
\end{equation}

then by the corollary above, (\ref{17.6}) is an isomorphism on $(\bar{Z}_{\Fl,x})_{\infty \cdot x}^s$.
\subsubsection{Step II}

Now we want to extend this isomorphism to the whole affine flags Zastava space $(\bar{Z}_{\Fl,x})_{\infty \cdot x}$ via the factorization properties. 

We denote by $\bar{Z}_{\Gr}^{\lambda}$  the fiber product $$\bar{Z}_{\Gr}\mathop{\times}\limits_{\Conf}\Conf^{\lambda}.$$ Similarly, we denote by $(\bar{Z}_{\Fl,x})_{\infty \cdot x}^{\lambda}$ the fiber product $(\bar{Z}_{\Fl,x})_{\infty \cdot x}\mathop{\times}\limits_{\Conf_{\infty\cdot x}}\Conf_{\infty\cdot x}^{\lambda}$.

By Proposition \ref{factzas}, the affine flags Zastava space $(\bar{Z}_{\Fl,x})_{\infty \cdot x}$ is a factorization module space with respect to $\bar{Z}_{\Gr}$. Note that the factorization structure is compatible with degree, i.e.,
\begin{equation}\label{8.7}
    \begin{split}
        \bar{Z}_{\Gr}^{\lambda_1}\mathop{\times}(\bar{Z}_{\Fl,x})_{\infty \cdot x}^{\lambda_2}&\mathop{\times}\limits_{\Conf^{\lambda_1}\times \Conf_{\infty\cdot x}^{\lambda_2}} (\Conf^{\lambda_1}\times \Conf_{\infty\cdot x}^{\lambda_2})_{disj}\\&\simeq\\ (\bar{Z}_{\Fl,x})_{\infty \cdot x}^{\lambda_1+\lambda_2}&\mathop{\times}\limits_{\Conf^{\lambda_1+\lambda_2}_{\infty\cdot x}} (\Conf^{\lambda_1}\times \Conf_{\infty\cdot x}^{\lambda_2})_{disj}.
    \end{split}
\end{equation}
Denote by ${Z}_{\Gr}^{\circ}:= \Bun_N^{\omega^\rho}\times' \Bun_{B^-}$. Taking the restriction of \eqref{8.7}, we get the following map
\begin{equation}\label{17.8}
\begin{split}
     {Z}_{\Gr}^{\circ,\lambda_1}\mathop{\times}(\bar{Z}_{\Fl,x})_{\infty \cdot x}^{\lambda_2}&\mathop{\times}\limits_{\Conf^{\lambda_1}\times \Conf_{\infty\cdot x}^{\lambda_2}} (\Conf^{\lambda_1}\times \Conf_{\infty\cdot x}^{\lambda_2})_{disj}\\ &\downarrow\\ (\bar{Z}_{\Fl,x})_{\infty \cdot x}^{\lambda_1+\lambda_2}&\mathop{\times}\limits_{\Conf_{\infty\cdot x}^{\lambda_1+\lambda_2}} (\Conf^{\lambda_1}\times \Conf_{\infty\cdot x}^{\lambda_2})_{disj}.
\end{split}
\end{equation}


We note that for any point in $Z_{\Gr}^{\circ,\lambda_1}$, the $B^-$-structure is genuine (non-degenerate). As a result, given an arbitrary point $z_2\in (\bar{Z}_{\Fl,x})_{\infty \cdot x}^{\lambda_2}$ and arbitrary point $z_1$ in $Z_{\Gr}^{\circ, \lambda_1}$, the corresponding object on the right-hand side of (\ref{17.8}) has the same order of degeneracy of generalized $B^-$-bundle as $z_2$.
\subsubsection{}\label{8.2.7}
$Z_{\Gr}^{\circ,\lambda_1}\mathop{\times}(\bar{Z}_{\Fl,x})_{\infty \cdot x}^{\lambda_2}\mathop{\times}\limits_{\Conf^{\lambda_1}\times \Conf_{\infty\cdot x}^{\lambda_2}} (\Conf^{\lambda_1}\times \Conf_{\infty\cdot x}^{\lambda_2})_{disj}$ admits two smooth morphisms to $(\bar{Z}_{\Fl,x})_{\infty \cdot x}$:
\begin{itemize}
    \item one is given by the projection to $(\bar{Z}_{\Fl,x})^{\lambda_2}_{\infty \cdot x}$
\begin{equation}
   r_1^{\lambda_1}: Z_{\Gr}^{\circ,\lambda_1}\mathop{\times}(\bar{Z}_{\Fl,x})_{\infty \cdot x}^{\lambda_2}\mathop{\times}\limits_{\Conf^{\lambda_1}\times \Conf_{\infty\cdot x}^{\lambda_2}} (\Conf^{\lambda_1}\times \Conf_{\infty\cdot x}^{\lambda_2})_{disj}\to (\bar{Z}_{\Fl,x})_{\infty \cdot x}^{\lambda_2},
\end{equation}
    \item 
another one is given by the factorization map (\ref{17.8}) composed with the projection to $(\bar{Z}_{\Fl,x})_{\infty \cdot x}$
\begin{equation}
    \begin{split}
        r_2^{\lambda_1}: Z_{\Gr}^{\circ,\lambda_1}\mathop{\times}(\bar{Z}_{\Fl,x})_{\infty \cdot x}^{\lambda_2}\mathop{\times}\limits_{\Conf^{\lambda_1}\times \Conf_{\infty\cdot x}^{\lambda_2}} (\Conf^{\lambda_1}\times \Conf_{\infty\cdot x}^{\lambda_2})_{disj}\to\\ \overset{(\ref{17.8})}{\longrightarrow}(\bar{Z}_{\Fl,x})_{\infty \cdot x}^{\lambda_1+\lambda_2}\mathop{\times}\limits_{\Conf^{\lambda_1+\lambda_2}_{\infty\cdot x}} (\Conf^{\lambda_1}\times \Conf_{\infty\cdot x}^{\lambda_2})_{disj}\to \\
        \to (\bar{Z}_{\Fl,x})_{\infty \cdot x}^{\lambda_1+\lambda_2}.
    \end{split}
\end{equation}

\end{itemize}


The key observation for the proof of Theorem \ref{dualfunctor} is
\begin{itemize}
    \item for any $\mu\in \Lambda^{\textnormal{pos}}$, $\lambda_1\in \Lambda^{\textnormal{neg}}$, and $\lambda_2\in \Lambda$,  we can take an open subset  $(Z_{\Gr}^{\circ, \lambda_1}\times (\bar{Z}_{\Fl,x})_{\infty \cdot x}^{\lambda_2})_{\mu}$ of $Z_{\Gr}^{\circ, \lambda_1}\times (\bar{Z}_{\Fl,x})_{\infty \cdot x}^{\lambda_2}$ whose image under $r_2^{\lambda_1}$ lies in $(\bar{Z}_{\Fl,x})_{\infty \cdot x}^s$, and if we let $\lambda_1$ and $\mu$ vary, the collection of stacks $\{(Z_{\Gr}^{\circ, \lambda_1}\times (\bar{Z}_{\Fl,x})_{\infty \cdot x}^{\lambda_2})_{\mu}, \mu\in \Lambda^{\textnormal{pos}}, \lambda_1\in \Lambda^{\textnormal{neg}} \}$ gives a smooth cover of $(\bar{Z}_{\Fl,x})_{\infty \cdot x}^{\lambda_2}$ by the map $r_1^{\lambda_1}$.
\end{itemize}

Now let us explain the construction of $(Z_{\Gr}^{\circ, \lambda_1}\times (\bar{Z}_{\Fl,x})_{\infty \cdot x}^{\lambda_2})_{\mu}$.
\begin{definition}
 
Given $\mu\in \Lambda^{\textnormal{pos}}$, $\lambda_1\in \Lambda^{\textnormal{neg}}$, and $\lambda_2\in \Lambda$, a point of $(Z_{\Gr}^{\circ,\lambda_1}\times (\bar{Z}_{\Fl,x})_{\infty \cdot x}^{\lambda_2})_{disj}$ belongs to $(Z_{\Gr}^{\circ, \lambda_1}\times (\bar{Z}_{\Fl,x})_{\infty \cdot x}^{\lambda_2})_{\mu}$ if and only if
\begin{enumerate}[label=(\arabic*)]
    \item the order of degeneracy of the generalized $B^-$-structure  is no more than $\mu$,
    \item  $\lambda:=\lambda_1+\lambda_2$ and $\mu$ satisfy the condition (X).
\end{enumerate}

\end{definition}
If we allow $\lambda_1$ and $\mu$ vary, the collection of $(Z_{\Gr}^{\circ, \lambda_1}\times (\bar{Z}_{\Fl,x})_{\infty \cdot x}^{\lambda_2})_{\mu}$ forms a smooth cover of $(\bar{Z}_{\Fl,x})_{\infty \cdot x}^{\lambda_2}$ by $r_1^{\lambda_1}$. The claim \eqref{17.6} is local in smooth topology, so we only need to prove that the $!$-pullback of the morphism (\ref{17.6}) to $(Z_{\Gr}^{\circ, \lambda_1}\times (\bar{Z}_{\Fl,x})_{\infty \cdot x}^{\lambda_2})_{\mu}$ along $r_1^{\lambda_1}$ is an isomorphism. 

By the same argument as \cite[Section 3.9]{[Ga3]}, we can see that the pullbacks of the morphism (\ref{17.6}) along $r_1^{\lambda_1}$ and $r_2^{\lambda_1}$ differ by a $lisse$ local system. To be more precise, by factorization property, the $!$-pullback of the restriction of \eqref{17.6} on $(\bar{Z}_{\Fl,x})_{\infty \cdot x}^{\lambda_1+\lambda_2}$ along $r_2^{\lambda_1}$ is given by the restriction of the external product of $\bar{\mathfrak{q}}_Z^{!}(\mathcal{F}_\emptyset)\mathop{\otimes}\limits^{!} \bar{\mathfrak{p}}_Z^{!} (j^-_{!, \glob, \Gr}[\dim \Bun_G])$ and \eqref{17.6}. On the other hand, the $!$-pullback of \eqref{17.6} along $r_1^{\lambda_1}$ is given by the restriction of the external product of the dualizing sheaf on $Z^{\circ,\lambda_1}_{\Gr}$ and \eqref{17.6}.


 Hence, we only need to prove that the pullback of $\eqref{17.6}$ along $r_2^{\lambda_1}$ is an isomorphism when restricted to $(Z_{\Gr}^{\circ, \lambda_1}\times (\bar{Z}_{\Fl,x})_{\infty \cdot x}^{\lambda_2})_{\mu}$.

By Corollary \ref{dual}, we know that our claim is true on \[(\bar{Z}_{\Fl,x})_{\infty \cdot x}^{s,\lambda_1+\lambda_2}:=(\bar{Z}_{\Fl,x})_{\infty \cdot x}^{s}\cap (\bar{Z}_{\Fl,x})_{\infty \cdot x}^{\lambda_1+\lambda_2}.\] Hence, the pullback of the morphism (\ref{17.6}) to the open subset $(r_2^{\lambda_1})^{-1}((\bar{Z}_{\Fl,x})_{\infty \cdot x}^{s,\lambda_1+\lambda_2})$ in $Z_{\Gr}^{\circ, \lambda_1}\times (\bar{Z}_{\Fl,x})_{\infty \cdot x}^{\lambda_2}$ is still an isomorphism. Now the claim follows from the fact that $(Z_{\Gr}^{\circ, \lambda_1}\times (\bar{Z}_{\Fl,x})_{\infty \cdot x}^{\lambda_2})_{\mu}$ is contained in $(r_2^{\lambda_1})^{-1}((\bar{Z}_{\Fl,x})_{\infty \cdot x}^{s,\lambda_1+\lambda_2})$ by our choice of $\lambda_1,\lambda_2$ and $\mu$.

So, we proved Theorem \ref{dualfunctor}.
\subsection{Proof of Proposition \ref{imstan glob}}\label{proof of imstan}
We define $\widetilde{\nabla}_{\lambda, \glob}$ to be the Verdier dual of $\Delta_{\lambda, \glob}$.
\begin{prop}\label{8.3.1}
There is an isomorphism
$$F_{\glob}^{DK}(\widetilde{\nabla}_{\lambda, \glob})\simeq \nabla_{\lambda, \Omega_{q}^{DK, '}}.$$
\end{prop}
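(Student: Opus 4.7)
The plan is to use the Verdier-duality intertwining of Theorem \ref{dualfunctor} to transport the question across the duality. Applying $F_\glob^L \circ \mathbb{D} \simeq \mathbb{D} \circ F_\glob^{KD}$ to $\Delta_{\lambda,\glob}$ and using $\widetilde{\nabla}_{\lambda,\glob}=\mathbb{D}(\Delta_{\lambda,\glob})$, I obtain
$$F_\glob^L(\Delta_{\lambda,\glob}) \;\simeq\; \mathbb{D}\bigl(F_\glob^{KD}(\widetilde{\nabla}_{\lambda,\glob})\bigr).$$
Combined with the standard/costandard duality \eqref{dualfact} and the identification $\mathbb{D}(\Omega_{q^{-1}}^{L,'}) \simeq \Omega_q^{KD,'}$ coming from Definition \ref{ omega kd}, this shows that the present proposition is Verdier-dual to the statement $F_\glob^L(\Delta_{\lambda,\glob}) \simeq \Delta_{\lambda,\Omega_{q^{-1}}^{L,'}}$, which is exactly Proposition \ref{imstan glob} for the inverse twisting. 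Hence the current statement and Proposition \ref{imstan glob} are mutually equivalent under Verdier duality.

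The reason for recasting the claim in the costandard/$KD$ form is that it can be checked by a direct $!$-stalk computation, which is generally more tractable than a $*$-stalk computation. By proper base change along $v_{\Fl,\glob}$ and the global analog of Proposition \ref{corep*H}, the $!$-fiber $i_\mu^!\bigl(F_\glob^{KD}(\widetilde{\nabla}_{\lambda,\glob})\bigr)$ is the cohomology over the Zastava fiber $(\bar{Z}_{\Fl,x})_{\infty\cdot x}^{\mu}$ of $\bar{\mathfrak{q}}_Z^{',!}(\widetilde{\nabla}_{\lambda,\glob}) \overset{!}{\otimes} \bar{\mathfrak{p}}_Z^{',!}(j^{-}_{*,\glob,\Fl})$. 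I would then verify, stratum by stratum, that this fiber vanishes for $\mu \not\leq \lambda$ and is one-dimensional in the expected cohomological degree at $\mu=\lambda$; by Lemma \ref{4.1.1} together with the fact that $\nabla_{\lambda,\Omega_q^{KD,'}}$ is by definition the $*$-pushforward from $\Conf_{=\lambda\cdot x}$, this would characterize the output.

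The main obstacle will be controlling $\bar{\mathfrak{p}}_Z^{',!}(j^{-}_{*,\glob,\Fl})$ at the non-generic strata, where direct computation of the $*$-extended semi-infinite sheaf is hard. My strategy, paralleling the proof of Theorem \ref{dualfunctor} in Section \ref{Duality of }, is to reduce the question via the factorization property of the Zastava spaces (Lemma \ref{factzas}) and the smooth covers constructed there to a computation on $\bar{Z}_{\Gr}$, where the analogous affine-Grassmannian statement identifies the output with $\Omega_q^{KD,'}$ by Lemma \ref{omega'}. The universal local acyclicity technology developed in Section \ref{Duality of } should then handle the compatibility between the two ends of the smooth cover, closing the argument.
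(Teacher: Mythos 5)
Your high-level architecture matches the paper's: the point of this proposition is that the $KD$/costandard statement is the one accessible through $!$-stalks, and combined with Theorem \ref{dualfunctor} it yields Proposition \ref{imstan glob}. (A caution about your opening paragraph: since Proposition \ref{imstan glob} is deduced \emph{from} the present statement, observing that the two are ``mutually equivalent under duality'' carries no logical weight unless one of them is established independently — which is exactly what the remainder of your proposal must supply.)

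The genuine gap is in the execution of the stalk computation, which is the entire content of the proposition. First, the characterization of $\nabla_{\lambda,\Omega_q^{KD,'}}$ requires $i_\mu^!$ of the output to vanish for \emph{all} $\mu\neq\lambda$; your plan addresses only $\mu\not\leq\lambda$ (essentially a support statement) and $\mu=\lambda$, and omits $\mu<\lambda$, which is precisely the nontrivial case where the Zastava fiber is positive-dimensional. Second, the tools you invoke for the hard part cannot reach it: factorization (Lemma \ref{factzas}) and the reduction to $\bar{Z}_{\Gr}$ via Lemma \ref{omega'} control the behavior of $F^{KD}_{\glob}(\mathcal{F})$ only \emph{away from} the marked point $x$ — that is what makes the output a factorization module over $\Omega_q^{KD,'}$ (Proposition \ref{prop 15.3}) — whereas the stalks $i_\mu^!$ at $\mu\cdot x$ live exactly at $x$ and cannot be moved off it by factorization; likewise the ULA input (Lemma \ref{ULA thm +}) compares $!$- and $*$-pullbacks of the same kernel and is a device for proving Theorem \ref{dualfunctor}, not for computing these stalks. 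The paper closes the computation by a different mechanism: it transfers back to the local category via Lemma \ref{thm2}, identifies $\widetilde{\nabla}_{\lambda}\simeq\Av_*^{ren}(\mathfrak{J}_\lambda^{\mathbb{D}})$ (Proposition \ref{LDK}), replaces $j_*(\omega_{S^{-,\mu}_{\Fl,x}})$ by its $N(\mathcal{O})^{\omega^\rho}$-average $\mathop{\colim}\limits_{\eta}\mathfrak{J}_{\eta,*}\star\mathfrak{J}_{-\eta+\mu,*}$ (Proposition \ref{*eq}), and then uses the convolution calculus of twisted Wakimoto sheaves together with adjunction to reduce to a $\mathcal{H}om$ between a $!$-extension and a $*$-extension of generators on relevant orbits, which equals $\mathbb{k}$ if $\lambda=\mu$ and $0$ otherwise. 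Some version of this explicit Wakimoto manipulation (or an equally concrete substitute) is indispensable; without it your argument does not close.
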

\begin{proof}
In order to simplify the notation, we omit the twisting notation here.

According to Lemma \ref{thm2}, we have $F^{DK}\simeq F^{DK}_{\glob}\circ \pi_{\Fl,x}[d_g]$. Furthermore, since the Verdier duality functor commutes with $\pi_{\Fl,x}[d_g]$, we only need to prove that the image of $\widetilde{\nabla}_\lambda:= \mathbb{D}^{\verdier}(\Delta_\lambda)$ under the functor $F^{DK}$ is isomorphic to $\nabla_{\lambda, \Omega_{q}^{DK, '}}$.

Proposition \ref{LDK} asserts that the twisted sheaf $\widetilde{\nabla}_\lambda\simeq \mathbb{D}^{\verdier}(\Delta_\lambda)$ is isomorphic to $\Av_*^{ren}({{J}}_\lambda^{\mathbb{D}})$. By Corollary \ref{corep*H} , in order to show the proposition, it suffices to show $$H(\Fl_{G,x}^{\omega^\rho}, \Av_*^{ren}({{J}}_\lambda^{\mathbb{D}})\mathop{\otimes}\limits^! j_{*}(\omega_{S^{-, \mu}_{\Fl,x}})[\langle\mu, 2\check{\rho}\rangle] )=0$$ if $\lambda\neq \mu$, and $$H(\Fl_{G,x}^{\omega^\rho}, \Av_*^{ren}({{J}}_\lambda^{\mathbb{D}})\mathop{\otimes}\limits^! j_{*}(\omega_{S^{-, \mu}_{\Fl,x}})[\langle\mu, 2\check{\rho}\rangle] )=\mathsf{e}$$ if $\lambda=\mu$.

{

Note that $\Av_*^{ren}({{J}}_\lambda^{\mathbb{D}})$ is compact, so it is supported on finitely many $N(\cK)^{\omega^\rho}$-orbits in $\Fl_{G,x}^{\omega^\rho}$. Furthermore, the intersection $S_{\Fl,x}^{t^\lambda w}\cap S_{\Fl,x}^{-,\mu}$ is of finite type for any $t^\lambda w$ and $\mu$ (since it is the central fiber of a finite type scheme $(\Bun_N^{\omega^\rho})_{=\lambda\cdot x}^{w}\underset{\Bun_G'}{\times'} \Bun_{B^-}^{'',\mu}$ over $\mu\cdot x\in \Conf^\mu_{\leq \lambda\cdot x}$). This implies that there exists a very dominant $\eta$ such that $\supp(\Av_*^{ren}({{J}}_\lambda^{\mathbb{D}}))\cap t^{\eta}I^{\omega^\rho} t^{-\eta+\mu}I^{\omega^\rho}/I^{\omega^\rho}= \supp(\Av_*^{ren}({{J}}_\lambda^{\mathbb{D}}))\cap S_{\Fl,x}^{-,\mu}.$ Let $t^\eta\Fl^{{-\eta+\mu}}:=t^{\eta}I^{\omega^\rho} t^{-\eta+\mu}I^{\omega^\rho}/I^{\omega^\rho}$, and denote $j_*(\omega_{t^\eta\Fl^{{-\eta+\mu}}})$ as the $*$-extension of the twisted dualizing sheaf on $t^\eta\Fl^{{-\eta+\mu}}$.
}

Since $\Av_*^{ren}({{J}}_\lambda^{\mathbb{D}})$ is $N(\mathcal{O})^{\omega^\rho}$-equivariant, we have

{
\begin{equation}
    \begin{split}
         &H(\Fl_{G,x}^{\omega^\rho}, \Av_*^{ren}({{J}}_\lambda^{\mathbb{D}})\mathop{\otimes}\limits^! j_{*}(\omega_{S^{-, \mu}_{\Fl,x}})[\langle\mu, 2\check{\rho}\rangle] )\\
          \simeq &H(\Fl_{G,x}^{\omega^\rho}, \Av_*^{ren}({{J}}_\lambda^{\mathbb{D}})\mathop{\otimes}\limits^! j_*(\omega_{t^\eta\Fl^{{-\eta+\mu}}})[\langle\mu, 2\check{\rho}\rangle] )\\
           \simeq& H(\Fl_{G,x}^{\omega^\rho}, \Av_*^{ren}({{J}}_\lambda^{\mathbb{D}})\mathop{\otimes}\limits^! \Av_*^{N(\mathcal{O})^{\omega^\rho}}(j_{*}(\omega_{t^\eta\Fl^{{-\eta+\mu}}})[\langle\mu, 2\check{\rho}\rangle]) )\\
            \simeq &H(\Fl_{G,x}^{\omega^\rho}, \Av_*^{ren}({{J}}_\lambda^{\mathbb{D}})\overset{!}{\otimes}  {{J}}_{\eta,*}\star {{J}}_{-\eta+\mu,*})\\
    \simeq &\mathcal{H}om_{\Shv_{\mathcal{G}^G}(\Fl_{G,x}^{\omega^\rho})}(\delta_0, \Av_*^{ren}({{J}}_\lambda^{\mathbb{D}})\star  {{J}}_{\eta-\mu,*}\star {{J}}_{-\eta,*})\\
    \simeq& \mathcal{H}om_{\Shv_{\mathcal{G}^G}(\Fl_{G,x}^{\omega^\rho})}({{J}}_{\eta,!}, \Av_*^{ren}({{J}}_\lambda^{\mathbb{D}})\star  {{J}}_{\eta-\mu,*})\\
    \simeq&\mathcal{H}om_{\Whit_q(\Fl_{G,x}^{\omega^\rho})}(\Av_!^{N(\mathcal{K})^{\omega^\rho},\chi}({{J}}_{\eta,!}), \Av_*^{ren}({{J}}_\lambda^{\mathbb{D}})\star  {{J}}_{\eta-\mu,*})\\
    \simeq&\mathcal{H}om_{\Whit_q(\Fl_{G,x}^{\omega^\rho})}(\Av_!^{N(\mathcal{K})^{\omega^\rho},\chi}({{J}}_{\eta,!}), \Av_*^{ren}({{J}}_{\lambda+\eta-\mu}^{\mathbb{D}}))\\
    \simeq& \mathcal{H}om_{\Whit_q(\Fl_{G,x}^{\omega^\rho})}(\Av_!^{N(\mathcal{K})^{\omega^\rho},\chi}(\delta_{\eta,!})[-\langle\eta,2\check{\rho}\rangle], \Av_*^{ren}(\delta_{\lambda+\eta-\mu})[\langle\lambda+\eta-\mu,2\check{\rho}\rangle]).
    \end{split}
\end{equation}

}

And the latter space is $0$ if $\lambda\neq\mu$ and is $\mathsf{e}$ if $\lambda=\mu$.

\end{proof}

Combining Proposition \ref{8.3.1} with \eqref{dualfact}, there is an isomorphism
$$\nabla_{\lambda, \Omega_{q^{-1}}^{DK, '}}\simeq \mathbb{D}^{\verdier}(\Delta_{\lambda, \Omega_q^{L}}).$$
Now Proposition \ref{imstan glob} follows directly.
\begin{proof}(of Proposition \ref{imstan glob})

According to Theorem  \ref{dualfunctor}, there is
\begin{align*}
    F_{\glob}^L(\Delta_{\lambda, \glob})\simeq&F_{\glob}^L(\mathbb{D}^{\verdier}(\widetilde{\nabla}_{\lambda, \glob}))\\
    \simeq& \mathbb{D}^{\verdier}F_{\glob}^{DK}(\widetilde{\nabla}_{\lambda, \glob})\\
    \simeq&\mathbb{D}^{\verdier}(\nabla_{\lambda, \Omega_{q^{-1}}^{DK,'}})\\
    \simeq& \Delta_{\lambda, \Omega_q^{L,'}}.
\end{align*}
\end{proof}

\appendix
    \section{Semi-infinite sheaves on affine flags}\label{app a}
{To be self-contained, we review the theory of semi-infinite sheaves developed in \cite{[Ga3]} and \cite{[Ga4]} and provide additional details. Our goal is to supply the necessary materials for the $!$-extension semi-infinite sheaves $j_!(\omega_{S^{-, \Conf}_{\Gr, \Conf}})$ and $j_!(\omega_{S^{-,w,\Conf_{\infty\cdot x}}_{\Fl, \Conf_{\infty\cdot x}}})$ in Section \ref{configuration gr and fl}.

To simplify the notation, we will consider the semi-infinite sheaves on $N(\cK)$-orbit, while in the main content, we consider semi-infinite sheaves on $N^-(\cK)$-orbit.
\subsection{Existence of $!$-extension semi-infinite sheaf}
Recall the prestack ${S}_{\Gr, \Ran}^0$ defined in Definition \ref{def 6.1.6}. Since $\kappa^{\check{\lambda}}$ is injective for any $\check{\lambda}$, the collection of maps \{$\kappa^{\check{\lambda}}$\} determines a $N^{\omega^\rho}$-reduction (in particular, a $B$-reduction) of $\cP_G$ at $x\in X$. The fiber product $S_{\Fl, \Ran_x}:={S}_{\Gr, \Ran}^0\underset{\pt/G}{\times} \pt/B$ admits a relative position map to $\pt/B\underset{\pt/G}{\times}{\pt/B}\simeq B\backslash G/B$. For any $w\in W$, let $S^w_{\Fl, \Ran_x}$ be the preimage of the Bruhat cell $B\backslash BwB/B$ under the above relative position map.

\subsubsection{}
Now, we define the $!$-extension of semi-infinite sheaves on $S^{w_0}_{\Fl,\Ran_x}$. Similar constructions work for $S^w_{\Fl,\Ran_x}$ and $S^0_{\Gr,\Ran}$ as well.

Denote $\overline{S}^{w_0}_{\Fl,\Ran_x}$ as the closure of ${S}^{w_0}_{\Fl,\Ran_x}$ in $\Fl^{\omega^\rho}_{G,\Ran_x}$, it is isomorphic to $\overline{S}^{0}_{\Gr,\Ran}\underset{\pt/G}{\times}\pt/B$. If $\lambda\in \Lambda^{\text{neg}}$, let $(\Conf^\lambda \times \Ran_x)^\subset \subset \Conf^\lambda \times \Ran_x$ be the subspace such that $(D,\mathcal{I})\in \Conf^\lambda \times \Ran_x$ belongs to $(\Conf^\lambda \times \Ran_x)^\subset$ if and only if $\supp(D)\subset \mathcal{I}$. 

Let $ \preccurlyeq$ be the semi-infinite Bruhat order. For any $t^\lambda w \preccurlyeq w_0$, we define 
\begin{equation}
    S^{t^{\lambda}w}_{\Fl, \Ran_x}\subset (\Conf^\lambda\times \Ran_x)^\subset \underset{\Ran_x}{\times} \Fl^{\omega^\rho}_{\Ran_x}
\end{equation}
as the sub-prestack such that the map $\kappa^{\check{\lambda}}$ induced by $\alpha$ extends to an injective map
\begin{equation}
    (\omega^{\frac{1}{2}})^{\langle \check{\lambda}, 2\rho\rangle}(-\langle\check{\lambda}, D\rangle)\longrightarrow \cV^{\check{\lambda}}_{\cP_G}
\end{equation} on $X$, and the relative position of the resulting $B$-reduction at $x$ and the Iwahori structure $\epsilon$ is $w$.

For any such $S^{t^{\lambda}w}_{\Fl, \Ran_x}$, the map
\begin{equation}\label{A.1.3}
  i^{t^\lambda w}:  S^{t^{\lambda}w}_{\Fl, \Ran_x}\longrightarrow (\Conf^\lambda\times \Ran_x)^\subset \underset{\Ran_x}{\times}\overline{S}^{w_0}_{\Fl, \Ran_x}\longrightarrow \overline{S}^{w_0}_{\Fl, \Ran_x}
\end{equation}
is a locally closed embedding, and $\{S^{t^{\lambda}w}_{\Fl, \Ran_x}, t^\lambda w\preccurlyeq w_0\}$ gives rise to a stratification of $\overline{S}^{w_0}_{\Fl, \Ran_x}$. 

The projection $p^{t^\lambda w}: S^{t^{\lambda}w}_{\Fl, \Ran_x}\to(\Conf^\lambda\times \Ran_x)^\subset $ has a section $s^{t^\lambda w}:(\Conf^\lambda\times \Ran_x)^\subset \to S^{t^{\lambda}w}_{\Fl, \Ran_x} $ which sends $(D, \cI)$ to $(D,\cI, \cP_G, \alpha, \epsilon)$, where $\cP_G= \omega^\rho(-D)\overset{T}{\times}G$, $\alpha$ is given by the identification of $\omega^\rho(-D)$ and $\omega^\rho$ on $X-\cI$, and $\epsilon$ is given by $\omega^\rho(-D)|_x\overset{T}{\times}wB$.

\begin{definition}
   We define the semi-infinite category $
        \SI^{\preccurlyeq w_0}_{q,\Fl,\Ran_x}:= \Shv_{\cG^G}(\overline{S}^{w_0}_{\Fl, \Ran_x})^{N(\cK)_{\Ran_x}^{\omega^\rho}}$ and $\SI^{=t^\lambda w}_{q,\Fl,\Ran_x}:= \Shv_{\cG^G}({S}^{t^\lambda w}_{\Fl, \Ran_x})^{N(\cK)_{\Ran_x}^{\omega^\rho}}.$
\end{definition}

One can check that the pullback $\cG^G$ along \eqref{A.1.3} is canonically identified with the pullback of $\cG^\Lambda$ along
\begin{equation}
    S^{t^{\lambda}w}_{\Fl, \Ran_x}\overset{p^{t^\lambda w}}{\longrightarrow} (\Conf^\lambda\times \Ran_x)^\subset \longrightarrow \Conf^\lambda.
\end{equation}
In particular, we define
\begin{equation}\label{A.1.5}
\begin{split}
     (s^{t^\lambda w})^!: \SI_{q,\Fl, \Ran_x}^{=t^\lambda w}\longrightarrow \Shv_{\cG^\Lambda}((\Conf^\lambda\times \Ran_x)^\subset),\\
     (p^{t^\lambda w})^!: \Shv_{\cG^\Lambda}((\Conf^\lambda\times \Ran_x)^\subset)\longrightarrow \SI_{q,\Fl, \Ran_x}^{=t^\lambda w}.
\end{split}
\end{equation}
Here, we use the observation that $(p^{t^\lambda w})^!:\Shv_{\cG^\Lambda}((\Conf^\lambda\times \Ran_x)^\subset)\longrightarrow \Shv_{\cG^G}({S}^{t^\lambda w}_{\Fl, \Ran_x})$ factors through the full subcategory $\SI_{q,\Fl, \Ran_x}^{=t^\lambda w}$.

Since $(\Conf^\lambda\times \Ran_x)^\subset \underset{\Ran_x}{\times}N(\cK)_{\Ran_x}^{\omega^\rho}$ acts transitively on $S^{t^{\lambda}w}_{\Fl, \Ran_x}$ and is (ind-pro-) unipotent, the functors in \eqref{A.1.5} are equivalences. Furthermore, we have 
\begin{lem}
    $(\SI_{q,\Fl, \Ran_x}^{=t^\lambda w})^{T(\cO)^{\omega^\rho}_{\Ran_x}}=0$ if $\lambda\notin \Lambda^\sharp$. Here, $\Lambda^\sharp$ denotes the kernel of the bilinear form $b$ (i.e., $b_\lambda$ is trivial).
\end{lem}
\begin{proof}
    By the factorization property, we only consider the point case $\Shv_{\cG^G}(S^{-,t^\lambda w}_{\Fl,x})^{N^-(\cK)^{\omega^\rho}T(\cO)^{\omega^\rho}}\simeq \Shv_{\cG^\Lambda|_{\lambda\cdot x}}(\pt)^{T(\cO)^{\omega^\rho}}$. According to \cite[Section 7.5]{[GL2]}, the $T(\cO)^{\omega^\rho}$-equivariance structure on the fiber $\cG^\Lambda|_{\lambda\cdot x}$ corresponds to the character $b_\lambda$. In particular, it is trivial only if $\lambda\in \Lambda^\sharp$. 
\end{proof}


\subsubsection{}
With the preparations above, we prove
\begin{prop}\label{Prop A 1.5}
    The left adjoint functor of $(i^{t^\lambda w})^!:\SI_{q,\Fl, \Ran_x}^{\preccurlyeq w_0}\longrightarrow \SI_{q,\Fl, \Ran_x}^{=t^\lambda w} $ is well-defined.
\end{prop}
\begin{proof}
By considering the dual category, it is equivalent to proving that $(i^{t^\lambda w})^*$ exists, which is further equivalent to the following
    \begin{enumerate}
        \item for any finite set $\fI$ with a distinguished point, the functor
        \begin{equation}
            (i^{t^\lambda w}_{\fI})^*: \SI_{q,\Fl, X^\fI_x}^{\preccurlyeq w_0}\longrightarrow \SI_{q,\Fl, X^\fI_x}^{=t^\lambda w} 
        \end{equation} exists, where $\SI_{q,\Fl, X^\fI_x}^{\preccurlyeq w_0}$ (resp. $\SI_{q,\Fl, X^\fI_x}^{=t^\lambda w}$) is the base change of $\SI_{q,\Fl, \Ran_x}^{\preccurlyeq w_0}$ (resp. $\SI_{q,\Fl, \Ran_x}^{=t^\lambda w}$) along $X^\fI_x\to \Ran_x$,\\
        \item for any surjection preserving the distinguished point $\phi: \fI\to \fJ$, denote $\Delta_\phi: X^\fJ_x\hookrightarrow X^\fI_x$ the corresponding diagonoal embedding. The natural transformation 
        \begin{equation}
            (i^{t^\lambda w}_{\fJ})^*\circ \Delta_\phi^!\longrightarrow \Delta_\phi^!\circ (i^{t^\lambda w}_{\fI})^*
        \end{equation} is an isomorphism.
    \end{enumerate}
    Once the above two points are proven, we obtain the desired functor by passing to the limit.

The category $\SI_{q,\Fl, X^\fI_x}^{\preccurlyeq w_0}$ admits a block decomposition according to different characters of $T$ indexed by $\Lambda/\Lambda^\sharp$. To show the existence of adjoint functor, it is sufficient to show in the block. We assume $\lambda\in \Lambda^\sharp$, and prove (i),(ii) for $T$-monodromic objects in $\SI_{q,\Fl, X^\fI_x}^{\preccurlyeq w_0}$. 

In this case, (i) and (ii) are corollaries of the Braden theorem in \cite[Theorem 3.1.6]{[DG]}.

Similar to the construction of ${S}^{t^\lambda w}_{\Fl, X^{\fI}_x}$, we can define ${S}^{-, t^\lambda w}_{\Fl, X^{\fI}_x}$. Let $s_{\fI}^{-,t^\lambda w}:(\Conf^\lambda\times X^{\fI}_x)^\subset \to S^{-,t^{\lambda}w}_{\Fl, X^{\fI}_x} , i_{\fI}^{-,t^\lambda w}: S^{-,t^{\lambda}w}_{\Fl, X^{\fI}_x}\cap \overline{S}^{w_0}_{\Fl, X^{\fI}_x} \to \overline{S}^{w_0}_{\Fl, X^{\fI}_x}, \text{and } p_{\fI}^{-,t^\lambda w}:S^{-,t^{\lambda}w}_{\Fl, X^{\fI}_x}\to (\Conf^\lambda\times X^{\fI}_x)^\subset$ denote the corresponding maps. 

Consider the $\BG_m$-action on the fiber of $\overline{S}^{w_0}_{\Fl, \Ran_x}$ via $\BG_m\overset{2\rho}{\longrightarrow} T\curvearrowright \overline{S}^{w_0}_{\Fl, \Ran_x}$. In our specific case, the Braden theorem says that the functors $(s_{\fI}^{t^\lambda w})^!\circ (i_{\fI}^{t^\lambda w})^*$ and $(s_{\fI}^{-,t^\lambda w})^*\circ (i_{\fI}^{-,t^\lambda w})^!$ are well-defined for $\BG_m$-monodromic D-modules on $\overline{S}^{w_0}_{\Fl, \Ran_x}$, and are canonically isomorphic. Additionally, $(s_{\fI}^{-,t^\lambda w})^*= (p_{\fI}^{-,t^\lambda w})_*$ for $\BG_m$-monodromic D-modules.

Since \eqref{A.1.5} are equivalences, we have $(i_{\fI}^{t^\lambda w})^*=(p_{\fI}^{t^\lambda w})^!\circ (s_{\fI}^{-,t^\lambda w})^*\circ (i_{\fI}^{-,t^\lambda w})^!=(p_{\fI}^{t^\lambda w})^!\circ (p_{\fI}^{-,t^\lambda w})_*\circ (i_{\fI}^{-,t^\lambda w})^!:  \SI_{q,\Fl, X^\fI_x}^{\preccurlyeq w_0}\longrightarrow \SI_{q,\Fl, X^\fI_x}^{=t^\lambda w}$. This implies (i) immediately. For (ii), we observe that the $!$-pullback and $*$-pushforward satisfy base-change, in particular $(p_{\fI}^{t^\lambda w})^!\circ (p_{\fI}^{-,t^\lambda w})_*\circ (i_{\fI}^{-,t^\lambda w})^!$ commutes with taking $!$-restriction to the diagonal.
\end{proof}

\subsection{Local-global comparison}
Recall the substack $(\Bun_N^{\omega^\rho})_{=\lambda\cdot x}^w$ of $\overline{(\Bun_N^{\omega^\rho})}'_{\infty\cdot x}$  in Section \ref{section 7.2.5}. In this section, we will focus on the $!$-extension of the constant D-module on  $(\Bun_N^{\omega^\rho})^{w_0}:=(\Bun_N^{\omega^\rho})_{=0\cdot x}^{w_0}$.

The restriction of $\cG^G$ to $(\Bun_N^{\omega^\rho})^{w_0}$ is canonically trivialized. Let $j_{!, glob, \Fl}^N$ be the $!$-extension of the twisted constant sheaf on $(\Bun_N^{\omega^\rho})^{w_0}$, it is well-defined since the constant sheaf is holonomic. In this section, we aim to prove
\begin{lem}\label{Lemma A.2.1}
Pulling-back along $\pi_{\Fl, \Ran_x}: \overline{S}_{\Fl,\Ran_x}^{w_0}\hookrightarrow (\overline{S}_{\Fl,\Ran_x}^{w_0})_{\infty\cdot x}\longrightarrow \overline{(\Bun_N^{\omega^\rho})}'_{\infty\cdot x}$ induces an isomorphism of semi-infinite sheaves
\begin{equation}\label{equation A.2.1}
     \pi_{\Fl,\Ran_x}^!(j_{!, glob, \Fl}^N)[d_g]\simeq j_!(\omega_{S^{w_0}_{\Fl, \Ran_x}}).
\end{equation}
\end{lem}
\subsubsection{}
Let $\Bun_B^w$ be the preimage of the Bruhat cell $B\backslash BwB/B$ under $\Bun_B':= \Bun_B\underset{\pt/G}{\times}\pt/B\longrightarrow B\backslash G/B$.

For any $t^\lambda w\preccurlyeq w_0$, we define 
\[(\Bun_N^{\omega^\rho})^{t^\lambda w}:= \Bun_B^w\underset{\Bun_T}{\times} \Conf^\lambda,\]
where the map $\Conf^\lambda\longrightarrow \Bun_T$ is given by $D\mapsto \omega^\rho(-D)$.

Let $i^{t^\lambda w}_{\glob}$ be the locally closed embedding
\[i^{t^\lambda w}_{\glob}:(\Bun_N^{\omega^\rho})^{t^\lambda w}\longrightarrow  \overline{(\Bun_N^{\omega^\rho})}'_{\infty\cdot x}\]
which sends $(\cP_B, \epsilon, D)$ to $(\cP_G, \{\kappa^{\check{\lambda}}\}, \epsilon)$. Here, $\cP_G= \cP_B\overset{B}{\times} G$, and $\kappa^{\check{\lambda}}: (\omega^{\frac{1}{2}})^{\langle \check{\lambda}, 2\rho\rangle}\longrightarrow \cV^{\check{\lambda}}_{\cP_G}$ is given by $(\omega^{\frac{1}{2}})^{\langle \check{\lambda}, 2\rho\rangle}\hookrightarrow (\omega^{\frac{1}{2}})^{\langle \check{\lambda}, 2\rho\rangle}(-\langle\check{\lambda}, D\rangle)\longrightarrow \cV^{\check{\lambda}}_{\cP_G}$. 

The map $i^{t^\lambda w}$ factors through $(\overline{\Bun}^{\omega^\rho}_N)^{w_0}:= \overline{\Bun}_N^{\omega^\rho}\underset{\pt/G}{\times}\pt/B$. The collection $\{(\Bun_N^{\omega^\rho})^{t^\lambda w}, t^\lambda w\preccurlyeq w_0\}$ gives rise to a stratification of $(\overline{\Bun}^{\omega^\rho}_N)^{w_0}$. 

Furthermore, the following lemmas follow from definitions. 
\begin{lem}\label{lem A 2.3}
    The diagram
   \[ \xymatrix{
    S^{t^\lambda w}_{\Fl, \Ran_x}\ar[r]^{i^{t^\lambda w}}\ar[d]^{\pi_{\Fl,\Ran_x}} &\overline{S}^{w_0}_{\Fl, \Ran_x}\ar[d]_{\pi_{\Fl, \Ran_x}}\\ (\Bun_N^{\omega^\rho})^{t^\lambda w}\ar[r]^{i^{t^\lambda w}_{\glob}}& (\overline{\Bun}^{\omega^\rho}_N)^{w_0}
    }\]
    is Cartesian.
\end{lem}
\begin{lem}\label{lem A 2.4}
    The morphism 
    \begin{equation}
        S^{t^\lambda w}_{\Fl, \Ran_x}\longrightarrow (\Conf^\lambda\times \Ran_x)^{\subset}\longrightarrow \Conf^\lambda
    \end{equation}
    is identified with
    \begin{equation}
        S^{t^\lambda w}_{\Fl, \Ran_x}\longrightarrow (\Bun_N^{\omega^\rho})^{t^\lambda w} \overset{p^{t^\lambda w}_{\glob}}{\longrightarrow} \Conf^\lambda.
    \end{equation}
    Here, $p^{t^\lambda w}_{\glob}: (\Bun_N^{\omega^\rho})^{t^\lambda w}=\Bun_B^w\underset{\Bun_T}{\times} \Conf^\lambda {\longrightarrow} \Conf^\lambda$ is the projection.
\end{lem}

Similar to Definition \ref{globdef}, if we erase the character $\chi$, we can define the global semi-infinite sheaf category $\SI_{q,\Fl, \glob}$ on $\overline{(\Bun_N^{\omega^\rho})}'_{\infty\cdot x}$. We denote by $\SI_{q,\Fl, \glob}^{\preccurlyeq w_0}$ and $\SI_{q, \Fl,\glob}^{=t^\lambda w}$ the corresponding categories on $(\overline{\Bun}^{\omega^\rho}_N)^{w_0}$ and $(\Bun_N^{\omega^\rho})^{t^\lambda w}$, respectively. 

Since the equivariance property is against a unipotent groupoid, the global semi-infinite sheaf category is a full subcategory of the category of D-modules. The pushforward and pullback functors for pl{a}in D-modules give rise to the corresponding functors for semi-infinite D-modules. That is to say, we have the following functors:
\begin{equation*}
    \begin{split}
        i^{t^\lambda w}_{\glob,!}: \SI_{q,\Fl, \glob}^{=t^\lambda w}\rightleftharpoons &\SI_{q,\Fl, \glob}^{\preccurlyeq w_0}: i^{t^\lambda w,!}_{\glob}\\
        i^{t^\lambda w,*}_{\glob}: \SI_{q,\Fl, \glob}^{\preccurlyeq w_0}\rightleftharpoons &\SI_{q,\Fl, \glob}^{=t^\lambda w}: i^{t^\lambda w}_{\glob,*}.
    \end{split}
\end{equation*}

One can check that the full subcategory $\SI_{q,\Fl, \glob}^{=t^\lambda w}\subset \Shv_{\cG^G}((\Bun_N^{\omega^\rho})^{t^\lambda w})$ coincides with the image of the fully faithful functor $p^{t^\lambda w,!}_{\glob}: \Shv_{\cG^\Lambda}(\Conf^\lambda)\longrightarrow \Shv_{\cG^G}((\Bun_N^{\omega^\rho})^{t^\lambda w})$. So, for any object $\cF\in \SI_{q,\Fl, \glob}^{\preccurlyeq w_0}$, its restriction to the strata $(\Bun_N^{\omega^\rho})^{t^\lambda w}$ is the $!$-pullback of a $\cG^\Lambda$-twisted sheaf on $\Conf^\Lambda$. Combined with Lemma \ref{lem A 2.3} and \ref{lem A 2.4}, $i^{t^\lambda w,!}\circ \pi^!_{\Fl, \Ran_x}(\cF)$ lies in the full subcategory $\Shv_{\cG^\Lambda}((\Conf^\lambda\times \Ran_x)^{\subset})\simeq \SI_{q, \Fl, \Ran_x}^{=t^\lambda w}$. Note that for any object $\cF$ in $\Shv_{\cG^G}(\overline{S}^{w_0}_{\Fl, \Ran_x})$, it belongs to $\SI^{\preccurlyeq w_0}_{q,\Fl,\Ran_x}$ if and only if $i^{t^\lambda w,!}(\cF)\in \SI_{q, \Fl, \Ran_x}^{=t^\lambda w}$ for any $t^\lambda w$. In particular, $\pi^!_{\Fl, \Ran_x}(\cF)\in \SI^{\preccurlyeq w_0}_{q,\Fl,\Ran_x}$. \footnote{However, the naive analogy of Lemma \ref{thm local glob} in the semi-infinite setting is not correct. }

In order to prove Lemma \ref{Lemma A.2.1}, we need to show for any $t^\lambda w\preccurlyeq w_0$, we have
\begin{equation}\label{eq A.2.4}
    (i^{t^\lambda w})^* \pi_{\Fl, \Ran_x}^! (j_{!, \glob, \Fl}^N)=0.
\end{equation}

It is sufficient to use the Braden theorem again. Recall that $(s^{t^\lambda w})^!$ in \eqref{A.1.5} is an equivalence, the equation \eqref{eq A.2.4} equals
\begin{equation}
    (s^{t^\lambda w})^!\circ (i^{t^\lambda w})^* \pi_{\Fl, \Ran_x}^! (j_{!, \glob, \Fl}^N)=0.
\end{equation}

Also, there is an action of $T$ on $(\overline{\Bun}^{\omega^\rho}_N)^{w_0}$ given by the adjoint action of $T$ on $N$, which is compatible with the $T$-action on the fiber of $\overline{S}_{\Fl,\Ran_x}^{w_0}$. In particular, since $j_{!,\glob,\Fl}^N$ is $T$-monodromic, the sheaf $\pi^!_{\Fl,\Ran_x}(j_{!,\glob,\Fl}^N)$ is $T$-monodromic. In particular, \eqref{eq A.2.4} is true if $\lambda\notin \Lambda^\sharp$.

If $\lambda\in \Lambda^\sharp$, using the Braden theorem, we have
\begin{equation}\label{eq A 2.6}
    \begin{split}
        (s^{t^\lambda w})^!\circ (i^{t^\lambda w})^*\circ \pi_{\Fl, \Ran_x}^! (j_{!, \glob, \Fl}^N)= (s^{-,t^\lambda w})^*\circ (i^{-,t^\lambda w})^!\circ \pi_{\Fl, \Ran_x}^! (j_{!, \glob, \Fl}^N)\\
        =(p^{-,t^\lambda w})_*\circ (i^{-,t^\lambda w})^! \circ\pi_{\Fl, \Ran_x}^! (j_{!, \glob, \Fl}^N).
    \end{split}
\end{equation}

Let $\Bun_{B^-}^{\lambda, w}$ be the algebraic substack of $\Bun_{B^-}'$ such that the degree of the induced $T$-bundle is $-\lambda+(2-2g)\rho$ and the relative position of the $B^-$-bundle and the Iwahori structure at $x$ is $w$. We denote by $(\overline{\Bun}_N^{\omega^\rho})^{ w_0}\underset{\Bun_G'}{\times'} \Bun_{B^-}^{\lambda, w}$ the sub-Zastava space of $(\bar{Z}_{\Fl,x})_{\infty\cdot x}$. It has a projection $v^{\lambda, w}_{\Fl,\glob}$ to $\Conf^\lambda$, and we denote by $s_{\glob}^{t^\lambda w}: \Conf^\lambda\to(\overline{\Bun}_N^{\omega^\rho})^{ w_0}\underset{\Bun_G'}{\times'} \Bun_{B^-}^{\lambda, w}$ its section.

\begin{lem}
   \[ \xymatrix{
    \overline{S}_{\Fl,\Ran_x}^{w_0}\cap S_{\Fl, \Ran_x}^{-, t^\lambda w}\ar[rd]^{p^{-, t^\lambda w}}\ar@{=}[d]&\\
    ((\overline{\Bun}_N^{\omega^\rho})^{ w_0}\underset{\Bun_G'}{\times'} \Bun_{B^-}^{\lambda, w})\underset{\Conf^\lambda}{\times} (\Conf^\lambda\times \Ran_x)^\subset \ar[r]_{pr^{\lambda}_{\Conf\times \Ran} }\ar[d]^{\Id\times pr^\lambda}& (\Conf^\lambda\times \Ran_x)^{\subset}\ar[d]^{pr^\lambda} \\
     (\overline{\Bun}_N^{\omega^\rho})^{ w_0}\underset{\Bun_G'}{\times'} \Bun_{B^-}^{\lambda, w}\ar[r]^{v^{\lambda, w}_{\Fl,\glob}}& \Conf^\lambda
    }\]
    The upper diagram is commutative and the lower diagram is Cartesian.

    Also, pulling-back the gerbe $\cG^G$ along 
    \[\overline{S}_{\Fl,\Ran_x}^{w_0}\cap S_{\Fl, \Ran_x}^{-, t^\lambda w}\longrightarrow  (\overline{\Bun}_N^{\omega^\rho})^{ w_0}\underset{\Bun_G'}{\times'} \Bun_{B^-}^{\lambda, w}\overset{\bar{\fq}_Z'}{\longrightarrow} (\overline{\Bun}_N^{\omega^\rho})^{ w_0}\] is
    canonically isomorphic to the pullback of the gerbe $\cG^\Lambda$ along
    \[\overline{S}_{\Fl,\Ran_x}^{w_0}\cap S_{\Fl, \Ran_x}^{-, t^\lambda w}\longrightarrow(\Conf^\lambda\times \Ran_x)^\subset\longrightarrow \Conf^\lambda.\]
\end{lem}

Using the above lemma, we obtain 
\begin{equation}
\begin{split}
     \eqref{eq A 2.6}=pr^{\lambda}_{\Conf\times \Ran,*}\circ (\Id\times pr^\lambda)^!\circ \bar{\fq}_Z'^!(j_{!,\glob,\Fl}^N)\\
     \underset{\text{Base change}}{=}
     (pr^\lambda)^!\circ v^{\lambda, w}_{\Fl,\glob,*}\circ \bar{\fq}_Z'^! (j_{!,\glob,\Fl}^N)\\
     \underset{\text{Braden Theorem}}{=}(pr^\lambda)^!\circ (s_{\glob}^{t^\lambda w})^* \circ  \bar{\fq}_Z'^! (j_{!,\glob,\Fl}^N).
\end{split}
\end{equation}

It is remained to show 
\begin{equation}\label{eq A 2.8}
     (s_{\glob}^{t^\lambda w})^* \circ  \bar{\fq}_Z'^! (j_{!,\glob,\Fl}^N)=0.
\end{equation}

In the affine Grassmannian case, it is well-known that the $!$-pullback from the Drinfeld compactification to the Zastava space sends $!$-extension (resp. IC) sheaf on $(\overline{\Bun}_N^{\omega^\rho})_{\infty\cdot x}$ to $!$-extension (resp. IC) sheaf on the Zastava space. The analogous result also holds in the affine flags case. That is to say, 
\begin{prop}\label{prop a 2.6}
    Up to a cohomological shift, there is
    \begin{equation}\label{eq A 2.9}
        \bar{\fq}_Z'^! (j_{!,\glob,\Fl}^N)\simeq j_!(c_{({\Bun}_N^{\omega^\rho})^{ w_0}\underset{\Bun_G'}{\times'} \Bun_{B^-}^{\lambda, w}}).
    \end{equation}
    Here, $j_!(c_{({\Bun}_N^{\omega^\rho})^{ w_0}\underset{\Bun_G'}{\times'} \Bun_{B^-}^{\lambda, w}})$ is the shifted $!$-extension of the (twisted) constant sheaf on the open Zastava space ${({\Bun}_N^{\omega^\rho})^{ w_0}\underset{\Bun_G'}{\times'} \Bun_{B^-}^{\lambda, w}}$.
\end{prop}
\begin{proof}
    The proof adapts a similar argument of Section \ref{step 1}-\ref{8.2.7}. Here, we sketch the proof.
    
  We fix a $\mu$, and let $\lambda$ be very anti-dominant. Taking projection defines a map \begin{equation}
      r_1: (({\Bun}_N^{\omega^\rho}\underset{\Bun_G}{\times'} \Bun_{B^-}^{\mu})\times((\overline{\Bun}_N^{\omega^\rho})^{ w_0}\underset{\Bun_G'}{\times'} \Bun_{B^-}^{\lambda, w}))_{disj}\longrightarrow (\overline{\Bun}_N^{\omega^\rho})^{ w_0}\times \Conf^\mu.
  \end{equation} The factorization structure gives another map, i.e., composing the factorization map
    \begin{equation}
    \begin{split}
         (({\Bun}_N^{\omega^\rho}\underset{\Bun_G}{\times'} \Bun_{B^-}^{\mu})\times((\overline{\Bun}_N^{\omega^\rho})^{ w_0}\underset{\Bun_G'}{\times'} \Bun_{B^-}^{\lambda, w}))_{disj}\\ \longrightarrow  (\overline{\Bun}_N^{\omega^\rho})^{ w_0}\underset{\Bun_G'}{\times'} \Bun_{B^-}^{\mu+\lambda, w}\underset{\Conf_{\infty\cdot x}^{\mu+\lambda}}{\times} (\Conf^\mu\times \Conf_{\infty\cdot x}^\lambda)_{disj}
    \end{split}
    \end{equation}
with the projection
\begin{equation}
   (\overline{\Bun}_N^{\omega^\rho})^{ w_0}\underset{\Bun_G'}{\times'} \Bun_{B^-}^{\mu+\lambda, w}\underset{\Conf_{\infty\cdot x}^{\mu+\lambda}}{\times} (\Conf^\mu\times \Conf_{\infty\cdot x}^\lambda)_{disj}\longrightarrow (\overline{\Bun}_N^{\omega^\rho})^{ w_0}\times \Conf^\mu,
\end{equation}
gives a map $r_2$. 

Furthermore, the images of both maps land in the open subspace $((\overline{\Bun}_N^{\omega^\rho})^{ w_0}\times \Conf^\mu)_{good}\subset (\overline{\Bun}_N^{\omega^\rho})^{ w_0}\times \Conf^\mu$, where we impose the condition that the generalized $N^{\omega^\rho}$-structure is genuine at the support of the point in $\Conf^\mu$.

Similar to Section \ref{global whittaker category}, one can define a $N(\cO)^{\omega^\rho}_{\Conf^\mu}$-bundle $((\overline{\Bun}_N^{\omega^\rho})^{ w_0}\times \Conf^\mu)^{level}_{good}$ on $((\overline{\Bun}_N^{\omega^\rho})^{ w_0}\times \Conf^\mu)_{good}$, and the action of $N(\cO)^{\omega^\rho}_{\Conf^\mu}$ extends to an action of $N(\cK)^{\omega^\rho}_{\Conf^\mu}$. One can check that further compositions of $r_1, r_2$ with the projection 
\begin{equation}\label{eq a 2.13}
 ((\overline{\Bun}_N^{\omega^\rho})^{ w_0}\times \Conf^\mu)_{good}\longrightarrow ((\overline{\Bun}_N^{\omega^\rho})^{ w_0}\times \Conf^\mu)^{level}_{good}/N'   
\end{equation}
are the same if $N'$ is a large enough sub pro-group of $N(\cK)^{\omega^\rho}_{\Conf^\mu}$ which contains $N(\cO)^{\omega^\rho}_{\Conf^\mu}$.

To prove \eqref{eq A 2.9}, since the composition of $r_1$ and the projection to $(\overline{\Bun}_N^{\omega^\rho})^{ w_0}\underset{\Bun_G'}{\times'} \Bun_{B^-}^{\lambda, w}$ is surjective, we need to prove that the $!$-pullback of the $!$-extension sheaf on $(\overline{\Bun}_N^{\omega^\rho})^{ w_0}$ along $(\overline{\Bun}_N^{\omega^\rho})^{ w_0}\times \Conf^\mu\longrightarrow (\overline{\Bun}_N^{\omega^\rho})^{ w_0}$ and $r_1$ is the $!$-extension sheaf on $(({\Bun}_N^{\omega^\rho}\underset{\Bun_G}{\times'} \Bun_{B^-}^{\mu})\times((\overline{\Bun}_N^{\omega^\rho})^{ w_0}\underset{\Bun_G'}{\times'} \Bun_{B^-}^{\lambda, w}))_{disj}$.

By the identification of composed maps $r_1, r_2$ with projection \eqref{eq a 2.13}, it is equivalent to proving that the $!$-pullback of the $!$-extension sheaf on $((\overline{\Bun}_N^{\omega^\rho})^{ w_0}\times \Conf^\mu)_{good}$ along $r_2$ is the $!$-extension sheaf on $(({\Bun}_N^{\omega^\rho}\underset{\Bun_G}{\times'} \Bun_{B^-}^{\mu})\times((\overline{\Bun}_N^{\omega^\rho})^{ w_0}\underset{\Bun_G'}{\times'} \Bun_{B^-}^{\lambda, w}))_{disj}$. 

Since the $!$-extension sheaf on $((\overline{\Bun}_N^{\omega^\rho})^{ w_0}\times \Conf^\mu)_{good}$ is the $!$-pullback of $j_{!,\glob,\Fl}^N$, we only need to show that the $!$-pullback of $j_{!,\glob,\Fl}^N$ along $((\overline{\Bun}_N^{\omega^\rho})^{ w_0}\times \Conf^\mu)_{good}\longrightarrow (\overline{\Bun}_N^{\omega^\rho})^{ w_0}$ and $r_2$ is the $!$-extension sheaf. Now, it follows that if $\lambda+\mu$ is anti-dominant enough, the composed map of $r_2$ and $((\overline{\Bun}_N^{\omega^\rho})^{ w_0}\times \Conf^\mu)_{good}\longrightarrow (\overline{\Bun}_N^{\omega^\rho})^{ w_0}$ is smooth. 
\end{proof}

Now, the desired isomorphism \eqref{eq A 2.8} follows immediately, since the image of $s^{t^\lambda w}_{\glob}$ lies in the complement of ${({\Bun}_N^{\omega^\rho})^{ w_0}\underset{\Bun_G'}{\times'} \Bun_{B^-}^{\lambda, w}}$.

\subsubsection{Comparison between $j_{!,\glob,\Fl}^N$ and $j_!(\omega_{S_{\Fl,x}^{w_0}}).$}
Let $\omega_{S^{w_0}_{\Fl}}$ be the twisted dualizing sheaf on $S^{w_0}_{\Fl}$ under the canonical trivialization of $\cG^G$ on  $S^{w_0}_{\Fl}$. In this section, we prove
\begin{lem}\label{lem A 2.8}
    $\pi_{\Fl,x}^!(j_{!,\glob,\Fl}^N)[d_g]\simeq j_!(\omega_{S^{w_0}_{\Fl}}).$
\end{lem}
\begin{proof}
Similar to the Ran case,  $\pi_{\Fl,x}^!(j_{!,\glob,\Fl}^N)$ is also $N(\cK)^{\omega^\rho}_{x}$-equivariant and $T$-equivariant. So, we can use the Braden theorem once again.

Recall the notation $j_{t^\lambda w,\Fl}: S_{\Fl}^{t^\lambda w}\longrightarrow \Fl_G^{\omega^\rho}$ in Definition \ref{ver st}, and let us denote by $i_x^{t^\lambda w}$ the closed embedding $\pt=\{t^\lambda w\}\hookrightarrow S_{\Fl,x}^{t^\lambda w}$. According to Proposition \ref{prop 5.2}, we only need to prove

\begin{equation}\label{A 2.12}
(i_x^{t^\lambda w})^!\circ j^*_{t^\lambda w,\Fl}\circ \pi_{\Fl,x}^!(j_{!,\glob,\Fl}^N)=0, 
\end{equation}
if $t^\lambda w\neq 1$. If $\lambda\notin \Lambda^\sharp$, it is true since semi-infinite sheaves on $S^{t^\lambda w}_{\Fl,x}$ is not $T$-monodromic (To be more precise, it has a different $T$-monodromy structure from $\pi_{\Fl,x}^!(j_{!,\glob,\Fl}^N)$).

Assume $\lambda\in \Lambda^\sharp$. By the Braden theorem, 
\begin{equation}\label{a 2.13}
    \eqref{A 2.12}= H(S^{-, t^\lambda w}_{\Fl}, \pi_{\Fl,x}^!(j_{!,\glob,\Fl}^N)|_{S^{-, t^\lambda w}_{\Fl}}).
\end{equation}
By base change and Proposition \eqref{prop a 2.6}, \eqref{a 2.13} is just the shifted $!$-fiber of the sheaf $v^{\lambda, w}_{\Fl,\glob,*}\circ j_!(c_{({\Bun}_N^{\omega^\rho})^{ w_0}\underset{\Bun_G'}{\times'} \Bun_{B^-}^{\lambda, w}})$ at $t^\lambda\cdot x\in \Conf^\lambda_x$. Using the Braden theorem again, we have
\begin{equation}
    v^{\lambda, w}_{\Fl,\glob,*}\circ j_!(c_{({\Bun}_N^{\omega^\rho})^{ w_0}\underset{\Bun_G'}{\times'} \Bun_{B^-}^{\lambda, w}})= (s_{\glob}^{t^\lambda w})^*\circ j_!(c_{({\Bun}_N^{\omega^\rho})^{ w_0}\underset{\Bun_G'}{\times'} \Bun_{B^-}^{\lambda, w}})=0.
\end{equation}
\end{proof}

As a combination of Lemma \ref{Lemma A.2.1} and Lemma \ref{lem A 2.8}, we obtain
\begin{cor}\label{cor A}
    The $!$-restriction of $j_!(\omega_{S_{\Fl,\Ran_x}^{w_0}})$ to $\Fl_{G,x}^{\omega^\rho}$ is isomorphic to $j_!(\omega_{S_{\Fl,x}^{w_0}})$.
\end{cor}

\section{Semi-infinite equivalence v.s Iwahori equivalence}
In the untwisted case, it is known that for a category $\cC$ with a strong action of $G(\cK)^{\omega^\rho}$, and any $\lambda,\mu\in \Lambda$, the following functors are equivalences:
\begin{equation}\label{B.0.1}
\begin{split}
\cC^{\Ad_{\lambda}I^{\omega^\rho}}\overset{\oblv}{\longrightarrow}\cC^{T(\cO)^{\omega^\rho}}\overset{\Av_*^{\Ad_{\mu}I^{\omega^\rho}/T(\cO)^{\omega^\rho}}}{\longrightarrow}\cC^{\Ad_{\mu} I^{\omega^\rho}}\\
        \cC^{I^{\omega^\rho}}\overset{\Av_!^{N^-(\cK)^{\omega^\rho}}}{\simeq} \cC^{N^-(\cK)^{\omega^\rho}T(\cO)^{\omega^\rho}}.
    \end{split}
\end{equation}

The first one is implicited in \cite[Lemma 8]{[AB]} and the second one is proved in \cite[Theorem 17.2.1, Corollary 17.2.3]{[Ras1]}.\footnote{The original statement of $\fp$-adic groups belongs to {W. Casselman, A. Borel, and} J. Bernstein {, cf. \cite[Lemma 4.7]{[Bor76]}}.} For self-completeness, we prove the metaplectic version of the above equivalence with a similar proof as in \cite{[Ras1]} and \cite[Proposition 5.2.2]{[Ga3]}.

\begin{prop}
    For a category $\cC$ with a strong action of $(\Shv_{\cG^G}(G(\cK)^{\omega^\rho}),\star)$, and any $\lambda,\mu\in \Lambda$, the functors in \eqref{B.0.1} are still equivalences.
\end{prop}

\begin{proof}
Let us first consider the first claim.

    The gerbe $\cG^G$ on $G(\cK)^{\omega^\rho}$ has a canonical trivialization on $T(\cO)^{\omega^\rho}$, which canonically extends to $\Ad_{\mu} I^{\omega^\rho}$ preserving the multiplication structure. We consider the  constant sheaf on $\Ad_{\mu} I^{\omega^\rho}$ under this trivialization, denoted by $c_{\Ad_{\mu} I^{\omega^\rho}}$. By definition, $\Av_*^{\Ad_{\mu}I^{\omega^\rho}/T(\cO)^{\omega^\rho}}(\cF)$ is given by $c_{\Ad_{\mu} I^{\omega^\rho}}\overset{T(\cO)^{\omega^\rho}}{\times} \cF$ for any $\Ad_\lambda I^{\omega^\rho}$-equivariant object $\cF\in \cC$.

  Choose a trivialization of $\cG^G$ at $t^{-\lambda}\in G(\cK)^{\omega^\rho}$, which determines a left transition functor $t^{-\lambda}\cdot -:\Shv_{\cG^G}(\Fl_G^{\omega^\rho})\to \Shv_{\cG^G}(\Fl_G^{\omega^\rho})$. Applying this functor to $\cF$, we obtain an $I^0$-equivariant $\cG^G$-twisted sheaf. However, note that conjugating the trivialization on $T(\cO)^{\omega^\rho}$ by $t^{-\lambda}$ will change the trivialization by a character sheaf $b_{-\lambda}$ on $T(\cO)^{\omega^\rho}$. So, $t^{-\lambda} \cF$ is $(I^{\omega^\rho}, b_{-\lambda})$-equivariant.

    Similarly, if we choose a trivialization of $\cG^G$ at $t^{-\mu}\in G(\cK)^{\omega^\rho}$, it determines a transition functor $t^{-\mu}\cdot-$. Let $c_{I^{\omega^\rho} t^{-\mu}}:= t^{-\mu}\cdot c_{\Ad_{\mu} I^{\omega^\rho}}$. We have 
    \begin{equation}\label{B.0.3}
        c_{\Ad_{\mu} I^{\omega^\rho}}\overset{T(\cO)^{\omega^\rho}}{\star} \cF\simeq t^\mu\cdot c_{I^{\omega^\rho} t^{-\mu}} \overset{T(\cO)^{\omega^\rho}}{\star} \cF\simeq t^\mu\cdot c_{I^{\omega^\rho} t^{-\mu}}\cdot t^{\lambda} \overset{T(\cO)^{\omega^\rho}}{\star} t^{-\lambda}\cdot \cF.
    \end{equation}

    Since $t^{-\lambda} \cF$ is $I^0$-equivariant, we can take right $I^0$-averaging of $c_{I^{\omega^\rho} t^{-\mu}} t^{\lambda}\in \Shv_{\cG^G}(G(\cK)^{\omega^\rho})$ before taking convolution. Up to a shift, it is isomorphic to the pullback of $(J_{-\mu+\lambda,*})_{-\lambda}$ along $G(\cK)^{\omega^\rho}\longrightarrow \widetilde{\Fl}$.
    
  In conclusion, we obtain that, up to a shift,
      $\Av_*^{\Ad_{\mu}I^{\omega^\rho}/T(\cO)^{\omega^\rho}}(\cF)$ is given by $ t^{\mu} (J_{-\mu+\lambda,*})_{-\lambda} \overset{I}{\star} t^{-\lambda}\cF$. The functor $t^{\mu} (J_{-\mu+\lambda,*})_{-\lambda} \overset{I}{\star} t^{-\lambda}\cdot-$ is an equivalence since transitions and convolution with twisted BMW sheaves are equivalences.

Now, we prove the second claim. 

First, we need to prove that $\Av_!^{N^-(\cK)^{\omega^\rho}}(\cF)$ is well-defined for $\cF$ in the image of $\cC^{I^{\omega^\rho}}\longrightarrow \cC$. We only need to prove $\Av_!^{\Ad_{\alpha}N^-(t\cO)^{\omega^\rho}}(\cF)$ exits for any dominant $\alpha$, and then $\Av_!^{N^-(\cK)^{\omega^\rho}}(\cF)= \colim \Av_!^{\Ad_{\alpha}N^-(t\cO)^{\omega^\rho}}(\cF)$. Since $\Ad_{\alpha}I^{\omega^\rho}=\Ad_{\alpha}N(\cO)^{\omega^\rho} \cdot T(\cO)^{\omega^\rho}\cdot \Ad_{\alpha}N^-(t\cO)^{\omega^\rho}$, and $\Ad_{\alpha}N(\cO)^{\omega^\rho}\subset I^{\omega^\rho}$, we obtain that for $\cF$ lies in the image of $\cC^{I^{\omega^\rho}}\longrightarrow \cC$, we have $\Av_!^{\Ad_{\alpha}N^-(t\cO)^{\omega^\rho}}(\cF)\simeq \Av_!^{\Ad_{\alpha}I^{\omega^\rho}/T(\cO)^{\omega^\rho}}(\cF)$. The latter exists and is the left adjoint functor of the equivalence functor $\cC^{\Ad_{\alpha} I^{\omega^\rho}}\overset{\oblv}{\longrightarrow}\cC^{T(\cO)^{\omega^\rho}}\overset{\Av_*^{I^{\omega^\rho}/T(\cO)^{\omega^\rho}}}{\longrightarrow}\cC^{I^{\omega^\rho}}$.

To be more precise, for $\cF$ lies in the image of $\cC^{I^{\omega^\rho}}\longrightarrow \cC$,
\[ \Av_!^{\Ad_{\alpha}N^-(t\cO)^{\omega^\rho}}(\cF)\simeq t^{\alpha} J_{-\alpha,!}{\star} \cF[\langle \alpha, 2\check{\rho}\rangle].\]
In particular, there is
\begin{equation}
    \begin{split}
        \Av_*^{N(\cO)^{\omega^\rho}}\circ \Av_!^{N^-(\cK)^{\omega^\rho}}(\cF)\simeq  \colim \Av_*^{N(\cO)^{\omega^\rho}} (t^{\alpha} J_{-\alpha,!}{\star} \cF[\langle \alpha, 2\check{\rho}\rangle])
        \simeq (J_{\alpha,*})_{-\alpha}\star J_{-\alpha,!}\star \cF
        \simeq \cF.
    \end{split}
\end{equation}

We note that the functor $\Av_!^{N^-(\cK)^{\omega^\rho}}:  \cC^{I^{\omega^\rho}}{\longrightarrow} \cC^{N^-(\cK)^{\omega^\rho}T(\cO)^{\omega^\rho}}$ is the left adjoint functor of $\Av_*^{N(\cO)^{\omega^\rho}}: \cC^{N^-(\cK)^{\omega^\rho}T(\cO)^{\omega^\rho}}\longrightarrow \cC^{I^{\omega^\rho}}$. So, it remains to show $\Av_*^{N(\cO)^{\omega^\rho}}$ is conservative, i.e., if  $\Av_*^{N(\cO)^{\omega^\rho}}(\cF)=0$ and $\cF$ is $N^-(\cK)^{\omega^\rho}T(\cO)^{\omega^\rho}$-equivariant, then $\Av_*^{N(\cO)^{\omega^\rho}}(\cF)=0$ implies $\cF=0$.

Indeed, since $N^-(t\cO)^{\omega^\rho}T(\cO)^{\omega^\rho}=\bigcap_{\alpha\in \Lambda^+} I^{\omega^\rho}\cap \Ad_{\alpha}I^{\omega^\rho}$, we have $\cF=\Av_*^{N^-(\cO)^{\omega^\rho}}(\cF)= \colim \Av_*^{I^{\omega^\rho}\cap \Ad_{\alpha}I^{\omega^\rho}/T(\cO)^{\omega^\rho}}(\cF)$. In particular, if $\cF\neq 0$, there exists a very dominant $\alpha$, such that \[\Av_*^{I^{\omega^\rho}\cap \Ad_{\alpha}I^{\omega^\rho}/T(\cO)^{\omega^\rho}}(\cF)\neq 0.\] Using the fact that $\cF$ is $N^-(\cK)^{\omega^\rho}T(\cO)^{\omega^\rho}$-equivariant, we have \[\Av_*^{I^{\omega^\rho}\cap \Ad_{\alpha}I^{\omega^\rho}/T(\cO)^{\omega^\rho}}(\cF)\simeq \Av_*^{\Ad_{\alpha}N(\cO)^{\omega^\rho}}(\cF).\] The latter is an $\Ad_{\alpha}I^{\omega^\rho}$-equivariant object. 

Now, note that
\[\Av_*^{N(\cO)^{\omega^\rho}}(\cF)\simeq \Av_*^{N(\cO)^{\omega^\rho}}\circ \Av_*^{\Ad_{\alpha}N(\cO)^{\omega^\rho}}(\cF),\]
the desired property $\Av_*^{N(\cO)^{\omega^\rho}}(\cF)\neq 0$ follows from $\Av_*^{\Ad_{\alpha}N(\cO)^{\omega^\rho}}(\cF)\neq 0$ and the fact that $\Av_*^{N(\cO)^{\omega^\rho}}: \cC^{\Ad_{\alpha}I^{\omega^\rho}}\longrightarrow \cC^{I^{\omega^\rho}}$ is an equivalence (the first claim of \eqref{B.0.1}).
\end{proof}

}
\subsection*{Acknowledgments}
This paper is a part of the author’s Ph.D. thesis which was defended in June 2020. The author thanks Dennis Gaitsgory for suggesting this subject and also deeply thanks him for his guidance. Without his help, the author could not finish this subject alone. The author thanks Sergey Lysenko for helpful discussions and comments on the paper.

The author also thanks Michael Finkelberg, Lin Chen, and  Yuchen Fu for their careful reading and detailed suggestions about writing and organizing this paper. Without their help, this article is far from being well-written. The author thanks Yifei Zhao for explaining his work \cite{[Zh]}.

{The author thanks the anonymous referee for pointing out mistakes and gaps in the former version and constructive comments, which make this paper more complete.}

The author also thanks Sam Raskin, Peng Zheng, Qiao Zhou, Zicheng Qian, Roman Travkin, Jonathan Wise, Justin Campbell, Lizao Ye, Simon Riche, Christoph Baerligea, and Mihai Pavel for their help. 





\end{document}